\theoremstyle{plain}
\newtheorem{theo}{Theorem}[section]
\newtheorem{prop}[theo]{Proposition}
\newtheorem{lemm}[theo]{Lemma}
\newtheorem{coro}[theo]{Corollary}
\newtheorem{defi}[theo]{Definition}
\theoremstyle{definition}
\newtheorem{rema}[theo]{Remark}
\DeclareMathOperator{\cnx}{div}
\DeclareMathOperator{\cn}{div}
\DeclareMathOperator{\supp}{supp}
\DeclareSymbolFont{pletters}{OT1}{cmr}{m}{sl}
\DeclareMathSymbol{s}{\mathalpha}{pletters}{`s}
\def\even#1{ #1^{\text{ev}}}
\def\odd#1{#1^{\text{od}}}
\def\B{B }
\def\cnxy{\cn_{x,y}}
\def\defn{\mathrel{:=}}
\def\deta{\eta}
\def\dzeta{\zeta}
\def\dB{B}
\def\dV{V}
\def\eps{\varepsilon}
\def\la{\left\lvert}
\def\lA{\left\lVert}
\def\le{\leq}
\def\ma{a}
\def\mez{\frac{1}{2}}
\def\partialx{\nabla}
\def\partialyx{\nabla_{x,y}}
\def\ra{\right\rvert}
\def\rA{\right\rVert}
\def\s{\sigma}
\def\tdm{\frac{3}{2}}
\def\xC{\mathbf{C}}
\def\xN{\mathbf{N}}
\def\xR{\mathbf{R}}
\def\xS{\mathbf{S}}
\def\xT{\mathbf{T}}
\def\xZ{\mathbf{Z}}
\numberwithin{equation}{section}
\title{Cauchy theory for the gravity water waves system with non localized initial data}
\author{
T. Alazard, 
\address{T. Alazard. D\'epartement de Math\'ematiques et Applications, UMR 8553 du CNRS \\ {\'E}cole Normale Sup\'erieure, 45, rue d'Ulm 75005 Paris Cedex, France}
N. Burq, 
\address{N. Burq. Laboratoire de Math\'ematiques d'Orsay, UMR 8628 du CNRS, Universit\'e Paris-Sud, 91405 Orsay Cedex, France 
et D\'epartement de Math\'ematiques et Applications, UMR 8553 du CNRS, {\'E}cole Normale Sup\'erieure, 45, rue d'Ulm 75005 Paris Cedex, France}
C. Zuily
\address{C. Zuily. Laboratoire de Math\'ematiques d'Orsay, UMR 8628 du CNRS, Universit\'e Paris-Sud, 91405 Orsay Cedex, France}
}
\thanks{
T.A. was supported by the French Agence Nationale de la Recherche, projects ANR-08-JCJC-0132-01 and ANR-08-JCJC-0124-01. 
}
\date{\empty}
\begin{document}
\begin{abstract}
In this article, we develop the local Cauchy theory for the gravity water waves system, 
for rough initial data which do not decay at infinity. We work in the context of 
$L^2$-based uniformly local Sobolev spaces introduced by Kato~(\cite{Kato}). 
We prove a classical well-posedness result (without loss of derivatives). 
Our result implies also a local well-posedness 
result in H\"older spaces (with loss of $d/2$ derivatives). 
As an illustration, we solve a question raised by Boussinesq in~\cite{Boussinesq} on the  water waves problem in a canal. 
We take benefit of an elementary 
observation to show that the strategy suggested in~\cite{Boussinesq} does indeed apply to this setting.
\end{abstract} 

\maketitle
\section{Introduction}
We are interested in this paper in the free boundary problem describing the motion of an 
incompressible, irrotational fluid flow moving under the force of gravitation,  without surface tension, 
in case where the initial data are neither localized nor periodic. 
There are indeed two cases where the mathematical analysis is rather well understood: 
firstly for periodic initial data (in the classical Sobolev spaces $H^s(\xT^d)$) and secondly when 
they are decaying to zero at infinity (for instance for data in $H^s(\xR^d)$ with $s$ large enough). 
With regards to the analysis of the Cauchy problem, we refer to the recent papers 
of Lannes~\cite{LannesKelvin}, Wu~\cite{Wu09,Wu10} and Germain, Masmoudi and Shatah~\cite{GMS}. 
We also refer to the introduction of \cite{ABZ3} or \cite{BL,LannesLivre} for more references. 
However, one can think to the moving 
surface of a lake or a canal where the waves are 
neither periodic nor decaying to zero (see also~\cite{Favre}). 

A most natural  strategy would be to solve the Cauchy problem in the classical 
H\"older spaces $W^{k,\infty}(\xR^d)$. 
However even the linearized system at the origin (the fluid at rest) 
is ill-posed in these spaces (see Remark~\ref{re.illposed} below), 
and this strategy leads consequently to loss of derivatives. 
Having this loss of derivatives in mind, the other natural approach is to work in the framework 
of $L^2$ based uniformly local Sobolev spaces, denoted by $H^{s}_{ul}(\xR^d)$. 
These spaces were introduced by Kato (see~\cite{Kato}) in the analysis of hyperbolic systems. 
Notice however, that 
compared to general hyperbolic systems, the water waves system appears to be non local,   
which induces new difficulties. 
This framework appears to be quite 
natural in our context. Indeed, the uniformly local Sobolev spaces $H^s_{ul}(\xR^d)$ contain, 
in particular, the usual Sobolev spaces $H^s(\xR^d)$, the periodic 
Sobolev spaces $H^s(\xT^d)$ (spaces of periodic functions on $\xR^d$), 
the sum $H^s(\xR^d)+H^{s}(\xT^d)$ and also the H\"older spaces $W^{s,p}(\xR^d)$ 
(and as a by-product of our analysis, we get well-posedness in H\"older spaces, with a loss of derivatives).

The aim of this paper is precisely to prove that the water waves system is locally (in time) 
well posed in the framework of uniformly local Sobolev spaces. 
Moreover, following our previous paper \cite{ABZ3}, the data for which we solve the 
Cauchy problem are allowed to be quite rough. Indeed we shall assume, for instance, 
that the initial free surface is the graph of a function which belongs to the 
space $H^{s+\mez}_{ul}(\xR^d)$ for $s>1+ \frac{d}{2}$. In particular, in term of Sobolev 
embedding, the initial free surface is merely $W^{\frac{3}{2}, \infty}(\xR^d)$ thus may have 
unbounded curvature. On the other hand this threshold should be compared 
with the scaling of the problem. Indeed  it is known that the water wave system 
has a scaling invariance for which the critical space for the initial free surface 
is the space $\dot{H}^{1+\frac{d}{2}}(\xR^d)$ (or $W^{1,\infty}(\xR^d))$. 
This shows that we  solve here  the Cauchy problem for data $\mez$ above the scaling. 
(Notice that in \cite{ABZ4} we prove well-posedness, 
in the classical Sobolev spaces, $\mez-\frac{1}{12}$ above the scaling when $d =2$).

As an illustration of the relevance of this low regularity Cauchy theory in the context of {\em local} spaces, we solve a question raised by Boussinesq in 1910~\cite{Boussinesq} on the  water waves problem in a canal. In~\cite{Boussinesq}, Boussinesq suggested to reduce the water-waves system in a canal to the same system on $\xR^3$ with periodic conditions with respect to one variable, by a simple reflection/periodization procedure (with respect to the normal variable to the boundary of the canal). However, this idea remained inapplicable for 
the simple reason that the even extension of a smooth function 
on the half line is in general merely Lipschitz continuous (due to the singularity at the origin). As a consequence, even if one starts with a smooth initial domain, the reflected/periodized domain will only be Lipschitz continuous. Here, we are able to take benefit of an elementary (though seemingly previously unnoticed) observation which shows that actually, as soon as we are looking for {\em reasonably smooth} solutions, the angle between the free surface and the vertical boundary of the canal is a right angle. Consequently, the reflected/periodized domain enjoy additional smoothness (namely up to $C^{3}$), which is enough to apply our {\em rough data} Cauchy theory and to show that the strategy suggested in~\cite{Boussinesq} does indeed apply.  This appears to be the first result on Cauchy theory for the water-wave system in a domain with boundary. 

The present paper relies on the strategies developed in our previous paper \cite{ABZ3} 
and we follow the same scheme of proof. In Section~\ref{sec.3}, we develop the machinery of para-differential calculus in the framework of uniformly local spaces that we need later. We think that this section could be useful for further studies in this framework. In Section~\ref{sec.4} we prove that the Dirichlet-Neumann operator is well defined in this framework (notice that this fact is not straightforward, see~\cite{GVMa, DaPr} for related works), and we give a precise description (including sharp elliptic estimates in very rough domains) 
on these  spaces. In Section~\ref{sec.5}, we symmetrize the system and prove {\em a priori} estimates. In section~\ref{sec.6}, we prove contraction estimates and well posedness. In section~\ref{sec.7}, we give the application to the canal (and swimming pools). Finally, in an appendix, we prove that in the context of H\"older spaces, the water-waves system linearized on the trivial solution (rest) is {\em ill posed}.

\section{The problem and the result }
In this paper  we shall denote by $t\in \xR$   the time variable and by $x \in \xR^d $ 
(where $  d\geq 1),$ $ y\in \xR,$   the horizontal and vertical space variables. 
We work in a fluid domain with free boundary and fixed bottom on the form
\begin{align*}
\Omega &= \{(t,x,y) \in[0,T] \times \xR^d \times \xR: (x,y) \in \Omega(t)\} \text{ where } \\
\Omega(t) &= \{(x,y): \eta_* (x)< y < \eta(t,x)\}. 
\end{align*}
Here the free surface is described by $\eta$, an unknown of the problem,  
and the bottom by a given function $\eta_* $. We shall only assume that $\eta_*$ is bounded and continuous. 
We assume that the bottom is the graph of a function for the sake of simplicity: our analysis applies 
whenever one has the Poincar\'e inequality given by Lemma~\ref{poinc} below. 

We shall denote by $\Sigma$ the free surface and by $\Gamma$ the bottom,
 \begin{align*}
 \Sigma &= \{(t,x,y) \in [0,T] \times \xR^d \times \xR: (x,y) \in \Sigma(t)\} \quad\text{where} \\
 \Sigma(t)  &= \{(x,y) \in\xR^d \times \xR: y= \eta(t,x)\},\\
      \Gamma &=  \{(x,y) \in \xR^d \times \xR: y= \eta_*(x)\}.
 \end{align*}
We shall use the following notations
$$
\nabla_x = (\partial_{x_i})_{1 \leq i \leq d}, \quad \nabla_{x,y} 
= (\nabla_x, \partial_y), \quad \Delta_x = \sum_{1 \leq i \leq d}\partial_{x_i}^2, \quad \Delta_{x,y} = \Delta_x + \partial_y^2.
$$
\subsection{The equations}
The Eulerian velocity $v: \Omega \to \xR^{d+1}$ solves the incompressible 
and irrotational Euler equation
$$
 \partial_t v + (v \cdot \nabla_{x,y})v + \nabla_{x,y}P = -g e_y, \quad \text{div}_{x,y} \, v =0, 
 \quad  \quad \text{curl}_{x,y}\, v =0 \quad \text{in } \Omega
 $$
where $g>0$ is the acceleration of the gravity, $e_y$ the  vector $(x=0,y=1)$ and $P$   the pressure. 
The problem is then given by three boundary conditions:
\begin{itemize}
\item a kinematic condition (which states that the free surface moves with the fluid)
$$
\partial_t \eta = \sqrt{1+ \vert \nabla_x \eta \vert^2} (v \cdot n) \quad \text{on} \quad \Sigma,
$$
where $n$ denotes the unit normal vector  to $\Sigma,$
\item a dynamic condition (that expresses a balance of forces across the free surface)
$$ P = 0 \quad \text{ on }   \Sigma,$$
\item the ``solid wall" boundary condition on the bottom $\Gamma$
$$
v \cdot \nu =0 \quad \text{on } \Gamma,
$$
where $\nu$ denotes  the normal vector to $\Gamma$ whenever it exists.
\end{itemize}

Since the motion is incompressible and irrotational there 
exists a velocity potential $\phi: \Omega \to \xR$ 
such that $v = \nabla_{x,y} \phi, $ thus $\Delta_{x,y} \phi = 0$ in $\Omega$. 
We shall work with the Zakharov/Craig--Sulem formulation of the water waves equations. 
We introduce
$$
\psi(t,x)= \phi(t,x,\eta(t,x))
$$
and the Dirichlet-Neumann operator 
\begin{align*}
G(\eta) \psi &= \sqrt{1 + \vert \nabla_x \eta \vert ^2}
\Big( \frac{\partial \phi}{\partial n} \Big \arrowvert_{\Sigma}\Big)\\
&= (\partial_y \phi)(t,x,\eta(t,x)) - \nabla_x \eta(t,x) \cdot(\nabla_x \phi)(t,x,\eta(t,x)).
\end{align*}
Then (see \cite{CrSuSu} or \cite{CL}) the water waves system can be written in term of the unknown $\eta,\psi$ as 
\begin{equation}\label{ww}
\left\{
\begin{aligned}
\partial_t \eta &= G(\eta) \psi,\\
\partial_t \psi &= - \mez \vert \nabla_x \psi \vert^2 + \mez \frac{(\nabla_x \eta \cdot \nabla_x \psi + G(\eta)\psi)^2}{1+ \vert \nabla_x \eta \vert^2} - g \eta.
\end{aligned}
\right.
\end{equation}
It is useful to introduce the  vertical and horizontal components of the velocity. We set 
\begin{equation}\label{BV}
\left\{
\begin{aligned}
B &= (v_y)\arrowvert_\Sigma = \frac{ \nabla_x \eta \cdot \nabla_x \psi + G(\eta)\psi} {1+ \vert \nabla_x \eta \vert^2},\\
V&= (v_x)\arrowvert_\Sigma  =\nabla_x \psi - B \nabla_x \eta.
\end{aligned}
\right.
 \end{equation}
 We recall also that the Taylor coefficient defined by $a = -\frac{\partial P}{\partial y}\big\arrowvert_\Sigma$ can be defined in terms of $\eta,\psi,B,V$ only (see \S 7.2 below  and \S 4.3.1 in \cite{LannesLivre}).
\subsection{The uniformly local Sobolev spaces}
We recall here the definition of the uniformly local Sobolev spaces   introduced by Kato in \cite{Kato}.

Recall that there exists $\chi \in C ^\infty(\xR^d)$ with
$\supp \chi \subset [-1,1]^d, \chi = 1$ near $ [-\mez,\mez]^d$   such that 
\begin{equation}\label{kiq}
\sum_{q \in \xZ^d}\chi_q(x) = 1,\quad \forall   x\in \xR^d
\end{equation}
where $$
\chi_q(x) = \chi(x-q).
$$ 

\begin{defi}
For $s\in \xR$   the space $H^s_{ul}(\xR^d)$ is the space of distributions $u\in H^s_{loc}(\xR^d)$  such that 
$$\Vert u\Vert_{H^s_{ul}(\xR^d)}:= \sup_{q\in \xZ^d}\Vert \chi_q u\Vert_{H^s (\xR^d)} < +\infty.$$
\end{defi}
The definition of the space $H^s_{ul}(\xR^d)$ is independent 
of the choice of the function $\chi$ in $C_0^\infty(\xR^d)$ satisfying~\eqref{kiq} (see Lemma \ref{invariance} below).

\begin{prop}One has the following embeddings:
\begin{enumerate}
\item  If $s> \frac{d}{2}$ and $s-\frac{d}{2} \notin \xN,$  $H^s_{ul}(\xR^d)$ is  continuously embedded in 
$W^{s- \frac{d}{2} ,\infty}(\xR^d)$.
\item If $m\in \xN,$ $W^{m,\infty}(\xR^d)$ is continuously embedded in $H^{m}_{ul}(\xR^d)$.
 \item If $s \geq 0, $  $W^{s+\eps, \infty}(\xR^d)$ is continuously embedded in $H^s_{ul}(\xR^d) $  for $\eps>0 $.
\end{enumerate}
\end{prop}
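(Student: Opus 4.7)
The three embeddings all reduce to a localization argument involving the cutoffs $\chi_q$ together with the classical Euclidean embeddings; the geometry of the covering in \eqref{kiq} supplies the uniformity in $q$.

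For item (1), I would argue pointwise. Given any $x_0\in\xR^d$, pick $q\in\xZ^d$ with $|x_0-q|_\infty \le \tfrac12$; then $\chi_q\equiv 1$ on a neighborhood of $x_0$ of fixed size independent of $x_0$, so all derivatives and Hölder quotients of $u$ at $x_0$ coincide with those of $\chi_q u$. Apply the classical Sobolev embedding $H^s(\xR^d)\hookrightarrow W^{s-d/2,\infty}(\xR^d)$ (which holds since $s>d/2$ and $s-d/2\notin\xN$, interpreting $W^{s-d/2,\infty}$ as the usual Hölder space) to get
\[
\lA \chi_q u\rA_{W^{s-d/2,\infty}(\xR^d)} \le C\lA \chi_q u\rA_{H^s(\xR^d)} \le C\lA u\rA_{H^s_{ul}(\xR^d)}.
\]
Taking the supremum in $x_0$ yields the embedding.

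For item (2), compute directly: $\partial^\alpha(\chi_q u) = \sum_{\beta\le\alpha}\binom{\alpha}{\beta}\partial^\beta\chi_q\,\partial^{\alpha-\beta}u$ is supported in $q+[-1,1]^d$ (a set of fixed finite volume) and bounded pointwise by $C\|\chi\|_{W^{m,\infty}}\|u\|_{W^{m,\infty}(\xR^d)}$. Integrating the squares and summing over $|\alpha|\le m$ gives $\lA \chi_q u\rA_{H^m(\xR^d)} \le C\lA u\rA_{W^{m,\infty}(\xR^d)}$, uniformly in $q$.

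For item (3), when $s\in\xN$ the result already follows from (2) since $W^{s+\eps,\infty}\subset W^{s,\infty}$. Otherwise, write $s = m+\theta$ with $0<\theta<1$. I would estimate $\lA\chi_q u\rA_{H^s}$ using Littlewood--Paley blocks: for a function $f$ supported in a ball of bounded radius, Bernstein's inequality gives $\lA \Delta_j f\rA_{L^2}\les \lA\Delta_j f\rA_{L^\infty}$ (times the square root of the measure of a fattened support), and the assumption $u\in W^{s+\eps,\infty}=B^{s+\eps}_{\infty,\infty}$ yields $\lA\Delta_j(\chi_q u)\rA_{L^\infty}\les 2^{-j(s+\eps)}\lA u\rA_{W^{s+\eps,\infty}}$ (using that multiplication by the smooth compactly supported $\chi_q$ is bounded on Hölder--Zygmund spaces, uniformly in $q$). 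Then
\[
\lA\chi_q u\rA_{H^s}^2 \;\les\; \sum_{j\ge 0} 2^{2js}\lA\Delta_j(\chi_q u)\rA_{L^2}^2 \;\les\; \lA u\rA_{W^{s+\eps,\infty}}^2\sum_{j\ge 0} 2^{-2j\eps} \;<\;\infty,
\]
uniformly in $q$. An equivalent route is to use the Gagliardo difference-quotient characterization on $H^{m+\theta}$ applied to $\partial^\alpha(\chi_q u)$ for $|\alpha|\le m$, where the Hölder regularity of order $\theta+\eps$ of the $m$-th derivatives ensures integrability of the double integral over the bounded support.

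The proofs are routine; the only real subtlety is item (3) in the non-integer case, where one has to justify that Hölder regularity above the index $s$ transfers to $L^2$-Sobolev regularity $s$ after localization, for which the $\eps$-gain provided by the hypothesis is exactly what absorbs the loss in passing from $L^\infty$ to $L^2$ estimates on Littlewood--Paley pieces.
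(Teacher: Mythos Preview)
The paper states this proposition without proof, so there is nothing to compare against directly; your arguments are essentially the standard ones and are correct for items (1) and (2).

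For item (3), your Littlewood--Paley argument is the right idea, but one step is not quite right as written. You claim that for $f=\chi_q u$ supported in a fixed ball, ``Bernstein's inequality gives $\lA\Delta_j f\rA_{L^2}\les \lA\Delta_j f\rA_{L^\infty}$''. This is not Bernstein (Bernstein goes the other way, $L^\infty\les 2^{jd/2}L^2$ for frequency-localized pieces), and more importantly the inequality is not literally true with a $j$-independent constant: $\Delta_j f$ is not compactly supported, only rapidly decaying outside a fattened support. What one actually has is
\[
\lA\Delta_j f\rA_{L^2}\le C\lA\Delta_j f\rA_{L^\infty}+C_N\,2^{-jN}\lA f\rA_{L^\infty}
\]
for every $N$, by splitting the $L^2$ integral into the region near $\supp f$ (where you use the trivial bound) and the complement (where the convolution kernel $2^{jd}\check\varphi(2^j\cdot)$ gives $|\Delta_j f(x)|\le C_N 2^{-jN}|x|^{-N}\lA f\rA_{L^\infty}$). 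Both terms are then summable against $2^{2js}$ --- the first using $\lA\Delta_j f\rA_{L^\infty}\les 2^{-j(s+\eps)}\lA\chi_q u\rA_{C^{s+\eps}_*}$, the second by taking $N$ large --- so your conclusion stands. Just make this tail term explicit rather than hiding it in the phrase ``a fattened support''.
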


\subsection{The main result}
The goal of this article  is to prove the following result.
\begin{theo}\label{theo:princ}Let $d\geq 1, s>1+ \frac{d}{2}$. Assume that $\eta_*$ 
is a bounded continuous function on~$\xR^d$. 
Consider an initial data $(\eta_0, \psi_0)$ satisfying the following conditions

\begin{enumerate}[(i)]
\item 
$ \eta_0 \in H^{s+\mez}_{ul}(\xR^d),  \psi_0 \in  H^{s+\mez}_{ul}(\xR^d),  V_0 \in H^{s}_{ul}(\xR^d), B_0 \in   H^{s}_{ul}(\xR^d),  $

\item there exists $h>0$ such that $\eta_0(x)  -\eta_*(x) \geq 2 h, \quad \forall x \in \xR^d,$

\item there exists  $c>0$ such that $a_0(x) \geq c, \quad \forall x \in \xR^d$,
\end{enumerate}
where $a_0$ denotes the Taylor coefficient at time $t=0$.

Then there exists $T>0$ such that the Cauchy problem for the system \eqref{ww} with initial data $(\eta_0, \psi_0)$ at $t=0$ has a unique solution  
$$(\eta, \psi) \in L^\infty\big([0,T], H^{s+\mez}_{ul}(\xR^d) \times H^{s+\mez}_{ul}(\xR^d)\big)
$$
such that 

\begin{enumerate}[1.]

\item $(V,B) \in L^\infty\big([0,T], H^{s }_{ul}(\xR^d) \times H^{s }_{ul}(\xR^d)\big)$,

\item $\eta(t,x) - \eta_*(x) \geq  h, \quad \forall (t,x) \in [0,T]\times \xR^d,$

\item $a(t,x) \geq \mez c, \quad \forall (t,x) \in [0,T]\times \xR^d$.

\item For any $s'<s$, 
$$
(\eta, \psi, V, B) \in C^0\big([0,T], H^{s'+\mez}_{ul}(\xR^d) \times H^{s'+\mez}_{ul}(\xR^d) \times H^{s' }_{ul}(\xR^d) \times H^{s' }_{ul}(\xR^d)\big).
$$
\end{enumerate}
\end{theo}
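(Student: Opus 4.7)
The plan is to follow the scheme of our earlier paper \cite{ABZ3}, adapting each step to the uniformly local framework. Schematically, the argument splits into four stages: (a) build the functional and para-differential calculus in $H^s_{ul}$; (b) establish sharp tame estimates for the Dirichlet--Neumann operator $G(\eta)\psi$ on these spaces; (c) para-linearize and symmetrize the system~\eqref{ww} to get a quasi-linear equation of the form $(\partial_t + T_V\cdot\nabla_x + iT_\gamma)\Theta = f$ where $\gamma\sim\sqrt{a\vert\xi\vert}$, and derive a priori and contraction estimates; (d) close the Cauchy problem by regularization, uniform existence time, and compactness.

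More concretely, I would first note that the uniformly local structure is preserved by all standard operations of Section~\ref{sec.3}: Littlewood--Paley projections, paraproducts and remainders act continuously on $H^s_{ul}$ since their kernels are (quasi)local. Next, for the Dirichlet--Neumann operator one works in the flattened strip using the Alinhac good unknown; the key technical point is an elliptic estimate \textbf{uniform in the base point $q\in\xZ^d$}, relying on the Poincaré inequality (Lemma~\ref{poinc}) valid in slabs because of assumption~(ii). This yields, as in \cite{ABZ3}, the estimate $\Vert G(\eta)\psi\Vert_{H^{s-1/2}_{ul}}\lesssim F(\Vert\eta\Vert_{H^{s+1/2}_{ul}})\Vert\psi\Vert_{H^{s+1/2}_{ul}}$ together with the para-linearization $G(\eta)\psi = T_\lambda(\psi-T_B\eta) - T_V\cdot\nabla_x\eta + R$ with $R$ of lower order. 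Composing with the change of unknown $U=\psi-T_B\eta$ and symmetrizing via $T_q$ and $T_p$ with $p\sim a^{1/2}\vert\xi\vert^{-1/4}$, $q\sim a^{-1/2}\vert\xi\vert^{1/4}$, one reduces \eqref{ww} to a paradifferential system whose energy
\begin{equation*}
\mathcal{E}(t) = \Vert\eta\Vert_{H^{s+1/2}_{ul}}^2 + \Vert U\Vert_{H^s_{ul}}^2 + \Vert(V,B)\Vert_{H^s_{ul}}^2
\end{equation*}
satisfies $\mathcal{E}'(t) \leq F(\mathcal{E}(t))$ for some continuous $F$, as long as conditions~(ii) and~(iii) persist with halved constants.

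Uniqueness and contraction would follow by the same symmetrization applied to the difference of two solutions, measured in $H^{s-1/2}_{ul}\times H^{s-1/2}_{ul}\times H^{s-1}_{ul}\times H^{s-1}_{ul}$, giving a Gronwall closure at one derivative below. For existence I would regularize the data by convolution with $\chi(\eps\cdot)$-localized mollifiers (which preserve the uniformly local norms uniformly in $\eps$), solve the resulting problem using the standard $H^s(\xR^d)$ theory of \cite{ABZ3} (after extending the data appropriately, e.g.\ by periodization on large tori, or directly by cutoff/paste arguments compatible with the uniform estimate), and pass to the limit: the uniform a priori bound yields a common existence time $T$ depending only on the initial norms and on $h,c$; contraction provides strong convergence in the lower norm, while weak-$\star$ compactness gives the $L^\infty_t H^{s+1/2}_{ul}$ regularity. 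Continuity in time at the $H^{s'+1/2}_{ul}$ level for $s'<s$ is standard Bona--Smith.

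The principal obstacle will be step~(b): because the fluid domain is unbounded in the horizontal direction and the data do not decay, one cannot rely on $L^2(\Omega)$-based variational solutions of $\Delta_{x,y}\phi=0$. One has to construct the harmonic extension by a Kato-type procedure, cutting $\psi$ with $\chi_q$, solving locally, and reassembling with uniform bounds. Proving that this reconstruction is independent of the decomposition and that it produces an operator that para-linearizes with a remainder in $H^{s}_{ul}$ (rather than merely $H^{s}_{loc}$) is where all the genuinely new work beyond \cite{ABZ3} is concentrated; the remainder of the proof is then a careful bookkeeping adaptation of the $H^s(\xR^d)$ argument.
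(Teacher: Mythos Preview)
Your outline is essentially the paper's scheme, and your diagnosis that step~(b) --- constructing the harmonic extension in $H^1_{ul}(\Omega)$ by localizing $\psi=\sum_q\chi_q\psi$, solving each piece variationally, and summing via weighted estimates --- is where the genuinely new analysis sits is exactly right. The paralinearization and symmetrization you sketch are broadly correct, though two details are off: the paper works with the unknowns $(\zeta,V,B)$ and sets $U_s=\langle D_x\rangle^sV+T_\zeta\langle D_x\rangle^sB$, $\theta_s=T_q\zeta_s$ with $q=\sqrt{a/\lambda}$ and $\gamma=\sqrt{a\lambda}$ (orders $-\tfrac12$ and $+\tfrac12$), not the order $\pm\tfrac14$ symbols you wrote; and the Alinhac good unknown $\psi-T_B\eta$ is used only in the contraction argument, not in the a~priori estimate.

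The genuine gap is in your existence step~(d). Mollifying the data by convolution keeps them in $H^{s'}_{ul}$ for large $s'$ but does \emph{not} put them in $H^{s'}(\xR^d)$: a smooth non-decaying function is still non-decaying. So you cannot ``solve the resulting problem using the standard $H^s(\xR^d)$ theory of \cite{ABZ3}''; nor can you periodize on large tori (the data need not be close to periodic) or cut off (truncation destroys the harmonic extension and the Dirichlet--Neumann operator is nonlocal). The paper closes this differently: for very smooth data in $H^s_{ul}$ it adds a parabolic regularization $+\,\eps\Delta_x$ to both equations of \eqref{ww} and solves the resulting system by a fixed-point argument \emph{directly in} $L^\infty_tH^s_{ul}\cap L^2_tH^{s+1}_{ul}$, using smoothing estimates for $e^{\eps t\Delta_x}$ on uniformly local spaces (Lemma~\ref{lemmsmooth}). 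The tame a~priori bound (Proposition~\ref{original}) then shows the lifespan is independent of $\eps$, and one lets $\eps\to0$. Only afterwards does one regularize rough data via $j(\eps D)$ and repeat. In short: you must build a self-contained local existence theory in $H^s_{ul}$ for smooth data before you can bootstrap; the $H^s(\xR^d)$ result from \cite{ABZ3} is not available as a black box here.
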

\begin{rema}\label{re.illposed}
$\bullet$ 
Theorem~\ref{theo:princ}  implies local well posedness  in H\" older spaces: indeed, assuming that 
\begin{multline*}
(\eta_0, \psi_0, V_0, B_0) \in W^{\sigma+ \mez+ \eps, \infty}(\xR^d)\times W^{\sigma+\mez+ \eps, \infty}(\xR^d)\times W^{\sigma+ \eps, \infty}(\xR^d)\times W^{\sigma+ \eps, \infty}(\xR^d) \\
\subset H^{\sigma+ \mez+\frac{\eps}{2}}_{ul}( \xR^d)\times H^{\sigma+\mez+\frac{\eps}{2}}_{ul}(\xR^d)\times H^{\sigma+\frac{\eps}{2}}_{ul}(\xR^d)\times  H^{\sigma+\frac{\eps}{2}}_{ul}(\xR^d)
\end{multline*}
for some $\sigma>1+d/2$ and $\eps>0$, then we get 
 a solution 
 \begin{multline*}
 (\eta, \psi, V, B)  \in C^0([0,T], H^{\sigma+ \mez}_{ul}( \xR^d)\times H^{\sigma+\mez}_{ul}(\xR^d)\times H^{\sigma}_{ul}(\xR^d)\times  H^{\sigma}_{ul}(\xR^d))\\
 \subset C^0([0,T], W^{\sigma+ \mez- \frac d2, \infty}(\xR^d)\times W^{\sigma+\mez- \frac d2, \infty}(\xR^d)\times W^{\sigma- \frac d2, \infty}(\xR^d)\times W^{\sigma- \frac d2, \infty}(\xR^d)).
 \end{multline*}

$\bullet$ It is very likely that this loss of $d/2$ derivatives cannot be completely avoided.  Indeed the linearized water waves equation around the zero solution can be  written as 
 $$\partial_t u +i  \la D_x\ra^{\mez} u=0.$$
The solution of this equation, with initial data 
$u_0$, is given by
$$
u(t)=S(t)u_0, \qquad 
S(t)=\exp(-i t \la D_x\ra^{\mez}).
$$
Proposition \ref{perte} shows that for $t\neq 0$ the operator $S(t)$ 
 is not bounded from the Zygmund space $C _*^{\sigma}(\xR^d)$ to $C^{s}_*(\xR^d)$ 
 if $s > \sigma-\frac{d}{4} $, remembering that $C^\sigma_*(\xR^d)=W^{\sigma,\infty}(\xR^d)$ if 
 $\sigma \ge 0$, $\sigma \not\in\xN$. (For positive results see Fefferman and Stein  \cite[page 160]{FS}). 
 Thus even in the linear case we have  a loss of $\frac{d}{4}$ derivative. 
 
$\bullet$ The result in the appendix also shows that, in the presence of surface tension, 
a similar well posedness result in the framework of uniformly local Sobolev space is rather unlikely to hold. 
Indeed, in the presence of surface tension,  the linearized operator 
around the solution $(\eta, \psi) = (0,0)$ can be written 
(see \cite{ABZ1}) with  $ u = \vert D \vert ^\mez \eta + i \psi$ 
as
$$
\partial_t u + i \vert D_x \vert^{\frac{3}{2}} u = 0, \quad u\arrowvert_{t=0} = u_0.
$$
According to Proposition \ref{perte} the loss of derivatives in $x$ from  $u_0$ to the solution $u(t,\cdot), t \neq0,$ is at least $\frac{3d}{4}$ whereas an analogue of the above theorem would give a loss of at most $\frac{d}{2}.$
\end{rema}


\section{The Dirichlet-Neumann operator}\label{sec.4}

\subsection{Definition of the Dirichlet-Neumann operator}
 
For $d \geq 1$ we set 
 \begin{equation}\label{omega}
\left\{
\begin{aligned} 
   \Omega &=\{(x,y)\in \xR^{d+1}: \eta_*(x) <y < \eta(x)\},\\
   \Sigma&= \{(x,y)\in \xR^{d+1}: y=\eta(x)\} 
   \end{aligned}
   \right.
   \end{equation}
where $\eta_*$ is a fixed bounded continuous function on $\xR^d$ and $\eta \in W^{1,\infty}(\xR^d)$.
We shall assume that there exists $h>0$ such that 
 \begin{equation}\label{condition}
       \{(x,y) \in \xR^{d+1} : \eta(x) -h \leq y <\eta(x) \} \subset \Omega.
  \end{equation}
  In \cite{ABZ3} the Dirichlet-Neumann operator $G(\eta)$ associated to $\Omega$ has been defined as a continuous operator from $H^{ \mez}(\xR^d)$ to $H^{- \mez}(\xR^d)$. Our aim here is to prove that it has a unique extension to the space $H_{ul}^{ \mez}(\xR^d)$ (see Theorem \ref{def:DN} below).  Define first the space $H^1_{ul}(\Omega)$ by 
 $$ u \in H^1_{ul}(\Omega) \Leftrightarrow \Vert u \Vert_{H^1_{ul}(\Omega)}:= \sup_{q\in \xZ^d} \Vert  \chi_q u \Vert_{H^1(\Omega)} < + \infty.$$
 Each element  $u \in H_{ul}^1(\Omega)$ have a trace  on $\Sigma $ (see below) which will be denoted by  $\gamma_0u$. We introduce   the subspace $H_{ul}^{1,0}(\Omega) \subset H_{ul}^{1}(\Omega)$  defined by 
    $$H_{ul}^{1,0}(\Omega) = \{u \in H_{ul}^{1 }(\Omega): \gamma_0 u =0\}.$$
 
 Then we have the following Poincar\'e inequality.
 \begin{lemm}\label{poinc}
 There exists  $C>0$ depending on  $\Vert \eta\Vert_{L^\infty (\xR^d)} + \Vert \eta_*\Vert_{ L^\infty(\xR^d)}  $  such that for    $\alpha \in C_0^\infty(\xR^d)$   non negative and   $u\in H_{ul}^{1,0}(\Omega)$ we have
  $$\iint_{\Omega} \alpha(x) \vert u(x,y) \vert^2 \, dx dy \leq C \iint_{\Omega} \alpha(x) \vert \partial_y u(x,y) \vert^2 \, dx dy. $$
\end{lemm}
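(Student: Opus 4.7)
The approach is a one-dimensional Poincaré inequality applied along each vertical line, exploiting the vanishing trace on the top boundary $\Sigma$. For each fixed $x$ in the support of $\alpha$, since $u$ vanishes on $\{y=\eta(x)\}$, one can write $u(x,y) = -\int_y^{\eta(x)} \partial_y u(x,z)\,dz$. An application of Cauchy--Schwarz in $z$ then gives
$$
|u(x,y)|^2 \leq (\eta(x)-y)\int_y^{\eta(x)}|\partial_y u(x,z)|^2\,dz \leq M\int_{\eta_*(x)}^{\eta(x)}|\partial_y u(x,z)|^2\,dz,
$$
where $M := \|\eta\|_{L^\infty} + \|\eta_*\|_{L^\infty}$ controls the length of each vertical segment in $\Omega$. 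Integrating first in $y$ over $(\eta_*(x),\eta(x))$ (whose length is again $\leq M$), then multiplying by $\alpha(x)$ and integrating in $x$, yields the desired inequality with $C = M^2$.

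The only non-trivial step is to justify the vertical fundamental-theorem-of-calculus representation of $u$ when $u$ is only in the uniformly local space $H^{1,0}_{ul}(\Omega)$ rather than $H^1(\Omega)$. I would handle this by localization: since $\alpha$ has compact support in $x$ and both $\eta$ and $\eta_*$ are bounded, the integrals only involve $u$ on the bounded set $\Omega \cap (K\times\xR)$, where $K\subset\xR^d$ is a cube containing $\supp\alpha$. On this bounded Lipschitz subdomain (recall $\eta\in W^{1,\infty}$, so $\Sigma$ is a Lipschitz graph near the top), the restriction of $u$ belongs to the usual space $H^1$ with vanishing trace on the corresponding piece of $\Sigma$, and by standard density of smooth functions vanishing near $\Sigma$, one may assume $u$ is smooth and apply the vertical FTC trivially.

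The main (small) obstacle is precisely this reduction to the classical setting — namely, checking that the trace zero condition encoded in the definition of $H^{1,0}_{ul}(\Omega)$ is compatible, after localization, with the classical trace in $H^1$ of a bounded Lipschitz domain, so that the usual approximation by smooth functions vanishing on $\Sigma$ can be used. Once this is done, the rest of the proof is the standard vertical Poincaré argument, and the constant $C$ depends only on $\|\eta\|_{L^\infty}+\|\eta_*\|_{L^\infty}$ as claimed.
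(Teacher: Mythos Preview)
Your proposal is correct and follows essentially the same approach as the paper: localize using the compact support of $\alpha$, approximate $u$ on the resulting bounded region by smooth functions vanishing near $\Sigma$, and then apply the one-dimensional Poincar\'e inequality via the fundamental theorem of calculus along vertical segments together with Cauchy--Schwarz. The paper uses the slightly sharper height bound $\|\eta-\eta_*\|_{L^\infty}$ in place of your $M=\|\eta\|_{L^\infty}+\|\eta_*\|_{L^\infty}$, but this is immaterial for the stated conclusion.
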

\begin{proof}
 Let ${u} \in  {H}^{1,0}_{ul}({\Omega})$.  
It is easy to see that  there exists  a sequence $({u}_n)$ of functions which are $C^1$ in ${\Omega}$ and vanish near the top boundary $y = \eta(x)$ such that
$$
\lim_{n\to + \infty}\| u_n - u \|_{H^1( \Omega\cap \{ |x| \leq K\})}=0.
$$
As a consequence, it is enough to prove the result for such functions. Let $\alpha \in C_0^\infty(\xR^d), \alpha \geq0$.
We can write
$$ u(x,y) =\int_{  \eta(x)} ^y \partial_s u(x,s) ds$$
from which we deduce 
$$\alpha(x) \vert u(x,y)\vert^2 \leq  \| \eta - \eta_*\|_{L^\infty} \alpha(x)  \int_{\eta_*(x)}^{\eta(x)}\vert \partial_s u(x,s)\vert^2 ds.$$
Integrating this inequality on ${\Omega}$  we obtain, 
$$\iint_{\Omega}\alpha(x) \vert u(x,y)\vert^2 dx\,dy\leq  \| \eta - \eta_*\|_{L^\infty} \iint_{{\Omega} }\alpha(x) \vert \partial_y{u}(x,y)\vert^2 \, dx dy.$$
\end{proof}
\begin{rema}
Let
$$
{H}^{1,0}({\Omega}) = \bigl\{{u} \in L^2({\Omega}) \,:\, 
\nabla_{x,y} {u} \in L^2({\Omega}),\, \text{ and }  {u}\arrowvert_{y = \eta(x) }=0\bigr\},
$$
then we also have the Poincar\'e inequality
\begin{equation}\label{encore:poinc}
 \iint_{{\Omega}}  \alpha(x)\vert {u} (x,y)\vert^2 \leq  C  \iint_{{\Omega} } \alpha(x) \vert \partial_y {u} (x,y)\vert^2 \, dx dy
\end{equation}
for all ${u} \in {H}^{1,0} ({\Omega})$, $\alpha \in C^\infty_b( \xR^d)$, $\alpha\ge 0$, with a constant $C$ independent of $\alpha$. Indeed, 
this follows from the same computation as above using 
the fact that any ${u} \in {H}^{1,0} ({\Omega})$ 
can be approximated by a sequence of functions which are $C^\infty$ in ${\Omega}$ 
and vanish near $y=\eta(x)$.
 \end{rema}
\begin{prop}\label{existe:phi}
For every $\psi \in H_{ul}^\mez(\xR^d)$ the problem
\begin{equation}\label{eqPhi}
 \Delta_{x,y} \Phi = 0 \text{ in } \Omega, \quad  \Phi\arrowvert_{\Sigma} = \psi, \quad \frac{\partial \Phi}{\partial \nu}\arrowvert_ \Gamma = 0,
 \end{equation}
has a unique solution $\Phi \in H^1_{ul}(\Omega)$ 
and there exists a function $\mathcal{F} :\xR^+ \to \xR^+$ 
independent of $(\psi,\eta)$ such that 
$$\Vert \Phi \Vert_{H^1_{ul}(\Omega)} \leq \mathcal{F}(\Vert \eta \Vert_{W^{1,\infty}(\xR^d)}) \Vert \psi\Vert_{H_{ul}^\mez(\xR^d)}.$$
 \end{prop}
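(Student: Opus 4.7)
My plan is to adapt the $H^1$ variational theory developed in \cite{ABZ3} by replacing the global formulation with one involving exponentially decaying weights, so that the estimates are uniform under translations in $x$. I would first construct a lifting $\underline{\psi} \in H^1_{ul}(\Omega)$ of the Dirichlet data, with $\gamma_0\underline{\psi} = \psi$ and $\Vert \underline{\psi}\Vert_{H^1_{ul}(\Omega)} \leq \mathcal{F}(\Vert \eta\Vert_{W^{1,\infty}}) \Vert \psi\Vert_{H^{\mez}_{ul}(\xR^d)}$, by straightening the surface via $(x,y)\mapsto (x,y-\eta(x))$ and taking a Poisson-type extension $e^{z\langle D_x\rangle}\psi(x)$ for $z<0$, multiplied by a cutoff in $z$ which forces the extension to vanish strictly above the bottom. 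The uniformly local bound is then checked by localizing with the partition of unity $\{\chi_q\}$ and controlling the commutators of $\chi_q$ with $e^{z\langle D_x\rangle}$ uniformly in $q$.

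Setting $\Phi = \underline{\psi} + v$, I reduce the problem to finding $v \in H^{1,0}_{ul}(\Omega)$ satisfying
$$\iint_\Omega \nabla_{x,y}v \cdot \nabla_{x,y}\varphi\, dx\, dy = -\iint_\Omega \nabla_{x,y}\underline{\psi}\cdot\nabla_{x,y}\varphi\,dx\,dy$$
for every admissible test function $\varphi$ vanishing on $\Sigma$. To solve this, for each $q \in \xZ^d$ and a small $\delta>0$, I introduce the exponential weight $w_q(x) = e^{-\delta\langle x-q\rangle}$ and the Hilbert space $\mathcal{V}_q = \{u \in H^{1,0}_{\mathrm{loc}}(\Omega) : \iint_\Omega w_q \vert \nabla_{x,y}u\vert^2 \,dx\,dy < \infty\}$, endowed with the associated weighted Dirichlet inner product. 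The Poincar\'e inequality of Lemma \ref{poinc}, applied with $\alpha = w_q$ (its proof only uses non-negativity of $\alpha$; the extension from compactly supported $\alpha$ to $w_q$ follows by monotone approximation), ensures that this is indeed a Hilbert space.

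I would then apply Lax--Milgram to the bilinear form $a(u,v) = \iint_\Omega \nabla_{x,y}u\cdot \nabla_{x,y}(w_q v)\,dx\,dy = \iint w_q\nabla u\cdot\nabla v + \iint v\,\nabla u\cdot\nabla w_q$. The commutator term carries a factor $\delta$ (since $\vert \nabla w_q\vert \leq \delta w_q$) and is absorbed by Cauchy--Schwarz combined with the weighted Poincar\'e inequality, provided $\delta$ is small enough independently of $q$. This yields a unique $v\in\mathcal{V}_q$ with $\iint w_q \vert \nabla_{x,y}v\vert^2 \leq C\iint w_q \vert \nabla_{x,y}\underline{\psi}\vert^2$. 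Decomposing the right-hand side over the $\chi_{q'}$-localizations, the exponential decay of $w_q$ at the lattice points $q'$ makes this sum uniformly bounded by $\mathcal{F}(\Vert \eta\Vert_{W^{1,\infty}})^2\Vert \psi\Vert^2_{H^{\mez}_{ul}(\xR^d)}$; since $w_q \sim 1$ on $\supp \chi_q$, taking the supremum over $q$ gives the required $H^1_{ul}$ estimate. Uniqueness in $H^1_{ul}(\Omega)$ is automatic from uniqueness in each $\mathcal{V}_q$, since any $H^1_{ul}$ candidate solution of the homogeneous problem belongs to every $\mathcal{V}_q$.

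The main obstacle is the lifting step: since $\eta$ is only $W^{1,\infty}$, the straightening diffeomorphism has rough coefficients, and one must check carefully that the composition of the Poisson extension with this diffeomorphism, multiplied by a vertical cutoff, truly lies in $H^1_{ul}(\Omega)$ with the announced norm. The remaining steps amount to an exercise in Lax--Milgram with weights, once the weighted form of Lemma \ref{poinc} is in place.
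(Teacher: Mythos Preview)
Your approach is correct and takes a genuinely different route from the paper. The paper decomposes $\psi = \sum_q \chi_q\psi$ into compactly supported pieces, lifts each piece $\psi_q$ to a compactly supported $\underline{\psi}_q$, solves $\Delta u_q = -\Delta\underline{\psi}_q$ in the ordinary space $H^{1,0}(\Omega)$ by the classical variational theory of \cite{ABZ3}, and then proves an \emph{a posteriori} exponential decay estimate $\Vert e^{\delta\langle x-q\rangle}\nabla_{x,y}u_q\Vert_{L^2(\Omega)} \leq \mathcal{F}(\Vert\eta\Vert_{W^{1,\infty}})\Vert\psi_q\Vert_{H^{1/2}}$ by testing against $e^{2\delta w_\varepsilon}u_q$ (with a regularized exponent) and absorbing the commutator for small $\delta$; this decay makes $u=\sum_q u_q$ converge in $H^{1,0}_{ul}(\Omega)$. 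The technical heart --- a weighted energy identity with an $O(\delta)$ cross term absorbed into the principal part --- is precisely your Lax--Milgram coercivity computation; the paper applies it a posteriori to each localized solution, while you apply it a priori to build a single solution directly in $\mathcal{V}_q$. Your organization avoids the decomposition and the summation argument, and your uniqueness proof (embed $H^1_{ul}\hookrightarrow\mathcal{V}_q$ and invoke coercivity) is shorter than the paper's, which tests against $e^{-\langle x\rangle/A}\zeta(\langle x\rangle/B)\Phi$ and sends $B\to\infty$ via Fatou.

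One point you should make explicit: your Lax--Milgram produces, for each $q$, a solution $v=v_q\in\mathcal{V}_q$, and when you then ``take the supremum over $q$'' you are tacitly assuming that these coincide. They do, but this deserves one line. Since $|\langle x-q\rangle-\langle x-q'\rangle|\leq|q-q'|$, the weights $w_q$ and $w_{q'}$ are comparable (with constants depending on $|q-q'|$), so $\mathcal{V}_q=\mathcal{V}_{q'}$ as sets; moreover the equation actually solved, namely $\iint\nabla(v+\underline{\psi})\cdot\nabla\theta=0$ for all $\theta=w_q\varphi$ with $\varphi\in\mathcal{V}_q$, has a $q$-independent class of test functions (these $\theta$ are exactly the elements of the oppositely-weighted space, again independent of $q$). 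Hence uniqueness in any single $\mathcal{V}_q$ forces all the $v_q$ to agree.
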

\begin{proof}
Before giving the proof we have to precise the meaning of the 
boundary condition $\frac{\partial \Phi}{\partial \nu}\arrowvert_ \Gamma = 0$ 
since $\Gamma $ is only $C^0$. This condition means that 
\begin{equation}\label{green1}
  \iint_\Omega \nabla_{x,y}\Phi(x,y) \cdot\nabla_{x,y}( \alpha(x)\theta(x,y)) \, dx\, dy = 0
 \end{equation}
for every $\theta \in H^1(\Omega)$ (the usual Sobolev space)   with $\supp \theta \subset   \{(x,y):  \eta_*(x) \leq y \leq \eta_*(x)+ \eps \}$ 
for a small $\eps>0 $ and every $\alpha \in C_0^\infty(\xR^d)$.

Notice that if $\eta_*\in W^{2,\infty}(\xR^d)$  the Green formula (see \cite{GRI} p.62) 
shows that \eqref{green1}  is equivalent to $\frac{\partial \Phi}{\partial \nu}\arrowvert_ \Gamma = 0$.

\begin{lemm}\label{GREEN}
 We have 
\begin{equation}\label{green2}
 \iint_\Omega \nabla_{x,y}\Phi(x,y) \cdot \nabla_{x,y}(\alpha(x)\theta(x,y)) \, dx\, dy = 0 
 \end{equation}     
for every $\theta \in H^1(\Omega)$ with $\gamma_0 \theta = 0$ and every $\alpha \in C_0^\infty(\xR^d)$.
\end{lemm}
\begin{proof}
If $\theta$  has   support  in a neighborhood of $\Gamma, V_\Gamma = \{(x,y): x\in \xR^d, \eta_*(x) < y <\eta_*(x) + \eps\},$ this follows from \eqref{green1}. Assume that $\theta$ vanishes in a neighborhood of $\Gamma$.  Let $\Omega_0 = \{(x,y):   \eta_*(x) + \frac{\eps}{2} < y <\eta(x)\}$  for $\eps >0$ small enough. Then $ \theta  \in   H ^1(\Omega_0)$  and $\theta\arrowvert_{\partial \Omega_0} =0$. Thus $\theta \in H^1_0(\Omega_0)$. Since $\Omega_0$ has a Lipschitz upper boundary there exists a sequence $\theta_n \in C_0^\infty(\Omega_0)$ which converges to $\theta$ in $H^1(\Omega_0) $ (see \cite{GRI} Corollary 1.5.1.6). Now by the equation we have
$$0= \langle \Delta_{x,y} \Phi, \alpha \theta_n \rangle =   \iint_\Omega \nabla_{x,y}\Phi(x,y) \cdot \nabla_{x,y}(\alpha(x)\theta_n(x,y)) \, dx\, dy.  $$
Moreover
$$
\la \iint_\Omega \nabla_{x,y}\Phi(x,y) \cdot\nabla_{x,y}\Big( \alpha(x)(\theta_n - \theta)(x,y)\Big) \, dx\, dy\ra 
\leq   C\Vert \Phi \Vert_{H^1_{ul}(\Omega)} \Vert \theta_n - \theta\Vert_{H^1(\Omega_0)}.
$$
Therefore, passing to the limit,  we obtain \eqref{green2} for such $\theta. $  
 \end{proof}

Part 1. Uniqueness: Let us denote by $\Phi$ the difference of two solutions in $H^1_{ul}(\Omega)$ of \eqref{eqPhi}.  Then $ \gamma_0 \Phi =0$.
 Now we take in \eqref{green2}   $\alpha(x) = e^{-\frac{\langle x \rangle}{A}}\zeta(\frac{\langle x \rangle}{B})$  where  $A,B$ are large constants to be chosen,  $\zeta \in C^\infty(\xR)$  $\zeta(t) = 1$ when $\vert t \vert \leq \mez,$   $ \supp \zeta \subset \{ t \in \xR: \vert t \vert \leq 1\}, 0 \leq \zeta \leq 1 $ and $\theta = \alpha_1(x) \Phi$ where $\alpha_1 \in C_0^\infty(\xR^d)$ is equal to one on the support of $\alpha$. Then $\theta \in H^1(\Omega)$ and $\gamma_0 \theta = 0$.  We can therefore use Lemma \ref{GREEN} and  we obtain
\begin{equation}\label{unicite}
\begin{aligned}
 I&:=  \iint_\Omega   \alpha(x) \vert \nabla_{x,y} \Phi(x,y)\vert^2  \, dx \,dy   \\
 &= -\frac{1}{A}\iint_\Omega (\nabla_x \langle x \rangle) \alpha(x) \Phi(x,y)\cdot \nabla_x \Phi(x,y)\, dx\, dy\\
  & \quad + \frac{1}{B}\iint_\Omega e^{-\frac{\langle x \rangle}{A}}\zeta'(\frac{\langle x \rangle}{B}) \Phi(x,y) ( \nabla_x \langle x \rangle)  \cdot \nabla_x \Phi(x,y)\, dx\, dy 
  \\
&=(1) +(2).
\end{aligned}
\end{equation}
By the Cauchy-Schwarz inequality we have
$$\vert (1) \vert \leq\frac{C_1}{A}  \Big(\iint_\Omega \alpha(x) \vert \nabla_x \Phi(x,y)\vert^2\, dx\, dy\Big)^\mez  \Big(\iint_\Omega \alpha(x) \vert \Phi(x,y)\vert^2\, dx\, dy\Big)^\mez.$$
Using Lemma \ref{poinc} we deduce that
$$ \vert (1) \vert \leq\frac{C_2}{ A} \Big(\iint_\Omega \alpha(x) \vert \nabla_x \Phi(x,y)\vert^2\, dx\, dy +  \iint_\Omega \alpha(x) \vert \partial_y\Phi(x,y)\vert^2\, dx\, dy\Big).$$ 
Taking $A$ large enough we see that the term $(1)$ can be absorbed by the left hand side of \eqref{unicite}. We then fix $A$. It follows that 
$$  I   \leq \frac{C_3}{B} \sum_{q\in \xZ^d} \sum_{k\in \xZ^d} \iint_\Omega  e^{-\frac{\langle x \rangle}{A}} \vert \zeta'(\frac{\langle x \rangle}{B})\vert 
\vert \chi_q(x) \Phi(x,y)\vert 
\big\vert  \nabla_x \big(\chi_k(x)\Phi(x,y)\big)\big\vert \, dx\, dy.$$
If $\vert k-q \vert \geq 2$ we have $\supp \chi_q \cap \supp \chi_k =\emptyset$. Therefore we have $\vert k-q \vert \leq 1$ (essentially $k=q$.) Moreover in the integral in the right hand side we have $\vert x-q\vert \leq 1$ and $1 \leq \frac{\langle x \rangle}{B} \leq 2$. If $B$ is large enough we have therefore $\frac{B}{3} \leq \vert q \vert \leq 3B$ and $\vert x \vert \geq \mez \vert q \vert$. It follows that 
$$
I  \leq \frac{C_4}{B}Ê\sum_{\frac{B}{3} \leq \vert q \vert \leq 3B} I_q
$$
where
$$
I_q=
e^{-\frac{ \langle q \rangle} {C_5A}} \Big(\iint_{\Omega} \vert \chi_q(x) \Phi(x,y)\vert^2\, dx\,dy\Big)^\mez 
\Big(\iint_{\Omega} \big\vert \nabla_x \big(\chi_q(x) \Phi(x,y)\big)\big\vert^2\, dx\,dy\Big)^\mez
$$
so using again the Poincar\'e inequality we obtain
\begin{align*}
I   &\leq \frac{C_6}{B}Ê\sum_{\frac{B}{3} \leq \vert q \vert \leq 3B}e^{-\frac{ \langle q \rangle} {C_5A}}    \iint_{\Omega} \vert \nabla_{x,y} \big(\chi_q(x) \Phi(x,y)\big)\vert^2\, dx\,dy \\
&\leq  \frac{C_7}{B}Ê\Big(\sum_{\frac{B}{3} \leq \vert q \vert \leq 3B}e^{-\frac{ \langle q \rangle} {C_5A}}\Big) \Vert \Phi \Vert^2_{H^1_{ul}(\Omega)}. 
\end{align*}
Since the cardinal of the set $\{q\in \xZ^d:  \frac{B}{3}\leq \vert q \vert \leq 3B\}$ is bounded by $C  B^d$ we obtain eventually
$$ \iint_\Omega   e^{-\frac{\langle x \rangle}{A}}\theta(\frac{\langle x \rangle}{B}) \vert  \nabla_{x,y} \Phi(x,y)\vert^2  \, dx \,dy   \leq C_8 B^{d-1} e^{-c \frac{ \langle B \rangle}{A}} \Vert \Phi \Vert^2_{H^1_{ul}(\Omega)}.$$
Letting  $B$ go to $+\infty$ and applying Fatou's Lemma we obtain 
$$
\iint_\Omega   e^{-\frac{\langle x \rangle}{A}}  \vert  \nabla_{x,y} \Phi(x,y)\vert^2  \, dx \,dy =0,
$$
which implies that $ \nabla_{x,y} \Phi(x,y) =0$ in $\Omega$ thus $\Phi =0$ since $\Phi\arrowvert_\Sigma = 0$.

Part 2: Existence. We first recall the situation when   $\psi \in H^\mez(\xR^d)$. In the following lemma, whose proof is given below, in section 4.2,  we construct a suitable extension of $\psi$ to $\Omega$.
\begin{lemm}\label{psisoul}
Let  $\psi \in H^\mez(\xR^d)$. One can find $\underline{ \psi } $ such that
 \begin{enumerate}
    \item $\underline{\psi} \in H^1(\Omega), \quad  
    \supp \underline{\psi} \subset \{(x,y): \eta(x) -h \leq y \leq \eta(x)\},$ 
 \item $\underline{\psi}_{\mid_{y= \eta (x) }} = \psi(x), $
 \item$ \| \underline{\psi} \|_{H^1( \Omega)} \leq  \mathcal{F}(\Vert \eta \Vert_{W^{1,\infty}(\xR^d)}) \| \psi\|_{H^{\mez} ( \xR^d)}$.  
     \end{enumerate}
 \end{lemm}
  Then (see \cite{ABZ3}  for more details) the problem
$$
\Delta_{x,y} u = - \Delta_{x,y} \underline{\psi} 
$$
has a unique solution $u\in H^{1,0}(\Omega)$. This solution, which is the variationnal one,  is characterized by 
\begin{equation}\label{eqvar}
\iint_{\Omega} \nabla_{x,y}u(x,y) \cdot \nabla_{x,y}\theta(x,y)dx dy = - \iint_{\Omega} \nabla_{x,y} \underline{\psi}(x,y) \cdot \nabla_{x,y}\theta(x,y)dx dy  
\end{equation}
for every $\theta \in H^{1,0}(\Omega)$. It satisfies
$$
\Vert \nabla_{x,y}u\Vert_{L^2(\Omega)} \leq C \Vert \psi \Vert_{H^\mez (\xR^d)}.
$$
Then $\Phi = u + \underline{\psi} $ solves the problem \eqref{eqPhi}. 

Let us consider now the case where  $\psi \in H_{ul}^\mez(\xR^d)$.
  If $q \in \xZ^d$ and   $\chi_q$ is defined in~\eqref{kiq} we set 
$$\psi_q = \chi_q \psi \in H^\mez(\xR^d).$$
By Lemma \ref{psisoul} one can find $  \underline{ \psi}_q \in H^1(\Omega)$ such that $\underline{ \psi}_q  \arrowvert_{ y=\eta(x)}  = \psi_q(x)$ and  
 \begin{align*} 
& (i) \quad \supp \underline{ \psi}_q \subset \{(x,y): \vert x-q \vert \leq 2, \eta(x) -h \leq y \leq \eta(x)\}\\
  & (ii) \quad \| \underline{ \psi}_q \|_{H^1( \Omega)} \leq \mathcal{F}(\Vert \eta \Vert_{W^{1,\infty}(\xR^d)}) \| \psi_q\|_{H^{\mez} ( \xR^d)}. 
   \end{align*}
To achieve $(i)$  we  multiply the function constructed in the lemma by $\widetilde{\chi}_q(x),$ where $\supp \widetilde{\chi}$ is contained in $\{x: \vert x \vert \leq 2\} $ and $ \widetilde{\chi} = 1$ on the support of $\chi$.

Let $u_q $ be the variational solution, described above,  of the equation $\Delta_{x,y} u_q = -\Delta_{x,y} \underline{\psi_q}$.   Our aim is to prove that the series $\sum_{q\in \xZ^d} {u}_q$ is   convergent in the space   $ {H}^{1,0}_{ul}({\Omega})$. This will be a consequence of the following lemma.
 \begin{lemm}\label{coro1}
 There exists $\delta >0$ and  $\mathcal{F}: \xR^+ \to \xR^+$   non decreasing  such that for all $q \in \xZ^d$ we have
 \begin{equation}\label{FN}
 \Vert e^{\delta \langle x-q\rangle} \nabla_{x,y}{u}_q \Vert_{L^2({\Omega})}   \leq \mathcal{F} (\Vert \eta \Vert_{W^{1,\infty}(\xR^d)})  \Vert  \psi_q \Vert_{H^\mez(\xR^d)}.
  \end{equation}
 \end{lemm}
Assuming that this lemma has been proved, one can write
\begin{equation}\label{kikuq}
\begin{aligned}
 \Vert \chi_k \nabla_{x,y}{u}_q \Vert_{L^2({\Omega})}  &= \Vert \chi_k e^{-\delta \langle x-q\rangle}e^{\delta \langle x-q\rangle} \nabla_{x,y}{u}_q \Vert_{L^2({\Omega})}\\
 &  \leq  Ce^{-\delta \langle k-q\rangle} \mathcal{F}  (\Vert \eta \Vert_{W^{1,\infty}}) \Vert  \psi_q \Vert_{H^\mez} \\
 &\leq C'e^{-\delta \langle k-q\rangle}   \mathcal{F}(\Vert \eta \Vert_{W^{1,\infty}})   \Vert \psi \Vert_{H^\mez_{ul}}.
 \end{aligned}
 \end{equation}
 Let us set ${S}^Q = \sum_{\vert q \vert \leq Q} {u}_q$. First of all $({S}^Q)$ converges to ${u}= \sum_{q\in \xZ^d} {u}_q$   in $\mathcal{D}'( {\Omega})$.  Indeed if $\varphi \in C_0^\infty({\Omega})$   there exists a finite set $A \subset \xZ^d$ such that $\varphi = \sum_{k\in A} \chi_k \varphi$. Then using Lemma \ref{poinc} and \eqref{kikuq} we can write
 \begin{equation*}
 \begin{aligned}
  \vert \langle {u}_q, \varphi \rangle \vert &\leq  \sum_{k\in A} \vert \langle \chi_k{u}_q, \varphi \rangle\vert \leq  C \sum_{k\in A} \Vert \chi_k \partial_y {u}_q \Vert_{L^2({\Omega})} \Vert \varphi \Vert_{L^2({\Omega})}  \\
  &  \leq {C}{e^{-\delta \langle q\rangle} }  \mathcal{F}(\Vert \eta \Vert_{W^{1,\infty}})   \Vert \psi \Vert_{H^\mez_{ul}} \Vert \varphi \Vert_{L^2({\Omega})}
     \end{aligned}
 \end{equation*}   
 for large $\vert q \vert$.

 On the other hand  \eqref{kikuq} shows that for fixed $k$ the series $\sum_{ q\in \xZ^d}  \chi_k  {u}_q $ is absolutely convergent in ${H}^{1,0}_{ul}({\Omega})$. Therefore $(\chi_k   {S}^Q)$ converges to $(\chi_k  {u} )$ in $ {H}^{1,0}_{ul}({\Omega})$ and we can write using~\eqref{kikuq},  
 \begin{equation*}
 \begin{aligned}
 \Vert \chi_k \nabla_{x,y} {u} \Vert_{L^2({\Omega})}&= \lim_{Q \to + \infty}\Vert \chi_k \nabla {S}^Q \Vert_{L^2({\Omega})}\\
 & \leq   \sum_{q\in \xZ^d}  e^{-\delta \langle k-q\rangle}  \mathcal{F} (\Vert \eta \Vert_{W^{1,\infty}})   \Vert \psi \Vert_{H^\mez_{ul}}.
 \end{aligned}
 \end{equation*}
 Therefore ${u} \in  {H}^{1,0}_{ul}({\Omega})$ and
   \begin{equation}\label{rec0}
 \Vert\nabla_{x,y} {u}  \Vert_{L_{ul}^2(\Omega)} \leq \mathcal{F}  (\Vert \eta \Vert_{W^{1,\infty} })  \Vert \psi \Vert_{H^\mez_{ul} }.
 \end{equation}

Finally  $ {\Phi} = {u}+ {\underline{\psi}}$ solves the problem \eqref{eqPhi} and we have 
$$\Vert \Phi \Vert_{H^1_{ul}(\Omega)} \leq \mathcal{F}(\Vert \eta \Vert_{W^{1,\infty} } ) \Vert \psi\Vert_{H_{ul}^\mez } $$
which completes the proof of Proposition \ref{existe:phi} assuming Lemma \ref{coro1}. 
 \end{proof}

\begin{proof}[Proof of Lemma \ref{coro1}]
We set
$$
w_\eps(x) = \frac{\langle x-q\rangle}{1+ \eps \langle x -q\rangle}\cdot
$$
Let $u_q$ be the variational solution in $H^{1,0} ( \Omega)$ of $\Delta u_q= -\Delta \underline{ \psi} _q$.
According to the variational formulation~\eqref{eqvar}, with $\theta= e^{\delta w_\eps(x)} u_q$, we have 

$$ \iint_{\Omega} \nabla_{x,y}{u_q} \cdot \nabla_{x,y} (e^{2\delta w_\eps(x)} u_q) \,dx\,dy= - \iint_{\Omega}\nabla_{x,y}{\underline{\psi}_q} \cdot \nabla_{x,y} (e^{2\delta w_\eps(x)} u_q) \,dx\,dy.$$
Therefore
\begin{equation}
\begin{aligned}
&\iint_{\Omega}e^{2\delta w_\eps(x)} \nabla_{x,y} {u_q} \cdot \nabla_{x,y}  u_q \, dx\,dy 
= - \iint_{\Omega} e^{2\delta w_\eps(x)} \nabla_{x,y}{\underline{\psi}_q} \cdot    \nabla_{x,y} u_q \,dx\,dy\\
&- 2\delta \iint _\Omega  e^{2\delta w_\eps(x)}  u_q \nabla_{x,y}{\underline{\psi}_q} \cdot \nabla_x   w_{\eps} \,dx\,dy - 2 \delta \iint _\Omega  e^{2\delta w_\eps(x)}  u_q \nabla_{x,y} {u_q} \cdot \nabla_x   w_{\eps}\, dx\,dy.\\
  \end{aligned}
 \end{equation}
Now $\nabla_x w_\eps $ is uniformly bounded in $L^\infty$ with respect to $\eps$ and $x$  and, on the support of $ {\underline{\psi}_q},$ we have $e^{ \delta w_\eps(x)} \leq e^{C\delta}$. Consequently, using the Cauchy-Schwarz inequality, the inequality \eqref{encore:poinc} with $\alpha = e^{ 2\delta w_\eps(x)}$ and taking $\delta$ small enough we obtain
\begin{equation} 
\iint_{\Omega}e^{2\delta w_\eps(x)} |\nabla_{x,y}{u_q}|^2 \,dx\, dy  
\leq C \| \underline{\psi}_q\|^2_{H^1( \Omega)}. 
\end{equation} 
 
We deduce when $\eps $ goes to $0$, using the Fatou Lemma, that
$$
\Vert e^{ \delta \langle x-q \rangle}  \nabla_{x,y}{u_q}\Vert_{L^2(\Omega)}    
\leq C \| \underline{\psi}_q\|_{H^1( \Omega)} \leq \mathcal{F} (\Vert \eta \Vert_{W^{1,\infty}(\xR^d)})\Vert \psi_q \Vert_{H^\mez(\xR^d)}.
$$
This completes the proof.
\end{proof}
\subsection{Straightening the boundary}
Before studying more precisely the properties of the Dirichlet-Neumann operator, we first straighten the  boundaries of
\begin{equation}\label{omegah} 
 \Omega_h = \{(x,y)\in \xR^{d+1}: \eta(x) -h < y< \eta(x)\}.
\end{equation}   
 \begin{lemm}\label{diffeo}
There is an absolute constant $C>0$ such that if we take $\delta>0$ so small that 
$$\delta  \Vert \eta \Vert_{ W^{1,\infty}(\xR^d) }   \leq \frac{h}{2C}$$
then the map $(x,z) \mapsto (x, \rho(x,z))$ where
\begin{equation}\label{rho} 
\rho(x,z) = (1+z)e^{\delta z \langle D_x \rangle}\eta(x) - z \big\{  e^{-\delta (1+z) \langle D_x \rangle}\eta(x) -h\big \}
\end{equation}
is a diffeomorphism from $\widetilde{\Omega} = \{(x,z): x\in \xR^d, -1<z<0\}$ to $\Omega_h$.
 \end{lemm}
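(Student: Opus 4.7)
The plan is to show that for each fixed $x$ the map $z\mapsto\rho(x,z)$ is a smooth, strictly increasing bijection from $(-1,0)$ onto $(\eta(x)-h,\eta(x))$. Combined with the trivial identity in $x$, this makes $(x,z)\mapsto(x,\rho(x,z))$ a bijection from $\widetilde\Omega$ onto $\Omega_h$, and since the Jacobian determinant is precisely $\partial_z\rho$, a uniform lower bound of the form $\partial_z\rho\geq c>0$ will give smoothness of the inverse via the implicit function theorem. First I would verify the boundary conditions by inspection: at $z=0$ both exponentials reduce to the identity, so $\rho(x,0)=\eta(x)$; at $z=-1$ the prefactor $(1+z)$ kills the first term and $\rho(x,-1)=\eta(x)-h$. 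Smoothness in $x$ on $\widetilde\Omega$ is automatic, because for $z\in(-1,0)$ both $\delta z$ and $-\delta(1+z)$ are strictly negative, so both exponentials are $C^\infty$-regularizing Bessel-Poisson-type operators.

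The heart of the proof is thus the lower bound on $\partial_z\rho$. A direct differentiation yields
\begin{equation*}
\partial_z\rho = \bigl(e^{\delta z\langle D_x\rangle}\eta-e^{-\delta(1+z)\langle D_x\rangle}\eta\bigr)+h+\delta\langle D_x\rangle\bigl\{(1+z)e^{\delta z\langle D_x\rangle}\eta+z\,e^{-\delta(1+z)\langle D_x\rangle}\eta\bigr\}.
\end{equation*}
The idea is to treat $h$ as the main term and to show that the two remaining expressions are pointwise bounded by $C\delta\|\eta\|_{W^{1,\infty}}$, uniformly in $(x,z)\in\widetilde\Omega$. The key ingredients are the two Poisson-type inequalities
\begin{equation*}
\|e^{-t\langle D_x\rangle}\eta-\eta\|_{L^\infty}\leq Ct\,\|\eta\|_{W^{1,\infty}},\qquad \|\langle D_x\rangle e^{-t\langle D_x\rangle}\eta\|_{L^\infty}\leq C\|\eta\|_{W^{1,\infty}},\qquad t>0,
\end{equation*}
valid for an absolute constant $C$. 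The first is a standard Lipschitz estimate proved from the $L^1$-bound $\int|y|\,K_t(y)\,dy\lesssim t$ on the Bessel-Poisson kernel applied to the difference $\eta(x-y)-\eta(x)$; the second follows from the first by writing $\langle D_x\rangle e^{-t\langle D_x\rangle}=-\partial_t e^{-t\langle D_x\rangle}$ together with the semigroup property $e^{-(t+s)\langle D_x\rangle}\eta - e^{-t\langle D_x\rangle}\eta = e^{-t\langle D_x\rangle}(e^{-s\langle D_x\rangle}\eta-\eta)$.

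Writing $a=1+z$, $b=-z\in[0,1]$ with $a+b=1$, the first parenthesis in $\partial_z\rho$ is $e^{-\delta b\langle D_x\rangle}\eta-e^{-\delta a\langle D_x\rangle}\eta$, and by the triangle inequality plus the first Poisson estimate it is bounded in $L^\infty$ by $C\delta(a+b)\|\eta\|_{W^{1,\infty}}=C\delta\|\eta\|_{W^{1,\infty}}$. The last bracket has the form $\delta\langle D_x\rangle$ applied to a convex combination of two regularized versions of $\eta$, so by the second Poisson estimate its $L^\infty$ norm is also bounded by $C\delta\|\eta\|_{W^{1,\infty}}$. Putting these together gives $\partial_z\rho\geq h-C'\delta\|\eta\|_{W^{1,\infty}}\geq h/2$ as soon as $\delta\|\eta\|_{W^{1,\infty}}\leq h/(2C')$, which is precisely the hypothesis. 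The main obstacle is obtaining these bounds in $L^\infty$ rather than $L^2$: one must exploit that, even though $\langle D_x\rangle\eta$ need not be bounded, the regularized operator $\langle D_x\rangle e^{-t\langle D_x\rangle}$ acts boundedly on $W^{1,\infty}$ uniformly in $t>0$. Once $\partial_z\rho\geq h/2$ is established, strict monotonicity in $z$ combined with the two correct boundary values gives the bijection onto $\Omega_h$, and the non-vanishing Jacobian yields $C^\infty$-regularity of the inverse via the implicit function theorem, completing the proof.
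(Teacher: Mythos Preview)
Your proof is correct and follows essentially the same route as the paper: check the boundary values $\rho(x,0)=\eta(x)$, $\rho(x,-1)=\eta(x)-h$, then establish $\partial_z\rho\geq h/2$ by showing all terms in $\partial_z\rho$ other than $h$ are bounded in $L^\infty$ by $C\delta\|\eta\|_{W^{1,\infty}}$ via the two Bessel--Poisson estimates you state. (One small wording slip: the bracket $(1+z)e^{\delta z\langle D_x\rangle}\eta+z\,e^{-\delta(1+z)\langle D_x\rangle}\eta$ is not literally a convex combination since $z\le 0$, but since $|1+z|+|z|=1$ your bound goes through unchanged.)
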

\begin{proof}
First of all  we have $\rho(x,0)= \eta(x), \rho(x,-1) = \eta(x) -h$. Moreover we have $\partial_z \rho \geq \frac{h}{2}$. Indeed we have
\begin{align*}
\partial_z \rho  &= \eta    + (e^{\delta z \langle D_x \rangle}\eta  - \eta)  +(1+z)\delta e^{\delta z \langle D_x \rangle}\langle D_x \rangle \eta \\
&-\{\eta -h + (e^{-\delta (1+z) \langle D_x \rangle}\eta  - \eta)\} + z \delta e^{-\delta (1+z) \langle D_x \rangle} \langle D_x \rangle \eta.
\end{align*}
  Now  for any $\lambda<0$ 
the symbol $a(\xi) = e^{\lambda \langle \xi \rangle}$ satisfies 
the estimate $\vert \partial_\xi^\alpha a(\xi) \vert \leq C_\alpha \langle \xi \rangle^{-\vert \alpha \vert}$ 
where $C_\alpha$ is independent on $\lambda$. Therefore its 
Fourier transform in an $L^1(\xR^d)$ function  whose norm is uniformly bounded. 
This implies that $\Vert a(D)f \Vert_{L^\infty(\xR^d)} \leq K \Vert  f \Vert_{L^\infty(\xR^d)} $ 
with $K$ independent of $\lambda$. Since 
$ e^{\delta \lambda \langle D_x \rangle}\eta  - \eta
= \delta \lambda \int_0^1e^{\delta t \lambda \langle D_x \rangle}\langle D_x \rangle \eta \, dt$ 
we can write
\begin{multline*}\Vert e^{\delta z \langle D_x \rangle}\eta  - \eta \Vert_{L^\infty(\xR^d)} 
+ \delta \Vert e^{\delta z \langle D_x \rangle}
\langle D_x \rangle \eta\Vert_ {L^\infty(\xR^d)} + \Vert e^{-\delta (1+z) \langle D_x \rangle}\eta  - \eta \Vert_{L^\infty(\xR^d)}\\ 
+ \delta \Vert e^{-\delta (1+z)\langle D_x \rangle}
\langle D_x \rangle \eta\Vert_ {L^\infty(\xR^d)}
\leq C \delta \Vert \eta \Vert_{W^{1,\infty}(\xR^d)} \leq \frac{h}{2}.
\end{multline*}
This completes the proof.
\end{proof}
From the above computation we deduce that 
\begin{equation}\label{rho>}
  \partial_z \rho(x,z)\geq  \frac{h}{2} \quad  \text{and } \quad 
\Vert \nabla_{x,z} \rho \Vert_{L^\infty(\xR^d)}  \leq C  \Vert \eta \Vert_{W^{1,\infty}(\xR^d)}. 
    \end{equation}
 
We shall denote by $\kappa$ the inverse of $\rho$,  
$$
\rho(x,z) = y \Longleftrightarrow z = \kappa (x,y).
$$

If we set
$$ \widetilde{f}(x,z) = f(x,\rho(x,z))$$
 we have
 \begin{equation}\label{lambda}
 \left\{
 \begin{aligned}
  \frac{\partial f}{\partial y}(x,\rho(x,z)) &= \frac{1}{\partial_z \rho}\partial_z \widetilde{f}(x,z):=\Lambda_1\widetilde{f}(x,z) \\
  \nabla_x f(x,\rho(x,z)) &= \big(\nabla_x \tilde{f} - \frac{\nabla_x \rho}{\partial_z \rho}\partial_z \widetilde{f}\big)(x,z) := \Lambda_2 \widetilde{f}(x,z).
  \end{aligned}
  \right.
\end{equation}
We introduce the space
$$
\mathcal{H}^1_{ul}(\widetilde{\Omega}) = \big\{\widetilde{u} \in L^2_{ul}(\tilde{\Omega}): \Lambda_j \widetilde{u} \in  L^2_{ul}(\widetilde{\Omega}),\, j=1,2 \big\},
$$
endowed with the norm
$$\Vert \widetilde{u}\Vert_{\mathcal{H}^1_{ul}(\widetilde{\Omega})} = \sup_{q\in \xZ^d} \Vert \chi_q\widetilde{u}\Vert_{L^2 (\tilde{\Omega})}+  \sum_{j=1}^2 \sup_{q\in \xZ^d} \Vert \chi_q \Lambda_j\widetilde{u}\Vert_{L^2 (\tilde{\Omega})}.$$
 
Then according to Lemma \ref{lions} we see that the elements of $\mathcal{H}^1_{ul}(\widetilde{\Omega})$ 
have a trace  on $z=0$ 
belonging  to the space $\mathcal{H}^\mez_{ul}(\xR^d)$. 
Then we introduce the subspace $\mathcal{H}^{1,0}_{ul}(\widetilde{\Omega}) \subset \mathcal{H}^{1}_{ul}(\widetilde{\Omega})$ 
defined as follows 
$$
\mathcal{H}^{1,0}_{ul}(\widetilde{\Omega})  = \{ \widetilde{u} \in \mathcal{H}^{1}_{ul}(\widetilde{\Omega}) : \widetilde{u}\arrowvert_{z=0} =0\}.
$$
It follows that we have
$$
u\in H^{1}_{ul}(\Omega) \Leftrightarrow\widetilde{u} 
\in \mathcal{H}^{1}_{ul}(\widetilde{\Omega}), \quad  u\in H^{1,0}_{ul}(\Omega) \Leftrightarrow\widetilde{u} \in \mathcal{H}^{1,0}_{ul}(\widetilde{\Omega}).
$$

\subsection{Definition of the Dirichlet-Neumann operator}
    We can now define the Dirichlet-Neumann operator. Formally we set for $\psi \in H_{ul}^\mez(\xR^d)$
\begin{equation}\label{G=}
\begin{aligned}
 G(\eta) \psi(x) &= (1+\vert \nabla_x \eta\vert^2)^\mez \frac{\partial \Phi}{\partial n}\arrowvert_\Sigma = \big(\frac{\partial \Phi}{\partial y} - \nabla_x \eta \cdot \nabla_x \Phi\big)\arrowvert_\Sigma\\
   &= \big( \Lambda_1\widetilde{\Phi}- \nabla_x \eta \cdot\Lambda_2 \widetilde{\Phi}\big)\arrowvert_{z=0} = \big( \Lambda_1\widetilde{\Phi}- \nabla_x \rho\cdot\Lambda_2 \widetilde{\Phi}\big)\arrowvert_{z=0} 
 \end{aligned}
 \end{equation}
 where $\Phi$ is the solution described in Lemma \ref{existe:phi} and $\Lambda_j$ is as defined by \eqref{lambda}.
Our aim is to prove the following theorem.
\begin{theo}\label{def:DN}
 Let $d\geq 1 $  and $\eta\in W^{1,\infty}(\xR^d)$. Then the Dirichlet-Neumann operator is well defined on $H^\mez_{ul}(\xR^d)$ by \eqref{G=}. Moreover there exists a non decreasing function $\mathcal{F}: \xR^+ \to \xR^+$   such that for all  $\eta\in W^{1,\infty}(\xR^d)$
$$
\Vert G(\eta) \psi \Vert_{H^{-\mez}_{ul}(\xR^d)} \leq 
\mathcal{F}\bigl(\Vert \eta \Vert_{W^{1,\infty}(\xR^d)}\bigr)
\Vert \psi \Vert_{H^{\mez}_{ul}(\xR^d)}.
$$
\end{theo}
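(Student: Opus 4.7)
The plan is to give $G(\eta)\psi$ a rigorous meaning as a distribution via Green's identity, then exploit the uniform local $H^{1}$ bound on the harmonic extension $\Phi$ provided by Proposition \ref{existe:phi} to obtain the $H^{-\mez}_{ul}$ estimate by duality. Concretely, for $\varphi\in C_{0}^{\infty}(\xR^{d})$ I would \emph{define}
\[
\langle G(\eta)\psi,\varphi\rangle \defn \iint_{\Omega}\nabla_{x,y}\Phi(x,y)\cdot \nabla_{x,y}\underline{\varphi}(x,y)\,dx\,dy,
\]
where $\Phi$ is the solution of \eqref{eqPhi} given by Proposition \ref{existe:phi} and $\underline{\varphi}\in H^{1}(\Omega)$ is the lift of $\varphi$ provided by Lemma \ref{psisoul}, so that $\supp\underline{\varphi}\subset\{(x,y):\eta(x)-h\le y\le \eta(x)\}$ and $\gamma_{0}\underline{\varphi}=\varphi$. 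The independence of this definition from the choice of lift is an immediate consequence of Lemma \ref{GREEN}: the difference of two admissible lifts has zero trace on $\Sigma$ (and is compactly supported in $x$), so choosing $\alpha\in C_{0}^{\infty}(\xR^{d})$ equal to one on its $x$-projection and applying \eqref{green2} shows that the integral vanishes. Performing the change of variables $(x,z)\mapsto(x,\rho(x,z))$ and integrating by parts in $z$ identifies this pairing with the one given by the formal formula \eqref{G=}.

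For the continuity estimate I would fix $q\in\xZ^{d}$ and $\varphi\in H^{\mez}(\xR^{d})$, and use that multiplication by $\chi_{q}$ is uniformly bounded on $H^{\mez}(\xR^{d})$. Writing $\langle \chi_{q}G(\eta)\psi,\varphi\rangle =\langle G(\eta)\psi,\chi_{q}\varphi\rangle$ and applying Lemma \ref{psisoul} to $\chi_{q}\varphi$ yields a lift $\underline{\chi_{q}\varphi}\in H^{1}(\Omega)$ with
\[
\supp \underline{\chi_{q}\varphi}\subset\{(x,y):|x-q|\le 2,\ \eta(x)-h\le y\le \eta(x)\},\qquad \|\underline{\chi_{q}\varphi}\|_{H^{1}(\Omega)}\le \mathcal{F}\bigl(\|\eta\|_{W^{1,\infty}}\bigr)\|\varphi\|_{H^{\mez}}.
\]
Since the integrand in the definition of $\langle G(\eta)\psi,\chi_{q}\varphi\rangle$ is supported where $|x-q|\le 2$, Cauchy--Schwarz gives
\[
|\langle G(\eta)\psi,\chi_{q}\varphi\rangle|\le \Bigl(\sum_{|k-q|\le 3}\|\chi_{k}\nabla_{x,y}\Phi\|_{L^{2}(\Omega)}^{2}\Bigr)^{\!\mez}\|\nabla_{x,y}\underline{\chi_{q}\varphi}\|_{L^{2}(\Omega)}\le \mathcal{F}\bigl(\|\eta\|_{W^{1,\infty}}\bigr)\|\Phi\|_{H^{1}_{ul}(\Omega)}\|\varphi\|_{H^{\mez}},
\]
and Proposition \ref{existe:phi} then bounds $\|\Phi\|_{H^{1}_{ul}(\Omega)}$ by $\mathcal{F}(\|\eta\|_{W^{1,\infty}})\|\psi\|_{H^{\mez}_{ul}}$. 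Taking the supremum over $\|\varphi\|_{H^{\mez}}\le 1$ and over $q\in\xZ^{d}$ produces the desired estimate.

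The only nontrivial step is the identification of the weak pairing with the pointwise formula \eqref{G=}: a priori, the traces $\Lambda_{1}\widetilde{\Phi}|_{z=0}$ and $\Lambda_{2}\widetilde{\Phi}|_{z=0}$ only exist in an $H^{-\mez}_{ul}$ (or weaker) sense since $\widetilde{\Phi}\in\mathcal{H}^{1}_{ul}(\widetilde{\Omega})$ does not give $L^{2}$ control of $\partial_{z}\widetilde{\Phi}$ up to the boundary in a classical pointwise sense. The content of \eqref{G=} is therefore interpreted through the very duality used to define $G(\eta)\psi$, and consistency amounts to a routine (but slightly lengthy) computation using the diffeomorphism of Lemma \ref{diffeo} and the harmonicity of $\Phi$. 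All the substantive analytical work, the uniform local $H^{1}$ bound on $\Phi$, has already been done in Proposition \ref{existe:phi}, so the proof of Theorem \ref{def:DN} is, essentially, a packaging exercise combining that estimate with the lifting of boundary data from Lemma \ref{psisoul}.
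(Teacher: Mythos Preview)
Your approach is correct but genuinely different from the paper's. You define $G(\eta)\psi$ by duality against lifted test functions and estimate the pairing by Cauchy--Schwarz plus the localization of the lift; the substantive input is indeed Proposition~\ref{existe:phi}. The paper instead works directly with the function
\[
U(z,\cdot)=\Lambda_1\widetilde{\Phi}-\nabla_x\rho\cdot\Lambda_2\widetilde{\Phi}
\]
on the strip and shows that $\chi_qU\in L^2(J,L^2)$ and $\chi_q\partial_zU\in L^2(J,H^{-1})$, then invokes the Lions trace lemma (Lemma~\ref{lions}) to conclude $U\arrowvert_{z=0}\in H^{-\mez}_{ul}$. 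The control of $\partial_zU$ relies on the divergence identity $\partial_zU=-\nabla_x\bigl((\partial_z\rho)\Lambda_2\widetilde{\Phi}\bigr)$, which is a consequence of harmonicity.

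What each buys: your argument is shorter and more elementary---no Lions lemma, no divergence identity---and makes the $H^{-\mez}_{ul}$ bound transparent from the variational structure. The paper's route, on the other hand, shows \emph{directly} that the trace in~\eqref{G=} exists as a bona fide $H^{-\mez}_{ul}$ function, with no separate identification step. This is not merely cosmetic: the later paralinearization of $G(\eta)$ (Theorems~\ref{princ} and~\ref{paralinDN}) proceeds by manipulating $U$ and related quantities as functions of $z$ with explicit $X^\sigma_{ul}$ regularity, and the Lions-lemma mechanism introduced here is reused throughout (e.g.\ in Corollary~\ref{rec00} and the proof of Theorem~\ref{princ}). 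So while your proof of the stated theorem is complete, the paper's proof is also setting up machinery for what follows.
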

\begin{proof}
Set $U =   \Lambda_1\widetilde{\Phi}- \nabla_x \rho\cdot\Lambda_2\widetilde{\Phi}$ and $J=(-1,0)$. We shall prove that for all $q\in \xZ^d$,
\begin{equation}\label{tag:i} 
 \Vert \chi_qU\Vert_{L^2(J, L^2 )} \leq \mathcal{F}\bigl(\Vert \eta \Vert_{W^{1,\infty} }\bigr) \Vert \psi \Vert_{H^{\mez}_{ul}} , 
  \end{equation}
  \begin{equation}\label{tag:ii}
  \Vert \chi_q \partial_zU\Vert_{L^2(J, H^{-1} )} \leq \mathcal{F}\bigl(\Vert \eta \Vert_{W^{1,\infty}}\bigr) \Vert \psi \Vert_{H^{\mez}_{ul}},
  \end{equation}
where $\mathcal{F}: \xR^+ \to \xR^+$  is independent of $q$ and $\eta$.  Then Theorem \ref{def:DN} will follow from \eqref{tag:i}, \eqref{tag:ii} 
and Lemma \ref{lions}. Recall that $\widetilde{\Phi} = \widetilde{u} + \widetilde{\underline{\psi}}$. Now the estimate \eqref{tag:i} follows 
from \eqref{rec0}, \eqref{lambda} and Corollary \ref{e^z} with $\sigma = 0$ and $m=1$.
To prove \eqref{tag:ii} we observe that
\begin{equation}\label{dzU}
\partial_z U = -\nabla_x \big((\partial_z \rho)\Lambda_2\widetilde{\Phi}\big).
\end{equation}
Indeed we have
\begin{equation*}
\begin{aligned}
\partial_z U &= \partial_z \Lambda_1\widetilde{\Phi}- \nabla_x \partial_z \rho \cdot \Lambda_2\widetilde{\Phi}- \nabla_x \rho  \cdot \partial_z \Lambda_2\widetilde{\Phi}\\
&= (\partial_z \rho) \Lambda_1^2\widetilde{\Phi}-  \nabla_x \partial_z \rho \cdot \Lambda_2\widetilde{\Phi}+ (\partial_z \rho) (\Lambda_2 - \nabla_x) \Lambda_2 \widetilde{\Phi}\\
&= (\partial_z \rho)( \Lambda_1^2 +  \Lambda_2^2)\widetilde{\Phi}-\nabla_x \big((\partial_z \rho) \Lambda_2\widetilde{\Phi}\big) 
\end{aligned}
\end{equation*}
so \eqref{dzU} follows from the fact that  $ (\Lambda_1^2 +  \Lambda_2^2)\widetilde{\Phi}=0$. Then \eqref{tag:ii} follows from the 
estimates used to bound \eqref{tag:i} and the Poincar\' e inequality \eqref{encore:poinc}. The proof of Theorem \ref{def:DN} is complete.
 \end{proof}
 
We state now a consequence of the previous estimates which will be used in the sequel. 
Notice first that the equation $(\Lambda_1^2 +  \Lambda_2^2)\widetilde{\Phi}=0$ 
is equivalent to the equation
\begin{equation}\label{equ:modifie}
(\partial_z^2 + \alpha \Delta_x + \beta\cdot \nabla_x \partial_z - \gamma \partial_z) \widetilde{\Phi} = 0,
\end{equation}
where 
\begin{equation}\label{alpha}
\alpha\defn \frac{(\partial_z\rho)^2}{1+|\partialx  \rho |^2},\quad 
\beta\defn  -2 \frac{\partial_z \rho \nabla_x \rho}{1+|\nabla_x  \rho |^2} ,\quad 
\gamma \defn \frac{1}{\partial_z\rho}\bigl(  \partial_z^2 \rho 
+\alpha\Delta_x \rho + \beta \cdot \nabla_x \partial_z \rho\bigr).
\end{equation}
 
 \begin{coro}\label{rec00}
 Let $s_0 >1+Ê\frac{d}{2}$ and $J = (-1,0)$. There exists a non decreasing function $\mathcal{F}: \xR^+ \to \xR^+$ such that
 \begin{equation}
 \Vert \nabla_{x,z}\widetilde{\Phi}\Vert_{X^{-\mez}_{ul}(J)} \leq \mathcal{F}(\Vert \eta \Vert_{H^{s_0+\mez}_{ul}(\xR^d)}) \Vert \psi \Vert_{H^\mez_{ul}(\xR^d)}.
 \end{equation}
   \end{coro}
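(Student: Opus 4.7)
The plan is to obtain this refined $X^{-\mez}_{ul}$-bound by combining the $L^2$-type estimate already established in the course of proving Theorem~\ref{def:DN} with a use of the elliptic equation~\eqref{equ:modifie} to trade the $\partial_z$ derivative for spatial derivatives, and then to close with the trace/interpolation Lemma~\ref{lions} as in the estimates~\eqref{tag:i}--\eqref{tag:ii}.

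First, I would recall that the $L^2(J,L^2_{ul})$ component of the $X^{-\mez}_{ul}$-norm of $\nabla_{x,z}\widetilde{\Phi}$ is already controlled. Indeed, from~\eqref{rec0}, Lemma~\ref{coro1} and the construction of the extension $\underline{\psi}$ in Lemma~\ref{psisoul} one has
$$
\sup_{q\in\xZ^d}\Vert \chi_q\nabla_{x,z}\widetilde{\Phi}\Vert_{L^2(\widetilde{\Omega})}\leq \mathcal{F}\bigl(\Vert\eta\Vert_{W^{1,\infty}(\xR^d)}\bigr)\Vert\psi\Vert_{H^\mez_{ul}(\xR^d)},
$$
and this suffices here since $H^{s_0+\mez}_{ul}\hookrightarrow W^{1,\infty}$ for $s_0>1+d/2$. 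Equivalence between $\nabla_{x,z}\widetilde{\Phi}$ and the pair $(\Lambda_1\widetilde{\Phi},\Lambda_2\widetilde{\Phi})$ follows from~\eqref{lambda}--\eqref{rho>}.

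Next, to get the $C^0(\bar J,H^{-\mez}_{ul})$ piece, I would use~\eqref{equ:modifie} to write
$$
\partial_z^2\widetilde{\Phi}= -\alpha\Delta_x\widetilde{\Phi}-\beta\cdot\nabla_x\partial_z\widetilde{\Phi}+\gamma\partial_z\widetilde{\Phi},
$$
and put the two main terms in divergence form, $\alpha\Delta_x\widetilde{\Phi}= \cn_x(\alpha\nabla_x\widetilde{\Phi})-(\nabla_x\alpha)\cdot\nabla_x\widetilde{\Phi}$ and $\beta\cdot\nabla_x\partial_z\widetilde{\Phi}= \cn_x(\beta\partial_z\widetilde{\Phi})-(\cn_x\beta)\partial_z\widetilde{\Phi}$. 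Thanks to the smoothing factors $e^{\pm\delta z\langle D_x\rangle}$ in~\eqref{rho}, the coefficients $\alpha,\beta,\gamma$ from~\eqref{alpha} belong to $L^\infty\bigl(J,H^{s_0-\mez}_{ul}(\xR^d)\bigr)$ with norm controlled by $\mathcal{F}(\Vert\eta\Vert_{H^{s_0+\mez}_{ul}})$. Since $s_0>1+d/2$, $H^{s_0-\mez}_{ul}$ acts by multiplication both on $L^2_{ul}$ and on $H^{-1}_{ul}$ by the uniformly local product/paraproduct estimates of Section~\ref{sec.3}, so the right-hand side belongs to $L^2(J,H^{-1}_{ul})$. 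After also differentiating in $x$ and arguing similarly, this yields
$$
\Vert \partial_z\nabla_{x,z}\widetilde{\Phi}\Vert_{L^2(J,H^{-1}_{ul}(\xR^d))}\leq \mathcal{F}\bigl(\Vert\eta\Vert_{H^{s_0+\mez}_{ul}}\bigr)\Vert\psi\Vert_{H^\mez_{ul}}.
$$
Combined with the $L^2(J,L^2_{ul})$ bound above and Lemma~\ref{lions} applied to $\chi_q\nabla_{x,z}\widetilde{\Phi}$, this delivers the claimed $X^{-\mez}_{ul}(J)$ estimate, uniformly in $q\in\xZ^d$.

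The main obstacle is the bookkeeping of products against coefficients of limited smoothness: since $\nabla_{x,z}\widetilde{\Phi}$ has only $L^2$ regularity, one cannot multiply it freely by such factors. The resolution is to place all derivatives outside via the divergence form above, and to exploit that the Poisson-type regularization in~\eqref{rho} gives $\rho$, and hence the coefficients~\eqref{alpha}, arbitrarily many $x$-derivatives for $z$ bounded away from~$0$. This is precisely what the uniformly local paraproduct calculus of Section~\ref{sec.3} is built to handle under the threshold hypothesis $s_0>1+d/2$.
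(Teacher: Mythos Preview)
Your approach is essentially the same as the paper's: control the $L^2(J,L^2)_{ul}$ part from~\eqref{rec0}, use the equation~\eqref{equ:modifie} to bound $\partial_z\nabla_{x,z}\widetilde{\Phi}$ in $L^2(J,H^{-1})_{ul}$, and conclude via Lemma~\ref{lions}. One correction: it is \emph{not} true that $\gamma\in L^\infty(J,H^{s_0-\mez}_{ul})$. Since $\gamma$ involves second derivatives of $\rho$ (see~\eqref{alpha}), one only has $\gamma\in L^\infty(J,H^{s_0-\frac{3}{2}})_{ul}$, as recorded in Lemma~\ref{est-alpha}. This is still enough, because under $s_0>1+\frac{d}{2}$ the product rule of Proposition~\ref{Ho} gives $H^{s_0-\frac{3}{2}}_{ul}\times L^2_{ul}\hookrightarrow H^{-1}_{ul}$, so $\gamma\,\partial_z\widetilde{\Phi}\in L^2(J,H^{-1})_{ul}$; the same remark applies to the lower-order terms $(\nabla_x\alpha)\cdot\nabla_x\widetilde{\Phi}$ and $(\cn_x\beta)\partial_z\widetilde{\Phi}$ produced by your divergence-form rewriting. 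Also, your comment that the Poisson regularization yields ``arbitrarily many $x$-derivatives for $z$ bounded away from~$0$'' is a red herring here: the estimate must be uniform on all of $J=(-1,0)$, including $z\to 0$ where no smoothing occurs, and the argument stands without invoking any extra interior regularity.
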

 \begin{proof}
First of all the estimate
$$ \Vert \nabla_{x,z} \widetilde{\underline{\psi}} \Vert_{X^{-\mez}_{ul}(J)} \leq  C \Vert \psi \Vert_{H^\mez_{ul} } $$
follows from Corollary \ref{e^z} with $\delta =1, m=0,1$ and $\sigma = 0$.

On the other hand we notice that $\partial_z = (\partial_z \rho) \Lambda_1$ and $\nabla_x = \Lambda_2 + (\nabla_x \rho) \Lambda_1$. Let $\widetilde{\chi} \in C_0^\infty(\xR^d), \widetilde{\chi}  = 1$ on the support of $\chi$.  Since $s>1 + \frac{d}{2},$  using Corollary  \ref{e^z}  with $\sigma = s$ we can write
\begin{equation*}
\begin{aligned}
  \Vert \widetilde{\chi}_k \nabla_{x,z} \rho \Vert_{L^\infty(J\times \xR^d)}  
 &\leq C \Vert \widetilde{\chi}_k\nabla_{x,z} \rho \Vert_{L^\infty(J,H^{s-\mez} )}\leq C'\Vert \nabla_{x,z} \rho \Vert_{L^\infty(J,H^{s-\mez} )_{ul}}\\
  \quad &\leq C''(1+ \Vert \eta \Vert_{H^{s+\mez}_{ul} }).
  \end{aligned}
\end{equation*}
It follows from~\eqref{rec0}  that
\begin{equation}\label{-mez1}
  \Vert  \nabla_{x,z} \widetilde{u}  \Vert_{L^2(J, L^2 )_{ul}} \leq \mathcal{F}  (\Vert \eta \Vert_{H^{s+\mez}_{ul} }) \Vert \psi \Vert_{H^\mez_{ul}}.
  \end{equation}
  Now using Lemma~\ref{lions} we have
 $$
\Vert \chi_k \nabla_x \widetilde{u}\Vert_{L^\infty(J, H^{-\mez} )} 
\leq C(\Vert \chi_k \nabla_x \widetilde{u}\Vert_{L^2(J, L^2 )}+\Vert \chi_k \partial_z \nabla_x \widetilde{u}\Vert_{L^2(J, H^{-1} )}).
$$
The first term in the right hand side is estimated using~\eqref{-mez1}. For the second term  using~\eqref{-mez1} we have
\begin{equation}\label{-mez2}
\begin{aligned}
 \Vert \chi_k \partial_z \nabla_x \widetilde{u}\Vert_{L^2(J, H^{-1})}  
  &\leq \Vert (\nabla_{x } \chi_k) \partial_z \widetilde{u}\Vert_{L^2(J, H^{-1} )} + \Vert \chi_k \partial_z  \widetilde{u}\Vert_{L^2(J, L^2 )}\\
&  \leq C  \Vert  \partial_z  \widetilde{u}\Vert_{L^2(J, L^2 )_{ul}} \leq \mathcal{F}  (\Vert \eta \Vert_{H^{s+\mez}_{ul} }) \Vert \psi \Vert_{H^\mez_{ul} }.
\end{aligned}
\end{equation}
Therefore
\begin{equation}\label{-mez3}
\Vert  \nabla_x \widetilde{u}\Vert_{L^\infty(J, H^{-\mez} )_{ul}} \leq  \mathcal{F}  (\Vert \eta \Vert_{H^{s+\mez}_{ul} }) \Vert \psi \Vert_{H^\mez_{ul} }. 
\end{equation}
Eventually 
\begin{equation}\label{-mez4}
\Vert \chi_k \partial_z \widetilde{u}\Vert_{L^\infty(J, H^{-\mez} )} \leq C(\Vert \chi_k \partial_z \widetilde{u}\Vert_{L^2(J, L^2 )} + \Vert \chi_k \partial_z^2  \widetilde{u}\Vert_{L^2(J, H^{-1} )}).
\end{equation}
The first term in the right hand side is estimated using~\eqref{-mez1}. For the second term  using~\eqref{alpha} we have
\begin{equation*}\left\{
\begin{aligned}
 &\Vert \chi_k \partial_z^2  \widetilde{u}\Vert_{L^2(J, H^{-1} )}  \leq A_1 +A_2 +A_3\\
 &A_1  = \Vert \chi_k \alpha \Delta \widetilde{u}\Vert_{L^2(J, H^{-1} )}\\
 &A_2  =  \Vert \chi_k \beta \partial_z \nabla_x  \widetilde{u}\Vert_{L^2(J, H^{-1} )}\\
 &A_3  =  \Vert \chi_k \gamma\partial_z   \widetilde{u}\Vert_{L^2(J, H^{-1} )}.
\end{aligned}
\right.
\end{equation*}
Now using~\eqref{Houl},~\eqref{-mez1} and~\eqref{-mez2} we obtain
   \begin{equation*}
  \begin{aligned}
  A_1 &\leq \Vert \alpha \Vert_{L^\infty(J,H^{s-\mez} )_{ul}} \Vert \Delta \widetilde{u}\Vert_{L^2(J, H^{-1}  )_{ul}} 
  \leq  \mathcal{F}  (\Vert \eta \Vert_{H^{s+\mez}_{ul}} )\Vert \psi \Vert_{H^\mez_{ul}},\\
  A_2 &\leq \Vert \beta \Vert_{L^\infty(J,H^{s-\mez} )_{ul}} \Vert \partial_z \nabla_x  \widetilde{u}\Vert_{L^2(J, H^{-1}  )_{ul}}
   \leq  \mathcal{F}  (\Vert \eta \Vert_{H^{s+\mez}_{ul} })\Vert \psi \Vert_{H^\mez_{ul} },\\
  A_3 &\leq \Vert \gamma \Vert_{L^\infty(J,H^{s-\frac{3}{2}}) _{ul}}\Vert \partial_z \widetilde{u} \Vert_{L^2(J,L^2 )_{ul}} 
    \leq  \mathcal{F} (\Vert \eta \Vert_{H^{s+\mez} _{ul} } \Vert \psi \Vert_{H^\mez_{ul} }.
  \end{aligned}
\end{equation*}
Therefore using~\eqref{-mez4} we obtain
$$\Vert \partial_z \widetilde{u} \Vert_{L^\infty(J,H^{-\mez})_{ul}  } 
\leq \mathcal{F} \big(\Vert \eta \Vert_{H^{s+\mez} _{ul} }\big)\Vert \psi \Vert_{H^\mez_{ul} } $$
which completes the proof of Corollary~\ref{rec00}.
\end{proof}

\subsection{Higher estimates for the Dirichlet--Neumann operator}

In this section we prove the following results.
\begin{theo}\label{princ}
Let $d \geq 1$ and  $s_0>1 + \frac{d}{2}$. 

Case 1. There exists  $\mathcal{F}:\xR^+ \to \xR^+$ non decreasing such that for $-\mez \leq \sigma \leq s_0-1,$ every   $\eta  \in H^{s_0+\mez}_{ul}(\xR^d) $ satisfying~\eqref{condition} and every $\psi \in H^{\sigma+1}_{ul}(\xR^d)$ we have
$$
\Vert G(\eta) \psi \Vert_{H^{\sigma}_{ul} } \leq 
\mathcal{F}\bigl(\Vert  \eta   \Vert_{  H^{s_0+ \mez}_{ul}} \bigr)
  \Vert \psi \Vert_{H^{\sigma+1}_{ul}} .
$$
Case 2.  For every $s \geq s_0,$    there exists  $\mathcal{F}:\xR^+ \to \xR^+$ non decreasing such that for every $\eta  \in H^{s +\mez}_{ul}(\xR^d) $ satisfying~\eqref{condition}  every $s_0-1 \leq \sigma \leq s-\mez$ and every $\psi \in H^{\sigma+1}_{ul}(\xR^d)$ we have
 
$$
\Vert G(\eta) \psi \Vert_{H^{\sigma}_{ul} }  
  \leq \mathcal{F}\bigl(\Vert  (\eta,\psi)  \Vert_{H^{s_0+ \mez}_{ul} \times H^{s_0 }_{ul}} \bigr)
\big\{ \Vert \eta \Vert_{H^{s+\mez}_{ul} } + \Vert \psi\ \Vert_{H^{\sigma+1}_{ul}}+ 1\big\}.
 $$
    \end{theo}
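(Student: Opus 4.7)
The plan is to follow the paradifferential reduction of \cite{ABZ3} transposed to the uniformly local setting of Section \ref{sec.3}, bootstrapping from the low-regularity bound of Corollary \ref{rec00}. After straightening the domain via Lemma \ref{diffeo} and using the identity $G(\eta)\psi = (\Lambda_1 \widetilde{\Phi} - \nabla_x \rho \cdot \Lambda_2 \widetilde{\Phi})\arrowvert_{z=0}$ combined with equation \eqref{equ:modifie}, I would paradifferentially factor the elliptic operator
\[
P \defn \partial_z^2 + \alpha \Delta_x + \beta \cdot \nabla_x \partial_z - \gamma \partial_z = (\partial_z - T_A)(\partial_z - T_a) + R,
\]
where the symbols $a,A$, built from $(\alpha,\beta,\gamma)$ in \eqref{alpha}, are of order one with $\RE a < 0$ and $\RE A > 0$ uniformly in $x$, and $R$ is a remainder of lower order. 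Introducing the good unknown $w \defn (\partial_z - T_a)\widetilde{\Phi}$ one obtains the first-order forward parabolic equation $(\partial_z - T_A)w = -R\widetilde{\Phi}$ on $\widetilde{\Omega}$, while a direct computation from \eqref{G=} expresses
\[
G(\eta)\psi = \frac{1+\la \nabla_x \eta \ra^2}{\partial_z \rho \arrowvert_{z=0}}\bigl(w\arrowvert_{z=0} + T_a \psi\bigr) - \nabla_x \eta \cdot \nabla_x \psi,
\]
so that the theorem reduces to controlling $w\arrowvert_{z=0}$ in $H^\sigma_{ul}(\xR^d)$ and $T_a \psi$ in the same space.

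The estimate is then obtained by bootstrap. Corollary \ref{rec00} provides the starting estimate at $\sigma = -\mez$. Assuming inductively that $\nabla_{x,z}\widetilde{\Phi}$ is controlled in $X^{\sigma}_{ul}(J)$ by the right-hand side of the theorem, the forward parabolic smoothing for $(\partial_z - T_A)w = -R\widetilde{\Phi}$ (integrated from $z=-1$ to $z=0$, with a $z$-cutoff $\chi(z)$ vanishing near $-1$ that absorbs the absence of clean data at the bottom into a commutator source controlled by the inductive hypothesis and by Corollary \ref{rec00}) yields $w \in L^\infty(J, H^{\sigma+\mez}_{ul}(\xR^d))$. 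Solving the equation $(\partial_z - T_a)\widetilde{\Phi} = w$ in the reverse direction then upgrades $\nabla_{x,z}\widetilde{\Phi}$ to the level $\sigma+\mez$; iterating, one reaches $\sigma = s_0-1$ for Case 1. For Case 2 the same scheme applies, now with Bony's paralinearization: products like $\alpha \Delta_x \widetilde{\Phi}$ split into the paraproduct $T_\alpha \Delta_x \widetilde{\Phi}$ (absorbed into the factorization) and a remainder $T_{\Delta_x \widetilde{\Phi}}\alpha + R(\alpha, \Delta_x \widetilde{\Phi})$ which is tame and linear in $\Vert \alpha \Vert_{H^{s-\mez}_{ul}} \les \mathcal{F}(\Vert \eta \Vert_{H^{s_0+\mez}_{ul}})\Vert \eta \Vert_{H^{s+\mez}_{ul}}$; these produce exactly the additive terms on the right-hand side of Case 2.

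The main obstacle is ensuring that the paradifferential calculus from Section \ref{sec.3} delivers composition, adjoint, paralinearization and forward parabolic smoothing bounds in $H^\sigma_{ul}$ \emph{uniformly in the translation index} $q \in \xZ^d$. Operators such as $T_a$ are only pseudo-local, so when testing against the partition $\chi_q$ one must control the non-diagonal contributions coming from distant translates; this is where the uniform estimates developed in Section \ref{sec.3} are crucial, and where the analysis genuinely departs from the classical $H^s(\xR^d)$ case. A secondary, purely technical, difficulty is the bottom treatment through the cutoff $\chi(z)$: the commutator $[T_A,\chi(z)]$ generates a lower-order source supported away from $z=0$, which is handled by the inductive hypothesis together with the Poincar\'e-type control \eqref{encore:poinc}. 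Once these uniform tools are in place, the argument is a transcription of the scheme of \cite{ABZ3}.
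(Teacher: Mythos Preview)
Your overall plan is essentially the paper's: straighten the boundary, paradifferentially factor the elliptic operator in \eqref{equ:modifie} into a forward and a backward first-order parabolic piece, introduce a cutoff in $z$ near the bottom, bootstrap from Corollary~\ref{rec00}, and read off the trace at $z=0$. The paper carries this out through Theorem~\ref{regell} (elliptic regularity via the inductive schemes $(\mathcal{H}_\sigma)$ and $(\mathcal{K}_\sigma)$), and then deduces Theorem~\ref{princ} from Corollary~\ref{coroetape1} by a short trace argument using the identity $\partial_z H = -\nabla_x\bigl((\partial_z\rho)\Lambda_2\widetilde{\Phi}\bigr)$ and Lemma~\ref{lions}, rather than through your paradifferential trace formula; your route via $G(\eta)\psi = g_1^0\bigl(w\arrowvert_{z=0}+T_a\psi\bigr)-\nabla_x\eta\cdot\nabla_x\psi$ is closer to the proof of Theorem~\ref{paralinDN} and works too, but is more than is needed here.

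There is, however, a genuine mix-up in the order of your factorization. With your conventions $\RE a<0$ and $\RE A>0$, if you factor as $(\partial_z-T_A)(\partial_z-T_a)$ and set $w=(\partial_z-T_a)\widetilde{\Phi}$, then the equation $(\partial_z-T_A)w=\cdots$ is \emph{backward} parabolic: integrating from $z=-1$ to $z=0$ you would be applying $e^{(z-z_0)T_A}$ with $\RE A>0$, which blows up rather than smooths. You cannot obtain $w\arrowvert_{z=0}$ this way without already prescribing it. The paper instead factors as $(\partial_z-T_{a})(\partial_z-T_{A})$ (Lemma~\ref{decouple}), sets $\widetilde{w}_k=\theta(z)(\partial_z-T_A)\widetilde{v}_k$, solves the genuinely forward equation $(\partial_z-T_a)\widetilde{w}_k=\cdots$ from $z=z_0$ with zero data, and then solves the backward equation $(\partial_z-T_A)\widetilde{v}_k=w_k$ from $z=0$ with data $\psi$. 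Swapping the roles of $a$ and $A$ in your $w$ (equivalently, reversing the order of the two factors) fixes the issue, and then your outline coincides with the paper's scheme.
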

 We  set 
 \begin{equation}\label{reste}
 R(\eta)\psi\ := G(\eta)\psi\ -T_\lambda \psi\, 
 \end{equation}
where $$
\lambda =  \big((1+ \vert \nabla_x \eta\vert^2 ) \vert \xi \vert^2 -(\nabla_x \eta \cdot \xi)^2\big)^\mez.
$$
\begin{theo}\label{paralinDN}
Let $d \geq 1$ and $s_0>1 + \frac{d}{2}$. 

Case 1. There exists  $\mathcal{F}:\xR^+ \to \xR^+$ non decreasing such that for $0\leq t \leq s_0-\mez, $  $\eta  \in H^{s_0+\mez}_{ul}(\xR^d) $ satisfying~\eqref{condition} we have 

 $$\Vert R(\eta)\psi\Vert_{H^{t }_{ul} } \leq \mathcal{F}\bigl(\Vert  \eta  \Vert_{  H^{s_0+ \mez}_{ul}} \bigr)
  \Vert \psi\ \Vert_{H^{t+\mez}_{ul}} 
$$
for every $\psi \in H^{t+\mez}_{ul}(\xR^d)$.

  Case 2.  For all $s \geq s_0 $   there exists  $\mathcal{F}:\xR^+ \to \xR^+$ non decreasing such that for every $\eta  \in H^{s +\mez}_{ul}(\xR^d) $ satisfying~\eqref{condition},  every $s_0-\mez \leq t \leq s-\mez$ and every $\psi \in H^{t+\mez}_{ul}(\xR^d)$ we have
   $$\Vert R(\eta)\psi\Vert_{H^{t }_{ul} } \leq  \mathcal{F}\bigl(\Vert  (\eta,\psi)  \Vert_{H^{s_0+ \mez}_{ul} \times H^{s_0 }_{ul}} \bigr)
\big\{ \Vert \eta \Vert_{H^{s+\mez}_{ul} } + \Vert \psi\ \Vert_{H^{t+\mez}_{ul}}+ 1\big\}.
$$

\end{theo}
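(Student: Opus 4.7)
My plan is to transpose the paralinearization strategy of~\cite{ABZ3} (developed there in classical Sobolev spaces) to the uniformly local framework, making systematic use of the paradifferential calculus on $H^s_{ul}$ built in Section~\ref{sec.3}.

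First I would transfer the problem to the straightened strip $\widetilde\Omega = \xR^d\times(-1,0)$ via the diffeomorphism of Lemma~\ref{diffeo}; the function $\widetilde\Phi$ then satisfies the elliptic equation~\eqref{equ:modifie} with coefficients $\alpha,\beta,\gamma$ given by~\eqref{alpha}. The trace formula~\eqref{G=} expresses $G(\eta)\psi$ as an algebraic combination of $\partial_z\widetilde\Phi|_{z=0}$ and $\nabla_x\psi$, so it suffices to identify $\partial_z\widetilde\Phi|_{z=0}$ with a paradifferential expression $T_b\psi$ modulo a remainder in $H^{t}_{ul}$, for an appropriate tangential symbol $b$ of order one, and to check that the algebraic combination with the $\nabla_x\eta\cdot\nabla_x\psi$ contribution recovers exactly $T_\lambda\psi$.

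Next I would paralinearize the equation~\eqref{equ:modifie}: using Bony's decomposition and the symbolic calculus of Section~\ref{sec.3}, replace each product $\alpha\Delta_x\widetilde\Phi$, $\beta\cdot\nabla_x\partial_z\widetilde\Phi$, $\gamma\partial_z\widetilde\Phi$ by its paradifferential counterpart $T_\alpha\Delta_x\widetilde\Phi$, etc., modulo an error $F_0$ whose $L^2_z(H^{t-1}_{ul})$ norm is dominated by the right-hand side of the theorem. The $H^{s_0-\mez}_{ul}$-type estimates on $\alpha,\beta,\gamma$ coming from~\eqref{alpha}, combined with the interior tangential regularity of $\widetilde\Phi$ bootstrapped from Corollary~\ref{rec00}, are what power this step.

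The heart of the argument is the tangential factorization
\begin{equation*}
\partial_z^2 + T_\alpha\Delta_x + T_\beta\cdot\nabla_x\partial_z - T_\gamma\partial_z = (\partial_z - T_A)\circ(\partial_z - T_a) + R_0,
\end{equation*}
where $a,A$ are the roots of the symbolic characteristic equation $X^2 - (i\beta\cdot\xi+\gamma)X - \alpha|\xi|^2 = 0$, selected so that $\RE a < 0$ and $\RE A > 0$ (possible by the ellipticity $4\alpha - |\beta|^2 > 0$ implied by~\eqref{alpha}), and $R_0$ is of order zero by symbolic composition. Setting $w = (\partial_z - T_a)\widetilde\Phi$, the equation reduces to $(\partial_z - T_A)w = F_1$; since $\RE A > 0$, this first-order ODE in $z$ can be integrated from $z=-1$ using the Neumann bottom condition, yielding an $L^\infty_z(H^{t-\mez}_{ul})$ bound on $w$ and hence on its trace at $z=0$. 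Evaluating gives the identity $\partial_z\widetilde\Phi|_{z=0} = T_{a(\cdot,0,\cdot)}\psi$ modulo a controlled $H^t_{ul}$ remainder, and a direct computation of $a(x,0,\xi)$ using $\rho|_{z=0}=\eta$, combined with~\eqref{G=}, yields $G(\eta)\psi = T_\lambda\psi + R(\eta)\psi$ with the required estimate.

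The main obstacle will be ensuring that every step above is truly \emph{uniform} in the lattice translations: Bony remainder estimates, symbolic composition, and the $z$-parametrix are all classical in $H^s(\xR^d)$ but must be reworked in $H^s_{ul}$, which is precisely the role of the calculus developed in Section~\ref{sec.3}. The split between Cases~1 and~2 reflects the usual Sobolev threshold $s_0 > 1 + d/2$: in Case~1 (low regularity, $t \le s_0-\mez$), all norms of $\alpha,\beta,\gamma$ can be absorbed into a single function of $\|\eta\|_{H^{s_0+\mez}_{ul}}$, giving the non-tame bound; in Case~2 (high regularity, $t \ge s_0-\mez$), one must isolate the leading linear dependence on $\eta$ in $\alpha,\beta,\gamma$ and use a tame paraproduct estimate to produce the sought-after linear bound in $\|\eta\|_{H^{s+\mez}_{ul}}+\|\psi\|_{H^{t+\mez}_{ul}}$.
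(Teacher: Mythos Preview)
Your overall strategy---straighten, paralinearize the coefficients, factor into a forward and a backward parabolic evolution, and read off the symbol at $z=0$---is exactly the paper's approach (\S4.5--\S4.7). But the execution of the factorization step contains a genuine gap. You write $(\partial_z - T_A)\circ(\partial_z - T_a)$ with $\RE A>0$, set $w=(\partial_z - T_a)\widetilde\Phi$, and propose to solve $(\partial_z - T_A)w = F_1$ by integrating \emph{forward} from $z=-1$. This does not work: with $\RE A>0$ the operator $\partial_z - T_A$ is \emph{backward} parabolic, so forward integration loses control rather than gaining it. The paper uses the opposite order $(\partial_z - T_a)(\partial_z - T_A)$ (Lemma~\ref{decouple}), sets $\widetilde w_k=\theta(z)(\partial_z - T_A)\widetilde\Phi_k$, and solves the \emph{forward} parabolic equation $(\partial_z - T_a)\widetilde w_k=\cdots$ from below, then the backward equation $(\partial_z - T_A)\widetilde\Phi_k=\widetilde w_k$ from $z=0$ with data $\psi$. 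Consequently the symbol appearing in $\partial_z\widetilde\Phi|_{z=0}$ is $A_0$, not $a_0$; it is $g_1^0 A_0 - i\xi\cdot g_2^0$ that equals $\lambda$, not the expression with $a_0$ (which would produce $-\lambda$).

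Two further points. First, there is no Neumann condition available at $z=-1$: the diffeomorphism of Lemma~\ref{diffeo} sends $z=-1$ to $y=\eta(x)-h$, which is \emph{not} the physical bottom $y=\eta_*(x)$. The paper instead introduces a cutoff $\theta(z)$ vanishing at some $z_0\in(-1,0)$, so that the forward parabolic problem has zero initial data there (see~\eqref{wk=}--\eqref{eqwk}); this is where the interior regularity already proved in Theorem~\ref{regell} is spent. Second, you include $\gamma$ in the characteristic polynomial, but $\gamma\in X^{s_0-3/2}_{ul}$ has one degree of regularity less than $\alpha,\beta$; the paper treats $\gamma\partial_z\widetilde v$ as a source term $F_1$ (see~\eqref{eqtildephi} and Lemma~\ref{estF0F1}) rather than putting it in the symbol, precisely to avoid this loss.
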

 The main step in the proof of the above theorems  is the following elliptic regularity result.
 
\begin{theo}\label{regell}
Let $d\geq1 , J=(-1,0), s_0>1+\frac{d}{2}$. Let $\tilde{v}$  be a solution of the problem
\begin{equation}\label{elliptique}
\left\{
\begin{aligned}
&(\partial_z^2 + \alpha \Delta_x + \beta\cdot \nabla_x \partial_z - \gamma \partial_z) \widetilde{v} 
= F  \quad \text{in } \, \xR^d \times  J, \\
&\widetilde{v} \arrowvert_{z=0} = \psi.  
\end{aligned}
\right.
\end{equation}
Case 1. For $-\mez \leq \sigma \leq s_0-1 $ let  $ \eta  \in H^{s_0 + \mez}_{ul}(\xR^d)$ 
satisfying~\eqref{condition},  $\psi \in H^{\sigma+1}_{ul}(\xR^d), F \in Y_{ul}^{\sigma}(J)$  and 
\begin{equation}\label{-mez-i}
\Vert \nabla_{x,z}\widetilde{v}\Vert_{X_{ul}^{-\mez}(J)}  <+\infty.
\end{equation}
Then for every $z_0\in ]-1,0[$ one has
$ \nabla_{x,z}\widetilde{v} \in X^\sigma_{ul}(z_0,0)  $ 
and one can find $\mathcal{F} :\xR^+ \to \xR^+ $ non decreasing, 
depending only on $(s_0, d)$   such that \\
$$
\Vert \nabla_{x,z}\widetilde{v} \Vert_{X^\sigma_{ul}(z_0,0)} 
\leq \mathcal{F} \bigl(\Vert  \eta  \Vert_{H^{s_0+ \mez}_{ul}}\bigr) 
\Big\{ \Vert \psi \Vert_{H^{\sigma+1}_{ul} } +   \Vert F \Vert_{Y_{ul}^{\sigma}(J)}  
+  \Vert \nabla_{x,z}\tilde{v}\Vert_{X_{ul}^{-\mez}(J)} \Big\}.
$$
Case 2.  For   $s\geq s_0,$ and $s_0-1 \leq \sigma \leq s-\mez $ let $ \eta  \in H^{s  + \mez}_{ul}(\xR^d)$ satisfying~\eqref{condition},  $\psi \in H^{\sigma+1}_{ul}(\xR^d), F \in Y_{ul}^{\sigma}(J)$ and 
\begin{equation}\label{-mez}
\Vert \nabla_{x,z}\widetilde{v}\Vert_{X_{ul}^{s_0 -1}(J)}  <+\infty.
\end{equation}
 Then for every $z_0\in ]-1,0[$ one has
   $ \nabla_{x,z}\widetilde{v} \in X^\sigma_{ul}(z_0,0)  $ 
   and one can find $\mathcal{F} :\xR^+ \to \xR^+ $   non decreasing,  depending only on $(s_0, s, d)$   such that
     \begin{multline*}
\Vert \nabla_{x,z}\widetilde{v} \Vert_{X^\sigma_{ul}(z_0,0)} \\
\leq \mathcal{F} \bigl(\Vert  (\eta,\psi)  \Vert_{H^{s_0+ \mez}_{ul}  \times H^{s_0 }_{ul}}\bigr) \Big\{ \Vert \psi \Vert_{H^{\sigma+1}_{ul} } +   \Vert F \Vert_{Y_{ul}^{\sigma}(J)} +(\Vert \eta \Vert_{H^{s+\mez}_{ul} } +1) \Vert \nabla_{x,z}\tilde{v}\Vert_{X_{ul}^{s_0-1}(J)}\Big\}.
\end{multline*}
   \end{theo}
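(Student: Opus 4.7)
The plan is to follow the factorization/bootstrap scheme of the authors' previous paper~\cite{ABZ3}, carried out consistently in the uniformly local setting. The proof has four ingredients: a paradifferential factorization of the second-order operator in~\eqref{elliptique} into two first-order factors; forward/backward parabolic energy estimates for each factor; a bootstrap that gains $\mez$ derivative at a time, starting from the low-regularity hypothesis~\eqref{-mez-i} or~\eqref{-mez}; and a localization argument in $x$ that promotes classical $H^\sigma$ estimates to $H^\sigma_{ul}$ ones uniformly in $q \in \xZ^d$.

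\textbf{Factorization.} Using the paradifferential calculus on $H^\sigma_{ul}$ developed in Section~\ref{sec.3} together with the coefficients $\alpha,\beta,\gamma$ from~\eqref{alpha}, I would construct symbols $a(z;x,\xi)$ and $A(z;x,\xi)$ of order one in $\xi$, with $\RE a \le -c\L{\xi}$ and $\RE A \ge c\L{\xi}$, such that
\[
\partial_z^2 + \alpha \Delta_x + \beta\cdot \nabla_x \partial_z - \gamma \partial_z
= (\partial_z - T_a)(\partial_z - T_A) + R,
\]
modulo paralinearization of the products (replacing $\alpha,\beta,\gamma$ by the associated paradifferential operators) and a zeroth-order remainder $R$, both controlled by $\mathcal{F}(\Vert \eta \Vert_{H^{s_0+\mez}_{ul}})$ in the relevant operator norm. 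The existence of $a,A$ with these sign properties reduces to the strict ellipticity in the $z$-variable, which follows from the non-degeneracy~\eqref{rho>}.

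\textbf{Bootstrap via parabolic estimates.} Setting $w \defn (\partial_z - T_A)\widetilde v$, the equation rewrites as $(\partial_z - T_a)w = F + R\widetilde v + (\text{paralinearization remainders})$. Since $\RE a < 0$, a standard $H^\sigma$ energy estimate propagates $w$ \emph{forward} from some $z_1 \in (-1,z_0)$, where the initial data for $w$ is controlled by the low-regularity hypothesis, giving $w \in X^\sigma_{ul}(z_0,0)$. The second factor $(\partial_z - T_A)\widetilde v = w$ with Dirichlet data $\psi$ at $z=0$ is then integrated \emph{backward} from the top, exploiting $\RE A > 0$, to obtain $\nabla_{x,z}\widetilde v \in X^\sigma_{ul}(z_0,0)$. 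One iterates this forward/backward pair one half-derivative at a time: in Case~1 the base level is $\sigma_0 = -\mez$ provided by~\eqref{-mez-i}, climbing up to the target $\sigma \le s_0-1$; in Case~2 one restarts at $\sigma_0 = s_0-1$ (supplied by~\eqref{-mez}) and climbs up to $\sigma \le s-\mez$.

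\textbf{Localization and main difficulty.} To pass from $H^\sigma$ to $H^\sigma_{ul}$ the above scheme is applied to $\chi_q\widetilde v$ for each $q\in\xZ^d$; this produces commutators $[\chi_q,T_a]$, $[\chi_q,T_A]$, $[\chi_q,T_\alpha\Delta_x]$, etc., which are paradifferential operators of strictly lower order handled by the $ul$ calculus of Section~\ref{sec.3} and uniform in $q$ by translation invariance. To sum over $q$ one needs quantitative spatial decoupling: as in Lemma~\ref{coro1}, inserting an exponential weight $e^{-\delta\L{x-q}}$ in the energy estimate controls the influence of distant cubes, and the resulting bounds sum to a uniform $\sup_q$ bound. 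For Case~2, the tame structure (linear in $\Vert\eta\Vert_{H^{s+\mez}_{ul}}$, with the prefactor depending only on $\Vert(\eta,\psi)\Vert_{H^{s_0+\mez}_{ul}\times H^{s_0}_{ul}}$) requires Bony-type product and paralinearization rules in the $ul$ framework, so that whenever $\alpha,\beta,\gamma$ are replaced by their paradifferential versions the remainders have the Bony tame form. I expect the main technical obstacle to be precisely this quantitative localization step: producing exponential-weight energy estimates for $\partial_z - T_a$ and $\partial_z - T_A$ that are compatible with both the low-regularity starting hypothesis and the tame Bony bounds, without losing any derivative when commuting $\chi_q$ through the paradifferential factors.
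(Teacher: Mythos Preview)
Your overall scheme---localize with $\chi_k$, paralinearize, factor into $(\partial_z-T_a)(\partial_z-T_A)$, and bootstrap by half-derivatives via forward/backward parabolic estimates---is exactly the paper's route (Propositions~\ref{etape0}--\ref{rec1}, Lemmas~\ref{est-alpha}--\ref{decouple}).

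The one point where you diverge is the localization step, and here you have overcomplicated matters. You anticipate needing exponential-weight energy estimates for $\partial_z-T_a$ and $\partial_z-T_A$ in order to ``sum over $q$'' as in Lemma~\ref{coro1}. No summation is needed. The paper simply fixes $k$, writes the equation for $\widetilde v_k=\chi_k\widetilde v$, and observes that the commutator terms $F_0,F_1$ (from $[\chi_k,\alpha\Delta_x]$, $[\chi_k,\beta\cdot\nabla_x\partial_z]$) and the paralinearization/factorization remainders $F_2,F_3$ are all bounded by $\mathcal F(\lA\eta\rA)\,\lA\nabla_{x,z}\widetilde v\rA_{X^\sigma_{ul}}$ at the \emph{previous} regularity level; this $ul$ norm is finite by the induction hypothesis (or by~\eqref{-mez-i},~\eqref{-mez} at the base step). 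One then runs the forward estimate for $w_k=\theta(z)(\partial_z-T_A)\widetilde v_k$ (Proposition~2.18 of~\cite{ABZ3}) and the backward estimate for $\widetilde v_k$ (Proposition~2.19 of~\cite{ABZ3}), obtaining a bound for $\lA\nabla_{x,z}\widetilde v_k\rA_{X^{\sigma+\mez}}$ uniform in $k$, and finally takes $\sup_k$. The exponential weight of Lemma~\ref{coro1} was needed earlier only because there one genuinely sums a series $\sum_q u_q$ of variational solutions; here one never sums, one only takes a supremum, and the pseudo-local character of paradifferential operators (Section~\ref{sec.3}) makes the commutators harmless. Dropping the weighted-estimate plan removes what you flagged as the main obstacle.
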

 \begin{coro}\label{coroetape1}
  Let $s_0>1+ \frac{d}{2}$. Let  $ {\Phi}$  be defined in Proposition \ref{existe:phi}. 
        
Case 1. For   $-\mez \leq\sigma \leq  s_0-1$ assume that   $ \eta  \in H^{s_0 + \mez}_{ul}(\xR^d)$ satisfying~\eqref{condition} and $\psi \in H^{\sigma+1}_{ul}(\xR^d)$.   Then  there exists  $\mathcal{F}:\xR^+ \to \xR^+$    non decreasing  depending only on $(s_0,d)$ such that 
   $$\Vert  \widetilde{\Phi}  \Vert_{X^{\sigma+1}_{ul}(z_0,0)}+  \Vert \nabla_{x,z}\widetilde{\Phi} \Vert_{X^\sigma_{ul}(z_0,0)}  \leq \mathcal{F}\bigl(\Vert   \eta  \Vert_{H^{s_0+ \mez}_{ul}}\bigr)     \Vert \psi \Vert_{H^{\sigma+1}_{ul}}.$$  

Case 2. For $s\geq s_0$, $ s_0-1 \leq \sigma \leq  s-\mez$ assume that $ \widetilde{\eta} \in H^{s  + \mez}_{ul}(\xR^d),$ $ \eta  \in H^{s  + \mez}_{ul}(\xR^d)$ satisfying~\eqref{condition} and $\psi \in H^{\sigma+1}_{ul}(\xR^d)$.   Then  there exists  $\mathcal{F}:\xR^+ \to \xR^+$    non decreasing  depending only on $(s_0, s, d)$ such that 
     \begin{align*} 
   \Vert  \widetilde{\Phi}  \Vert_{X^{\sigma+1}_{ul}(z_0,0)}+& \Vert \nabla_{x,z}\widetilde{\Phi} \Vert_{X^\sigma_{ul}(z_0,0)}\\
    &\leq \mathcal{F}\bigl(\Vert  (\eta,\psi)  \Vert_{H^{s_0+ \mez}_{ul}  \times H^{s_0 }_{ul}}\bigr) \big\{  \Vert \eta \Vert_{H^{s+\mez}_{ul} } +  \Vert \psi \Vert_{H^{\sigma+1}_{ul} } +1 \big\}.
   \end{align*} 
\end{coro}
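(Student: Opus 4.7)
The plan is to apply the elliptic regularity result of Theorem \ref{regell} directly to $\widetilde{v}:=\widetilde{\Phi}$, where $\widetilde{\Phi}$ arises from the variational solution $\Phi$ of Proposition \ref{existe:phi} after the change of variables $y=\rho(x,z)$. Since $\Delta_{x,y}\Phi=0$ in $\Omega$ is equivalent to $(\Lambda_1^2+\Lambda_2^2)\widetilde{\Phi}=0$, which in turn (as noted just before Corollary \ref{rec00}) is equivalent to \eqref{equ:modifie}, the function $\widetilde{\Phi}$ solves \eqref{elliptique} with $F=0$ and boundary trace $\psi$ at $z=0$. The whole proof then reduces to checking the a priori low-regularity hypothesis required by each case of Theorem \ref{regell} and transferring the resulting gradient estimates into estimates on $\widetilde{\Phi}$ itself.

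For Case 1, the hypothesis \eqref{-mez-i} is precisely the content of Corollary \ref{rec00}, which gives
\[
\Vert \nabla_{x,z}\widetilde{\Phi}\Vert_{X_{ul}^{-\mez}(J)}\leq \mathcal{F}\bigl(\Vert \eta\Vert_{H^{s_0+\mez}_{ul}}\bigr)\Vert \psi\Vert_{H^{\mez}_{ul}}.
\]
Plugging this bound (together with $F=0$) into Case 1 of Theorem \ref{regell} yields
\[
\Vert \nabla_{x,z}\widetilde{\Phi}\Vert_{X^{\sigma}_{ul}(z_0,0)}\leq \mathcal{F}\bigl(\Vert \eta\Vert_{H^{s_0+\mez}_{ul}}\bigr)\bigl\{\Vert \psi\Vert_{H^{\sigma+1}_{ul}}+\Vert \psi\Vert_{H^{\mez}_{ul}}\bigr\},
\]
and because $\sigma+1\geq \mez$, the embedding $H^{\sigma+1}_{ul}\hookrightarrow H^{\mez}_{ul}$ absorbs the second term into the first. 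To promote the gradient bound to control of $\widetilde{\Phi}$ itself in $X^{\sigma+1}_{ul}(z_0,0)$, I use the trace $\widetilde{\Phi}\arrowvert_{z=0}=\psi$ together with $\widetilde{\Phi}(\cdot,z)=\psi+\int_{0}^{z}\partial_{z}\widetilde{\Phi}(\cdot,z')\,dz'$ and the hypothesis $\psi\in H^{\sigma+1}_{ul}$; this is a routine integration since $\partial_{z}\widetilde{\Phi}$ has just been controlled at the target level of regularity.

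For Case 2, the strategy is a simple bootstrap. First, I apply Case 1 with the intermediate exponent $\sigma=s_0-1$ to obtain
\[
\Vert \nabla_{x,z}\widetilde{\Phi}\Vert_{X^{s_0-1}_{ul}(z_0,0)}\leq \mathcal{F}\bigl(\Vert \eta\Vert_{H^{s_0+\mez}_{ul}}\bigr)\Vert \psi\Vert_{H^{s_0}_{ul}},
\]
which is exactly the a priori hypothesis \eqref{-mez} required to enter Case 2 of Theorem \ref{regell}. Feeding this and $F=0$ into that theorem produces
\[
\Vert \nabla_{x,z}\widetilde{\Phi}\Vert_{X^{\sigma}_{ul}(z_0,0)}\leq \mathcal{F}\bigl(\Vert (\eta,\psi)\Vert_{H^{s_0+\mez}_{ul}\times H^{s_0}_{ul}}\bigr)\bigl\{\Vert \psi\Vert_{H^{\sigma+1}_{ul}}+(\Vert \eta\Vert_{H^{s+\mez}_{ul}}+1)\Vert \psi\Vert_{H^{s_0}_{ul}}\bigr\},
\]
and absorbing the factor $\Vert \psi\Vert_{H^{s_0}_{ul}}$ into the running prefactor $\mathcal{F}(\Vert(\eta,\psi)\Vert_{H^{s_0+\mez}_{ul}\times H^{s_0}_{ul}})$ gives exactly the claimed tame estimate. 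The passage from $\nabla_{x,z}\widetilde{\Phi}$ to $\widetilde{\Phi}$ in $X^{\sigma+1}_{ul}(z_0,0)$ is done as in Case 1.

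I expect no serious obstacle: all of the heavy lifting (the elliptic bootstrap on the uniformly local scale and the low-regularity starting estimate) is already isolated in Theorem \ref{regell} and Corollary \ref{rec00} respectively. The only mildly technical point is reconciling the slab on which the a priori bound is stated ($J=(-1,0)$, via Corollary \ref{rec00}) with the slab $(z_0,0)$ on which Theorem \ref{regell} concludes; since the a priori bound on $J$ restricts trivially to any subinterval $(z_0,0)$, this causes no difficulty. Everything else is standard unfolding of definitions and the straightforward combination of the two cited results.
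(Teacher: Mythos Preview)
Your proposal is correct and follows essentially the same route as the paper: observe that $\widetilde{\Phi}$ solves \eqref{elliptique} with $F=0$, invoke Corollary~\ref{rec00} for the base estimate in $X^{-\mez}_{ul}$, and feed this into Theorem~\ref{regell}. The only cosmetic difference is that the paper recovers the $\widetilde{\Phi}$ estimate from the gradient via the Poincar\'e inequality, whereas you write $\widetilde{\Phi}(\cdot,z)=\psi+\int_0^z\partial_z\widetilde{\Phi}\,dz'$; both give the required low-frequency control which, combined with $\nabla_x\widetilde{\Phi}\in X^\sigma_{ul}$, yields $\widetilde{\Phi}\in X^{\sigma+1}_{ul}$. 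Your explicit bootstrap for Case~2 (apply Case~1 at $\sigma=s_0-1$ to manufacture the hypothesis~\eqref{-mez}) is exactly what is implicit in the paper's terse proof.
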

\begin{proof}
Indeed $\widetilde{\Phi}$ satisfies~\eqref{elliptique} with $F=0$ and it is proved in Corollary~\ref{rec00}  that 
\begin{equation*}
\Vert \nabla_{x,z}\widetilde{\Phi}\Vert_{X^{-\mez}_{ul}(z_0,0)} \leq \mathcal{F}(\Vert \eta \Vert_{ H^{s_0+ \mez}_{ul}}) \Vert \psi \Vert_{H^\mez_{ul} }<+\infty.
\end{equation*}
Moreover the estimate of $\widetilde{\Phi}$ in $X^\sigma_{ul}(z_0,0)$ is obtained by the Poincar\' e inequality from the estimate of $\partial_z\widetilde{\Phi}$.
\end{proof}

\begin{proof}[Proof of Theorem~\ref{princ} given Corollary~\ref{coroetape1}]

  Let us set
\begin{equation}
H= \Lambda_1\widetilde{\Phi}- \nabla_x \rho\cdot\Lambda_2 \widetilde{\Phi}.
\end{equation}
By~\eqref{G=} we have $H\arrowvert_{z=0} = G(\eta)\psi $ and by \eqref{dzU}
  \begin{equation}\label{dzU1}
\partial_z H = -\nabla_x \big((\partial_z \rho)\Lambda_2\widetilde{\Phi}\big) =- \nabla_x \big( (\partial_z \rho)\nabla_x - (\nabla_x \rho) \partial_z\big)\widetilde{\Phi}.
\end{equation}
   Using  Lemma~\ref{lions} with $f= \chi_q H,$ ~\eqref{dzU1} we deduce that
 \begin{equation*}
        \Vert \chi_q G(\eta)\psi\Vert_{H^{\sigma}}   \leq  
        C \Bigl [\Vert \chi_q H \Vert_{L^2(J,H^{\sigma +\mez})} + \Vert \chi_q \partial_z H \Vert_{L^2(J,H^{\sigma -\mez})} \Bigl]. 
        \end{equation*}
Moreover by \eqref{dzU1} we have
$$
\Vert \chi_q \partial_z H \Vert_{L^2(J,H^{\sigma +\mez})} 
\leq C' \Bigl[\Vert  (\partial_z \rho)\nabla_x \widetilde{\Phi} \Vert_{ L^2(J,H^{\sigma+\mez})_{ul}}
+\Vert  (\nabla_x \rho) \partial_z \widetilde{\Phi}\Vert_{L^2(J,H^{\sigma +\mez})_{ul}}\Bigr].
$$

Case 1. If $-\mez \leq \sigma \leq s_0-1$ we use the estimate
$$
\Vert  fg \Vert_{ L^2(J,H^{\sigma +\mez}))_{ul}}  \leq C \Vert f \Vert_{L^\infty(J, H^{s_0-\mez})_{ul}}  \Vert   g \Vert_{ L^2(J,H^{\sigma +\mez})_{ul}}
$$
which follows from Proposition \ref{Ho} with 
$\sigma_0 = \sigma+\mez, \sigma_1=s_0-\mez, \sigma_2= \sigma+\mez, $ 
the estimates on $\rho$ and  Corollary \ref{coroetape1}.

Case 2. If $s_0-1 \leq \sigma \leq s-\mez$  we use the inequality
$$
\Vert fg\Vert_{L^2(J, H^{\sigma+\mez})_{ul}} \leq \Vert f \Vert_{L^\infty(J, H^{s_0-1})_{ul}}\Vert g\Vert_{X_{ul}^\sigma(J)} + \Vert g \Vert_{L^\infty(J, H^{s_0-1})_{ul}}\Vert f \Vert_{X_{ul}^\sigma(J)}$$
  the estimates on $\rho$ and again Corollary \ref{coroetape1} to obtain Theorem \ref{princ}. 
\end{proof}
Theorem~\ref{regell} will be a consequence of the following two results.
\begin{prop}\label{etape0}
Let $s_0>1+ \frac{d}{2}$. There exists   $\mathcal{F}:\xR^+ \to \xR^+$ non   decreasing such that for   $-1<z_0<z_1<0$,   $-\mez \leq\sigma \leq  s_0-1$  and     $k\in \xZ^d$ we have 
\begin{equation}
   \tag {$\mathcal{H}_\sigma$}  
\Vert \nabla_{x,z}\widetilde{v}_k\Vert_{X^\sigma(z_1,0)} 
\leq\mathcal{F}\bigl(\Vert  \eta  \Vert_{  H^{s_0+ \mez}_{ul} }\bigr) \Big\{  \Vert \psi \Vert_{H^{\sigma+1}_{ul}} +  \Vert F\Vert_{Y_{ul}^{\sigma }(J)} + \Vert \nabla_{x,z}\widetilde{v}_k   \Vert_{X ^{-\mez}(z_0,0)} \Big\},
\end{equation}
where $\widetilde{v}_k= \chi_k\widetilde{v}$.
\end{prop}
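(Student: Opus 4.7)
The proof goes by induction on $\sigma$ in steps of $\mez$, the base case $\sigma = -\mez$ being immediate since $(z_1, 0) \subset (z_0, 0)$. Fix $\sigma \in (-\mez, s_0 - 1]$, assume $(\mathcal{H}_{\sigma - \mez})$ holds for every $k$, and let us establish $(\mathcal{H}_\sigma)$.

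\textbf{Cutoffs and localized equation.} Combine the spatial cutoff $\chi_k$ with a $z$-cutoff $\vartheta \in C^\infty_c((z_0, 0])$ equal to $1$ on $[z_1, 0]$, and set $V = \vartheta(z)\,\widetilde{v}_k$. Multiplying \eqref{elliptique} by $\vartheta(z)\chi_k$ and rearranging, $V$ satisfies
\[
(\partial_z^2 + \alpha \Delta_x + \beta \cdot \nabla_x \partial_z - \gamma \partial_z) V = \vartheta \chi_k F + G_k, \qquad V\arrowvert_{z=0} = \chi_k \psi,
\]
where $G_k$ gathers the commutators of the elliptic operator with $\vartheta$ and $\chi_k$. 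Crucially, $G_k$ only involves $\widetilde{v}$ and $\nabla_{x,z}\widetilde{v}$ localized either near $\supp(\vartheta') \subset (z_0, z_1)$, which is controlled via the $X^{-\mez}(z_0, 0)$ hypothesis, or near $\supp(\nabla \chi_k) \subset \{|x - k| \leq 1\}$, controlled by the induction hypothesis applied to the neighbouring indices $j$ with $|j - k| \leq 1$. In both cases the relevant $Y^{\sigma-\mez}$ or $Y^\sigma$ norm of $G_k$ is bounded by the right-hand side of $(\mathcal{H}_\sigma)$.

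\textbf{Paradifferential factorization and parabolic smoothing.} Following \cite{ABZ3} and the uniformly local paradifferential calculus of Section~\ref{sec.3}, factor
\[
\partial_z^2 + T_\alpha \Delta_x + T_\beta \cdot \nabla_x \partial_z - T_\gamma \partial_z = (\partial_z - T_a)(\partial_z - T_A) + R_0,
\]
with $a, A$ first-order symbols satisfying $\RE\,a \geq c \langle \xi \rangle$ and $\RE\,A \leq -c \langle \xi \rangle$ (with $c > 0$ depending only on $h$ and $\Vert\eta\Vert_{H^{s_0+\mez}_{ul}}$), and $R_0$ smoothing by one derivative. Set $w = (\partial_z - T_A) V$. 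Then $(\partial_z - T_a) w = \Phi_k$, where $\Phi_k$ collects $\vartheta \chi_k F$, $G_k$, the paraproduct versus product remainders from the factorization (estimated via Section~\ref{sec.3}), and $R_0 V$. Since $V$ vanishes near $z = z_0$, so does $w$, and the forward-parabolic estimate for $\partial_z - T_a$ (which is well-posed from $z_0$ upward because $\RE\,a \geq c\langle \xi\rangle$) gives $\Vert w\Vert_{X^\sigma(z_0, 0)} \les \Vert\Phi_k\Vert_{Y^\sigma}$. Next, $(\partial_z - T_A) V = w$ with terminal data $\chi_k \psi$ is backward-parabolic, and solving it recovers $\Vert\nabla_{x,z} V\Vert_{X^\sigma(z_0, 0)} \les \Vert\chi_k \psi\Vert_{H^{\sigma+1}} + \Vert w\Vert_{X^\sigma(z_0, 0)}$. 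Since $V = \widetilde{v}_k$ on $[z_1, 0]$, this yields $(\mathcal{H}_\sigma)$.

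\textbf{Main obstacle.} The principal technical step is checking, in the uniformly local setting, that the commutators of $\chi_k$ with the paradifferential operators $T_a, T_A$, as well as the paraproduct-versus-product remainders from Section~\ref{sec.3}, either localize to a bounded number of neighbouring indices $j$ with $|j-k|$ bounded (so that the $\sup_k$ defining the $ul$-norms is preserved) or are off-diagonal with rapidly decaying tails that sum uniformly in $k$. Granted this bookkeeping and the uniform-in-$k$ symbolic calculus for $a, A$ driven by $\Vert\eta\Vert_{H^{s_0+\mez}_{ul}}$ and $h$, the induction closes in finitely many $\mez$-steps from $\sigma = -\mez$ up to $\sigma = s_0 - 1$.
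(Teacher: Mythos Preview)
Your proposal is correct and follows essentially the same strategy as the paper: induction on $\sigma$ in half-steps, localization by $\chi_k$ and a $z$-cutoff, paradifferential factorization into a forward and a backward parabolic evolution, and application of the parabolic estimates from \cite{ABZ3}. Two small remarks: your sign conventions on $a,A$ are swapped relative to the paper (there $\RE a\le -c|\xi|$ and $\RE A\ge c|\xi|$, so $(\partial_z-T_a)$ is solved forward from $z_0$ and $(\partial_z-T_A)$ backward from $z=0$); and your ``main obstacle'' paragraph is largely unnecessary, since once you localize to $\widetilde v_k=\chi_k\widetilde v$ \emph{before} factorizing, all subsequent parabolic estimates take place in the classical spaces $X^\sigma$ on a compactly-supported-in-$x$ function, so no $ul$-bookkeeping or commutation of $\chi_k$ with $T_a,T_A$ is required---the only $ul$-issue is bounding the commutator terms $G_k$ by the $X^\sigma_{ul}$-norm of $\nabla_{x,z}\widetilde v$, exactly as you indicate via neighbouring indices.
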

\begin{prop}\label{etape1}
Let $s_0>1+ \frac{d}{2}, $  and $s \geq s_0. $  Then there exists   $\mathcal{F}:\xR^+ \to \xR^+$ non decreasing such that for     $-1<z_0<z_1<0 $,   $s_0-1 \leq \sigma \leq  s-\mez$  and     $k\in \xZ^d$ we have 
\begin{multline}
     \tag {$\mathcal{K}_\sigma$} 
\Vert \nabla_{x,z}\widetilde{v}_k \Vert_{X^\sigma(z_1,0)}  
\leq\mathcal{F}\bigl(\Vert  \eta  \Vert_{  H^{s_0+ \mez}_{ul} }\bigr) \Big\{ \Vert \psi \Vert_{H^{\sigma+1}_{ul}} + \Vert F\Vert_{Y_{ul}^{\sigma }(J)} \\+ (1+\Vert \eta \Vert_{H^{s+\mez}_{ul} }) \Vert \nabla_{x,z}\widetilde{v}_k\Vert_{X_{ul}^{s_0-1}(z_0,0)} \Big\},
\end{multline}
where $\widetilde{v}_k= \chi_k\widetilde{v}$.
\end{prop}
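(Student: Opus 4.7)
I would proceed by induction on $\sigma$ with step $\mez$, starting from the base case $\sigma = s_0 - 1$, which is immediate: since $z_0 < z_1 < 0$, the left-hand side of $(\mathcal{K}_{s_0-1})$ is trivially dominated by the last term on the right. For the induction step $\sigma \to \sigma + \mez$ (as long as $\sigma + \mez \le s - \mez$), the strategy is to paradifferentialize and factor the elliptic operator exactly as in the analogous localized estimate of \cite{ABZ3}, but using the paradifferential calculus in uniformly local Sobolev spaces developed in Section~\ref{sec.3}. Concretely, one writes
$$L \defn \partial_z^2 + \alpha \Delta_x + \beta \cdot \nabla_x \partial_z - \gamma \partial_z = (\partial_z - T_A)(\partial_z - T_a) + R_0,$$
where $a,A$ are symbols of order one arising from the two roots of the characteristic equation $\zeta^2 - i(\beta\cdot\xi - \gamma)\zeta - \alpha|\xi|^2 = 0$, and $R_0$ is a paradifferential remainder of order zero. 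Setting $w \defn (\partial_z - T_a) \widetilde v_k$, the equation for $\widetilde v_k$ reduces to a first-order transport equation in $z$ for $w$, which can be integrated from the top boundary $z=0$ (where the trace of $\widetilde v_k$ is $\chi_k \psi$), giving a bound on $w$ in $X^{\sigma+\mez}$. A second, elliptic, inversion of $(\partial_z - T_a)$ then recovers $\nabla_{x,z}\widetilde v_k$ in the same space.

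The right-hand side of the transport equation for $w$ is $\chi_k F + R_0 \widetilde v_k + [L,\chi_k]\widetilde v$ plus the Bony remainders coming from the replacement of $\alpha \Delta_x$, $\beta\cdot \nabla_x \partial_z$ and $\gamma \partial_z$ by $T_\alpha \Delta_x$, $T_\beta\cdot\nabla_x\partial_z$ and $T_\gamma \partial_z$. The tame structure of $(\mathcal{K}_\sigma)$ comes from estimating these remainders via the uniformly local tame paraproduct/remainder bound
$$\lA fg\rA_{H^{\sigma+\mez}_{ul}} \les \lA f\rA_{L^\infty_{ul}} \lA g\rA_{H^{\sigma+\mez}_{ul}} + \lA g\rA_{H^{s_0-1}_{ul}} \lA f\rA_{H^{\sigma+\mez}_{ul}}$$
from Section~\ref{sec.3}, combined with a Moser-type estimate for the nonlinear functions $\alpha, \beta, \gamma$ of $\nabla_{x,z}\rho$ which yields $\lA(\alpha,\beta)\rA_{H^{s-\mez}_{ul}} + \lA \gamma\rA_{H^{s-\frac{3}{2}}_{ul}} \le \mathcal F(\lA\eta\rA_{H^{s_0+\mez}_{ul}})(1 + \lA\eta\rA_{H^{s+\mez}_{ul}})$. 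This produces precisely the factor $(1 + \lA\eta\rA_{H^{s+\mez}_{ul}})$ multiplying the low $X^{s_0-1}_{ul}$-norm of $\nabla_{x,z}\widetilde v$, as required.

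The main obstacle is the commutator $[L,\chi_k]\widetilde v$: it is supported in a band of width $O(1)$ around $k$ and involves two derivatives of $\widetilde v$. It has to be rewritten, by integrating one derivative onto $\chi_k$, as a first-order expression in $\nabla_{x,z}\widetilde v$ with $O(1)$ coefficients supported near~$k$, and then estimated using the uniform-in-$k$ bounds of the paradifferential calculus. This is also the structural reason why the right-hand side of $(\mathcal{K}_\sigma)$ features the \emph{uniformly local} norm $\lA\nabla_{x,z}\widetilde v_k\rA_{X^{s_0-1}_{ul}(z_0,0)}$ (which controls $\widetilde v$ on the neighbouring cutoffs through $\widetilde\chi_q$ with $|q-k|\le 1$) rather than a purely local norm around~$k$, and why we start the induction at level $s_0-1$ rather than $-\mez$ as in Proposition~\ref{etape0}: the $s_0-1$ threshold is exactly what the tame paraproduct bound and the Sobolev embedding $H^{s_0-1}_{ul}\hookrightarrow L^\infty$ require.
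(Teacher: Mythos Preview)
Your overall plan is exactly the paper's: induction in half-steps starting from $\sigma = s_0 - 1$, paradifferential factoring of the localized elliptic operator, and tame product/remainder estimates (Lemma~\ref{est:produit}, Lemma~\ref{est-alpha} Case~2) to produce the $(1+\lA\eta\rA_{H^{s+\mez}_{ul}})$ factor. Two points in your sketch would not close as written, though.

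First, $\gamma$ cannot go into the principal symbol. By Lemma~\ref{est-alpha} one only has $\gamma \in X^{s_0-\frac{3}{2}}_{ul}$, and since merely $s_0>1+\frac{d}{2}$ is assumed, this need not even embed into $L^\infty$, let alone $W^{\mez,\infty}$; the seminorms $M^1_{\mez}(a),M^1_{\mez}(A)$ of your factored symbols would then be unbounded and the symbolic calculus behind Lemma~\ref{decouple} would fail. The paper instead moves $\gamma\,\partial_z(\chi_k\widetilde v)$ to the right as a forcing term $F_1$ (see~\eqref{eqtildephi}) and estimates it in $Y^{\sigma+\mez}$ by the product rule of Lemma~\ref{Houl} (Lemma~\ref{estF0F1}, Case~2); the operator that gets factored is only $\partial_z^2+\alpha\Delta_x+\beta\cdot\nabla_x\partial_z$.

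Second, the boundary-value scheme you describe does not close. With $w=(\partial_z-T_a)\widetilde v_k$ you know neither $w|_{z=0}$ (this would require $\partial_z\widetilde v_k|_{z=0}$, i.e.\ Neumann data) nor $w|_{z=z_0}$, so there is no endpoint from which to integrate. The paper's device is a cutoff $\theta$ in $z$ with $\theta(z_0)=0$ and $\theta\equiv 1$ on $[z_1,0]$: setting $\widetilde w_k=\theta(z)(\partial_z-T_A)\widetilde v_k$, one gets the \emph{forward}-parabolic problem $(\partial_z-T_a)\widetilde w_k=G$ with zero data at $z_0$, which is solved by Proposition~2.18 of \cite{ABZ3}; then on $[z_1,0]$ one solves the \emph{backward}-parabolic problem $(\partial_z-T_A)\widetilde v_k=\widetilde w_k$ with the known final data $\widetilde v_k|_{z=0}=\chi_k\psi$ (Proposition~2.19 of \cite{ABZ3}). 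The extra forcing $F_4=\theta'(z)(\partial_z-T_A)\widetilde v_k$ is supported in $(z_0,z_1)$ and is controlled by the induction hypothesis $(\mathcal K_\sigma)$. Both steps are parabolic evolutions with a genuine half-derivative gain, not transport or elliptic inversions.
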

  We shall prove these two results  by induction on $\sigma$ and by the same method. However we have to distinguish them since we want the right hand side of these estimates to be linear with respect to the higher norms of $(\psi,\eta)$. Since   $(\mathcal{H}_{-\mez})$ and $(\mathcal{K}_{s_0-1})$ are trivially satisfied if $\mathcal{F} \geq 1 $  these propositions will   be a consequence of the following one.   
 \begin{prop}\label{rec1}
 
   Case 1. \quad Let $s_0>1 + \frac{d}{2}$.    If $(\mathcal{H}_\sigma)$ is satisfied for some $-\mez \leq \sigma \leq s_0-1$ then $(\mathcal{H}_{\sigma+\mez})$ is true as long as $\sigma+ \mez \leq s_0-1$.
   
  Case 2.  \quad  Let $s_0>1 + \frac{d}{2} $ and $s\geq s_0$.     If $(\mathcal{K}_\sigma)$ is satisfied for some $ s_0-1  \leq \sigma \leq s -\mez $ then $(\mathcal{K}_{\sigma+\mez})$ is true as long as $\sigma+ \mez \leq s-\mez$.
 \end{prop}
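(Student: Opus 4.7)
The plan is the classical paradifferential factorization bootstrap for second-order elliptic equations, implemented in the uniformly local scale and iterated in $\mez$-derivative increments. Cases 1 and 2 run in parallel; only the bookkeeping of which factor carries the top regularity differs.

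\textbf{Localization.} Set $\widetilde{v}_k = \chi_k \widetilde{v}$, which satisfies \eqref{elliptique} with right-hand side $\chi_k F + [\alpha\Delta_x + \beta\cdot\nabla_x\partial_z,\chi_k]\widetilde{v}$ and trace $\chi_k\psi$ at $z=0$. The commutator is a first-order operator supported in $\supp \nabla_x\chi_k$, hence localized on finitely many neighbouring cubes with indices $q$ satisfying $|q-k|\le 1$, and can thus be bounded, uniformly in $k$, by the a priori control on $\nabla_{x,z}\widetilde{v}$ in $X^{-\mez}_{ul}$ (Case~1), respectively $X^{s_0-1}_{ul}$ (Case~2).

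\textbf{Factorization and bootstrap.} After paralinearizing the horizontal coefficients (writing $\alpha g = T_\alpha g + T_g\alpha + R(\alpha,g)$ and similarly for $\beta$), the symmetrized principal part factors as
\[
\partial_z^2 + T_\alpha\Delta_x + T_\beta\cdot\nabla_x\partial_z = (\partial_z - T_a)(\partial_z - T_A) + \mathcal{R}_0,
\]
where $A,a = \mez\bigl(-i\beta\cdot\xi \pm \sqrt{4\alpha|\xi|^2 - (\beta\cdot\xi)^2}\bigr)$ have real parts of opposite sign (the discriminant is positive by the form~\eqref{alpha} of $\alpha,\beta$ combined with $\partial_z\rho\ge h/2$ from~\eqref{rho>}), and $\mathcal{R}_0$ is of order $\le 0$ in the $H^{s_0-\mez}_{ul}$ paradifferential calculus built in Section~\ref{sec.3}. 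Introduce $w := (\partial_z - T_A)\widetilde{v}_k$; then $w$ satisfies $(\partial_z - T_a)w = G$, where $G$ collects $\chi_k F$, the commutator from Step~1, the Bony and $\mathcal{R}_0$ remainders and the $\gamma\partial_z\widetilde{v}_k$ term. Since $\RE a < 0$, this is forward parabolic in $z$, and the standard parabolic smoothing estimate, applied uniformly in $k$, delivers
\[
\Vert w\Vert_{X^{\sigma+\mez}(z_1',0)} \leq C\bigl(\Vert w(z_0)\Vert_{H^\sigma_{ul}} + \Vert G\Vert_{Y^\sigma_{ul}}\bigr)
\]
for some intermediate $z_0 < z_1' < z_1$: the source $G$ is absorbed by the inductive hypothesis together with $\Vert\psi\Vert_{H^{\sigma+1}_{ul}}$ and $\Vert F\Vert_{Y^\sigma_{ul}}$, while $\Vert w(z_0)\Vert_{H^\sigma_{ul}}$ is bounded by the a priori control on $\nabla_{x,z}\widetilde{v}$. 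Next, $(\partial_z - T_A)\widetilde{v}_k = w$ on $(z_1',0)$ with Dirichlet data $\chi_k\psi$ at $z=0$ is backward parabolic (since $\RE A > 0$), yielding
\[
\Vert \nabla_{x,z}\widetilde{v}_k\Vert_{X^{\sigma+\mez}(z_1,0)} \leq C\bigl(\Vert \chi_k\psi\Vert_{H^{\sigma+3/2}_{ul}} + \Vert w\Vert_{X^{\sigma+\mez}(z_1',0)}\bigr),
\]
which is exactly the conclusion $(\mathcal{H}_{\sigma+\mez})$, respectively $(\mathcal{K}_{\sigma+\mez})$.

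\textbf{Main obstacle.} The difficulty lies not in the factorization itself, which is classical, but in two accounting tasks. First, every paraproduct, commutator and parabolic-smoothing estimate must hold with a constant independent of $k\in\xZ^d$; this is precisely the purpose of the uniformly local paradifferential calculus developed in Section~\ref{sec.3}. Second, in Case~2 one must keep the right-hand side of $(\mathcal{K}_{\sigma+\mez})$ linear in $\Vert\eta\Vert_{H^{s+\mez}_{ul}}$. This is achieved by the systematic use of the Bony-type bilinear estimate
\[
\Vert fg\Vert_{L^2(J,H^\tau)_{ul}} \lesssim \Vert f\Vert_{L^\infty(J,H^{s_0-1})_{ul}}\Vert g\Vert_{X^\tau_{ul}} + \Vert g\Vert_{L^\infty(J,H^{s_0-1})_{ul}}\Vert f\Vert_{X^\tau_{ul}},
\]
arranging matters so that whenever a coefficient built from $\rho$ is measured at the top regularity $\Vert\eta\Vert_{H^{s+\mez}_{ul}}$, the companion factor drawn from $\widetilde{v}_k$ is measured at the safe level $X^{s_0-1}_{ul}$ granted by the a priori hypothesis~\eqref{-mez}.
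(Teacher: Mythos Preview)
Your strategy is the same as the paper's: localize, paralinearize, factor $(\partial_z-T_a)(\partial_z-T_A)$, and run a forward/backward parabolic bootstrap. The bookkeeping discussion in your ``Main obstacle'' paragraph is accurate. Two technical points in your write-up need fixing, however.

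\textbf{The forward parabolic step and the initial data.} Your displayed estimate
\[
\Vert w\Vert_{X^{\sigma+\mez}(z_1',0)} \leq C\bigl(\Vert w(z_0)\Vert_{H^\sigma_{ul}} + \Vert G\Vert_{Y^\sigma_{ul}}\bigr)
\]
has indices that are off by $\mez$: to land in $X^{\sigma+\mez}$ the parabolic estimate (Proposition~2.18 in \cite{ABZ3}) requires the source in $Y^{\sigma+\mez}$ and the initial data in $H^{\sigma+\mez}$. The source is fine, since Lemmas~\ref{estF0F1}, \ref{aTa}, \ref{decouple} do give $\Vert F_j\Vert_{Y^{\sigma+\mez}}$ controlled by the inductive hypothesis at level $\sigma$. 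But the inductive hypothesis only yields $w(z_0)\in H^\sigma$, not $H^{\sigma+\mez}$, so as written there is a gap. The paper closes it by introducing a cutoff $\theta(z)$ with $\theta(z_0)=0$, $\theta\equiv 1$ on $[z_1,0]$, and working with $\widetilde w_k=\theta(z)(\partial_z-T_A)\widetilde v_k$. This has \emph{zero} initial data at $z_0$; the price is the extra source term $F_4=\theta'(z)(\partial_z-T_A)\widetilde v_k$, supported away from $z=0$, which is bounded in $Y^{\sigma+\mez}$ precisely by the level-$\sigma$ inductive hypothesis (since $Y^{\sigma+\mez}\supset L^2(H^\sigma)$ and $\nabla_{x,z}\widetilde v_k\in X^\sigma\subset L^\infty(H^\sigma)$). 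Your interval shrinking $(z_0,0)\to(z_1',0)\to(z_1,0)$ is exactly the trace of this mechanism, but you need to say how the shrinking is implemented.

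\textbf{The order of the factorization remainder.} The remainder $\mathcal R_0$ in $(\partial_z-T_a)(\partial_z-T_A)=\partial_z^2+T_\alpha\Delta_x+T_\beta\cdot\nabla_x\partial_z+\mathcal R_0$ is not of order $\le 0$: it is of order $3/2$ (see Lemma~\ref{decouple}), since the symbolic calculus at $W^{1/2,\infty}$ regularity of the coefficients only gains half a derivative. This is still enough, because applied to $\widetilde v_k$ (with $\nabla_x\widetilde v_k\in L^2(H^{\sigma+\mez})$ by the inductive hypothesis) it lands in $L^2(H^\sigma)\subset Y^{\sigma+\mez}$, which is all that is required.
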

 In the sequel, case 1 will refer to Proposition \ref{etape0} and case 2 to Proposition \ref{etape1}. 
 \subsection{Non linear estimates}
 We begin by estimating the coefficients $\alpha, \beta, \gamma,$ defined in~\eqref{alpha}. We set $J= (z_0,0)$.  
 \begin{lemm}\label{est-alpha}
 Case 1. \quad Let $s_0>1 + \frac{d}{2}$.    Then there exists   $\mathcal{F}:\xR^+ \to \xR^+$ non decreasing such that
  \begin{equation*}
    \Vert \alpha \Vert_{X^{s_0-\mez}_{ul}(J)}  +  \Vert \beta \Vert_{X^{s_0-\mez}_{ul}(J)} + \Vert \gamma \Vert_{X^{s_0-\frac{3}{2}}_{ul}(J)} \\
   \leq \mathcal{F}\big(\Vert  \eta \Vert_{ H^{s_0+ \mez}_{ul}}\big).
        \end{equation*}
        
 Case 2. \quad Let $s_0>1 + \frac{d}{2}. $ Then for   $s\geq s_0$ there exists   $\mathcal{F}_1:\xR^+ \to \xR^+$ non decreasing such that
\begin{equation*}
\Vert \alpha \Vert_{X^{s -\mez}_{ul}(J)}  
+  \Vert \beta \Vert_{X^{s -\mez}_{ul}(J)} + \Vert \gamma \Vert_{X^{s -\frac{3}{2}}_{ul}(J)}  
   \leq \mathcal{F}_1\big(\Vert  \eta \Vert_{ H^{s_0+ \mez}_{ul}}\big) 
   \bigl( \Vert \eta \Vert_{H^{s+\mez}_{ul}} +  1\bigr).
   \end{equation*}
 \end{lemm}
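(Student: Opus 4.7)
The plan is first to control the derivatives of the regularized surface $\rho$ in the uniformly local scale, then to feed these bounds into the nonlinear combinations defining $\alpha$, $\beta$ and $\gamma$ via product and composition estimates developed in Section~\ref{sec.3}.

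First I would establish the core linear estimate on $\rho$. Writing $\rho$ as in~\eqref{rho}, for $z\in J=(z_0,0)\subset(-1,0)$ the symbols $e^{\delta z\langle\xi\rangle}$ and $e^{-\delta(1+z)\langle\xi\rangle}$ are smoothing, and every $\partial_z$ produces a factor $\pm\delta\langle D_x\rangle$ acting on $\eta$. A direct Fourier computation then gives, for any integer $m\ge 0$,
$$
\lA \partial_z^m \nabla_{x,z}\rho\rA_{X^{\sigma}_{ul}(J)}\le C_m \lA \eta\rA_{H^{\sigma+m+\mez}_{ul}(\xR^d)},
$$
the key point being that $z$ stays away from $0$ or from $-1$ depending on which exponential is differentiated, so the $L^2_z$ integral of the squared symbol is finite and uniform in $x$-frequency. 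In particular one obtains, in Case 1, $\nabla_{x,z}\rho\in X^{s_0-\mez}_{ul}(J)$ and $\partial_z^2\rho,\,\nabla_x\partial_z\rho,\,\Delta_x\rho \in X^{s_0-\tdm}_{ul}(J)$, with norms bounded by $\mathcal{F}(\lA\eta\rA_{H^{s_0+\mez}_{ul}})$; in Case 2 the analogous bounds are linear in $\lA\eta\rA_{H^{s+\mez}_{ul}}$ with coefficient $\mathcal{F}(\lA\eta\rA_{H^{s_0+\mez}_{ul}})$.

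Second, I would invoke the paraproduct machinery from Section~\ref{sec.3}. Since $s_0-\mez>d/2$, the space $H^{s_0-\mez}_{ul}(\xR^d)$ is an algebra, whence $X^{s_0-\mez}_{ul}(J)$ is closed under products; and the standard composition estimate applied to the smooth function $u\mapsto 1/(1+|u|^2)$ and, using the lower bound $\partial_z\rho\ge h/2$ from~\eqref{rho>}, to a truncated version of $u\mapsto 1/u$, gives
$$
\BlA (1+|\nabla_x\rho|^2)^{-1}\BrA_{X^{s_0-\mez}_{ul}(J)}
+\BlA (\partial_z\rho)^{-1}\BrA_{X^{s_0-\mez}_{ul}(J)} \le \mathcal{F}(\lA\eta\rA_{H^{s_0+\mez}_{ul}}).
$$
Combined with the product estimate this immediately bounds $\alpha$ and $\beta$ in $X^{s_0-\mez}_{ul}(J)$. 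For $\gamma$, each of the three terms $\partial_z^2\rho$, $\alpha\Delta_x\rho$ and $\beta\cdot\nabla_x\partial_z\rho$ lies in $X^{s_0-\tdm}_{ul}(J)$, and multiplying by $(\partial_z\rho)^{-1}\in X^{s_0-\mez}_{ul}(J)$ preserves this class because $s_0-\mez>d/2$ suffices for the paraproduct estimate at regularity $s_0-\tdm$.

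Case 2 follows from exactly the same scheme, replacing the algebra estimate by its tame version
$$
\lA fg\rA_{X^\sigma_{ul}(J)}\le C\bigl(\lA f\rA_{L^\infty(J\times\xR^d)}\lA g\rA_{X^\sigma_{ul}(J)}+\lA g\rA_{L^\infty(J\times\xR^d)}\lA f\rA_{X^\sigma_{ul}(J)}\bigr),
$$
and its analogue for the composition estimate (Moser inequality), controlling the low $L^\infty$ norms by $\lA\eta\rA_{H^{s_0+\mez}_{ul}}$ thanks to the Sobolev embedding $H^{s_0-\mez}_{ul}\hookrightarrow L^\infty$. The main obstacle is purely a bookkeeping one: one must check that in the expression of $\gamma$ no single factor is asked to carry two losses of derivative simultaneously; the tame inequality above is exactly what makes this work, because whenever the top norm $\lA\eta\rA_{H^{s+\mez}_{ul}}$ appears on one factor, all other factors only need to be bounded in the low norm $\lA\eta\rA_{H^{s_0+\mez}_{ul}}$.
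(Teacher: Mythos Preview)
Your overall plan matches the paper's: first control derivatives of $\rho$ via the smoothing estimate (this is exactly \eqref{gradrh}, coming from Corollary~\ref{e^z}), then build $\alpha,\beta,\gamma$ by product and composition rules. Two minor points: the smoothing is not because ``$z$ stays away from $0$ or $-1$'' but because the heat-type semigroup gains half a derivative in $L^2_z$; and the composition estimate \eqref{est:prod3} requires $F(0)=0$, so one should write $\alpha=(\partial_z\rho)^2\bigl(1-G_1(\nabla_x\rho)\bigr)$ with $G_1(\xi)=|\xi|^2/(1+|\xi|^2)$ (and similarly for $1/\partial_z\rho$, subtracting the constant value at $\nabla_{x,z}\rho=(\,\cdot\,,h)$), which is how the paper proceeds.

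There is however a genuine gap in your treatment of $\gamma$ in Case~2. The symmetric tame product estimate you display,
\[
\lA fg\rA_{X^\mu_{ul}}\le C\bigl(\lA f\rA_{L^\infty}\lA g\rA_{X^\mu_{ul}}+\lA g\rA_{L^\infty}\lA f\rA_{X^\mu_{ul}}\bigr),
\]
requires both factors to be in $L^\infty$. But $\gamma$ involves second-order factors such as $\Delta_x\rho$, $\partial_z^2\rho$, $\nabla_x\partial_z\rho$, and at $z=0$ one has $\Delta_x\rho=\Delta_x\eta\in H^{s_0-\frac{3}{2}}_{ul}$, which is \emph{not} embedded in $L^\infty$ under the sole hypothesis $s_0>1+\frac{d}{2}$. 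So the term $\lA \Delta_x\rho\rA_{L^\infty}\lA \alpha\rA_{X^{s-\frac{3}{2}}_{ul}}$ cannot be controlled by $\mathcal{F}(\lA\eta\rA_{H^{s_0+\frac{1}{2}}_{ul}})$ times a linear quantity. The paper fixes this by using instead the asymmetric product estimate \eqref{est:prod2},
\[
\lA fg\rA_{X^\mu_{ul}}\le C\bigl(\lA f\rA_{L^\infty(J,H^{s_0-1})_{ul}}\lA g\rA_{X^\mu_{ul}}+\lA g\rA_{L^\infty(J,H^{s_0-\frac{3}{2}})_{ul}}\lA f\rA_{X^{\mu+\frac{1}{2}}_{ul}}\bigr),
\]
which allows the rough second-order factor to sit in $H^{s_0-\frac{3}{2}}$ (equivalently $C_*^{-\frac{1}{2}}$) rather than $L^\infty$, at the price of asking half a derivative more on the smooth factor---a price one can pay since $\alpha,\beta,1/\partial_z\rho$ live in $X^{s-\frac{1}{2}}_{ul}$. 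Once you replace your symmetric tame estimate by \eqref{est:prod2} for the $\gamma$ terms, your argument goes through and coincides with the paper's.
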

\begin{proof}
 Since 
 \begin{equation*}
    \rho  =   (1+z)e^{\delta z \langle D_x\rangle}  \eta -z( e^{-\delta (1+z) \langle D_x\rangle}  \eta -h)
    \end{equation*}
    Lemma \ref{e^z} shows that that for all $t \in \xR, k \in \xN$ and all $a \in S^m_{1,0}$ we have  
    \begin{equation}\label{gradrh}
   \Vert \partial_z^k a(D) \rho \Vert_{X^t_{ul}(J)} \leq C (1+\Vert  \eta \Vert_{H^{t+m+k}_{ul} \times H^{t+m+k}_{ul}}).
  \end{equation}
Then according to~\eqref{alpha} we write
$$
\alpha = (\partial_z \rho)^2 - (\partial_z \rho)^2 G_1(\nabla_x \rho), \quad G_1(\xi) = \frac{ \vert \xi  \vert^2}{1+ \vert\xi \vert^2}.
$$
Case 1:\quad The estimate for $\alpha$ follows from \eqref{est:prod1} and \eqref{est:prod3} with $\mu = s_0-\mez$. 
The estimate  for $\beta$ is similar. 
Now we can write
$$
\gamma
= \frac{\partial^2_z \rho}{\partial_z \rho}
+\bigr(\partial_z \rho - (\partial_z \rho) G_1(\nabla_x \rho)\bigl) \Delta_x \rho 
+ G_2(\nabla_x \rho) \cdot\nabla_x \partial_z \rho, \quad G_2(\xi)
= \frac{\xi}{1+ \vert \xi \vert^2}.
$$
To estimate $\gamma$ we first use  the embedding
$$
X^{s_0-\mez}_{ul} \times X^{s_0-\frac{3}{2}}_{ul} \subset X^{s_0-\frac{3}{2}}_{ul}
$$
which is a consequence of Lemma~\ref{Houl} with $p=+\infty,\sigma_0 =\sigma_1 = s_0-\frac{3}{2}, \sigma_2 = s_0-\mez$ and $p =2, \sigma_0 = \sigma_2 = s_0-1, \sigma_1 = s_0-\mez$. Then we use \eqref{est:prod3} and \eqref{gradrh}.
   Case 2:\quad The estimates of $\alpha$ and $\beta$ follow from \eqref{est:prod1} and \eqref{est:prod3} with $\mu = s-\mez $ and from \eqref{gradrh}. The estimate of $\gamma$ follow from  \eqref{est:prod2} with  $\mu = s-\mez$  and \eqref{gradrh} with $t= s_0-\frac{3}{2}, m+k=2$.  
      \end{proof}
  According to~\eqref{elliptique} we have
  \begin{equation}\label{eqtildephi}
  \left\{
     \begin{aligned}
  &(\partial_z^2 + \alpha \Delta_x + \beta\cdot \nabla_x \partial_z) (\chi_k \widetilde{ v}) = \chi_kF+F_0 +F_1\\
  & F_0 := \alpha \nabla_x \chi_k \cdot \nabla_x\widetilde{v} + \beta\cdot  \nabla_x \chi_k \cdot \partial_z \widetilde{v}\\
  &F_1:= \gamma \chi_k \partial_z \widetilde{v}.
\end{aligned}
\right.
  \end{equation}
 \begin{lemm}\label{estF0F1}
 Case 1. Let $s_0>1+\frac{d}{2}$.  There exists    $\mathcal{F}:\xR^+ \to \xR^+$  non decreasing  such that  for $-\mez\leq \sigma \leq s_0-1 $ with $\sigma+\mez \leq s_0-1$ 
 $$ \sum_{j=0}^1\Vert F_j \Vert_{Y^{\sigma + \mez}(J)} \leq    \mathcal{F}\big(\Vert  \eta \Vert_{H^{s_0+\mez}_{ul} }\bigr)  \Vert \nabla_{x,z} \widetilde{v}\Vert_{X^\sigma_{ul}(J)}.$$
 
 Case 2. Assume $s_0> 1 + \frac{d}{2}. $  Then for all $s \geq s_0 $   there exists      $\mathcal{F}:\xR^+ \to \xR^+$ non decreasing such that  for $s_0-1 \leq \sigma \leq s-\mez $  with $\sigma+\mez \leq s -\mez$ we have
 \begin{equation*}
 \begin{aligned}
  \sum_{j=0}^1\Vert F_j& \Vert_{Y^{\sigma + \mez}(J)} \\
  &\leq    \mathcal{F}\big(\Vert  \eta \Vert_{   H^{s_0+\mez}_{ul} }\bigr) \Big\{ \Vert \nabla_{x,z} \widetilde{v}\Vert_{X^\sigma_{ul}(J)}+ \bigl(\Vert \eta \Vert_{H^{s+\mez}_{ul}} +1)\Vert \nabla_{x,z} \widetilde{v}\Vert_{X^{s_0-1}_{ul}(J)}\Big\}.
  \end{aligned}
  \end{equation*}
\end{lemm}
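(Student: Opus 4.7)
The plan is to estimate each of the terms in $F_0$ and $F_1$ by combining Lemma~\ref{est-alpha} for the coefficients $\alpha,\beta,\gamma$ with the uniformly local product estimates \eqref{est:prod1}--\eqref{est:prod3} and Proposition~\ref{Ho}. The cutoffs $\chi_k$ and $\nabla_x\chi_k$ are uniformly bounded in $C_0^\infty(\xR^d)$ in $k$, and act as multipliers on the $H^t_{ul}$ scale with constants independent of $k$; they may therefore be absorbed. The task thus reduces to controlling products of the form $c(x,z)\,\partial^{\star}\widetilde{v}$ (where $c$ is one of $\alpha,\beta,\gamma$ and $\partial^{\star}$ is $\nabla_x$ or $\partial_z$) in the $Y^{\sigma+\mez}(J)$ norm, which I would embed into $L^2(J,H^\sigma)_{ul}$.

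For Case~1, Lemma~\ref{est-alpha} provides uniform-in-$\eta$ control of $\alpha,\beta$ in $L^\infty(J,H^{s_0-\mez})_{ul}$ and of $\gamma$ in $L^\infty(J,H^{s_0-\tdm})_{ul}$. Since $s_0>1+\frac{d}{2}$, both $s_0-\mez$ and $s_0-\tdm$ exceed the Sobolev threshold required for these coefficients to act as multipliers at the scale of $\sigma+\mez\le s_0-1$. Proposition~\ref{Ho} applied with second index $\sigma+\mez$ then delivers an inequality of the form
\[
\Vert c\cdot \partial^{\star}\widetilde{v}\Vert_{L^2(J,H^\sigma)_{ul}}\leq \mathcal{F}\bigl(\Vert\eta\Vert_{H^{s_0+\mez}_{ul}}\bigr)\Vert\nabla_{x,z}\widetilde{v}\Vert_{X^\sigma_{ul}(J)},
\]
which, together with the harmless cutoff factors, yields the Case~1 estimate.

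For Case~2 I would use the tame version of the product rule. One splits each product into two pieces: the first puts the coefficient at its critical $s_0$-level (by Lemma~\ref{est-alpha} this is absorbed into $\mathcal{F}(\Vert\eta\Vert_{H^{s_0+\mez}_{ul}})$) and $\partial^{\star}\widetilde{v}$ at the top $X^\sigma_{ul}$-level; the second puts the coefficient at its top $s$-level (so Lemma~\ref{est-alpha} contributes the linear factor $\Vert\eta\Vert_{H^{s+\mez}_{ul}}+1$) and $\partial^{\star}\widetilde{v}$ at the low $X^{s_0-1}_{ul}$-level. Summing these contributions reproduces precisely the right-hand side of the claim.

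The main obstacle is the $F_1$ term, because $\gamma$ costs an additional half-derivative compared to $\alpha,\beta$. Verifying the index arithmetic at the endpoints $\sigma+\mez=s_0-1$ in Case~1 and $\sigma+\mez=s-\mez$ in Case~2, with $\gamma$ only in $X^{s_0-\tdm}_{ul}$ or $X^{s-\tdm}_{ul}$, is the delicate point; the hypothesis $s_0>1+\frac{d}{2}$ is decisive here because it ensures that $s_0-\tdm$ remains above the algebra threshold $\frac{d}{2}-\mez$ needed for the relevant instance of \eqref{est:prod2}. Once this is verified, the rest is a routine combination of Lemma~\ref{est-alpha} with the product rule.
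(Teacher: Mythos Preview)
Your approach is correct and essentially the same as the paper's: reduce to $L^2(J,H^\sigma)$, invoke Lemma~\ref{est-alpha} for the coefficients, and apply the product rules (Proposition~\ref{Ho}/Lemma~\ref{Houl} in Case~1, the tame estimate~\eqref{est:prod2} in Case~2), with the $F_1$ term being the worst because $\gamma$ sits half a derivative lower. The only minor difference is in the distribution of the $z$-integrability in Case~1: the paper places $\gamma$ in $L^2(J,H^{s_0-1})$ (the $L^2$-part of $X^{s_0-\tdm}_{ul}$) and $\partial_z\widetilde v$ in $L^\infty(J,H^\sigma)$, whereas your wording suggests the symmetric choice $\gamma\in L^\infty(J,H^{s_0-\tdm})$, $\partial_z\widetilde v\in L^2(J,H^{\sigma+\mez})$; both satisfy the index constraints of Proposition~\ref{Ho} under $s_0>1+\frac d2$ and $\sigma+\mez\le s_0-1$.
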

\begin{proof}
Case 1: The terms $F_0$ and $F_1$ have the same structure but $F_1$ is worse since, according to Lemma \ref{est-alpha}, $\gamma $ is bounded in a weaker norm.
 

Let us look at the term $F_1$. We can use Proposition~\ref{Houl} with $p=2, \sigma_0 = \sigma, \sigma_1 = s_0-1, \sigma_2 = \sigma. $ Indeed we have $\sigma_1 +\sigma_2 >0$ since $s_0> 1+ \frac{d}{2}, \sigma_0 = \sigma_1$, $\sigma_0 \leq \sigma_2$ due to the definition of $\sigma,$ eventually $\sigma_0< \sigma_1 +\sigma_2 -\frac{d}{2}$  since $s_0>1+\frac{d}{2}$. We obtain
$$ \Vert  \gamma \chi_k \partial_z  \widetilde{v}\Vert_{L^2(J, H^{\sigma } )} \leq  \Vert  \widetilde{\chi}_k \gamma  \Vert_{L^2(J, H^{s_0-1} )}\Vert  \chi_k \partial_z  \widetilde{v}\Vert_{L^\infty(J, H^{\sigma} )}$$
and we use Lemma~\ref{est-alpha} to conclude.

Case 2: Using \eqref{est:prod2} with $\mu = \sigma-\mez$ we obtain
$$ \Vert  \gamma \chi_k \partial_z \widetilde{v}\Vert_{L^2(J, H^\sigma)}  \leq C\bigl( \Vert \gamma \Vert_{X_{ul}^{\sigma-\mez}} \Vert \partial_z \widetilde{v}\Vert_{X^{s_0-1}_{ul}} + \Vert \gamma \Vert_{X_{ul}^{s_0-\frac{3}{2}}} \Vert \partial_z \widetilde{v}\Vert_{X^\sigma_{ul}}\bigr). $$ 
Since $\sigma-\mez \leq s - \frac{3}{2}$ we can use Lemma \ref{est-alpha} to conclude.
\end{proof}
Our next step is to replace the multiplication by $\alpha$  (resp. $\beta$) by the paramultiplication by $T_\alpha$ (resp$.T_\beta)$.  
 Recall that  we see that the equation ~\eqref{eqtildephi} can be written as
  \begin{equation}\label{eqtildephik}
  (\partial_z^2 +  {\alpha}  \Delta_x +  {\beta} \cdot \nabla_x \partial_z)  \widetilde{v}_k = F+ F_0 +F_1.
  \end{equation}
  Then we have the following result.
  \begin{lemm}\label{aTa}
  Let $J =(z_0,0), s_0> 1 + \frac{d}{2} $ and $s\geq s_0$. There exists   $\mathcal{F}:\xR^+ \to \xR^+$ non decreasing such that, for all $I\subset J,$ $\tilde{v}_k$ satisfies the paradifferential equation
  \begin{equation}\label{eqpara}
   (\partial_z^2 +  T_{{\alpha} } \Delta_x +  T_{{\beta} }\cdot \nabla_x \partial_z) \widetilde{v}_k = F+ F_0 +F_1+F_2
\end{equation}
for some remainder $F_2$ satisfying

Case1: if \quad $0\leq \sigma \leq s_0-1$ with $\sigma + \mez \leq s_0-1$  
\begin{equation}\label{F2}
\Vert F_2\Vert_{Y^{\sigma + \mez}(I)} \leq    \mathcal{F}\big(\Vert  \eta \Vert_{ H^{s_0+ \mez}_{ul}}\big) \Vert \nabla_{x,z}\widetilde{v}  \Vert_{X^{\sigma}_{ul}(I)}.
\end{equation}
Case 2. if \quad $s_0-1 \leq \sigma \leq s-\mez$  with $\sigma + \mez \leq s -\mez$    
\begin{equation}\label{F3}
\Vert F_2\Vert_{Y^{\sigma + \mez}(I)} \leq  \mathcal{F}\big(\Vert  \eta \Vert_{   H^{s_0+\mez}_{ul} }\bigr)    \bigl(\Vert \eta \Vert_{H^{s+\mez}_{ul}} +1)\Vert \nabla_{x,z} \widetilde{v}\Vert_{X^{s_0-1}_{ul}(J)}.
\end{equation}

 \begin{proof}
Case 1: Using Proposition \ref{a-Ta} with $\gamma = \sigma, r = s_0-\mez, \mu = \sigma- \mez$ which satisfy all the conditions   we obtain 
 \begin{equation*} 
\begin{aligned}
&\Vert (\alpha -T_\alpha)\Delta_x\widetilde{v}_k  \Vert_{L^2(J,H^{\sigma } )}  
 \leq C\Vert \alpha  \Vert_{L^\infty(J, H^{s_0-\mez} )_{ul}}\Vert \nabla_{x,z}\widetilde{v}_k  \Vert_{L^2(J, H^{\sigma+\mez} )},\\
&\Vert (\beta -T_\beta)\cdot \nabla_x \partial_z \widetilde{v}_k \Vert_{L^2(J,H^{\sigma } )}  
 \leq C \Vert \beta  \Vert_{L^\infty(J, H^{s_0-\mez} )_{ul}} \Vert \nabla_{x,z}\widetilde{v}_k  \Vert_{L^2(J, H^{\sigma+\mez} )}.
\end{aligned}
\end{equation*}
The result follows then from Lemma \ref{est-alpha}.

Case 2. By  Theorem 2.10 in \cite{ABZ3} we have the following estimate for $\sigma >0$
$$\Vert (\alpha -T_\alpha)u\Vert_{H^\sigma} \leq C \Vert u \Vert _{C_*^{-\mez}} \Vert \alpha \Vert_{H^{\sigma+\mez}}.$$
Let $\widetilde{\chi} \in C_0^\infty(\xR^d)$ equal to one on the support of $\chi$. We write
$$(\alpha -T_\alpha)\Delta_x \widetilde{v}_k = (\widetilde{\chi}_k\alpha -T_{\widetilde{\chi}_k\alpha})\Delta_x \widetilde{v}_k +T_{(\widetilde{\chi}_k -1)\alpha}\Delta_x \widetilde{v}_k .$$
It follows from Lemma \eqref{techpara} and the above inequality that 
$$ \Vert (\alpha -T_\alpha)\Delta_x \widetilde{v}_k \Vert_{H^\sigma} \leq C \Vert \Delta_x \widetilde{v}_k \Vert_{C_*^{-\mez}} \Vert \widetilde{\chi}_k\alpha \Vert_{H^{\sigma+\mez}}.$$
Since $H^{s_0-\mez} \subset C_*^{\mez}$ and $\sigma \leq s-1$ we obtain  
$$ \Vert (\alpha -T_\alpha)\Delta_x \widetilde{v}_k \Vert_{L^2(J,H^\sigma)} \leq C \Vert \nabla_x \widetilde{v}  \Vert_{X_{ul}^{s_0-1}(J)} \Vert  \alpha \Vert_{ X_{ul}^{s-\mez}(J)} $$
wich in view of Lemma \ref{est-alpha} Case 2, proves \eqref{F3}.
  \end{proof}
   \end{lemm}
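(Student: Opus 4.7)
The plan is first to identify $F_2$ algebraically, then to estimate each of its two constituent pieces by paraproduct calculus, with different tactics for the two cases. Subtracting \eqref{eqpara} from \eqref{eqtildephik} gives
\begin{equation*}
F_2 = (\alpha - T_\alpha)\Delta_x \widetilde{v}_k + (\beta - T_\beta)\cdot\nabla_x \partial_z \widetilde{v}_k,
\end{equation*}
so the whole task reduces to bounding two Bony remainders in $L^2(I, H^{\sigma + \mez})$.

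For Case~1, the right-hand side of \eqref{F2} already contains the norm $\Vert \nabla_{x,z}\widetilde{v}\Vert_{X^\sigma_{ul}(I)}$, so I can afford to absorb a half derivative into $\widetilde{v}_k$. The natural tool is the standard paraproduct remainder estimate from Proposition~\ref{a-Ta}, namely
\begin{equation*}
\Vert(a - T_a) u\Vert_{H^\sigma} \leq C \Vert a\Vert_{H^{s_0 - \mez}} \Vert u\Vert_{H^{\sigma + \mez}},
\end{equation*}
applied with $u = \Delta_x \widetilde{v}_k$ or $u = \nabla_x \partial_z \widetilde{v}_k$ (the constraint $\sigma + \mez \leq s_0 - 1$ is exactly what makes the relevant indices compatible). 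Time-integrating in $z$ and combining with the $X^{s_0 - \mez}_{ul}(J)$ bounds for $\alpha$ and $\beta$ from Lemma~\ref{est-alpha} Case~1 produces \eqref{F2}.

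For Case~2, the bound \eqref{F3} only allows the norm $\Vert \nabla_{x,z}\widetilde{v}\Vert_{X^{s_0-1}_{ul}(J)}$, so no derivative can be absorbed into $\widetilde{v}_k$. I would instead swap the roles and place the extra derivative on the coefficient, using the dual Bony estimate (essentially Theorem~2.10 of \cite{ABZ3}),
\begin{equation*}
\Vert(a - T_a) u\Vert_{H^\sigma} \leq C \Vert u\Vert_{C^{-\mez}_*} \Vert a\Vert_{H^{\sigma + \mez}} \quad (\sigma > 0).
\end{equation*}
Since $\alpha, \beta$ live only in uniformly local spaces, I would localize by a cut-off $\widetilde{\chi}_k$ equal to $1$ on $\supp \chi_k$, write $\alpha = \widetilde{\chi}_k \alpha + (1 - \widetilde{\chi}_k)\alpha$, and apply the dual estimate to $\widetilde{\chi}_k \alpha \in H^{\sigma+\mez}$. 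The off-diagonal piece $T_{(1 - \widetilde{\chi}_k)\alpha} \Delta_x \widetilde{v}_k$ has spatially disjoint factors and is absorbed via a technical paraproduct lemma such as Lemma~\ref{techpara}. The Sobolev embedding $H^{s_0 - \mez}_{ul} \hookrightarrow C^{\mez}_*$ (which uses $s_0 > 1 + d/2$) gives
\begin{equation*}
\Vert\Delta_x \widetilde{v}_k\Vert_{C^{-\mez}_*} + \Vert\nabla_x \partial_z \widetilde{v}_k\Vert_{C^{-\mez}_*} \leq C \Vert\nabla_{x,z}\widetilde{v}\Vert_{X^{s_0-1}_{ul}(J)},
\end{equation*}
and Lemma~\ref{est-alpha} Case~2 controls $\Vert\widetilde{\chi}_k \alpha\Vert_{H^{\sigma + \mez}}$ linearly in $\Vert\eta\Vert_{H^{s + \mez}_{ul}}$, producing \eqref{F3}.

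The main obstacle is Case~2, where linearity in the highest norm of $\eta$ is non-negotiable for the subsequent energy estimates. This constraint forces the reversal of roles in the Bony remainder, and the genuinely new technical step compared with the global $H^s(\xR^d)$ theory of \cite{ABZ3} is to handle the absence of spatial decay of $\alpha$ and $\beta$ through auxiliary cut-offs and disjoint-support paraproduct estimates, while preserving the linear dependence on the high norm of $\eta$.
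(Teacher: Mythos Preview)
Your approach is essentially identical to the paper's: you correctly identify $F_2$, use Proposition~\ref{a-Ta} for Case~1 and the dual Bony estimate (Theorem~2.10 of \cite{ABZ3}) with the $\widetilde\chi_k$ localization and Lemma~\ref{techpara} for Case~2, exactly as the authors do. One minor index slip: in Case~1 you should target $L^2(I,H^\sigma)\subset Y^{\sigma+\mez}(I)$ (not $L^2(I,H^{\sigma+\mez})$), i.e.\ apply Proposition~\ref{a-Ta} with $\mu=\sigma-\tfrac12$ so that $\Vert\Delta_x\widetilde v_k\Vert_{H^{\sigma-1/2}}\le\Vert\nabla_x\widetilde v_k\Vert_{H^{\sigma+1/2}}$ is what gets controlled by $\Vert\nabla_{x,z}\widetilde v\Vert_{X^\sigma_{ul}(I)}$.
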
 
  Then as in~\cite{ABZ3} we perform a decoupling in a forward and a backward parabolic evolution equations. Recall that $\eta \in H^{s+\mez}_{ul}(\xR^d),$ in particular $\eta \in W^{\tdm,\infty}(\xR^d)$. We can apply Lemma 3.29 in~\cite{ABZ3} to obtain the following result.
\begin{lemm}\label{decouple}
 Let $s_0> 1+ \frac{d}{2}$. There exist two symbols $a , A $ in $\Gamma^1_{\mez}(\xR^d\times J),$  $ \mathcal{F}: \xR^+ \to \xR^+$ non decreasing and a remainder term $F_3$ such that
\begin{equation}\label{decouple1}
(\partial_z - T_{a(z)})(\partial_z-T_{A(z)})\widetilde{v}_k =F+ F_0 +F_1 +F_2 +F_3
\end{equation}
with
\begin{equation}\label{estsymb}
\sup_{z\in (-1,0)} \big(\mathcal{M}^1_{\mez}(a(z)) + \mathcal{M}^1_{\mez}(A(z))\big)  \leq \mathcal{F}(\Vert \eta \Vert_{H^{s_0+\mez}_{ul}(\xR^d)})
\end{equation}
and
\begin{equation}\label{estF3}
\Vert F_3\Vert_{Y^{\sigma +\mez}(J)} \leq \mathcal{F}(\Vert \eta \Vert_{H^{s_0+\mez}_{ul}(\xR^d)})\Vert \nabla_{x,z}\widetilde{v}  \Vert_{X^{\sigma}_{ul}(J)} 
\end{equation}
for all $\sigma \in \xR$.
\end{lemm}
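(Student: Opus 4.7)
The plan is to carry out the parabolic factorization of the elliptic paradifferential operator in \eqref{eqpara} exactly as in Lemma~3.29 of \cite{ABZ3}, the only novelty being that the coefficient estimates are read off in the uniformly local scale. I would start by solving in $\partial_z$ the characteristic equation of the principal symbol $\zeta^2 + i(\beta\cdot\xi)\zeta - \alpha\vert\xi\vert^2 = 0$ and setting
\[
a = \mez\bigl(-i\beta\cdot\xi - \sqrt{4\alpha\vert\xi\vert^2 - (\beta\cdot\xi)^2}\bigr),\qquad A = \mez\bigl(-i\beta\cdot\xi + \sqrt{4\alpha\vert\xi\vert^2 - (\beta\cdot\xi)^2}\bigr),
\]
smoothly truncated for $\vert\xi\vert \leq 1$ to remove the singularity of the square root at the origin. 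The explicit form \eqref{alpha} together with the bound \eqref{rho>} guarantees the ellipticity $4\alpha\vert\xi\vert^2 - (\beta\cdot\xi)^2 \geq c \vert\xi\vert^2$ for $\vert\xi\vert \geq 1$, so that $a, A$ are genuine symbols of order one with $\RE A \geq c\vert\xi\vert \geq 0$ and $\RE a \leq -c\vert\xi\vert \leq 0$, yielding the forward/backward parabolic splitting.

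For the semi-norm estimate \eqref{estsymb}, I would apply Lemma~\ref{est-alpha} (Case~1) to bound $\alpha,\beta$ in $X^{s_0-\mez}_{ul}(J)$ by $\mathcal{F}(\Vert \eta\Vert_{H^{s_0+\mez}_{ul}})$. Since $s_0 > 1+d/2$, the uniformly local Sobolev embedding gives $\alpha, \beta$ uniformly in $L^\infty\bigl(J, W^{\mez,\infty}(\xR^d)\bigr)$, and since $a, A$ are smooth functions of $(\alpha,\beta,\xi)$ positively homogeneous of degree one in $\xi$ for $\vert\xi\vert \geq 1$, their $\mathcal{M}^1_{\mez}$ semi-norms are uniformly bounded in $z \in J$ by a function of $\Vert \eta\Vert_{H^{s_0+\mez}_{ul}}$.

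Next I would expand
\[
(\partial_z - T_a)(\partial_z - T_A)\widetilde{v}_k = \partial_z^2 \widetilde{v}_k - (T_a + T_A)\partial_z \widetilde{v}_k + T_a T_A \widetilde{v}_k - T_{\partial_z A}\widetilde{v}_k
\]
and subtract \eqref{eqpara}. Since $a + A = -i\beta\cdot\xi$ and $aA = -\alpha \vert\xi\vert^2$, the paradifferential symbolic calculus of Section~\ref{sec.3} gives $T_{a+A} = -T_\beta \cdot \nabla_x + R_1$ and $T_a T_A = -T_\alpha \Delta_x + R_2$, where $R_1$ is of order zero and $R_2$ is of order one gaining $\mez$ derivative on its argument, both with operator norms controlled by $\mathcal{F}(\Vert\eta\Vert_{H^{s_0+\mez}_{ul}})$. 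Consequently
\[
F_3 = R_1\partial_z\widetilde{v}_k + R_2 \widetilde{v}_k - T_{\partial_z A}\widetilde{v}_k,
\]
and the estimate \eqref{estF3} follows from the paradifferential continuity estimates applied in uniformly local spaces; the term $T_{\partial_z A}$ is harmless because one $z$-derivative of $\rho$ costs nothing (the mollifier $e^{\pm \delta z\langle D_x\rangle}$ is $C^\infty$ in $z$), and the fact that no restriction on $\sigma$ arises simply reflects that the factorization is purely symbolic.

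The main obstacle is bookkeeping: one has to check that the symbolic-calculus estimates used in \cite{ABZ3} (composition, adjoint, and the fact that paradifferential operators act on $X^\sigma$ with the expected gain) transfer verbatim to the uniformly local setting. Via the $\chi_q$-localization of Section~\ref{sec.3}, this reduces to controlling commutators between the paradifferential operators and the cut-offs $\chi_q$; since $\nabla\chi_q$ is compactly supported and uniformly bounded, such commutators are of strictly lower order and can be absorbed. The square-root truncation at low frequencies and the precise regularity class $\Gamma^1_{\mez}$ are then standard.
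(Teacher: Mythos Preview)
Your proposal is correct and follows the paper's route exactly: explicit roots $a,A$ of the characteristic polynomial, ellipticity from \eqref{alpha} and \eqref{rho>}, semi-norm control via Lemma~\ref{est-alpha} together with the embedding $H^{s_0-\mez}_{ul}\hookrightarrow W^{\mez,\infty}$, and remainder bounds by the symbolic calculus of Theorem~\ref{calc:symb}. One small caveat on your justification for $T_{\partial_z A}$: a $z$-derivative of $e^{\delta z\langle D_x\rangle}$ does bring out a factor $\langle D_x\rangle$, so $\partial_z A$ is one notch less regular in $x$ than $A$ and $\sup_z M^1_0(\partial_z A(z))$ is not directly available under $s_0>1+\tfrac d2$; the term is nonetheless harmless because $\partial_z\alpha,\partial_z\beta$ lie in $L^2\bigl(J,H^{s_0-1}\bigr)_{ul}\subset L^2(J,L^\infty)$ (the same level as $\gamma$ in Lemma~\ref{est-alpha}), and the $L^2(J,H^\sigma)$ component of $Y^{\sigma+\mez}(J)$ absorbs the resulting $L^2_z$ bound.
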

\begin{proof}
We follow closely the proof of Lemma 3.29 in~\cite{ABZ3}. We set 
\begin{equation}\label{a,A}
\begin{aligned}
a &= \mez\Big( -i\beta \cdot \xi - \sqrt{4 \alpha \vert \xi \vert^2 -(\beta \cdot \xi)^2}\Big)\\
A&= \mez\Big( -i\beta \cdot \xi + \sqrt{4 \alpha \vert \xi \vert^2 -(\beta \cdot \xi)^2}\Big).
\end{aligned}
 \end{equation}
We claim the there exists $c>0$ depending only on $\Vert \eta \Vert_{H^{s_0+\mez}_{ul}(\xR^d)}$ such that 
 \begin{equation}
  \sqrt{4 \alpha \vert \xi \vert^2 -(\beta \cdot \xi)^2} \geq c \vert \xi \vert.
\end{equation}
Indeed according to~\eqref{alpha} we see by an elementary computation that
$$ 4 \alpha \vert \xi \vert^2 -(\beta \cdot \xi)^2  \geq \frac{ 4(\partial_z \rho)^2}{(1+ \vert \nabla_x\rho\vert ^2)^2}\vert \xi \vert^2. $$
Then our claim follows from~\eqref{rho>}.

 Since  we have $s_0>1 + \frac{d}{2}  $ we deduce from the paradifferential symbolic calculus that
$$ (\partial_z -T_a)(\partial_z - T_A) = \partial_z^2 + T_\alpha \Delta_x + T_\beta \cdot \nabla_x \partial_z +R_0 +R_1$$
where
\begin{equation*}
\begin{aligned}
 R_0(z) &:= T_{a(z)}T_{A(z)} -T_\alpha \Delta_x  \quad \text{ is of order } \frac{3}{2}\\
R_1(z)&:=  -T_{\partial_zA(z)} \quad \text{ is of order } \frac{3}{2}
\end{aligned}
\end{equation*}
together with the estimates
$$\sup_{z\in (-1,0)} \big( \Vert R_0(z) \Vert_{H^{t+\frac{3}{2}} \to H^t } 
+ \Vert R_1(z) \Vert_{H^{t+\frac{3}{2}} \to H^t }\big) \leq \mathcal{F}( \mathcal{M}^1_{\mez}(A ) + \mathcal{M}^1_{\mez}(a )).$$
Now the seminorms $\mathcal{M}^1_{\mez} (A )$ and $ \mathcal{M}^1_{\mez}(a)$ are bounded by the $W^{\mez, \infty}(\xR^d)$ norms of $\alpha$ and~$\beta$. Since for $f = \alpha, \beta$ we have
$$\Vert f(z) \Vert_{W^{\mez, \infty}(\xR^d)}  \leq C\Vert f(z) \Vert_{H^{s_0-\mez}_{ul}(\xR^d)} $$we deduce from Lemma~\ref{est-alpha} and the fact that the symbols of $R_j$ vanish near the origin that  for $j=0,1$
$$\Vert R_j(z)\widetilde{v}_k \Vert_{H^{\sigma } } \leq \mathcal{F}(\Vert \eta \Vert_{H^{s_0+\mez}_{ul} })\Vert \nabla_x\widetilde{v}_k \Vert_{H^{\sigma+\mez} }.$$
The proof is complete.
\end{proof}
\subsection{Proof of Proposition~\ref{rec1}}

  Case 1. \quad Assume that $(\mathcal{H}_\sigma)$ is satisfied, which means that  there exists $I_0 = (z_0,0)$ such that
  \begin{equation}\label{recphik}
  \Vert \nabla_{x,z}\widetilde{v}_k\Vert_{X^\sigma(I_0)} \\
\leq\mathcal{F}\bigl(\Vert \eta  \Vert_{   H^{s_0+ \mez}_{ul} }\bigr) \Big\{   \Vert \psi \Vert_{H^{\sigma+1}_{ul}} + \Vert F\Vert_{Y_{ul}^{\sigma }(J)} + \Vert \nabla_{x,z}\widetilde{v}   \Vert_{X_{ul}^{-\mez}(J)}  \Big\}.
     \end{equation}
  From this estimate and the Poincar\'e inequality we deduce that 
  \begin{equation}\label{recphi}
   \Vert \nabla_{x,z}\widetilde{v} \Vert_{X^\sigma(I_0)} \\
\leq      \mathcal{F} \bigl(\Vert  \eta \Vert_{  H^{s_0+ \mez}_{ul} }\bigr) \Big\{   \Vert \psi \Vert_{H^{\sigma+1}_{ul}} + \Vert F\Vert_{Y_{ul}^{\sigma }(J)} + \Vert \nabla_{x,z}\widetilde{v}   \Vert_{X_{ul}^{-\mez}(J)}  \Big\}.
 \end{equation}
 We want to prove that 
  \begin{equation}\label{recmu0}
  \Vert \nabla_{x,z}\widetilde{v}_k\Vert_{X^{\sigma+\mez}(I_1)} \\
\leq\mathcal{F}_1\bigl(\Vert  \eta \Vert_{    H^{s_0+ \mez}_{ul} }\bigr) \Big\{   \Vert \psi \Vert_{H^{\sigma+\frac{3}{2}}_{ul}} + \Vert F\Vert_{Y_{ul}^{\sigma+\mez }(J)} + \Vert \nabla_{x,z}\widetilde{v}   \Vert_{X_{ul}^{-\mez}(J)}  \Big\}. \end{equation}
Introduce a cutoff function $\theta$ such that $\theta(z_0) = 0,\quad \theta(z) = 1$ for $z \geq z_1$. Set 
\begin{equation}\label{wk=}
 \widetilde{w}_k(z, \cdot) := \theta(z) (\partial_z - T_A) \widetilde{v}_k(z,\cdot).
 \end{equation}
 It follows from Lemma~\ref{decouple}  for $z\geq z_0$ that
 \begin{equation}\label{eqwk}
  \partial_z \widetilde{w}_k - T_a \widetilde{w}_k = \theta(z)\big(F+\sum_{j =0}^3 F_j \big)+ F_4
  \end{equation}
 where 
 $$F_4 = \theta'(z) ( \partial_z - T_A) \widetilde{v}_k.$$ 
We deduce from Lemma~\ref{estF0F1}, Lemma~\ref{aTa},  Lemma~\ref{decouple} and~\eqref{recphi} that
\begin{equation}\label{estF0-3}
  \sum_{j=0}^3 \Vert \theta F_j \Vert_{Y^{\sigma + \mez}(I_0)}  \leq\mathcal{F} \bigl(\Vert  \eta  \Vert_{  H^{s_0+ \mez}_{ul}  }\bigr) \Big\{   \Vert F\Vert_{Y_{ul}^{\sigma }(J)} + \Vert \nabla_{x,z}\widetilde{v}   \Vert_{X_{ul}^{-\mez}(J)} \Big\} 
\end{equation}
 and we  see easily using~\eqref{recphik} that
\begin{equation}\label{estF4}
 \Vert F_4 \Vert_{Y^{\sigma + \mez}(I_0)} 
 \leq \mathcal{F} \bigl(\Vert  \eta \Vert_{   H^{s_0+ \mez}_{ul} }\bigr) 
 \Big\{  \Vert \psi \Vert_{H^{\sigma+1}_{ul}} +\Vert F\Vert_{Y_{ul}^{\sigma }(J)} + \Vert \nabla_{x,z}\widetilde{v}   \Vert_{X_{ul}^{-\mez}(J)}  \Big\} \cdot
 \end{equation} 
Now using Proposition 2.18 in~\cite{ABZ3} and~\eqref{eqwk},~\eqref{estF0-3} and~\eqref{estF4} we see, since $\widetilde{w}_k\arrowvert_{z=z_0} = 0,$ that 
\begin{equation}\label{estwk}
\Vert \widetilde{w}_k \Vert_{X^{\sigma + \mez}(I_0)} \leq   \mathcal{F} \bigl(\Vert  \eta  \Vert_{  H^{s_0+ \mez}_{ul} }\bigr) \Big\{  \Vert \psi \Vert_{H^{\sigma+1}_{ul}} + \Vert F\Vert_{Y_{ul}^{\sigma }(J)} + \Vert \nabla_{x,z}\widetilde{v}   \Vert_{X_{ul}^{-\mez}(J)}  \Big\} \cdot
 \end{equation}
Now notice that on $I_1 :=(z_1,0)$ we have $\theta(z) =1$ so that
\begin{equation}\label{eqphik}
 (\partial_z  -T_A )\widetilde{v}_k = w_k.
 \end{equation}
 We may apply again Proposition 2.19 in~\cite{ABZ3} and write
 \begin{equation*}
 \begin{aligned}
   \Vert \nabla_x\widetilde{v}_k \Vert_{X^{\sigma +\mez}(I_1)}  &\leq \Vert  \widetilde{v}_k \Vert_{X^{\sigma +1+\mez}(I_1)} \\
  & \leq \mathcal{F} \big(\Vert \eta \Vert_{H^{s_0+\mez}_{ul} }\big)
  \big(\Vert \widetilde{w}_k\Vert_{Y^{\sigma +\frac{3}{2}}(I_1)}  + \Vert \psi \Vert_{H^{\sigma+\frac{3}{2}}_{ul}} \big)\\
  & \leq \mathcal{F} \big(\Vert \eta \Vert_{H^{s_0+\mez}_{ul} }\big) \big(\Vert \widetilde{w}_k\Vert_{X^{\sigma + \mez}(I_1)}  +  \Vert \psi \Vert_{H^{\sigma+\frac{3}{2}}_{ul}} \big)\\
  &  \leq  \mathcal{F} \big(\Vert \eta \Vert_{H^{s_0+\mez}_{ul} }\big)
  \big(\Vert \psi \Vert_{H^{\sigma + \frac{3}{2}}_{ul}}+\Vert F\Vert_{Y_{ul}^{\sigma+\mez}(J)} +  \Vert \nabla_{x,z}\widetilde{v}   \Vert_{X_{ul}^{-\mez}(J)}  \big).
  \end{aligned}
  \end{equation*}
  The same estimate for $\partial_z\widetilde{v}_k$ follows then from~\eqref{eqphik} and~\eqref{estwk}. Thus we have proved~\eqref{recmu0} which completes the induction.

Case 2. Assuming that $(\mathcal{K}_\sigma)$ is true,   the exact same method shows that $(\mathcal{K}_{\sigma + \mez})$ holds as long as $\sigma+\mez \leq s-\mez$. Details are left to the reader.
  
\subsection{Proof of Theorem~\ref{paralinDN}}
Let $0\leq t \leq s-\mez$.  Recall that 
\begin{equation*}
G(\eta) \psi  = g_1 \partial_z\widetilde{\Phi}-g_2\cdot \nabla_x \widetilde{\Phi}  \big \arrowvert_{z=0}, \quad 
g_1  =  \frac{1 + \vert \nabla_x \rho\vert^2}{\partial_z \rho}, \quad g_2 = \nabla_x \rho. 
\end{equation*}
We shall set  
$$
g_j\arrowvert_{z=0} = g_{j }^0, \quad j=1,2, \quad A\arrowvert_{z=0} = A_0, \quad a\arrowvert_{z=0} = a_0.
$$ 
We recall  that we have set $\chi_k \widetilde{\Phi}= \widetilde{\Phi}_k$ where $\widetilde{\Phi}\arrowvert_{z=0} = \psi$  and $\widetilde{w}_k = (\partial_z - T_A)\widetilde{\Phi}_k $ (see \eqref{wk=}) for $z\in I_1$. It follows that we can write
\begin{equation}
\begin{aligned}
\chi_k  G(\eta) \psi  &=  g_{1 }^0(\partial_z \widetilde{\Phi}_k)\arrowvert_{z=0} - \chi_kg_2^0 \nabla_x \psi \\
&= g_1^0\widetilde{w}_k\arrowvert_{z=0} + g_1^0[T_{A_0}, \chi_k]\psi + \chi_k \big(g_1^0 T_{A_0} - g_2^0\cdot \nabla\big)\psi.
\end{aligned}
\end{equation}
We shall set 
\begin{equation}\label{Rj=}
R_1 = (\widetilde{\chi}_kg_1^0)[T_{A_0}, \chi_k]\psi, \quad  R_2 = (\widetilde{\chi}_k g_1^0) \widetilde{w}_k\arrowvert_{z=0}
\end{equation}
where $\widetilde{\chi}\in C_0^\infty(\xR^d)$ is equal to one on the support of $\chi $ so 
\begin{equation}\label{DNul=}
 \chi_k  G(\eta) \psi   =  \chi_k \big(g_1^0 T_{A_0} - g_2^0\cdot \nabla\big)\psi + R_1 +R_2.
\end{equation}
 Let us set $U =  [T_{A_0}, \chi_k]\psi$.  By the symbolic calculus, since $H_{ul}^{s_0+\mez} \subset W^{\mez, \infty}$ we have for all $\sigma\in \xR$
 \begin{equation}\label{est:U-i}
    \Vert U \Vert_{ H^{\sigma} } \leq \mathcal{F}\bigl(\Vert  \eta  \Vert_{H^{s_0+ \mez}_{ul}  }\bigr)  \Vert \psi \Vert_{H^{\sigma+\mez}_{ul} } .
    \end{equation}
  If $0\leq t \leq s_0-\mez$ the product law in Proposition \ref{Ho} gives
   $$\Vert \widetilde{\chi}_kg_1^0 U \Vert_{ H^{t } }\leq \Vert \widetilde{\chi}_kg_1^0 \Vert_{H^{s_0-\mez}}\Vert U \Vert_{H^t} \leq \mathcal{F}\bigl(\Vert  \eta  \Vert_{H^{s_0+ \mez}_{ul}}\bigr)\Vert U \Vert_{H^t}. $$
  If $s_0-\mez \leq t \leq s-\mez$ we use the estimation  
  $$\Vert \widetilde{\chi}_kg_1^0 U \Vert_{ H^{t } }\leq C \big(\Vert \widetilde{\chi}_kg_1^0\Vert_{H^{s_0 -1}}\Vert U \Vert_{H^t} + \Vert \widetilde{\chi}_kg_1^0\Vert_{H^{t}}\Vert U \Vert_{H^{s_0-1}}\big).$$ 
Therefore using \eqref{est:U-i} and \eqref{est:prod3} we obtain
$$
\Vert \widetilde{\chi}_kg_1^0 U \Vert_{ H^{t } }
\leq   \mathcal{F}\bigl(\Vert  (\eta,\psi)  \Vert_{H^{s_0+ \mez}_{ul}  \times H^{s_0+ \mez}_{ul}}\bigr) \big\{  \Vert \eta \Vert_{H^{s+\mez}_{ul} } +  \Vert \psi \Vert_{H^{t+\mez}_{ul} } +1 \big\}.
$$
It follows that we have
\begin{equation}\label{est:R1final}
\Vert R_1 \Vert_{ H^{t } }\leq    \mathcal{F}\bigl(\Vert  (\eta,\psi)  \Vert_{H^{s_0+ \mez}_{ul}  \times H^{s_0+ \mez}_{ul}}\bigr)\big\{  \Vert \eta \Vert_{H^{s+\mez}_{ul} } +  \Vert \psi \Vert_{H^{t+\mez}_{ul} } +1 \big\}. 
   \end{equation}
By the same argument as above we have
\begin{equation}\label{est:R2}
 \Vert R_2 \Vert_{ H^{t } }\leq \mathcal{F}\bigl(\Vert  \eta  \Vert_{H^{s_0+ \mez}_{ul}  }\bigr)\big\{  \Vert\widetilde{w}_k\arrowvert_{z=0} \Vert_{H^{t}_{ul}} + \Vert \eta \Vert_{H^{s+\mez}_{ul}} \Vert\widetilde{w}_k\arrowvert_{z=0} \Vert_{H^{s_0-1}_{ul}}\}. 
 \end{equation}
    Now using \eqref{estwk} with $\sigma = t-\mez$ we obtain in particular
 \begin{equation}\label{estwktilde}
 \Vert \widetilde{w}_k\Vert_{L^2(I_1, H^{t+\mez}) } \leq \mathcal{F}\bigl(\Vert  \eta  \Vert_{H^{s_0+ \mez}_{ul}  }\bigr)(\Vert \psi \Vert_{H^{t+\mez}_{ul} }+1).
\end{equation}
Moreover we deduce from \eqref{eqwk} that 
$$ \Vert \partial_z \widetilde{w}_k\Vert_{L^2(I_1, H^{t-\mez} )} \leq  \Vert T_a \widetilde{w}_k\Vert_{L^2(I_1, H^{t-\mez} )}    + \sum_{j=0}^4 \Vert  F_j \Vert_{L^2(I_1, H^{t-\mez} )} .$$
 It follows from \eqref{estwktilde} and the estimates already obtained on the $F_j's$ that 
 \begin{equation}\label{estdzwk}
 \Vert \partial_z \widetilde{w}_k\Vert_{L^2(I_1, H^{t-\mez} )} \leq  \mathcal{F}\bigl(\Vert  \eta  \Vert_{H^{s_0+ \mez}}\bigr)(\Vert \psi \Vert_{H^{t+\mez} }+1).
 \end{equation}
Applying Lemma \ref{lions} to $\chi_q \widetilde{w}_k$ we obtain, for $0\leq t \leq s-\mez$
 \begin{equation}\label{wkz=0}
 \Vert\widetilde{w}_k\arrowvert_{z=0} \Vert_{H^{t}_{ul}}  \leq \mathcal{F}\bigl(\Vert  \eta  \Vert_{H^{s_0+ \mez}_{ul}  }\bigr)(\Vert \psi \Vert_{H^{t+\mez}_{ul} }+1).
 \end{equation}
Combining with \eqref{est:R2} we obtain
\begin{equation}\label{est:R2final}
\Vert R_2 \Vert_{ H^{t } }\leq    \mathcal{F}\bigl(\Vert  (\eta,\psi)  \Vert_{H^{s_0+ \mez}_{ul}  \times H^{s_0+ \mez}_{ul}}\bigr)\big\{  \Vert \eta \Vert_{H^{s+\mez}_{ul} } +  \Vert \psi \Vert_{H^{t+\mez}_{ul} } +1 \big\}. 
   \end{equation}
Now we have
\begin{equation}\label{R3-R4}
\begin{aligned}
 \chi_k\big(g_1^0 T_{A_0} - g_2^0\cdot \nabla\big)\psi &= \chi_kT_{g_1^0 A_0 - i \xi \cdot g_2^0} \psi + R_3 +R_4,\\
 R_3&= \chi_k\{Ê(g_1^0-T_{g_1^0})T_{A_0}\psi -(g_2^0-T_{g_2^0})\cdot \nabla\psi\}\\
R_4 &= \chi_k\{T_{g_1^0}T_ {A_0} - T_{g_1^0 A_0}\}.
 \end{aligned}
 \end{equation}
If $0 \leq t \leq s_0-\mez$ we use Proposition \ref{a-Ta} with  $\gamma =t, r = s_0-\mez, \mu = t-\mez$ which satisfy the conditions and we obtain
$$\Vert  \chi_k\{Ê(g_1^0-T_{g_1^0})T_{A_0}\psi \Vert_{H^t} \leq \Vert g_1^0\Vert_{H^{s_0-\mez}_{ul}} \Vert T_{A_0} \psi \Vert_{H^{t-\mez}_{ul}} $$
 and an analogue estimate for the term containing $g_2^0$, from which we deduce
 \begin{equation}\label{est:R3}
 \Vert R_3 \Vert_{H^t} \leq \mathcal{F}\bigl(\Vert  \eta  \Vert_{H^{s_0+ \mez}_{ul}  }\bigr) \Vert \psi \Vert_{H^{t+\mez}_{ul}}.
 \end{equation}
From Theorem \ref{calc:symb}, $(ii)$ with $\rho = \mez$ we have
   \begin{equation}\label{est:R4}
 \Vert R_4 \Vert_{H^t} \leq \mathcal{F}\bigl(\Vert  \eta  \Vert_{H^{s_0+ \mez}_{ul}  }\bigr) \Vert \psi \Vert_{H^{t+\mez}_{ul}}.
 \end{equation}
 Summing up, using \eqref{DNul=}, \eqref{est:R1final}, 
 \eqref{est:R2final}, \eqref{est:R3}, and \eqref{est:R4}, we obtain
$$
\chi_k G(\eta)\psi =  \chi_kT_{g_1^0 A_0 - i \xi \cdot g_2^0} \psi +R_5
$$
with
 $$\Vert R_5 \Vert_{H^t} \leq  \mathcal{F}\bigl(\Vert  (\eta,\psi)  \Vert_{H^{s_0+ \mez}_{ul}  \times H^{s_0+ \mez}_{ul}}\bigr)\big\{  \Vert \eta \Vert_{H^{s+\mez}_{ul} } +  \Vert \psi \Vert_{H^{t+\mez}_{ul} } +1 \big\}.$$ 
  So Theorem \ref{paralinDN} follows from the fact that
$$g_ 1^0 A_0 -i\xi\cdot g_ 2 ^0   = \sqrt{(1+ \vert \nabla_x \eta \vert^2) \vert \xi \vert^2 - (\nabla_x \eta \cdot \xi)^2}.$$

\section{A priori estimates in the uniformly local Sobolev space}\label{sec.5}
\subsection{Reformulation of the equations}
 We introduce the following unknowns
 \begin{equation}\label{new:inc}
 \zeta = \nabla_x \eta, \quad B= (\partial_y \Phi)\arrowvert_{y = \eta}, \quad V = (\nabla_x \Phi)\arrowvert_{y=\eta}, \quad a = - (\partial_y P)\arrowvert_{y= \eta}
\end{equation}
where $\Phi$ is the velocity potential and the pressure $P$ is given by 
$$ P =  Q- \mez \vert \nabla_{x,y} \Phi \vert^2 - gy.$$
where $Q$ is obtained from $B,V,\eta$ by solving a variational problem (see \S 7.2 below for details).

We begin by a useful formula.
\begin{lemm}\label{GB=divV}
Let $I = [0,T]$ and $s_0>1+\frac{d}{2}$. For all $s\geq s_0$ one can find $\mathcal{F}: \xR^+ \to \xR^+$ non decreasing such that 
\begin{equation*} 
 G(\eta)B = - {\rm{div }} \, V + \gamma 
 \end{equation*}
with
\begin{multline*}
\Vert \gamma \Vert_{L^\infty(I, H^{s-\mez} (\xR^d))_{ul} } \\
\leq  
\mathcal{F}\big(\Vert (\eta, \psi)  \Vert_{L^\infty(I,H^{s_0+\mez} (\xR^d)\times H^{s_0+\mez} (\xR^d))_{ul}}\big)  \Big\{1+ \Vert \eta \Vert_{L^\infty(I,H^{s+\mez} (\xR^d))_{ul} }\Big\}\cdot
\end{multline*}
\end{lemm}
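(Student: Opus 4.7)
The plan is to derive an exact expression for $\gamma$ as a Neumann-type trace of a harmonic function with zero Dirichlet data on $\Sigma$, and then invoke the sharp elliptic estimates of Theorem~\ref{regell} to gain the half derivative that the identity encodes.

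\emph{Step 1: the identity.} Let $\Phi$ be the velocity potential of Proposition~\ref{existe:phi} and set $u \defn \partial_y \Phi$, which is harmonic in $\Omega$ with $u\arrowvert_\Sigma = B$. The chain rule along $\Sigma$, combined with $\partial_y^2\Phi = -\Delta_x\Phi$, gives
\begin{equation*}
\cn V = (\Delta_x \Phi)\arrowvert_{y=\eta} + \nabla_x\eta\cdot(\nabla_x\partial_y\Phi)\arrowvert_{y=\eta} = -\bigl[(\partial_y - \nabla_x\eta\cdot\nabla_x)u\bigr]\arrowvert_{y=\eta}.
\end{equation*}
Let $U$ solve~\eqref{eqPhi} with Dirichlet data $B$ in place of $\psi$, so that $G(\eta)B = [(\partial_y - \nabla_x\eta\cdot\nabla_x)U]\arrowvert_{y=\eta}$ by definition. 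The function $w \defn u - U$ is harmonic in $\Omega$ with $w\arrowvert_\Sigma = 0$, and subtracting the two formulas yields the clean identity
\begin{equation*}
\gamma \defn G(\eta)B + \cn V = -\bigl[(\partial_y - \nabla_x\eta\cdot\nabla_x)w\bigr]\arrowvert_{y=\eta}.
\end{equation*}

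\emph{Step 2: estimating $\gamma$.} I straighten $\Omega_h$ via Lemma~\ref{diffeo}; the transported $\widetilde{w}(x,z) = w(x,\rho(x,z))$ solves the elliptic equation~\eqref{equ:modifie} with $F=0$ and $\widetilde{w}\arrowvert_{z=0} = 0$, and~\eqref{lambda} rewrites the target as $\gamma = -(\Lambda_1\widetilde{w} - \nabla_x\rho\cdot\Lambda_2\widetilde{w})\arrowvert_{z=0}$. Applying Theorem~\ref{regell} Case~2 with $\sigma = s-\mez$ yields
\begin{equation*}
\|\nabla_{x,z}\widetilde{w}\|_{X^{s-\mez}_{ul}(z_0,0)} \leq \mathcal{F}\bigl(\|(\eta,\psi)\|_{H^{s_0+\mez}_{ul}\times H^{s_0}_{ul}}\bigr)\bigl\{\|\eta\|_{H^{s+\mez}_{ul}}+1\bigr\}\|\nabla_{x,z}\widetilde{w}\|_{X^{s_0-1}_{ul}(J)}.
\end{equation*}
The low-regularity seed is controlled by applying Corollary~\ref{coroetape1} twice: once to $U$ with Dirichlet data $B$ (itself bounded in terms of $(\eta,\psi)$ via~\eqref{BV} and Theorem~\ref{def:DN}), and once to $\Phi$, after which the bound on $u=\partial_y\Phi$ in $X^{s_0-1}_{ul}$ follows from the $z$-derivative component of the estimate on $\widetilde{\Phi}$. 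Lemma~\ref{lions} then delivers the trace at $z=0$ in $H^{s-\mez}_{ul}$, and the multiplication by $\nabla_x\rho\arrowvert_{z=0}=\nabla_x\eta$, of size $\|\eta\|_{H^{s+\mez}_{ul}}$, is absorbed by the tame product estimate of Proposition~\ref{Ho}.

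\emph{Main obstacle.} The delicate point is bookkeeping the \emph{linear} dependence on $\|\eta\|_{H^{s+\mez}_{ul}}$: one has to use Case~2 of Theorem~\ref{regell} together with the tame product inequality~\eqref{est:prod2} (not the crude one~\eqref{est:prod1}) whenever the high Sobolev norm of $\eta$ enters---both through the coefficients $\alpha,\beta,\gamma$ of~\eqref{equ:modifie} (controlled by Lemma~\ref{est-alpha} Case~2) and through the coefficient $\nabla_x\rho$. Everything else---the identity, the straightening, the trace lemma, and Corollary~\ref{coroetape1}---fits together routinely from the tools already built in Sections~\ref{sec.3} and~\ref{sec.4}.
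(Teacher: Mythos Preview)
Your proof is correct and follows essentially the same route as the paper: both derive the identity expressing $\gamma$ as the Neumann-type trace of the harmonic function $\theta-\partial_y\Phi$ (your $-w$) with zero Dirichlet data on $\Sigma$, then apply Theorem~\ref{regell} at level $\sigma=s-\tfrac12$ together with the product estimates and Lemma~\ref{lions} to take the trace. Your added bookkeeping of the low-regularity seed via Corollary~\ref{coroetape1} and the tame dependence on $\|\eta\|_{H^{s+1/2}_{ul}}$ is exactly what the paper leaves implicit.
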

\begin{proof}
The estimate of the lemma will be proved first  with fixed $t$ which therefore will be skipped. Let $\theta$ be the variational solution of the problem 
$$\Delta_{x,y}\theta =0 \quad \text{in } \Omega, \quad \theta\arrowvert_{y= \eta(x)}= B. $$
Then $G(\eta)B = (\partial_y \theta - \nabla_x \eta \cdot \nabla_x \theta) \arrowvert_{y=\eta(x)}$. On the other hand since $V_i (x) = \partial_i \Phi(x, \eta(x))$ we have
\begin{equation*}
\begin{aligned}
 \text{div }V &= (\Delta_x \Phi  + \nabla_x \eta \cdot \nabla_x \partial_y \Phi)(x, \eta(x))\\
 &=   (-\partial_y^2 \Phi  + \nabla_x \eta \cdot \nabla_x \partial_y \Phi)(x, \eta(x)) \\
 &= -(\partial_y - \nabla_x\eta \cdot \nabla_x) \partial_y \Phi (x, \eta(x)).
 \end{aligned}
 \end{equation*}
 It follows that
 \begin{equation*}
    G(\eta)B + \text{div } V = \gamma, \quad \gamma = (\partial_y - \nabla_x\eta \cdot \nabla_x)(\theta - \partial_y \Phi)(x, \eta(x)).
   \end{equation*}
Setting $\Theta =  \theta - \partial_y \Phi$ we see that  $\Delta_{x,y}\Theta = 0$ in $\Omega,$ and $\Theta\arrowvert_{y= \eta} =0$. Therefore we may apply Theorem \ref{regell} with $\sigma=s-\mez$ to deduce that 
$$
\Vert \nabla_{x,z}\widetilde{\Theta}\Vert_{X^{s-\mez}_{ul}(z_0,0)} \leq \mathcal{F} (\Vert (\eta, \psi)  \Vert_{ H^{s_0+\mez}_{ul}  \times H^{s_0+\mez}_{ul} }) (1+ \Vert \eta \Vert_{ H^{s+\mez}_{ul} })
$$
where $\widetilde{\Theta} (x,z) =  {\Theta}(x, \rho(x,z))$.
Using \eqref{est:prod1} with $\mu = s-\mez$ we deduce that
 $$
\Vert \nabla_{x,z}\widetilde{\gamma}\Vert_{X^{s-\mez}_{ul}(z_0,0)} \leq \mathcal{F} (\Vert (\eta, \psi)  \Vert_{ H^{s_0+\mez}_{ul}  \times H^{s_0+\mez}_{ul} }) (1+ \Vert \eta \Vert_{ H^{s+\mez}_{ul}}).
$$  
Now using the equation satified by $\Theta$, Lemma \ref{est-alpha} and Lemma \ref{lions} we obtain the desired conclusion.
\end{proof}
 \begin{prop}\label{new:eq}
 Let $s_0 >1+ \frac{d}{2}$. Then for all $s \geq s_0$ we  have
   \begin{align}
   (\partial_t + V \cdot \nabla_x)B &= a-g, \label{eq:B}\\
   (\partial_t + V \cdot \nabla_x)V +a \zeta &=0, \label{eq:V}\\
   (\partial_t + V \cdot \nabla_x)\zeta &= G(\eta)V + \zeta G(\eta)B +R, \label{eq:zeta}
\end{align}
where  the remainder term $R = R(\eta, \psi ,V,B) $ satisfies the  estimate
\begin{equation}\label{est:R}
\Vert R\Vert_{L^\infty(I,H^{s-\mez}_{ul} )} \leq  \mathcal{F}(\Vert (\eta, \psi,V) \Vert_{L^\infty(I, H^{s_0+\mez}  \times H^{s_0+\mez}  \times H^{s_0} )_{ul}})  (1+ \Vert \eta \Vert_{L^\infty(I,H^{s+\mez} )_{ul}}) 
\end{equation}
where $H^\sigma  = H^\sigma (\xR^d)$. 
 \end{prop}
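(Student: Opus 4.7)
The first two equations \eqref{eq:B} and \eqref{eq:V} are exact pointwise identities deduced directly from the Euler system. The basic observation is that for any $F$ defined on $\Omega$ with trace $f(t,x) = F(t,x,\eta(t,x))$, the chain rule gives
\[
(\partial_t + V\cdot\nabla_x) f = \bigl(\partial_t F + V\cdot \nabla_x F\bigr)\big|_\Sigma + \bigl(\partial_t\eta + V\cdot\zeta\bigr)(\partial_y F)\big|_\Sigma.
\]
The kinematic identity $\partial_t\eta = G(\eta)\psi = B - V\cdot\zeta$ reduces the coefficient of $(\partial_y F)|_\Sigma$ to $B$, so that the material derivative on the surface equals $(\partial_t F + v\cdot \nabla_{x,y} F)|_\Sigma$ with $v = \nabla_{x,y}\phi$. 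Applied to $F = v_y$ and $F = v_x$, combined with the Euler equation, this immediately yields \eqref{eq:B}, and gives $(\partial_t + V\cdot\nabla_x)V = -(\nabla_x P)|_\Sigma$; differentiating the surface condition $P|_\Sigma = 0$ tangentially provides $(\nabla_x P)|_\Sigma = a\zeta$, hence \eqref{eq:V}.

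For \eqref{eq:zeta}, I would apply $\nabla_x$ to $\partial_t\eta = B - V\cdot \zeta$ and use the symmetry $\partial_i\zeta_j = \partial_j\zeta_i$ to obtain
\[
(\partial_t + V\cdot\nabla_x)\zeta_i = \partial_i B - \zeta_j \partial_i V_j.
\]
The irrotationality of $v$ in $\Omega$, combined with the chain rule on $V_i = (\partial_i\phi)|_\Sigma$ and $B = (\partial_y\phi)|_\Sigma$, yields $\partial_j V_i - \partial_i V_j = (\partial_y\partial_i\phi)|_\Sigma \zeta_j - (\partial_y\partial_j\phi)|_\Sigma \zeta_i$, and contracting with $\zeta_j$ produces, after cancellation of the $(\partial_y^2\phi)|_\Sigma$ contributions, the identity $\zeta_j(\partial_j V_i - \partial_i V_j) = |\zeta|^2 \partial_i B - \zeta_i(\zeta\cdot\nabla_x B)$. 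To bring in $G(\eta)V_i$, I would compare it with $(\partial_y\partial_i\phi - \zeta\cdot\nabla_x\partial_i\phi)|_\Sigma$: since $\partial_i\phi$ is harmonic in $\Omega$ with trace $V_i$ on $\Sigma$ but does not in general satisfy the bottom Neumann condition, the discrepancy
\[
\kappa_i \defn G(\eta)V_i - (\partial_y\partial_i\phi - \zeta\cdot\nabla_x\partial_i\phi)\big|_\Sigma
\]
is a ``bottom correction'' equal to the trace on $\Sigma$ of a normal derivative of a harmonic function vanishing on $\Sigma$. Using the Laplace equation in the form $(\partial_y^2\phi)|_\Sigma(1+|\zeta|^2) = -\mathrm{div}\,V + \zeta\cdot\nabla_x B$ together with Lemma \ref{GB=divV}, which writes $G(\eta)B = -\mathrm{div}\,V + \gamma$, a careful but purely algebraic cancellation leaves
\[
R_i = (\partial_t + V\cdot\nabla_x)\zeta_i - G(\eta)V_i - \zeta_i G(\eta)B = -\zeta_i\gamma - \kappa_i.
\]

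The estimate \eqref{est:R} then splits into two parts. The contribution of $\zeta\gamma$ is handled by combining the tame bound on $\gamma$ from Lemma \ref{GB=divV} with the Moser-type product estimate in $H^{s-\mez}_{ul}$ (Proposition \ref{Ho}), which is admissible since $s-\mez > d/2$. The contribution of $\kappa_i$ is controlled by applying the uniformly-local elliptic regularity result (Theorem \ref{regell}, Case 2 with $\sigma = s-\mez$) to the harmonic function $\partial_i\phi - \theta_i$, where $\theta_i$ is the harmonic extension of $V_i$ furnished by Proposition \ref{existe:phi}, and then taking the $z=0$ trace via Lemma \ref{lions}. I expect the main obstacle to be precisely the analysis of $\kappa_i$: one must obtain the sharp linear dependence on $\Vert\eta\Vert_{H^{s+\mez}_{ul}}$ with only lower-order norms of $(\eta,\psi,V)$ multiplying it, which leaves essentially no slack in the Sobolev indices and forces one to use Case 2 of Theorem \ref{regell} in its full strength, together with the tame product estimates on the coefficients $\alpha,\beta,\gamma$ from Lemma \ref{est-alpha}.
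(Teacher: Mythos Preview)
Your approach is essentially the same as the paper's. The paper cites \cite{ABZ3} for the algebraic derivation and then writes the remainder as $R_i=(\partial_y-\zeta\cdot\nabla_x)(\partial_i\Phi-\theta_i)\big|_\Sigma$ for $i=1,\dots,d$ together with $R_{d+1}=(\partial_y-\zeta\cdot\nabla_x)(\partial_y\Phi-\theta_{d+1})\big|_\Sigma$; in your notation $\kappa_i=-R_i$ and $\gamma=-R_{d+1}$, so your decomposition $-\kappa_i-\zeta_i\gamma$ is literally $R_i+\zeta_i R_{d+1}$. The estimate of $\kappa_i$ via Theorem~\ref{regell} (Case 2, $\sigma=s-\tfrac12$) applied to the harmonic function $\partial_i\Phi-\theta_i$ with zero surface trace, followed by the trace Lemma~\ref{lions}, is exactly what the paper does; the only point you leave implicit is the verification of the a priori bound $\lVert\nabla_{x,z}\widetilde{U}_i\rVert_{X^{-\mez}_{ul}}<\infty$ needed to launch the elliptic bootstrap, which the paper checks explicitly using Corollary~\ref{coroetape1}.
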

\begin{proof}
According to Proposition 4.3 in~\cite{ABZ3} the only point to be proved is the estimate~\eqref{est:R}. Let us recall how $R$ is defined. Let $\theta_i,\Phi$ be the variationnal solutions of the problems
  \begin{align}
 \Delta_{x,y}\theta_i &=0 \text{ in } \Omega, \quad \theta_i\arrowvert_{y=\eta} = V_i, \quad i= 1, \ldots,d \label{eq:theta}\\
 \Delta_{x,y}\theta_{d+1} &=0 \text{ in } \Omega, \quad \theta_{d+1}\arrowvert_{y=\eta} = B,  \label{eq:theta0}\\
   \Delta_{x,y}\Phi&=0 \text{ in } \Omega, \quad \Phi\arrowvert_{y=\eta} = \psi.  
\end{align}
   Then, (see \cite{ABZ3} Proposition 4.3)
   \begin{align}
 R_i &= (\partial_y - \nabla_x \eta \cdot \nabla_x)U_i\arrowvert_{y=\eta}, \quad U_i =  \partial_i \Phi - \theta_i, \quad i=1 \ldots,d \label{Ri}\\
   R_{d+1} &= (\partial_y - \nabla_x \eta \cdot \nabla_x)U_{d+1}\arrowvert_{y=\eta}, \quad U_{d+1}=  \partial_y \Phi - \theta_{d+1}. \label{Rd+1}
 \end{align}
  First of all for $i=1,\ldots,d$ we have $\Delta_{x,y} U_i = 0$ in $\Omega$ and $U_i\arrowvert_{y=\eta} = 0$ since $\partial_i \Phi \arrowvert_{y=\eta}= V_i$. Denoting by $\widetilde{U}_i$ the image of $U_i$ by the diffeomorphism~\eqref{rho} we see that $\widetilde{U}_i$  satisfies the equation~\eqref{elliptique}  with $F=\psi =0$. It follows from Theorem \ref{regell} wit $\sigma = s-\mez$ that 
  \begin{equation}\label{est:Ui}
\Vert \nabla_{x,z}\widetilde{U}_i \Vert_{X^{s-\mez}_{ul}(J)} \leq  \mathcal{F}(\Vert (\eta, \psi) \Vert_{H^{s_0+\mez}_{ul}\times H^{s_0+\mez}_{ul}}) (1+ \Vert \eta \Vert_{H^{s+\mez}_{ul}})\Vert \nabla_{x,z}\widetilde{U}_i \Vert_{X^{-\mez}_{ul}(J)}.
\end{equation}
 We are left with the  condition~\eqref{-mez-i}, that is,
$$
\big\Vert \nabla_{x,z}\widetilde{U}_i \big\Vert_{X^{-\mez}_{ul}(J)} <+\infty.
$$
Indeed, since $\theta_i$ is the variationnal solution of~\eqref{eq:theta} Corollary~\ref{coroetape1} shows that 
$$
\big\Vert \nabla_{x,z}\widetilde{\theta}_i \big\Vert_{X^{-\mez}_{ul}(J)} 
\leq   \mathcal{F}\big(\Vert \eta \Vert_{H^{s_0+\mez}_{ul} }\big) \Vert V_i\Vert_{H^\mez_{ul} }.
$$
Now $\widetilde{\partial_i \Phi} = (\partial_i - \frac{\partial_i \rho}{\partial_z \rho}\partial_z) \widetilde{\Phi}$. 
It follows that
$$
\big\Vert \nabla_x \widetilde{\partial_i \Phi} \big\Vert_{X^{-\mez}_{ul}(J)}
\leq \big\Vert \widetilde{\partial_i \Phi} \big\Vert_{X^{\mez}_{ul}(J)} 
\leq \big\Vert  \partial_i\widetilde{\Phi}\big\Vert_{X^{ \mez}_{ul}(J)} 
+ \Big\Vert   \frac{\partial_i \rho}{\partial_z \rho}\partial_z\widetilde{ \Phi} \Big\Vert_{X^{ \mez}_{ul}(J)}.
$$
Now we use the following facts:    since $s_0>1+\frac{d}{2}$ one has  $X^{ s_0-\mez}_{ul}(J) \subset X^{ \mez}_{ul}(J)$; moreover $X^{ s_0-\mez}_{ul}(J)$ is an algebra and eventually $\Vert   \frac{\partial_i \rho}{\partial_z \rho}\Vert_{X^{ s_0-1}_{ul}(J) } \leq \mathcal{F}(\Vert \eta \Vert_{H^{s_0+\mez}_{ul} })$. We deduce using Corollary~\ref{coroetape1} that 
$$
\big\Vert \nabla_x \widetilde{\partial_i \Phi} \big\Vert_{X^{-\mez}_{ul}(J)}
\leq \mathcal{F}_1\big(\Vert \eta \Vert_{H^{s_0+\mez}_{ul} }\big)\Vert \psi \Vert_{H^{s_0+\mez}_{ul} }.$$
  To estimate the term $\Vert \partial_z \widetilde{\partial_i \Phi} \Vert_{X^{-\mez}_{ul}(J)}$ we follow the same path using furthermore the equation~\eqref{elliptique} with $F=0$ satisfied by $\widetilde{\Phi}$. We obtain eventually
$$
\big\Vert \nabla_{x,z}\widetilde{U}_i \big\Vert_{X^{-\mez}_{ul}(J)} 
\leq  \mathcal{F}_2\Big(\Vert (\eta,\psi,V) \Vert_{H^{s_0+\mez}_{ul}\times H^{s_0+\mez}_{ul} \times H^{s_0}_{ul}}\Big).
$$
Using \eqref{est:Ui} we obtain
\begin{equation}\label{Ui}
\big\Vert \nabla_{x,z}\widetilde{U}_i \big\Vert_{X^{s-\mez}_{ul}(J)} 
\leq  \mathcal{F}\Big(\Vert (\eta, \psi,V) \Vert_{H^{s_0+\mez}_{ul}\times H^{s_0+\mez}_{ul} \times H^{s_0}_{ul}  }\Big)
\Big\{1+ \Vert \eta \Vert_{H^{s+\mez}_{ul}}\Big\} .
    \end{equation}
Now from ~\eqref{Ri} we have 
\begin{equation}\label{Ritilde}
 R_i := \mathcal{R}_i \arrowvert_{z=0}, \quad \mathcal{R}_i= \Big(\frac{1+ \vert \nabla_x \rho \vert^2}{\partial_z \rho} \partial_z - \nabla_x \rho\cdot \nabla_x \Big) \widetilde{U}_i:= (A\partial_z  + B\cdot \nabla_x) \widetilde{U}_i.
 \end{equation}
 Using \eqref{est:prod1} with $\mu = s-\mez$ and \eqref{Ui} with $s$ and $s_0$ we obtain
\begin{equation}\label{est:Ri}
\big\Vert \mathcal{R}_i\big\Vert_{X^{s-\mez}_{ul}(J)} 
\leq  \mathcal{F}\Big(\Vert (\eta, \psi,V) \Vert_{H^{s_0+\mez}_{ul}\times H^{s_0+\mez}_{ul} \times H^{s_0}_{ul}  }\Big)
\Big\{1+ \Vert \eta \Vert_{H^{s+\mez}_{ul}}\Big\} \cdot
\end{equation}
Now we claim that 
\begin{equation}\label{dzRi}
\big\Vert \partial_z \mathcal{R}_i\big\Vert_{X^{s-\frac{3}{2}}_{ul}(J)}
\leq  \mathcal{F}\Big(\Vert (\eta, \psi,V) \Vert_{H^{s_0+\mez}_{ul}\times H^{s_0+\mez}_{ul} \times H^{s_0}_{ul}  }\Big)
\Big\{1+ \Vert \eta \Vert_{H^{s+\mez}_{ul}}\Big\}\cdot
    \end{equation}
   Indeed we can write
 $$ \partial_z \mathcal{R}_i = (\partial_z A) \partial_z \widetilde{U}_i + (\partial_z B) \cdot \nabla_x \widetilde{U}_i - (\text{div} B) \partial_z \widetilde{U}_i + \nabla_x \cdot(B \partial_z \widetilde{U}_i) + A \partial_z^2 \widetilde{U}_i.$$
The first three terms are bounded using \eqref{est:prod2} with $\mu = s-\frac{3}{2}$ and \eqref{Ui}, 
the fourth is estimated using  \eqref{est:prod1} with $\mu = s-\mez$ and \eqref{Ui}, eventually for 
the last term we use the fact that $\partial_z^2 \widetilde{U}_i = -(\alpha \Delta_x + \beta\cdot \nabla \partial_z - \gamma \partial_z) \widetilde{U}_i$ together with \eqref{est:prod1}, \eqref{est:prod2}, and 
\eqref{Ui}. Finally from \eqref{est:Ri} and \eqref{dzRi}, using Lemma \ref{lions} we obtain \eqref{est:R} for $R_i$.

We use exactly the same argument to estimate $\Vert R_{d+1}\Vert_{H^{s-\mez}_{ul}(\xR^d)}$. This completes the proof of Proposition~\ref{new:eq}
 \end{proof} 
 \subsection{Estimate of the Taylor coefficient}
  \begin{prop}\label{prop:taylor}
 Let $I = [0,T],   s _0>1+ \frac{d}{2}$. For all $s \geq s_0$ there exists  $\mathcal{F} : \xR^+ \to \xR^+$ non decreasing  such that,  with $ H^{\sigma}   = H^{\sigma}(\xR^d)$
 \begin{multline}\label{est:a}
 \Vert a  -g \Vert_{L^\infty(I,H^{s-\mez})_{ul}} 
 \leq \mathcal{F}\big(\Vert (\eta ,\psi,V,B )\Vert_{L^\infty(I, H^{s_0+\mez} \times H^{s_0+\mez}  \times H^{s_0}  \times H^{s_0})_{ul}}\big) \\
 \times \Big \{1 + \Vert \eta \Vert_{L^\infty(I,H^{s+\mez} )_{ul}} + \Vert B \Vert_{L^\infty(I,H^{s} )_{ul}}
 +\Vert V\Vert_{L^\infty(I,H^{s })_{ul}}\Big\}.
 \end{multline}
 \end{prop}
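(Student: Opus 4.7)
The plan is to adapt to the uniformly local scale the strategy used in Proposition~4.10 of~\cite{ABZ3}. First I would write $P = -gy + p$, so that $a - g = -(\partial_y p)\arrowvert_\Sigma$. Taking the divergence of the Euler equation and using $v = \nabla_{x,y}\Phi$ together with $\cnx_{x,y} v = 0$ shows that
\begin{equation*}
\Delta_{x,y} p = -|\nabla_{x,y}v|^2 \text{ in } \Omega,\qquad p\arrowvert_\Sigma = g\eta,
\end{equation*}
with a homogeneous Neumann-type condition on $\Gamma$ (interpreted in the weak sense of Lemma~\ref{GREEN}). Pulling back by the diffeomorphism of Lemma~\ref{diffeo}, $\tilde{p}(x,z) = p(x, \rho(x,z))$ satisfies an equation of the form~\eqref{elliptique} with Dirichlet datum $g\eta \in H^{s+\mez}_{ul}$ and source $F$ equal to a linear combination, with coefficients smooth in $\nabla_{x,z}\rho$ and $1/\partial_z\rho$, of products $(\Lambda_j\tilde{\Phi})(\Lambda_k\tilde{\Phi})$, $j,k\in\{1,2\}$. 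Up to a smooth nonvanishing multiple, $\Lambda_1\tilde{p}\arrowvert_{z=0} = g - a$, so Lemma~\ref{lions} reduces~\eqref{est:a} to an estimate on $\nabla_{x,z}\tilde{p}$ in $X^{s-\mez}_{ul}(z_0,0)$.

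I would obtain this estimate by invoking Theorem~\ref{regell} Case~2 with $\sigma = s-\mez$. Two ingredients must be supplied. \emph{First}, a tame bound on $\|F\|_{Y^{s-\mez}_{ul}(J)}$ that is linear in the top norms: this is delivered by the paraproduct inequalities~\eqref{est:prod1}--\eqref{est:prod3}, once one knows that $\nabla_{x,z}\rho \in X^{s-\mez}_{ul}(J)$ linearly in $\|\eta\|_{H^{s+\mez}_{ul}}$ (Lemma~\ref{est-alpha}) and that $\nabla_{x,z}\tilde{\Phi} \in X^{s-\mez}_{ul}(J)$ linearly in $\|\eta\|_{H^{s+\mez}_{ul}} + \|\psi\|_{H^{s+\mez}_{ul}}$ (Corollary~\ref{coroetape1} Case~2). \emph{Second}, the low-regularity a priori input $\|\nabla_{x,z}\tilde{p}\|_{X^{s_0-1}_{ul}(J)} < +\infty$ has to be established; this is done by first running Theorem~\ref{regell} Case~1 on $\tilde{p}$ with $\sigma = s_0-1$, after an initial $X^{-\mez}_{ul}$ energy estimate for $p$ obtained from its variational formulation in the spirit of Proposition~\ref{existe:phi} and Corollary~\ref{rec00}.

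The presence of $V$ and $B$ on the right of~\eqref{est:a} is the subtle point: since $\psi$ is only controlled at the low level $H^{s_0+\mez}_{ul}$, any high-order contribution must come from the other unknowns. One exploits the boundary traces $B = \Lambda_1\tilde{\Phi}\arrowvert_{z=0}$ and $V = (\Lambda_2\tilde{\Phi})\arrowvert_{z=0} - \zeta B$ built into~\eqref{BV}, which allow the factors of $\nabla_{x,y}\Phi$ in $F$ that would otherwise demand too many derivatives of $\psi$ to be rewritten (after integration by parts in $x$, as in the proof of Lemma~\ref{aTa}) in terms of $V$ and $B$ directly. I expect the main obstacle to lie precisely in this bookkeeping: organising the expansion of $F$ so that every high-order term is \emph{linear} in exactly one of $\|\eta\|_{H^{s+\mez}_{ul}}$, $\|V\|_{H^{s}_{ul}}$, $\|B\|_{H^{s}_{ul}}$, with the nonlinear prefactor $\mathcal{F}$ depending only on the low-regularity norms. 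Once this structure is in place, taking the supremum over $t \in I$ yields~\eqref{est:a}.
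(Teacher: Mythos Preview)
Your overall architecture is correct, but there is a genuine gap in the two places where the argument has actual content.

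\textbf{The structure of the source.} Taking the divergence of Euler gives $\Delta_{x,y}P=-\lvert\nabla_{x,y}^{2}\Phi\rvert^{2}$, the squared \emph{Hessian} of the potential. After the change of variables this is (see \eqref{eq:Ptilde})
\[
F=-\alpha\sum_{i,j=1}^{2}\bigl\lvert \Lambda_{i}\Lambda_{j}\widetilde{\Phi}\bigr\rvert^{2},
\]
a quadratic expression in the \emph{second} derivatives $\Lambda_{i}\Lambda_{j}\widetilde{\Phi}$, not in the first derivatives $\Lambda_{j}\widetilde{\Phi}$ as you wrote. To put $F$ into $Y^{s-\mez}_{ul}$ you therefore need $\Lambda_{i}\Lambda_{j}\widetilde{\Phi}\in L^{2}(J,H^{s-\mez})_{ul}$, not merely $\nabla_{x,z}\widetilde{\Phi}\in X^{s-\mez}_{ul}$; Corollary~\ref{coroetape1} for $\widetilde{\Phi}$ is one derivative short.

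\textbf{How $V$ and $B$ enter linearly.} This is exactly the missing derivative and it is also where your last paragraph goes wrong: no integration by parts or rewriting of $F$ is needed. The point is that $\Lambda_{1},\Lambda_{2}$ commute with $\Lambda_{1}^{2}+\Lambda_{2}^{2}$, so each $\Lambda_{j}\widetilde{\Phi}$ is itself a solution of the elliptic problem \eqref{elliptique} with $F=0$, and by \eqref{VB} its Dirichlet trace is $B$ (for $j=1$) or $V$ (for $j=2$). Applying Theorem~\ref{regell} Case~2 \emph{to $\Lambda_{j}\widetilde{\Phi}$} at level $\sigma=s-1$ yields
\[
\bigl\lVert \nabla_{x,z}\Lambda_{j}\widetilde{\Phi}\bigr\rVert_{L^{2}(J,H^{s-\mez})_{ul}}
\le \mathcal{F}_{0}\,\bigl(1+\lVert\eta\rVert_{H^{s+\mez}_{ul}}+\lVert B\rVert_{H^{s}_{ul}}+\lVert V\rVert_{H^{s}_{ul}}\bigr),
\]
which, together with \eqref{est:prod1} and Lemma~\ref{est-alpha}, gives the tame bound on $\lVert F\rVert_{Y^{s-\mez}_{ul}}$. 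This is the mechanism that replaces $\lVert\psi\rVert_{H^{s+\mez}_{ul}}$ by $\lVert V\rVert_{H^{s}_{ul}}+\lVert B\rVert_{H^{s}_{ul}}$; your proposed ``integration by parts in $x$, as in Lemma~\ref{aTa}'' is not the right tool here and would not produce the linear structure. (Incidentally, $\Lambda_{2}\widetilde{\Phi}\arrowvert_{z=0}=V$, not $V+\zeta B$.) Once this is in place, the remaining steps---the $X^{-\mez}_{ul}$ bound for $\nabla_{x,z}\widetilde{P}$ via the decomposition $P=Q-\tfrac12\lvert\nabla_{x,y}\Phi\rvert^{2}-gy$, then Theorem~\ref{regell} for $\widetilde{P}$ at $\sigma=s-\mez$, then Lemma~\ref{lions}---go through as you outlined.
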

 For convenience we shall set in what follows
 \begin{equation}\label{F0}
 \mathcal{F}_0 =: \mathcal{F}\Big(\Vert (\eta ,\psi,V,B )\Vert_{L^\infty(I, H^{s_0+\mez} \times H^{s_0+\mez}  \times H^{s_0}  \times H^{s_0})_{ul}}\Big)
 \end{equation}
 where $\mathcal{F}: \xR^+ \to \xR^+$ is a non decreasing function which may change from line to line.
  Before giving the proof of this result let us recall how $a$ is defined. As is \cite{ABZ3} the pressure is defined as follows. Le $Q$ be the variationnal solution of the problem
 \begin{equation}\label{def:Q}
 \Delta_{x,y} Q = 0 \quad \text{in } \Omega, \quad Q\arrowvert_{y=\eta} = \mez B^2 + \mez \vert V \vert^2 + g \eta.
 \end{equation}
Then
\begin{equation}\label{def:P}
P = Q - \mez \vert \nabla_{x,y} \Phi \vert^2 -gy.
\end{equation}
 It is shown in~\cite{ABZ3} that $Q = -\partial_t \Phi$. Then 
 \begin{equation}\label{def:a}
 a = - \partial_y P\arrowvert_{y = \eta}.
\end{equation}
We deduce from~\eqref{def:Q}, \eqref{def:P} that $P$ is solution of the problem
\begin{equation}\label{eq:P}
\Delta_{x,y}P = - \vert \nabla_{x,y}^2 \Phi \vert^2, \quad P\arrowvert_{y=\eta} =0.
\end{equation}
Denoting, as usual, by $\widetilde{P}, \widetilde{Q}, \widetilde{\Phi}$ the images of $P,Q,\Phi$ by the diffeomorphism~\eqref {rho}  we have, using the notation~\eqref{lambda},
 $$\widetilde{P}Ê= \widetilde{Q} - \mez(\Lambda_1 \widetilde{\Phi})^2 - \mez \vert \Lambda_2 \widetilde{\Phi}\vert ^2 - g \rho $$
and we see that $\widetilde{P}$ is a solution of the problem in   \, $\xR^d \times  J,$
\begin{equation}\label{eq:Ptilde}
\big(\partial_z^2 + \alpha \Delta_x + \beta\cdot \nabla_x \partial_z - \gamma \partial_z\big) \widetilde{P} 
= -\alpha \sum_{i,j=1}^2\big\vert \Lambda_i \Lambda_j\widetilde{\Phi}\big\vert^2,\quad   \widetilde{P} \arrowvert_{z=0} =0.
\end{equation}
Notice that we have
\begin{equation}\label{VB}
 \Lambda_1 \widetilde{\Phi}\arrowvert_{z=0} =B, \quad  \Lambda_2 \widetilde{\Phi}\arrowvert_{z=0} = V.
\end{equation}
\begin{proof}[Proof of Proposition~\ref{prop:taylor}]
Below  the time is fixed and we will skip it. We want to  apply Theorem~\ref{regell} with $\sigma = s-\mez$, 
so we must estimate the source term and show that the condition~\eqref{-mez-i} is satisfied. 
We claim that (see \eqref{F0})
\begin{equation}\label{-mezP}
\big\Vert \nabla_{x,z} \widetilde{P} \big\Vert_{X^{-\mez}_{ul}(J)} \leq \mathcal{F}_0
\end{equation}
 First of all since $Q$ is a the variationnal solution of~\eqref{def:Q} we have according to Corollary~\ref{rec00}
\begin{equation*}
\big\Vert \nabla_{x,z} \widetilde{Q} \big\Vert_{X^{-\mez}_{ul}} 
\leq \mathcal{F} \big(\Vert \eta \Vert_{H^{s_0+\mez}_{ul} }\big) \big(\big\Vert B^2 \big\Vert_{H^\mez_{ul} } 
+ \big\Vert \vert V \vert^2 \big\Vert_{H^\mez_{ul} } +  \Vert  \eta \Vert_{H^\mez_{ul} }\big).
\end{equation*}
Using the fact that  $H^{s_0}_{ul}(\xR^d)$ is an algebra contained in $H^\mez_{ul}(\xR^d)$ we obtain
 \begin{equation}\label{-mezQ}
\big\Vert \nabla_{x,z} \widetilde{Q} \big\Vert_{X^{-\mez}_{ul} } \leq \mathcal{F}_0. 
\end{equation}
The estimate of $\Vert \nabla_{x,z} \rho\Vert_{X^{-\mez}_{ul}(\xR^d)} $ by the right hand side of~\eqref{-mezP} is straightforward. Now, for $j = 1,2$ since $ X^{s_0-\mez}_{ul}(J)$ is an algebra contained in $X^{ \mez}_{ul}(J)$ we have 
$$
\big\Vert \nabla_x \vert \Lambda_j \widetilde{\Phi}\vert^2 \big\Vert_{X^{-\mez}_{ul}(J)} 
\leq C \big\Vert  \vert \Lambda_j \widetilde{\Phi}\vert^2 \big\Vert_{X^{ \mez}_{ul}(J)} 
\leq C' \big\Vert   \Lambda_j\widetilde{\Phi} \big\Vert^2_{X^{s_0- \mez}_{ul}(J)}
$$
so using Corollary~\ref{coroetape1} with $\sigma = s_0-\mez$ and the estimates on $\rho$ we obtain
\begin{equation}\label{est:lambdaj}
\big\Vert \nabla_x \vert \Lambda_j \widetilde{\Phi}\vert^2 \big\Vert_{X^{-\mez}_{ul}(J)} \leq \mathcal{F}_3 \big( \Vert (\eta ,\psi) \Vert_{H^{s_0+\mez}_{ul} \times H^{s_0+\mez}_{ul} }\big).
\end{equation}
The same kind of arguments show that
\begin{equation}\label{est:lambdaj1}
\big\Vert \partial_z\vert \Lambda_j \widetilde{\Phi}\vert^2 \big\Vert_{X^{-\mez}_{ul}(J)} \leq \mathcal{F}_3 \big( \Vert \eta ,\psi \Vert_{H^{s_0+\mez}_{ul}  \times H^{s_0+\mez}_{ul} }\big).
\end{equation}
Using~\eqref{-mezQ},~\eqref{est:lambdaj}, and~\eqref{est:lambdaj1} we obtain the claim~\eqref{-mezP}.
 Now we estimate the source term $F = -\alpha \sum_{i,j=1}^2\big\vert \Lambda_i \Lambda_j\widetilde{\Phi}\big\vert^2$ in equation~\eqref{eq:Ptilde}. 
Since $s-\mez> \frac{d}{2} $ we can write
\begin{align*}
\Vert F \Vert_{Y^{s-\mez}_{ul}(J)}& \leq \Vert F \Vert_{ L^1(J,H^{s-\mez} )_{ul}} \\
&\leq  C\Vert \alpha \Vert_{ L^\infty(J,H^{s-\mez})_{ul}}   \sum_{i,j=1}^2\Vert \Lambda_i \Lambda_j\widetilde{\Phi}\Vert^2_{ L^2(J,H^{s-\mez} )_{ul}}.
\end{align*}
 Since $(\Lambda_1^2 + \Lambda_2^2)\widetilde{\Phi}=0$ an $\Lambda_1, \Lambda_2$ commute, we have for $j=1,2$
 $$(\Lambda_1^2 + \Lambda_2^2) \Lambda_j \widetilde{\Phi} =0, \quad (\Lambda_1\widetilde{\Phi}, \Lambda_2\widetilde{\Phi})\arrowvert_{z=0} = (B,V)\in H^s_{ul} \times H^s_{ul}.$$
 Since we have (see \eqref{F0})
$$\Vert \Lambda_j \widetilde{\Phi}\Vert_{X^{-\mez}_{ul}(J)} \leq \mathcal{F}_0 $$
 we can apply Theorem~\ref{regell} with $\sigma = s-1$ and conclude that
\begin{equation*}
\Vert \nabla_{x,z}\Lambda_j\widetilde{\Phi}\Vert_{L^2(J,H^{s-\mez})_{ul}} 
 \leq \mathcal{F}_0 \cdot \big(1 + \Vert \eta \Vert_{H^{s +\mez}_{ul}} + \Vert B \Vert_{H^{s  }_{ul}}+\Vert V\Vert_{H^{s }_{ul}}\big).
 \end{equation*}
  Since $\Lambda_1 = \frac{1}{\partial_z \rho} \partial_z, \Lambda_2 = \nabla_x -  \frac{\nabla_x \rho}{\partial_z \rho} \partial_z,$ using \eqref{est:prod1}, the estimates on $\rho$, the above inequality for $s=s_0$ and for $s$ we obtain
  \begin{equation}\label{lambdaij}
   \Vert\Lambda_i \Lambda_j\widetilde{\Phi}\Vert_{L^2(J,H^{s-\mez}(\xR^d) _{ul}} \leq \mathcal{F}_0 \cdot
   \big(1 + \Vert \eta \Vert_{H^{s+\mez}_{ul}} + \Vert B \Vert_{H^{s  }_{ul}}+\Vert V\Vert_{H^{s }_{ul}}\big).
   \end{equation}
    It follows easily that 
 \begin{equation}\label{est:F}
   \Vert F \Vert_{Y^{s-\mez}_{ul}(J)}\leq \mathcal{F}_0\cdot
   \big(1 + \Vert \eta \Vert_{H^{s+\mez}_{ul}} + \Vert B \Vert_{H^{s  }_{ul}}+\Vert V\Vert_{H^{s }_{ul}}\big).
    \end{equation}
Using \eqref{eq:Ptilde},~\eqref{-mezP},~\eqref{est:F},  Theorem~\ref{regell} and $ X^{s-\mez}_{ul}(z_0,0) \subset L^2((z_0,0), H^s)_{ul}$ we obtain, using \eqref{F0}
\begin{equation}\label{est:dzPtilde}
\Vert \nabla_{x,z} \widetilde{P} \Vert_{ L^2((z_0,0), H^s)_{ul}} \leq \mathcal{F}_0\cdot
\big(1 + \Vert \eta \Vert_{H^{s+\mez}_{ul}} + \Vert B \Vert_{H^{s  }_{ul}}+\Vert V\Vert_{H^{s }_{ul}}\big). 
\end{equation}
We claim that 
\begin{equation*}
\big\Vert \partial_z^2 \widetilde{P} \big\Vert_{ L^2((z_0,0), H^{s-1})_{ul}}
\leq \mathcal{F}_0 \cdot
\big(1 + \Vert \eta \Vert_{H^{s+\mez}_{ul}} + \Vert B \Vert_{H^{s  }_{ul}}+\Vert V\Vert_{H^{s }_{ul}}\big). 
\end{equation*}
Indeed this follows from \eqref{eq:Ptilde}, \eqref{lambdaij}, \eqref{est:prod1}, \eqref{est:prod2}, \eqref{est:dzPtilde}. 

Noticing that $a = \frac{1}{\partial_z \rho}\partial_z \widetilde{P}\arrowvert_{z=0}$ and applying Lemma \ref{lions} 
we obtain the conclusion of Proposition~\ref{prop:taylor}.
\end{proof}
\subsection{Paralinearization of the system}
As in~\cite{ABZ3} for $s>1+\frac{d}{2}$ we set
\begin{equation}\label{def:Uzetas}
\left\{
\begin{aligned}
&U_s = \langle D_x \rangle^s V + T_\zeta  \langle D_x \rangle^s B,\\
& \zeta_s =  \langle D_x \rangle^s \zeta.
\end{aligned}
\right.
\end{equation}
and we recall that we have set  (see the statement of Theorem~\ref{paralinDN})  
\begin{equation}\label{lambda1}  
 \lambda(t,x,\xi):= \sqrt{(1+ \vert \nabla_x \eta(t,x) \vert ^2) \vert \xi \vert^2 -(\nabla_x \eta(t,x)\cdot \xi)^2}.
 \end{equation}
 \begin{prop}\label{para:system}
 Let $s_0>1+\frac{d}{2}$. For all $s \geq s_0$ there exists  $\mathcal{F}:\xR^+ \to \xR^+$   non decreasing   such that 
 \begin{align}
 &(\partial_t + T_V\cdot \nabla_x)U_s +T_a \zeta_s = f_1,\label{para1}\\
   &(\partial_t + T_V\cdot \nabla_x))\zeta_s - T_\lambda U_s =f_2,\label{para2}
  \end{align}
   where for each time $t\in [0,T]$
  \begin{equation}\label{est:f}\begin{aligned}
  \Vert (f_1(t), f_2(t)) \Vert_{L^2_{ul} \times H^{-\mez}_{ul} } 
  \leq & \mathcal{F}\Big(\Vert (\eta(t) , \psi(t),V(t),B(t) )\Vert_{H^{s_0+\mez}_{ul}  \times H^{s_0+\mez}_{ul} \times H^{s_0 }_{ul} \times H^{s_0 }_{ul}}\Big) \\
  &\times\Big\{1 + \Vert \eta(t) \Vert_{H^{s+\mez}_{ul}} + \Vert B(t) \Vert_{H^{s  }_{ul}}+\Vert V(t)\Vert_{H^{s }_{ul}}\Big\}.   \end{aligned} 
\end{equation}
\end{prop}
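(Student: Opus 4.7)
The strategy mirrors the corresponding paralinearization in \cite{ABZ3}, with every estimate recast in the uniformly local framework of Section \ref{sec.3}. I would derive \eqref{para1}--\eqref{para2} from the evolution equations established in Proposition \ref{new:eq} by applying $\langle D_x\rangle^s$ to each equation and performing three coordinated substitutions: (i) Bony's decomposition $fg = T_f g + T_g f + R(f,g)$ on every nonlinear product; (ii) the paralinearization $G(\eta)V = T_\lambda V + R(\eta)V$ from Theorem \ref{paralinDN}; (iii) the identity $G(\eta)B = -\cn V + \gamma$ from Lemma \ref{GB=divV}, which is used instead of the direct paralinearization of $G(\eta)B$ in order to avoid losing a half derivative on $B$.

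Carrying this out on the $V$-equation $(\partial_t + V\cdot\nabla_x)V + a\zeta = 0$, the transport term produces $T_V\cdot\nabla_x\langle D_x\rangle^s V$ plus a commutator $[\langle D_x\rangle^s, T_V \cdot \nabla_x]V$, which by the uniformly local symbolic calculus (Theorem \ref{calc:symb}) gains one derivative and leaves an $L^2_{ul}$-bounded remainder controlled by the tame norm in \eqref{est:f}. The product $a\zeta$ contributes the principal term $T_a\zeta_s$ together with a subprincipal $T_\zeta\langle D_x\rangle^s(a-g)$ and a smoother Bony remainder; the crucial $H^{s-\mez}_{ul}$ bound on $a-g$ from Proposition \ref{prop:taylor} closes the estimate. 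On the $\zeta$-equation, applying Theorem \ref{paralinDN} to $G(\eta)V$ and substituting $G(\eta)B = -\cn V + \gamma$ into the nonlinear term $\zeta G(\eta)B$ produces, after commuting $\langle D_x\rangle^s$ with $T_\lambda$ and $T_\zeta$, exactly $T_\lambda(\langle D_x\rangle^s V + T_\zeta \langle D_x\rangle^s B) = T_\lambda U_s$ modulo controlled remainders.

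The key algebraic point that motivates the definition of $U_s$ is a cancellation: the subprincipal term $T_\zeta\langle D_x\rangle^s(a-g)$ produced by paralinearizing $a\zeta$ is exactly compensated by $T_{\partial_t\zeta}\langle D_x\rangle^s B$ and by $T_\zeta\partial_t\langle D_x\rangle^s B = T_\zeta\langle D_x\rangle^s(a-g-V\cdot\nabla_x B)$ coming from differentiating the correction $T_\zeta\langle D_x\rangle^s B$ in time via the $B$-equation, while the residual transport piece regroups with the other transport terms to form $T_V\cdot\nabla_x U_s$ modulo remainders. The main obstacle is the extensive bookkeeping: many remainders arise from Bony's decompositions, from the commutators $[\langle D_x\rangle^s, T_V\cdot\nabla_x]$, $[\langle D_x\rangle^s, T_\lambda]$, $[T_\lambda, T_\zeta]$, from the substitution of $G(\eta)B$ by $-\cn V + \gamma$, and from the $R(\eta)V$ term of Theorem \ref{paralinDN}. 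Each must be estimated in $L^2_{ul}$ or $H^{-\mez}_{ul}$ by combining the product laws of Proposition \ref{Ho}, the paraproduct estimates of Proposition \ref{a-Ta}, the symbolic calculus of Theorem \ref{calc:symb}, the tame bound on $a-g$ from Proposition \ref{prop:taylor}, and the $H^{s-\mez}_{ul}$ bound on $R(\eta)V$ from Theorem \ref{paralinDN}. The only genuinely new content compared to \cite{ABZ3} is routing every such estimate through its uniformly local analogue, which is precisely the content of Section \ref{sec.3}.
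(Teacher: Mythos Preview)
Your overall scheme --- paralinearize the equations of Proposition~\ref{new:eq}, apply $\langle D_x\rangle^s$, and exploit the cancellation between $T_\zeta\langle D_x\rangle^s(a-g)$ and the time derivative of $T_\zeta\langle D_x\rangle^s B$ that motivates the definition of $U_s$ --- is correct and matches the paper's approach. The handling of the first equation and the list of commutators to be estimated are essentially right.

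Your point (iii), however, is a misstep. The paper does \emph{not} use Lemma~\ref{GB=divV} in this proof; it paralinearizes $G(\eta)B$ directly via Theorem~\ref{paralinDN}, writing $G(\eta)B = T_\lambda B + R(\eta)B$ and then
\[
\zeta\, G(\eta)B
= T_\lambda T_\zeta B + [T_\zeta,T_\lambda]B + (\zeta - T_\zeta)T_\lambda B + \zeta\, R(\eta)B,
\]
so that $G(\eta)V+\zeta G(\eta)B=T_\lambda U+\mathcal{R}$ with $U=V+T_\zeta B$. Each piece of $\mathcal{R}$ is controlled in $H^{s-\mez}_{ul}$: the commutator by Theorem~\ref{calc:symb}(ii), the $(\zeta-T_\zeta)$ term by Proposition~\ref{a-Ta}, and $R(\eta)B$ by Theorem~\ref{paralinDN} with $t=s-\mez$, which only requires $B\in H^s_{ul}$. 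There is no half-derivative loss to avoid. By contrast, your substitution $G(\eta)B=-\cn V+\gamma$ replaces a $B$-expression by a $V$-expression, and $-\zeta\,\cn V$ does not produce the needed principal term $T_\lambda T_\zeta B$; you would have to undo the substitution to close the argument. Lemma~\ref{GB=divV} is indeed used in the paper, but later, in Lemma~\ref{end:proof}, to recover the estimate on $B$ from that on $U_s$ --- not in the paralinearization step.
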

\begin{proof}
We follow the proof of Proposition 4.9 in~\cite{ABZ3}. 
First of all we shall say that a positive quantity $A(t)$ is controlled if it is bounded 
by the right hand side of~\eqref{est:f}. 
Here $t$ will be fixed so we will skip it, taking care that the estimates 
are uniform with respect to $t\in [0,T]$. We also set
$$
\mathcal{L}_0 = \partial_t + T_V\cdot\nabla_x.
$$
  \subsubsection{Paralinearization of the first equation} We begin by proving that
 \begin{equation}\label{para3}
 \mathcal{L}_0  V + T_a \zeta +T_\zeta\mathcal{L}_0 B = h_1
  \end{equation}
where $\Vert h_1 \Vert_{H^s_{ul} }$ is controlled. 
Indeed using~\eqref{eq:B},~\eqref{eq:V} and the fact that $T_\zeta g=0$ we see that  $h_1 = (T_V -V)\cdot \nabla_x V - R(a,\zeta)$.  By Proposition \ref{a-Ta} with $\gamma = s, r=s, \mu = s_0-1$ we see that $\Vert (T_V -V)\cdot \nabla_x V \Vert_{ H^{s }_{ul} } \leq C \Vert V \Vert _{H^{s }_{ul} } \Vert V \Vert _{H^{s_0 }_{ul} }$. On the other hand since $s_0>1+ \frac{d}{2},$ Proposition \ref{paralin} with $ \alpha = s-\mez, \beta = s_0-\mez$  shows that  
$$\Vert R(a, \zeta) \Vert_{ H^{s}_{ul}(\xR^d)} \leq C \Vert a \Vert_{ H^{s-\mez }_{ul}(\xR^d)} \Vert \nabla_x \eta \Vert_{ H^{s_0 -\mez}_{ul}(\xR^d) }.$$
 These estimates together with Proposition \ref{prop:taylor} prove that $h_1$ is controlled.
 \subsubsection{Higher order energy estimates} Now we apply the operator $\langle D_x \rangle^s = (I-\Delta_x)^{\frac{s}{2}}$ to the equation~\eqref{para3} and we commute. We claim that we obtain
\begin{equation}\label{para4}
 \mathcal{L}_0  \langle D_x \rangle^sV + T_a \langle D_x \rangle^s\zeta +T_\zeta\mathcal{L}_0 \langle D_x \rangle^s B = h_2
 \end{equation}
where $\Vert h_2(t) \Vert_{L^2_{ul}(\xR^d)}$ is controlled. Indeed this is a consequence of the following estimates
\begin{align*}
&\Vert [T_V \cdot \nabla_x, \langle D_x \rangle^s ] \Vert_{H^s_{ul} \to L_{ul}^2} \leq C\Vert V \Vert_{W^{1,\infty}} \leq C'\Vert V \Vert_{H^{s_0}_{ul}},\\
 &\Vert [T_a, \langle D_x \rangle^s ] \Vert_{H^{s-\mez}_{ul} \to L^2_{ul}} \leq C \Vert a \Vert_{W^{\mez,\infty} } \leq C'\Vert a \Vert_{H^{s_0-\mez}_{ul} },\\
&\Vert [T_\zeta, \langle D_x \rangle^s ] \Vert_{H^{s-\mez}_{ul} \to L^2_{ul}} \leq C\Vert \zeta \Vert_{W^{\mez,\infty}  }  \leq C'\Vert \zeta \Vert_{H^{s_0-\mez}_{ul}} ,
 \end{align*}
which follow from Theorem~\ref{calc:symb}. Now Lemma \ref{commutation} shows that
  $$\Vert [T_\zeta, \mathcal{L}_0] \langle D_x \rangle^s B\Vert_{L_{ul}^2 } 
\leq C( \Vert \zeta \Vert_{L^\infty }\Vert V \Vert _{W^{1+\eps, \infty} } + \Vert \mathcal{L}_0 \zeta \Vert_{L^\infty }) \Vert B \Vert_{H^s_{ul} }.$$
   Since $s_0>1+\frac{d}{2}$ one can find $\eps>0$ such that  $H^{s_0}_{ul}(\xR^d)$ is continuously embedded in  $W^{1+\eps,\infty}(\xR^d)$. Therefore we obtain
$$\Vert [T_\zeta, \mathcal{L}_0] \langle D_x \rangle^s B\Vert_{L_{ul}^2} \leq  \mathcal{F} (\Vert (\eta, B ,V)\Vert_{H^{s_0+\mez}_{ul}   \times H^{s_0 }_{ul}  \times H^{s_0 }_{ul} }) \Vert B \Vert_{H^s_{ul}} $$
which shows that $\Vert [T_\zeta, \mathcal{L}_0] \langle D_x \rangle^s B\Vert_{L_{ul}^2 } $ is controlled.
Using~\eqref{para4} and \eqref{def:Uzetas} we obtain ~\eqref{para1}.
\subsubsection{Paralinearization of the second equation}
\begin{equation}\label{para5}
  (\partial_t + V \cdot \nabla)\zeta = G(\eta)V + \zeta G(\eta) B +R.
\end{equation}
We first replace $V$ by $T_V$ modulo a controlled term. To do this we use Proposition~\ref{a-Ta} with $\gamma = s - \mez, r=s,\mu = s_0-\frac{3}{2}$ and we obtain
\begin{equation}\label{para6}
 \Vert (V-T_V)\nabla \zeta \Vert_{H^{s-\mez}_{ul} } \leq \Vert VÊ\Vert_{H^s_{ul} } \Vert \eta \Vert_{H^{s_0+\mez}_{ul} }.
 \end{equation}
Next we paralinearize the Dirichlet-Neumann part. To achieve this paralinearization we use the analysis performed in Section 2. Using Theorem~\ref{paralinDN} with $t = s-\mez$ we can write
\begin{equation}\label{para7}
 G(\eta)V + \zeta G(\eta) B = T_\lambda U + \mathcal{R}
 \end{equation}
where 
\begin{equation}
\begin{aligned}
U &= V+ T_\zeta B\\
\mathcal{R} &= [T_\zeta, T_\lambda] B + R(\eta) V + \zeta R(\eta)B  + (\zeta - T_\zeta) T_\lambda B.
\end{aligned}
\end{equation}
and
\begin{multline*}
\Vert R(\eta) V\Vert_{H^{s-\mez}_{ul} }   +  \Vert \zeta R(\eta)B \Vert _{H^{s-\mez}_{ul} } \\
 \leq \mathcal{F}(\Vert (\eta , B ,V )\Vert_{H^{s_0+\mez}_{ul}   \times H^{s_0 }_{ul} \times H^{s_0 }_{ul} })\big(1 + \Vert \eta  \Vert_{H^{s+\mez}_{ul}} + \Vert  B \Vert_{H^{s  }_{ul}}+\Vert V \Vert_{H^{s }_{ul}}\big).
 \end{multline*}
 Using again Proposition~\ref{a-Ta} with $ \gamma = s-\mez, r = s-\mez, \mu = s_0-1$ and Theorem~\ref{calc:symb} $(i)$ we can write
$$
\Vert (\zeta - T_\zeta) T_\lambda B \Vert_{H^{s-\mez}_{ul} }  
\leq C \Vert \eta \Vert_{H^{s+\mez}_{ul} } M^1_0(\lambda) 
\Vert B \Vert_{H^ {s_0} _{ul} } \leq \mathcal{F}(\Vert \eta \Vert_{H^{s+\mez}_{ul} })\Vert B \Vert_{H^ {s_0} _{ul} },
$$
which shows that this term is controlled. Eventually, by Theorem~\ref{calc:symb} $(ii),$ 
the term  $\Vert  [T_\zeta, T_\lambda] B \Vert_{H^{s-\mez}_{ul} }$ is also controlled. 
Therefore we have the equality~\eqref{para7} with 
$ \Vert \mathcal{R}\Vert_{H^{s-\mez}_{ul} }$ controlled. 
It follows from~\eqref{para5},~\eqref{para6} and~\eqref{para7} that 
$$
\mathcal{L}_0 \zeta = T_\lambda U + \mathcal{R}
$$
where $ \Vert \mathcal{R}\Vert_{H^{s-\mez}_{ul} } $ controlled. 
As in the second step by commuting the equation~\eqref{para6} with $\langle D_x \rangle^s$ 
we obtain equation~\eqref{para2}. This completes the proof of Proposition~\ref{para:system}.
 \end{proof}
 \subsection{Symmetrization of the equations}
 As in~\cite{ABZ3} before proving an $L^2$ estimate for our system we begin 
 by performing a symmetrization of the non diagonal part. 
 Recall that $  \mathcal{L}_0  = \partial_t + T_V\cdot \nabla$.
 \begin{prop}\label{psym}
 Introduce the symbols
 $$\gamma = \sqrt{a\lambda}, \quad q = \sqrt{\frac{a}{\lambda}},$$
 where $a$ is the Taylor coefficient and $\lambda$ is recalled in~\eqref{lambda1}. Set $\theta_s = T_q \zeta_s $ and  $\mathcal{L}_0=\partial_t+T_V\cdot\nabla$. Then
 \begin{align}
 \mathcal{L}_0 U_s +T_\gamma \theta_s &=F_1 \label{syst:red1}\\
  \mathcal{L}_0 \theta_s -T_\gamma U_s &= F_2 \label{syst:red2}
\end{align}
where  $F_1,F_2$ satisfy, with $L^2_{ul}= L^2_{ul}(\xR^d), H_{ul}^\sigma = H_{ul}^\sigma(\xR^d),$
\begin{multline*}
\Vert (F_1(t), F_2(t)) \Vert_{L^2_{ul}  \times L^2_{ul} }\leq 
    \mathcal{F}(\Vert (\eta(t), B(t),V(t))\Vert_{H^{s_0+\mez}_{ul}   \times H^{s_0}_{ul} \times H^{s_0}_{ul} })\\
\cdot \big(1 + \Vert \eta(t)  \Vert_{H^{s+\mez}_{ul}  } + \Vert  B(t) \Vert_{H^{s  }_{ul} }+\Vert V(t) \Vert_{H^{s }_{ul}}\big), 
 \end{multline*}
for some non decreasing function $\mathcal{F}: \xR^+ \to \xR^+$ and all $t \in [0,T]$.
\end{prop}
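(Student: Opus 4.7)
The plan is to follow the classical symmetrization of the system via the operator $T_q$, lifting every estimate used in the analogous proof of \cite{ABZ3} to the uniformly local setting by means of Theorem~\ref{calc:symb} and the para-product estimates from Section~\ref{sec.3}. The non-degeneracy $a \geq c/2 > 0$ (from the assumption on the Taylor coefficient, propagated in time) together with the ellipticity of $\lambda$ guarantees that $\gamma = \sqrt{a\lambda}$ and $q = \sqrt{a/\lambda}$ are genuine symbols of order $\mez$ and $-\mez$ respectively, with seminorms $\mathcal{M}^{\pm \mez}_\mez(\cdot)$ controlled by $\mathcal{F}(\Vert (\eta, B, V) \Vert_{H^{s_0+\mez}_{ul} \times H^{s_0}_{ul} \times H^{s_0}_{ul}})$ thanks to Lemma~\ref{est-alpha}, Proposition~\ref{prop:taylor} and the low-norm estimates on $\eta$.

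For \eqref{syst:red1}, I would start from the equation \eqref{para1} and use the symbol identity $\gamma q = a$ together with the composition rule of Theorem~\ref{calc:symb}$(ii)$ (applied with $\rho = 1$) to write $T_a = T_\gamma T_q + R_1$, where $R_1$ is of order $-1$. Substituting and using $\theta_s = T_q \zeta_s$ yields
$$F_1 = f_1 - R_1 \zeta_s.$$
Since $\zeta_s \in H^{-\mez}_{ul}$ (because $\eta \in H^{s+\mez}_{ul}$) and $R_1$ is of order $-1$, we get $R_1 \zeta_s \in H^\mez_{ul} \hookrightarrow L^2_{ul}$ with the required bound.

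For \eqref{syst:red2}, I would apply $T_q$ to \eqref{para2} and rewrite
$$
\mathcal{L}_0 \theta_s - T_\gamma U_s
= T_q f_2 + [\mathcal{L}_0, T_q] \zeta_s - R_2 U_s,
$$
where $R_2 := T_q T_\lambda - T_\gamma$ is of order $-\mez$ by Theorem~\ref{calc:symb}$(ii)$. The first piece is handled since $T_q$ maps $H^{-\mez}_{ul}$ to $L^2_{ul}$ and $f_2$ satisfies \eqref{est:f}; the last piece uses $U_s \in L^2_{ul}$ (by definition and the bounds on $V, B, \zeta$). The commutator splits as
$$
[\mathcal{L}_0, T_q] \zeta_s = T_{\partial_t q} \zeta_s + [T_V \cdot \nabla_x, T_q] \zeta_s,
$$
and the second term is of order $-\mez$ by the symbolic calculus, giving an $L^2_{ul}$ contribution when applied to $\zeta_s \in H^{-\mez}_{ul}$.

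The main obstacle is the term $T_{\partial_t q}\zeta_s$, which forces one to produce $L^\infty$-type control on $\partial_t q$. Since $q$ is a smooth function of $a$ and $\nabla_x \eta$, the chain rule gives $\partial_t q = q_a \partial_t a + q_\zeta \cdot \partial_t \zeta$, and one must express $\partial_t a$ and $\partial_t \zeta$ via the evolution equations of Proposition~\ref{new:eq} (namely $\partial_t \zeta = \nabla_x G(\eta) \psi + \cdots$ and $(\partial_t + V\cdot \nabla_x) B = a - g$, combined with the bound for $G(\eta)\psi$ from Theorem~\ref{princ} and the estimate of Proposition~\ref{prop:taylor}). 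This gives $\partial_t q \in L^\infty(I, H^{s_0 - 1}_{ul})$ with seminorm controlled by $\mathcal{F}(\Vert(\eta,\psi,V,B)\Vert_{H^{s_0+\mez}_{ul}\times H^{s_0+\mez}_{ul}\times H^{s_0}_{ul}\times H^{s_0}_{ul}})$, so $T_{\partial_t q}$ is a paramultiplication of order $0$ mapping $\zeta_s \in L^2_{ul}$ into $L^2_{ul}$. Collecting all contributions gives the claimed estimate, with the high-regularity norms of $\eta, V, B$ appearing linearly, exactly through the dependence coming from $R_1, R_2$ and $\partial_t q$.
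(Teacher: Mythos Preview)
Your decomposition of $F_1, F_2$ coincides with the paper's: $F_1 = f_1 + (T_\gamma T_q - T_a)\zeta_s$ and $F_2 = T_q f_2 + (T_q T_\lambda - T_\gamma)U_s - [T_q, \mathcal{L}_0]\zeta_s$. One correction on the symbolic-calculus remainders: since $a, \nabla\eta \in H^{s_0-\mez}_{ul} \subset W^{\mez,\infty}$ (and no better when $s_0$ is close to $1+d/2$), the symbols $\gamma, q$ lie only in $\Gamma^{\pm\mez}_{\mez}$, not in $\Gamma^{\pm\mez}_1$. Theorem~\ref{calc:symb}(ii) with $\rho=\mez$ therefore gives $T_\gamma T_q - T_a$ of order $-\mez$ (not $-1$) and $T_q T_\lambda - T_\gamma$ of order $0$ (not $-\mez$). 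These weaker orders are still sufficient, since $\zeta_s \in H^{-\mez}_{ul}$ and $U_s \in L^2_{ul}$.

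The genuine gap is in the commutator term. First, $\zeta_s = \langle D_x\rangle^s\nabla\eta$ belongs to $H^{-\mez}_{ul}$, not to $L^2_{ul}$ as you write in the last paragraph. Second, your claim that $[T_V\cdot\nabla, T_q]$ is of order $-\mez$ ``by the symbolic calculus'' does not follow from Theorem~\ref{calc:symb}(ii): with $q\in\Gamma^{-\mez}_{\mez}$ that theorem gives only order $0$, so the commutator applied to $\zeta_s\in H^{-\mez}_{ul}$ lands in $H^{-\mez}_{ul}$, not $L^2_{ul}$. Expanding $[T_V\cdot\nabla, T_q]=\sum_j\bigl(T_{V_j}T_{\partial_j q}+[T_{V_j},T_q]\partial_j\bigr)$ does not rescue this either, because $\partial_j q\notin \Gamma^{-\mez}_0$ in general ($\nabla a$ and $\nabla^2\eta$ need not be in $L^\infty$ under the sole assumption $s_0>1+d/2$). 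The paper avoids this by invoking Lemma~\ref{commutation}, a dedicated estimate for $[T_p,\partial_t+T_V\cdot\nabla]$ which exploits the transport structure and the $W^{1+\eps,\infty}$ regularity of $V$ to bound the whole commutator from $H^m_{ul}$ into $L^2_{ul}$ (here $m=-\mez$), with constant controlled by $M_0^m(p)$ and $M_0^m\bigl((\partial_t+V\cdot\nabla_x)p\bigr)$. The point is that only the \emph{material derivative} of $q$ enters, so one reads $(\partial_t+V\cdot\nabla)\zeta$ directly from \eqref{eq:zeta} and $(\partial_t+V\cdot\nabla)a$ from the pressure analysis, without ever needing $\partial_t q$ or $\nabla_x q$ separately in $L^\infty$.
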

\begin{proof}
We follow \cite{ABZ3}. From~\eqref{para1} and ~\eqref{para2} we have
\begin{align*}
&F_1:= f_1 + (T_\gamma T_q -T_a) \zeta_s\\
&F_2:= T_q f_2 +(T_qT_\lambda - T_\gamma) U_s -[T_q, \mathcal{L}_0] \zeta_s.
\end{align*}
Then the Proposition follows from Lemma \ref{commutation} and from the symbolic calculus.
\end{proof}
We can now state our $L^2$ estimate. Let us set with $H_{ul}^\sigma = H_{ul}^\sigma(\xR^d)$
\begin{equation*}
\left\{
\begin{aligned}
&M_s(0) =   \Vert (\eta(0) ,\psi(0),B(0),V(0))\Vert_{H^{s+\mez}_{ul} \times H^{s+\mez}_{ul} \times H^{s }_{ul} \times H^{s }_{ul} },\\
  & M_s(T) = \sup_{t\in [0,T]}  \Vert (\eta(t) ,\psi(t),B(t),V(t))\Vert_{H^{s+\mez}_{ul} \times H^{s+\mez}_{ul} \times H^{s }_{ul} \times H^{s }_{ul} }. 
\end{aligned}
\right.
\end{equation*}
\begin{prop}\label{est:L2}
 There exists     $\mathcal{F}: \xR^+ \to \xR^+$   non decreasing  such that 
\begin{equation*}
\begin{aligned}
 &(i)  \quad \Vert U_s(t) \Vert_{ L^2_{ul}  } 
+ \Vert \theta_s(t) \Vert_{  L^2_{ul} } \leq \mathcal{F} (M_{s_0}(t)) M_s(t), \quad t\in I=:[0,T],\\ 
 &(ii) \quad  
   \Vert U_s \Vert_{L^\infty(I, L^2 )_{ul}}  + \Vert \theta_s \Vert_{L^\infty(I, L^2 )_{ul}} \leq 
 \mathcal{F} (TM_{s_0}(T))\Big\{ M_s(0) + \sqrt{T}M_s(T)\Big\}.
 \end{aligned}
\end{equation*}
\end{prop}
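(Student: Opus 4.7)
The plan is to carry out the classical symmetrizer energy estimate for the hyperbolic-type system \eqref{syst:red1}--\eqref{syst:red2}, adapted to the uniformly local framework through the partition of unity $\{\chi_q\}_{q \in \xZ^d}$.

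Part~(i) is purely algebraic. Using that $T_\zeta$ is $L^2_{ul}$-bounded with norm $\lesssim \Vert \zeta\Vert_{L^\infty}$, I read off from \eqref{def:Uzetas} that $\Vert U_s(t)\Vert_{L^2_{ul}} \leq \Vert V\Vert_{H^s_{ul}} + \mathcal{F}(M_{s_0}(t))\Vert B\Vert_{H^s_{ul}}$. For $\theta_s = T_q \zeta_s$, since $q = \sqrt{a/\lambda}$ is a symbol of order $-1/2$ whose seminorms are controlled by $\mathcal{F}(M_{s_0}(t))$ (using Proposition~\ref{prop:taylor} for $\Vert a\Vert_{L^\infty}$), the operator $T_q$ gains half a derivative so $\Vert \theta_s\Vert_{L^2_{ul}} \lesssim \mathcal{F}(M_{s_0}(t))\Vert \zeta\Vert_{H^{s-1/2}_{ul}} \leq \mathcal{F}(M_{s_0}(t))\Vert \eta\Vert_{H^{s+1/2}_{ul}}$. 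Both contributions are bounded by $\mathcal{F}(M_{s_0}(t))M_s(t)$, giving~(i).

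For part~(ii), I localize the symmetrized system: setting $u_q := \chi_q U_s$, $\vartheta_q := \chi_q \theta_s$ and commuting $\chi_q$ through $\mathcal{L}_0 = \partial_t + T_V\cdot\nabla$ and $T_\gamma$ gives
\begin{align*}
\mathcal{L}_0 u_q + T_\gamma \vartheta_q &= \chi_q F_1 + [T_V\cdot\nabla, \chi_q] U_s + [\chi_q, T_\gamma]\theta_s,\\
\mathcal{L}_0 \vartheta_q - T_\gamma u_q &= \chi_q F_2 + [T_V\cdot\nabla, \chi_q] \theta_s - [\chi_q, T_\gamma] U_s.
\end{align*}
The paradifferential symbolic calculus from Section~3 shows these commutators are $L^2$-bounded with norms $\leq \mathcal{F}(M_{s_0}(t))$ uniformly in $q$. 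I then compute $\frac{d}{dt}(\Vert u_q\Vert^2_{L^2} + \Vert\vartheta_q\Vert^2_{L^2})$, exploiting two cancellations: the drift contributions $2\Re\langle T_V\cdot\nabla u_q, u_q\rangle$ are controlled since $T_V\cdot\nabla + (T_V\cdot\nabla)^*$ is of order $0$ (because $V$ is real); and the cross terms collapse to $\langle u_q, (T_\gamma^* - T_\gamma)\vartheta_q\rangle$, where $T_\gamma^* - T_\gamma$ is of order $\leq 0$ because $\gamma$ is real, with norm bounded by $\mathcal{F}(M_{s_0}(t))$ via~\eqref{estsymb}.

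Summing and taking the supremum over $q \in \xZ^d$, I arrive at
$$
\frac{d}{dt}\bigl(\Vert U_s(t)\Vert^2_{L^2_{ul}} + \Vert\theta_s(t)\Vert^2_{L^2_{ul}}\bigr) \leq \mathcal{F}(M_{s_0}(t))\bigl(\Vert U_s\Vert^2_{L^2_{ul}} + \Vert\theta_s\Vert^2_{L^2_{ul}}\bigr) + \Vert F_1(t)\Vert^2_{L^2_{ul}} + \Vert F_2(t)\Vert^2_{L^2_{ul}}.
$$
Gronwall, together with the initial bound from part~(i) giving $\Vert U_s(0)\Vert^2 + \Vert\theta_s(0)\Vert^2 \leq \mathcal{F}(M_{s_0}(0))^2 M_s(0)^2$, and the source estimate \eqref{est:f} giving $\int_0^T(\Vert F_1\Vert^2 + \Vert F_2\Vert^2) dt \leq T \mathcal{F}(M_{s_0}(T))^2 M_s(T)^2$ (absorbing the constant $1$ in \eqref{est:f} into $\mathcal{F}$), yields the stated bound after taking square roots. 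The main obstacle is ensuring the commutator estimates $\Vert[T_V\cdot\nabla, \chi_q]\Vert_{L^2\to L^2}$ and $\Vert[\chi_q, T_\gamma]\Vert_{L^2\to L^2}$ are uniform in $q$: translation invariance of the cutoffs $\chi_q = \chi(\cdot - q)$ reduces these to a single commutator estimate whose constants depend only on the symbol seminorms of $V$ and $\gamma$, hence on $\mathcal{F}(M_{s_0})$.
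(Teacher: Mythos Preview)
Your approach is essentially identical to the paper's: localize the symmetrized system by $\chi_q$, control the resulting commutators uniformly in $q$, and run the $L^2$ energy estimate using the $L^2$-boundedness of $(T_V\cdot\nabla)^* + T_V\cdot\nabla$ and $T_\gamma^* - T_\gamma$. One technical point to fix: you cannot pass from $\frac{d}{dt}E_q \leq \ldots$ to $\frac{d}{dt}\bigl(\sup_q E_q\bigr) \leq \ldots$ directly (the supremum need not be differentiable, and in any case the derivative of a supremum is not the supremum of the derivatives); the correct route---which the paper also follows, equally tersely---is to integrate the differential inequality for each fixed $q$, bound the right-hand side uniformly in $q$ using $E_q \leq \sup_{q'} E_{q'}$ together with your uniform commutator and source estimates, then take $\sup_q$ of the resulting \emph{integral} inequality and apply Gronwall.
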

\begin{proof}
$(i)$ This follows easily  from the definition of $U_s$ and $ \theta_s$ given in \eqref{def:Uzetas} and in Proposition \ref{psym}.

$(ii)$ Let $\chi_k$ be as in ~\eqref{kiq}. Then we have
\begin{equation}\label{syst:ul}
\left\{
\begin{aligned}
\mathcal{L}_0 (\chi_kU_s) +T_\gamma (\chi_k\theta_s) &=G_1 \\
\mathcal{L}_0 (\chi_k\theta_s) -T_\gamma (\chi_k U_s) &= G_2 
\end{aligned}
\right.
\end{equation}
where $G_1,G_2$ are given by
\begin{equation*}
\begin{aligned}
G_1   &= \chi_k F_1  + V\cdot (\nabla \chi_k) U_s + [T_\gamma, \chi_k] \theta_s \\
G_2  &= \chi_k F_2   +  V \cdot (\nabla \chi_k) \theta_s - [T_\gamma, \chi_k] U_s.
\end{aligned}
\end{equation*}
We claim that for all $t\in [0,T]$ we have
\begin{multline}\label{est:Gj}
\Vert (G_1(t), G_2(t)) \Vert_{L^2   \times L^2 } 
\leq \mathcal{F}\big(\Vert (\eta(t), B(t),V(t))\Vert_{H^{s_0+\mez}_{ul}  \times H^{s_0}_{ul} \times H^{s_0}_{ul}}\big)\\
\cdot \big(1 + \Vert \eta(t)  \Vert_{H^{s+\mez}_{ul}  } + \Vert  B(t) \Vert_{H^{s  }_{ul} }+\Vert V(t) \Vert_{H^{s }_{ul}}\big). 
\end{multline}
According to Proposition~\ref{psym} this is true for the terms coming from $\chi_k F_j, j =1,2$. 
Now according to~\eqref{def:Uzetas}  we have (for fixed $t$ which is skipped)
\begin{align*}
 \Vert  V\cdot (\nabla \chi_k) U_s \Vert_{L^2 } &\leq \Vert V \Vert_{L^\infty }\Vert U_s \Vert_{L^2_{ul} }\\
 &\leq \Vert V \Vert_{H^{s_0}_{ul} }(\Vert V \Vert_{H^s_{ul}(\xR^d)} + \Vert \nabla \eta  \Vert_{ H^{s_0 +\mez}_{ul} } \Vert B\Vert_{H^s_{ul} })\\
 &\leq \mathcal{F}
 \big(\Vert (\eta ,V )\Vert_{H^{s_0+\mez}_{ul}  \times H^{s_0 }_{ul} }\big)\big(1  + \Vert   B \Vert_{H^{s  }_{ul} }+\Vert V \Vert_{H^{s }_{ul}}\big).
   \end{align*}
The same estimate holds for $ \Vert  V\cdot (\nabla \chi_k) \theta_s \Vert_{L^2 }$.  Eventually we have
\begin{equation*}
\Vert [T_\gamma, \chi_k] \theta_s \Vert_{L^2 } \leq
\mathcal{F}\big(\Vert (\eta , B ,V )\Vert_{H^{s_0+\mez}_{ul}  \times H^{s_0}_{ul} \times H^{s_0}_{ul} }\big)\\
\cdot \big(1  +  \Vert \eta \Vert_{H^{s+\mez}_{ul}}+ \Vert   B \Vert_{H^{s  }_{ul} }+\Vert V \Vert_{H^{s }_{ul}}\big).
\end{equation*}
This proves our claim.  

Now we compute the quantity 
$$(I) = \frac{d}{dt}\Big\{ \Vert \chi_k U_s(t) \Vert^2_{L^2} + \Vert \chi_k \theta_s(t) \Vert^2_{L^2}\Big\}.$$
Using  the equations \eqref{syst:red1}, \eqref{syst:red2}, the point $(i)$,  the fact that 
\begin{equation*}
\begin{aligned}
&\Vert (T_{V(t}\cdot \nabla)^* + T_{V(t)}\cdot \nabla\Vert_{L^2 \to L^2} \leq C M_{s_0}(t)\\
& \Vert T_{\gamma(t)} - (T_{\gamma(t)})^*\Vert_{L^2 \to L^2} \leq C M_{s_0}(t) 
\end{aligned}
 \end{equation*}
and \eqref{est:Gj} we obtain easily $(ii)$.
\end{proof}
\subsection{Back to the original unknowns}
 Recall that
 \begin{equation*} 
 \begin{aligned}
 U_s &= \langle D_x \rangle^s V + T_{\nabla \eta} \langle D_x \rangle^s B,\\
 \theta_s & = T_{\sqrt{\frac{a}{\lambda}}}\langle D_x \rangle^s \nabla \eta.
 \end{aligned}
\end{equation*}
From the estimate in Proposition~\ref{est:L2} 
we would like to recover estimates of the original unknowns $\psi, \eta, V, B$. 
We follow closely~\cite{ABZ3}. The result is as follows.
\begin{prop}\label{original}
Let $s_0>1+ \frac{d}{2}$. For all $s \geq s_0$ one can find    $\mathcal{F}: \xR^+ \to \xR^+$ non decreasing such that  
$$ M_s(T) \leq \mathcal{F}(M_{s_0}(0) + T M_{s_0}(T))  \big\{ M_s(0)  + T  M_s(T)\big\} .$$
\end{prop}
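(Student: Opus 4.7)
The plan is to invert the paradifferential operators defining $U_s$ and $\theta_s$ in \eqref{def:Uzetas} and Proposition \ref{psym} so as to recover estimates on the physical unknowns $(\eta, \psi, V, B)$ from the $L^2_{ul}$ bounds on $(U_s, \theta_s)$ supplied by Proposition \ref{est:L2}, and then to combine these with direct time-integration of the equations of motion in order to control the low-frequency parts. The $T$-factor in the right-hand side $\{M_s(0) + T M_s(T)\}$ originates from these time integrations.

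First, I would recover $\eta$ in $H^{s+\mez}_{ul}$. The symbol $q = \sqrt{a/\lambda}$ is elliptic of order $-\mez$: indeed $\lambda$ is elliptic of order $1$ by \eqref{lambda1}, and $a$ is bounded below by a positive constant thanks to Proposition \ref{prop:taylor} combined with the persistence of the Taylor sign from $t=0$. The symbolic calculus (Theorem \ref{calc:symb}) then produces a parametrix $T_{1/q}$ of order $+\mez$ such that $T_{1/q} T_q = I + R_{-1}$ with $R_{-1}$ regularizing, all with operator bounds controlled by $\mathcal{F}(M_{s_0}(T))$. Applied to $\theta_s = T_q \zeta_s$ this yields $\Vert \nabla \eta\Vert_{H^{s-\mez}_{ul}} \leq \mathcal{F}(M_{s_0}(T))(\Vert \theta_s\Vert_{L^2_{ul}} + \Vert \eta\Vert_{H^{s_0+\mez}_{ul}})$. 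The missing low-frequency part of $\eta$ is controlled by integrating $\partial_t \eta = G(\eta)\psi$ in time: Theorem \ref{princ} bounds $\Vert G(\eta)\psi\Vert_{H^{s-\mez}_{ul}}$ tamely by $\mathcal{F}(M_{s_0}(\cdot))\,M_s(\cdot)$, contributing a $TM_s(T)$ term.

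Second, for $V$ and $B$, I would use $\langle D_x\rangle^s V = U_s - T_{\nabla \eta}\langle D_x\rangle^s B$: since $\nabla \eta \in L^\infty$ with norm bounded by $M_{s_0}(T)$, the paraproduct $T_{\nabla \eta}$ is bounded on $L^2_{ul}$, so it suffices to estimate $B$ separately. For $B$, I would use its defining formula \eqref{BV}, namely $B = (\nabla \eta \cdot \nabla\psi + G(\eta)\psi)/(1+\vert \nabla \eta\vert^2)$; combining the tame Dirichlet--Neumann estimate (Theorem \ref{princ}, Case 2) with the product rules (Proposition \ref{Ho}) yields $B \in H^s_{ul}$ in terms of $\eta, \psi \in H^{s+\mez}_{ul}$, after which $V = \nabla \psi - B \nabla \eta$ is recovered in $H^s_{ul}$ as well. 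For $\psi$ itself, I would integrate the second equation of \eqref{ww} in time, each term of $\partial_t \psi$ being tamely estimated in $H^{s+\mez}_{ul}$ by product laws and Theorem \ref{princ}, again producing a factor $T$ on the $M_s(T)$ contributions.

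The main obstacle is the circular structure of the estimates: the tame Dirichlet--Neumann bound for $G(\eta)\psi$ and the recovery of $B$ from the Zakharov formula both contain top-order norms of $(\eta, \psi)$ on the right-hand side. The scheme closes because at every such occurrence, the top-order norm is either multiplied by a lower-order quantity already controlled by $M_{s_0}(0) + T M_{s_0}(T)$ (hence absorbed into $\mathcal{F}$), or is multiplied by $T$ coming from the time-integration of the initial-value formulas. A secondary technical point is that all paradifferential inversions, product rules, and commutator estimates must be carried out in the uniformly local framework with constants depending only on $M_{s_0}(T)$, which is precisely what the machinery developed in Section \ref{sec.3} provides.
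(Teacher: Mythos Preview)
Your recovery of $\eta$ via inversion of $T_q$ is essentially the paper's argument, and the low-frequency remainder can indeed be handled by a transport-type estimate. The genuine gap is in your treatment of $B$ (and, as a consequence, of $V$ and $\psi$).

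You propose to recover $B$ in $H^s_{ul}$ directly from the Zakharov formula
$B = \dfrac{\nabla\eta\cdot\nabla\psi + G(\eta)\psi}{1+|\nabla\eta|^2}$.
This cannot work: $G(\eta)$ is a first-order operator, so Theorem~\ref{princ} with $\psi\in H^{s+\mez}_{ul}$ only yields $G(\eta)\psi\in H^{s-\mez}_{ul}$ (take $\sigma=s-\tfrac12$; to reach $\sigma=s$ you would need $\psi\in H^{s+1}_{ul}$, which you do not have). Likewise $\nabla\eta\cdot\nabla\psi\in H^{s-\mez}_{ul}$. Hence the Zakharov formula gives $B\in H^{s-\mez}_{ul}$, half a derivative short, and then $V=\nabla\psi-B\nabla\eta$ is also only in $H^{s-\mez}_{ul}$. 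Your closure argument (``top-order norms are either multiplied by low-order factors or by $T$'') does not address this: it is a hard loss of regularity, not a circular dependence.

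The paper avoids this loss by a different mechanism. It exploits the elliptic identity $G(\eta)B=-\cnx V+\gamma$ (Lemma~\ref{GB=divV}) together with $U=V+T_\zeta B$. Taking the divergence gives
$\cnx U = T_e B + \text{lower order}$, where $e=-\lambda+i\zeta\cdot\xi$ is an \emph{elliptic} symbol of order~$1$. Inverting $T_e$ then recovers $B\in H^s_{ul}$ from $\|U\|_{H^s_{ul}}$, which in turn is controlled by $\|U_s\|_{L^2_{ul}}$ and Proposition~\ref{est:L2}. Only after $B\in H^s_{ul}$ is established does one get $V=U-T_\zeta B\in H^s_{ul}$ and finally $\nabla\psi=V+B\nabla\eta\in H^{s-\mez}_{ul}$, hence $\psi\in H^{s+\mez}_{ul}$. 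This elliptic inversion step is the essential ingredient missing from your scheme.
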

The Proposition will be implied by the following  Lemmas.
\begin{lemm}\label{weaker}
 There exists    $\mathcal{F} : \xR^+ \to \xR^+$ non decreasing such that with $I=[0,T]$ and $H_{ul}^\sigma = H_{ul}^\sigma(\xR^d)$ we have 
 \begin{multline*} 
   \Vert \eta \Vert_{L^\infty(I, H^s )_{ul}} + \Vert (B, V) \Vert_{L^\infty(I, H^{s-\mez}  \times  H^{s-\mez} )_{ul}}  \\ 
 \leq \mathcal{F}(M_{s_0}(0) + \sqrt{T}M_{s_0}(T))\big\{M_s(0) + \sqrt{T} M_s(T)\big\}.
\end{multline*}
\end{lemm}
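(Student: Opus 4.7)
The plan is to invert the paradifferential operators appearing in the definitions of $U_s$ and $\theta_s$ (see \eqref{def:Uzetas} and Proposition~\ref{psym}) in order to translate the $L^\infty(I, L^2_{ul})$ control of $(U_s, \theta_s)$ provided by Proposition~\ref{est:L2}$(ii)$ into estimates on the original unknowns $\eta$, $V$, $B$. The loss of a half-derivative compared to the norm $M_s$ is what justifies calling this estimate the \emph{weaker} one.

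To recover $\eta$, I would exploit that the symbol $q = \sqrt{a/\lambda}$ defining $\theta_s = T_q \zeta_s$ is elliptic of order $-\mez$: indeed $\lambda \geq \vert\xi\vert$ from~\eqref{lambda1}, while the Taylor sign hypothesis together with Proposition~\ref{prop:taylor} guarantees $a \geq c/2 > 0$. Constructing a paradifferential parametrix $T_{1/q}$ of order $+\mez$ with $T_{1/q}T_q - \mathrm{Id}$ bounded (as an operator of order $-1$) by $\mathcal{F}(M_{s_0}(T))$ and applying it to $\theta_s$ yields $\langle D_x\rangle^s \nabla_x\eta \in L^\infty(I, H^{-\mez}_{ul})$, hence $\nabla_x\eta \in L^\infty(I, H^{s-\mez}_{ul})$. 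The remaining low-frequency content of $\eta$ I would obtain by integrating the evolution equation $\partial_t\eta = G(\eta)\psi$ in time, bounding $G(\eta)\psi$ in $L^\infty(I, L^2_{ul})$ via Theorem~\ref{princ} (case~$1$ with $\sigma=0$); combined with $\Vert\eta(0)\Vert_{L^2_{ul}}\leq M_s(0)$ and a straightforward bootstrap through the localizations $\chi_q$, this produces $\eta \in L^\infty(I, H^s_{ul})$ with the required bound.

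For $V$ and $B$ a single relation from $U_s = \langle D_x\rangle^s V + T_{\nabla_x\eta}\langle D_x\rangle^s B$ is insufficient, since $T_{\nabla_x\eta}$ is of order zero and couples $V$ and $B$ at the same level of regularity. I would supply the missing relation from Lemma~\ref{GB=divV}, $\mathrm{div}\,V + G(\eta)B = -\gamma$, paralinearizing $G(\eta)$ via Theorem~\ref{paralinDN} to obtain $\mathrm{div}\,V + T_\lambda B = \widetilde\gamma$ with $\widetilde\gamma$ controlled in $L^\infty(I, H^{s-\mez}_{ul})$ thanks to the $\eta$ bound from the previous step. Eliminating $V$ by substituting $\langle D_x\rangle^s V = U_s - T_{\nabla_x\eta}\langle D_x\rangle^s B$ into the second relation produces a scalar paradifferential equation of the form $T_{\lambda - i\xi\cdot\nabla_x\eta}B = \text{(RHS bounded in $H^{s-3/2}_{ul}$)}$. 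Since $\vert\lambda - i\xi\cdot\nabla_x\eta\vert^2 = (1+\vert\nabla_x\eta\vert^2)\vert\xi\vert^2 \geq \vert\xi\vert^2$, this symbol is elliptic of order~$1$, and paradifferential inversion yields $B \in L^\infty(I, H^{s-\mez}_{ul})$, from which $V \in L^\infty(I, H^{s-\mez}_{ul})$ follows by reading off the first relation.

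The main obstacle is keeping all the commutator and parametrix errors from these paradifferential inversions under control in the uniformly local setting. They must be absorbed either into the prefactor $\mathcal{F}(M_{s_0}(0)+\sqrt{T}M_{s_0}(T))$ (for lower-order terms bounded at the $M_{s_0}$ level) or into the factor $\sqrt T\, M_s(T)$ (for contributions arising from time integrations). Fortunately, the pseudo-local character of paradifferential operators developed in Section~\ref{sec.3} makes each of these estimates compatible with the $\sup_{q\in\xZ^d}\Vert\chi_q\,\cdot\,\Vert$ structure, so the argument parallels the classical Sobolev one from~\cite{ABZ3} up to these bookkeeping adaptations.
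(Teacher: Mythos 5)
Your strategy mixes up the roles of Lemma~\ref{weaker} and Lemma~\ref{end:proof}, and as a result it contains a genuine circularity. The paradifferential-inversion argument you describe (applying a parametrix $T_{1/q}$ to $\theta_s$ to recover $\zeta_s$, then using Lemma~\ref{GB=divV} and Theorem~\ref{paralinDN} to eliminate $V$ and solve for $B$ by inverting $T_{\lambda - i\zeta\cdot\xi}$) is precisely what the paper does in Lemma~\ref{end:proof}, to recover the \emph{full} norms $\Vert\eta\Vert_{H^{s+\mez}_{ul}}, \Vert V\Vert_{H^s_{ul}}, \Vert B\Vert_{H^s_{ul}}$. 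But that argument does not close on its own: when you write $\zeta_s = T_{1/q}\theta_s - R\zeta_s$ with $R = T_{1/q}T_q - \mathrm{Id}$ of order $-1$, the remainder is estimated by $\Vert R\zeta_s\Vert_{H^{-1/2}_{ul}} \lesssim \Vert\zeta_s\Vert_{H^{-3/2}_{ul}} \sim \Vert\eta\Vert_{H^{s-1/2}_{ul}}$, and similarly the remainders in the paralinearization of $G(\eta)$ from Theorem~\ref{paralinDN} are of size $\mathcal{F}(\cdot)\{1 + \Vert\eta\Vert_{H^{s+\mez}_{ul}} + \ldots\}$. These do \emph{not} fit your proposed dichotomy: they are neither at the $M_{s_0}$ level (they carry the full regularity index $s$), nor do they arise from a time integration, so they come with no $\sqrt T$ factor. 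You would thus end up with $\Vert\eta\Vert_{L^\infty(I, H^s)_{ul}} \leq \mathcal{F}(\cdot)\{M_s(0) + \sqrt T M_s(T)\} + \mathcal{F}(\cdot)\Vert\eta\Vert_{L^\infty(I, H^{s-1/2})_{ul}}$, where the last term is not a priori controlled by the target right-hand side. This is exactly why, in the paper, Lemma~\ref{end:proof} \emph{cites} Lemma~\ref{weaker} to bound that residual term (look at the estimate for the quantity $A$ in its proof, which has $\Vert\eta\Vert_{L^\infty(I,H^s)_{ul}}$ on the right), so using the same inversion argument to establish Lemma~\ref{weaker} would be circular.

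The actual proof of Lemma~\ref{weaker} is much shorter and does not touch the paradifferential framework at all. By Proposition~\ref{new:eq} and the definitions one has the transport identities $(\partial_t + V\cdot\nabla_x)\eta = B$, $(\partial_t + V\cdot\nabla_x)B = a - g$, and $(\partial_t + V\cdot\nabla_x)V = -a\nabla_x\eta$. Applying the transport estimate of Lemma~\ref{transport2} with $\mu = s$ for $\eta$ and $\mu = s - \mez$ for $B$ and $V$, and using Proposition~\ref{prop:taylor} to bound the source term $a-g$ in $L^\infty(I, H^{s-\mez})_{ul}$, the $L^1_t$ norms of the sources over $[0,T]$ produce the factor $T$ (and hence $\sqrt T$) automatically, and the initial-data term supplies $M_s(0)$. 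No inversion of elliptic paradifferential symbols is required; the argument is entirely a first-order transport energy estimate, which is precisely why the gain is only half a derivative and why the lemma is called \emph{weaker}. If you wish to pursue your approach, you would have to establish, independently, that the quantity $\Vert\eta\Vert_{L^\infty(I, H^{s-1/2})_{ul}}$ is bounded by $\mathcal{F}(\cdot)\{M_s(0) + \sqrt T M_s(T)\}$ before the inversion errors can be absorbed — but the only mechanism for producing such a bound is a transport estimate, at which point you might as well prove the full statement of Lemma~\ref{weaker} by transport directly.
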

\begin{proof}
Set $\mathcal{L}_1 = \partial_t + V \cdot \nabla$. According to Proposition~\ref{new:eq} we have
 $\mathcal{L}_1 B = a-g, \quad \mathcal{L}_1 V = -a \nabla \eta$  and
 from the definition of $V,B$ and the equations we have $\mathcal{L}_1 \eta = B$. 
 Then the lemma follows from Lemma~\ref{transport2} with $\mu =s, \mu = s-\mez$ 
 and Proposition~\ref{prop:taylor}.
\end{proof}
  \begin{lemm}\label{end:proof}
 Let $s_0>1+ \frac{d}{2}$. For $s \geq s_0$ one can find  $\mathcal{F}: \xR^+ \to \xR^+$ non decreasing such that, with $H^\sigma_{ul}= H^\sigma_{ul}(\xR^d), $ we have
 \begin{equation*}
 \begin{aligned}
 &(i)  \quad \Vert \eta \vert_{L^\infty(I, H^{s+\mez} )_{ul}} \leq  \mathcal{F}(M_{s_0}(0)+\sqrt{T}M_{s_0}(T))\big\{M_s(0) + \sqrt{T} M_s(T)\big\},\\ 
 &(ii) \quad \Vert (V,B) \Vert_{L^\infty(I, H^{s} )_{ul}} \leq  \mathcal{F}(M_{s_0}(0)+\sqrt{T}M_{s_0}(T))\big\{M_s(0) + \sqrt{T} M_s(T)\big\},\\
  &(iii) \quad \Vert \psi \Vert_{L^\infty(I, H^{s+\mez} )_{ul}} \leq  \mathcal{F}(M_{s_0}(0)+\sqrt{T}M_{s_0}(T))\big\{M_s(0) + \sqrt{T} M_s(T)\big\}.  
 \end{aligned}
 \end{equation*}
 \end{lemm}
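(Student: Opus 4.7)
The plan has three steps, one for each assertion, ordered so that each uses the previous. Throughout, ``controlled'' means bounded by the right hand side of the claimed inequality, and $\mathcal{F}$ denotes a generic non decreasing function of $M_{s_0}(0) + \sqrt{T}M_{s_0}(T)$ that may change from line to line.

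\textbf{Step 1 (proof of (i)).} I will recover $\eta$ by inverting $T_q$ paradifferentially. The symbol $q=\sqrt{a/\lambda}$ is elliptic of order $-\mez$: indeed $a \ge \mez c > 0$ by the bootstrap hypothesis and $\lambda \ge |\xi|$ follows from $(1+|\nabla_x \eta|^2)|\xi|^2 - (\nabla_x\eta \cdot \xi)^2 \ge |\xi|^2$. Setting $Q = 1/q$, the symbolic calculus of Theorem~\ref{calc:symb} gives $T_Q T_q = I + \mathcal{R}$ where $\mathcal{R}$ has order $-\mez$ with norm bounded by $\mathcal{F}$. Applying $T_Q$ to $\theta_s = T_q \langle D_x\rangle^s \nabla_x\eta$ yields
\begin{equation*}
\langle D_x\rangle^s \nabla_x \eta = T_Q \theta_s - \mathcal{R}\bigl(\langle D_x\rangle^s \nabla_x \eta\bigr).
\end{equation*}
Since $T_Q$ has order $+\mez$ it maps $L^2_{ul}$ into $H^{-\mez}_{ul}$, while the remainder gains $\mez$ derivative and is controlled by $\mathcal{F}\,\|\nabla_x\eta\|_{H^{s-1}_{ul}}$, itself controlled by Lemma~\ref{weaker}. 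Combining with Proposition~\ref{est:L2}(ii) applied to $\theta_s$ gives (i).

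\textbf{Step 2 (proof of (ii)).} The idea is to combine the defining identity of $U_s$ with the independent divergence relation $G(\eta)B = -\cnx V + \gamma$ of Lemma~\ref{GB=divV}. Fixing $t \in I$ and setting $Z = \langle D_x\rangle^s B$, $W = \langle D_x\rangle^s V$, I apply $\langle D_x\rangle^s$ to the divergence identity, use the paralinearization $G(\eta) = T_\lambda + R(\eta)$ from Theorem~\ref{paralinDN}, and commute $\langle D_x\rangle^s$ past $G(\eta)$ by symbolic calculus to obtain
\begin{equation*}
T_\lambda Z + \cnx W = \mathcal{E}_1,
\end{equation*}
with $\mathcal{E}_1$ controlled in $H^{-1}_{ul}$. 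The relation $W = U_s - T_\zeta Z$ from \eqref{def:Uzetas}, together with $\cnx(T_\zeta Z) = T_{i\xi \cdot \zeta} Z + \text{(lower order)}$, then gives
\begin{equation*}
T_{\lambda - i\xi\cdot \zeta}\, Z = -\cnx U_s + \mathcal{E}_2.
\end{equation*}
Since $\RE(\lambda - i\xi \cdot \zeta) = \lambda \ge |\xi|$ this symbol is elliptic of order one, so I can invert it by a paradifferential parametrix; using $\cnx U_s \in H^{-1}_{ul}$ this recovers $Z \in L^2_{ul}$, i.e.\ $B \in H^s_{ul}$, and then $W = U_s - T_\zeta Z$ gives $V \in H^s_{ul}$. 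Combining with (i), Lemma~\ref{weaker}, and Proposition~\ref{est:L2}(ii) yields (ii).

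\textbf{Step 3 (proof of (iii)).} The identity $\nabla_x \psi = V + B \nabla_x \eta$ from \eqref{BV}, combined with (i) ($\nabla_x \eta \in H^{s-\mez}_{ul}$), (ii) ($V, B \in H^s_{ul}$), and the product law in uniformly local Sobolev spaces, gives $\nabla_x \psi \in H^{s-\mez}_{ul}$; together with the a priori $L^2_{ul}$ bound on $\psi$ coming from the $M_{s_0}$ norm, this yields (iii). The main obstacle is Step~2: the unknowns $V$ and $B$ are coupled through $U_s$, and decoupling them requires the second, independent equation from Lemma~\ref{GB=divV}, turning the problem into an elliptic $2\times 2$ paradifferential system whose principal symbol reduces to the elliptic scalar $\lambda - i\xi\cdot\zeta$ of order one.
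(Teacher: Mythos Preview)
Your Steps~1 and~2 are essentially the paper's argument: invert $T_q$ paradifferentially using Lemma~\ref{weaker} for the lower-order remainder, then use Lemma~\ref{GB=divV} together with $G(\eta)=T_\lambda+R(\eta)$ and invert the elliptic first-order symbol $\lambda-i\xi\cdot\zeta$ to decouple $B$ from $U_s$. The paper works with $U=V+T_\zeta B$ and computes $\cnx U$ directly rather than applying $\langle D_x\rangle^s$ first, but the content is the same.

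Step~3 has a real gap. You claim the $L^2_{ul}$ bound on $\psi$ ``comes from the $M_{s_0}$ norm'', which gives only $\|\psi(t)\|_{L^2_{ul}}\le M_{s_0}(T)$. This is \emph{not} of the form $\mathcal{F}(M_{s_0}(0)+\sqrt{T}M_{s_0}(T))\{M_s(0)+\sqrt{T}M_s(T)\}$: for $T$ small the right-hand side can be made arbitrarily close to $\mathcal{F}(M_{s_0}(0))M_s(0)$, while $M_{s_0}(T)$ need not be bounded by any fixed function of $M_{s_0}(0)$ times $M_s(0)$. Worse, plugging such an estimate into Proposition~\ref{original} at $s=s_0$ yields the vacuous inequality $M_{s_0}(T)\le\text{(controlled)}+M_{s_0}(T)$, so the bootstrap cannot close. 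The paper avoids this by noting, from~\eqref{ww} and~\eqref{BV}, that
\[
(\partial_t+V\cdot\nabla)\psi=-g\eta+\tfrac12|V|^2+\tfrac12 B^2,
\]
and then applying the transport estimate of Lemma~\ref{transport2}; the right-hand side is controlled in $H^s_{ul}$ by parts~(i) and~(ii), and the initial term $\|\psi(0)\|$ contributes $M_s(0)$. This produces a bound of the correct shape, with the $T$-dependent factors appearing only through the time integrals in Lemma~\ref{transport2}. Your Step~3 is easily repaired along these lines, but as written the low-frequency control of $\psi$ is missing.
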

\begin{proof}
$(i)$ By Lemma \ref{weaker} it is sufficient to bound $A =  \Vert \nabla \eta \Vert_{L^\infty(I, H^{s-\mez}_{ul})}$. Recall that $q = \sqrt{\frac{a}{\lambda}},\,  \theta_s = T_q \zeta_s,$ and $  \zeta_s = \langle D \rangle^s \nabla \eta$. By Theorem \ref{calc:symb} $(ii)$ we can write $\zeta_s = T_\frac{1}{q} T_q \zeta_s + R \zeta_s$ where $\Vert R\Vert_{L^\infty(I,H^\mu )_{ul} \to L^\infty(I,H^{\mu+\mez} )_{ul}} \leq C(\Vert a \Vert_{L^\infty(I,H^{s_0-\mez} )_{ul}} + \Vert \eta \Vert_{L^\infty(I,H^{s_0+Ê\mez} )_{ul}})$. Then we have 
$$A = \Vert \langle D \rangle^{-\mez} \zeta_s \Vert_{L^\infty(I, L^2 )_{ul}} \leq  \Vert \langle D \rangle^{-\mez} T_\frac{1}{q}\theta_s \Vert_{L^\infty(I, L^2 )_{ul}} + \Vert \langle D \rangle^{-\mez} R \zeta_s \Vert_{L^\infty(I, L^2 )_{ul}}. $$
Using  Theorem \ref{calc:symb}, $(i)$,   the above estimate on the norm of $R$ with $\mu = -1,$  Lemma  \ref{weaker}Ê and Proposition \ref{prop:taylor} we deduce that
$$ A \leq \mathcal{F}(M_{s_0}(0)+\sqrt{T}M_{s_0}(T)) \big\{ \Vert \theta_s \Vert_{L^\infty(I,L^2 )_{ul}} + \Vert \eta \Vert_{L^\infty(I, H^s )_{ul}}\big\}.$$
Then the conclusion follows from Proposition \ref{est:L2} and Lemma \ref{weaker}.

$(ii)$ Recall that $U = V + T_\zeta B$. The commutator $[\langle D \rangle^s, T_\zeta]$ is of order $s-\mez$ which norm from $L^\infty(I,L^2 )_{ul}$ to  $L^\infty(I, L^2 )_{ul}$ is bounded by $C \Vert \eta_{L^\infty(I,C_*^{\frac{3}{2} })}$ thus by $C' \Vert \eta \Vert_{L^\infty(I, H^{s_0 + \mez} )_{ul}}$. Therefore we deduce from Proposition \ref{est:L2} and Lemma \ref{weaker} that 
\begin{equation}\label{est:U}
  \Vert U \Vert_{L^\infty(I, H^s )_{ul}} \leq \mathcal{F}(M_{s_0}(0)+ \sqrt{T} M_{s_0}(T))  (M_s(0)  + \sqrt{T}  M_s(T)).
  \end{equation}
   Now by Lemma \ref{GB=divV} we have 
 \begin{equation*}
 \begin{aligned}
 \text{div } U &= \text{div }V +T_{\text{div }\zeta} B + T_\zeta \cdot \nabla B = -G(\eta)B +T_{i\zeta \cdot \xi + \text{ div }\zeta}B + \gamma\\
 &= - T_\lambda B + R(\eta) B + T_{i\zeta \cdot \xi + \text{ div }\zeta}B + \gamma = T_eB + T_{\text{div }\zeta }B + \gamma 
 \end{aligned}
 \end{equation*}
 where $e = -\lambda +i \zeta \cdot \xi$. Writing $B = T_{\frac{1}{e}}T_eB + (I-  T_{\frac{1}{e}}T_e)B$ we obtain
 $$B = T_{\frac{1}{e}} \text{div }U - T_{\frac{1}{e}}\gamma + SB.$$
 Then   using \eqref{est:U}, Lemma \ref{GB=divV} we obtain the desired estimate on $B$ and since $V = U- T_\zeta B$ the estimate on $V$ follows as well. 
 
$(iii)$\quad We have $\nabla \psi = V + B \nabla \eta$. Since the $L^\infty(I, H^{s-\mez} )_{ul}$ norm of $(\nabla \eta, V,B)$ has been already estimated, it remains to bound $\Vert \psi \Vert_{L^\infty(I, L^2 )_{ul}}$. Now from \eqref{ww} and \eqref{BV} by a simple computation we see that
$$(\partial_t + V \cdot \nabla)\psi = -g \eta + \mez \vert V \vert^2  + \mez B^2.$$
Then the conclusion follows from Lemma \ref{transport2} with $ \mu=s $ and $(ii)$.
 \end{proof}
 
\section{Contraction and well posedness}\label{sec.6}
\subsection{Contraction}
In this section we shall prove estimates on the difference 
of two solutions of the system described in~\eqref{eq:B},~\eqref{eq:V},~\eqref{eq:zeta} 
which will prove the  uniqueness and also enter in the proof by contraction of the existence. 
Let $(\eta_j, \psi_j, V_j,B_j), j=1,2$ be two solutions of the system
\begin{equation}\label{eq.7.1}
\left\{
\begin{aligned}
&(\partial_t + V_j \cdot \nabla_x)B_j  = a_j-g, \\
&(\partial_t + V_j \cdot \nabla_x)V_j +a_j \zeta_j  =0, \\
&(\partial_t + V_j \cdot \nabla_x)\zeta_j  = G(\eta_j)V_j + \zeta_j G(\eta_j)B_j +R_j, 
\end{aligned}
\right.
\end{equation}
on $[0,T_0]$ such that with $H^\sigma_{ul} = H^\sigma_{ul}(\xR^d)$ we have 
$$M_j = \sup_{t\in [0,T_0]} \Vert (\eta_j(t), \psi_j(t), V_j(t),B_j(t))\Vert_{H^{s+\mez}_{ul} \times H^{s+\mez}_{ul} \times H^{s }_{ul} \times H^{s }_{ul}} <+\infty.$$
We also assume the Taylor sign condition satisfied that is that there exist $c_j>0$ for $j=1,2$ such that  $a_j(t,x) \geq c_j $ for all $ t\in [0,T_0]$.
We set 
\begin{equation*}
\begin{aligned}
  &\eta = \eta_1-\eta_2, \quad \psi = \psi_1-\psi_2, \quad V= V_1 - V_2, \quad B = B_1 - B_2,\\
  & N(T) = \sup_{t\in [0,T]} \Vert (\eta(t),  \psi(t) , V(t) ,B(t) )\Vert_{H^{s-\mez}_{ul} \times H^{s-\mez}_{ul} \times H^{s-1 }_{ul} \times H^{s-1 }_{ul}}.
  \end{aligned}
  \end{equation*}
\begin{theo}\label{th.lipschitz}
Let~$(\eta_j,\psi_j)$,~$j=1,2$, be two solutions of~\eqref{ww} such that 
$$
(\eta_j,\psi_j,V_j,B_j)\in 
C^0([0,T_0],H_{ul}^{s+\mez}\times H_{ul}^{s+\mez}\times H_{ul}^s\times H_{ul}^s),
$$
for some fixed~$T_0>0$,~$d\ge 1$ and~$s>1+\frac{d}{2}$. We also assume 
that the condition \eqref{condition} holds for 
$0\le t\le T_0$ and that there exists a positive constant~$c$ 
such that for all~$0\le t\le T_0$ and for all~$x\in {\mathbf{R}}^d$, we have 
$\ma_j(t,x)\ge c$ for~$j=1,2$,~$t\in [0,T]$.
Set
$$
M_j\defn \sup_{t\in [0,T]} \lA (\eta_j,\psi_j,V_j,B_j)(t)\rA_{H_{ul}^{s+\mez}\times H_{ul}^{s+\mez} \times H_{ul}^s\times H_{ul}^s}, 
$$
$$
\deta\defn \eta_1-\eta_2, \quad \psi=\psi_1-\psi_2,\quad \dV\defn V_1-V_2, \quad \dB=\B_1-\B_2.
$$
Then we have 
\begin{multline}\label{eq.lipschitz}
\|(\eta, \psi, V, B)\| _{L^\infty((0,T), H^{s-\mez}\times H ^{s-\mez}\times H ^{s-1}\times H ^{s-1})_{ul}} \\
\leq \mathcal{K} (M_1, M_2) \|(\eta, \psi, V, B) 
\mid_{t=0}\| _{H_{ul}^{s-\mez}\times H_{ul}^{s-\mez}\times H_{ul}^{s-1}\times H_{ul}^{s-1}}.
\end{multline}
\end{theo}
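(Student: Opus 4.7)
The plan is to mimic the a priori estimate machinery developed in Sections 5 (Propositions \ref{new:eq}--\ref{est:L2}) but applied to the \emph{linear} system satisfied by the differences, working at a regularity level $\mez$ lower than the regularity of the individual solutions. First I would subtract the two systems in \eqref{eq.7.1}. The equations for $(\eta,\psi,V,B)$ then take the schematic form
\begin{equation*}
\mathcal{L}_1 B = (a_1-a_2) + (V_2-V_1)\cdot\nabla B_2, \quad
\mathcal{L}_1 V + a_1\zeta = -(a_1-a_2)\zeta_2 + (V_2-V_1)\cdot\nabla V_2,
\end{equation*}
\begin{equation*}
\mathcal{L}_1 \zeta = G(\eta_1)V + \zeta_1 G(\eta_1)B + \bigl(G(\eta_1)-G(\eta_2)\bigr)V_2 + \zeta G(\eta_1)B_2 + \zeta_2 \bigl(G(\eta_1)-G(\eta_2)\bigr)B_2 + (R_1-R_2),
\end{equation*}
where $\mathcal{L}_1=\partial_t+V_1\cdot\nabla_x$. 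The extra terms measure differences of the nonlinear quantities between the two solutions; since these are all built from $(\eta,\psi,V,B)$ via smooth functionals of $(\eta_j,\psi_j,V_j,B_j)$, they should all be controlled by $\mathcal{K}(M_1,M_2)\,N(t)$ in the weaker norms, where $N(t)$ is as in the notation introduced before the theorem.

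Next I would paralinearize and symmetrize in the spirit of Propositions \ref{para:system} and \ref{psym}, but at level $s-1$ rather than $s$. Define
\begin{equation*}
U_{s-1}\defn \lDx{s-1} V + T_{\zeta_1}\lDx{s-1} B, \quad \zeta_{s-1}\defn \lDx{s-1}\zeta,\quad \theta_{s-1}\defn T_{\sqrt{a_1/\lambda_1}}\zeta_{s-1},
\end{equation*}
using the coefficients $a_1,\lambda_1$ of the first solution. The key paralinearization estimates from Theorem \ref{paralinDN} give, e.g., that $G(\eta_1)V-G(\eta_2)V = T_{\lambda_1}V + R(\eta_1)V - G(\eta_2)V$ plus controlled remainders, and the difference $G(\eta_1)B_2-G(\eta_2)B_2$ is paralinearized by the shape-derivative formula, yielding an operator of order $1$ applied to $\eta$ with coefficients involving $\nabla B_2$, bounded by $\mathcal{K}(M_1,M_2)\|\eta\|_{H^{s-\mez}_{ul}}$. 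Similarly $a_1-a_2$ is estimated from the elliptic problem \eqref{eq:Ptilde} differentiated with respect to the coefficients; the structure of Proposition \ref{prop:taylor} gives $\|a_1-a_2\|_{H^{s-\frac{3}{2}}_{ul}}\le \mathcal{K}(M_1,M_2)N(t)$. After these substitutions one obtains a symmetric system
\begin{equation*}
\mathcal{L}_1 U_{s-1} + T_{\gamma_1}\theta_{s-1} = F_1, \qquad \mathcal{L}_1 \theta_{s-1} - T_{\gamma_1} U_{s-1} = F_2,
\end{equation*}
with $\gamma_1=\sqrt{a_1\lambda_1}$ and $\|F_j(t)\|_{L^2_{ul}}\le \mathcal{K}(M_1,M_2)\,N(t)$.

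Then I would run the localized $L^2$ estimate exactly as in Proposition \ref{est:L2}(ii): multiply by $\chi_k$, use the commutators $[T_{\gamma_1},\chi_k]$ and $V_1\cdot(\nabla\chi_k)$ which generate terms bounded by $M_1 N(t)$ uniformly in $k$, and exploit the quasi-skewsymmetry of $T_{\gamma_1}$ and $T_{V_1}\cdot\nabla_x$ modulo zero-order operators of norm $\le C M_1$. This gives $\|U_{s-1}\|_{L^\infty(I,L^2_{ul})}+\|\theta_{s-1}\|_{L^\infty(I,L^2_{ul})}\le \mathcal{K}(M_1,M_2)(N(0)+\int_0^t N(\tau)\,d\tau)$. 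Transport estimates on $\mathcal{L}_1 \eta = B + (V_2-V_1)\cdot\nabla\eta_2$ (via Lemma \ref{transport2}) and the analogues of Lemma \ref{end:proof} for inverting $U_{s-1}\mapsto(V,B)$ and $\theta_{s-1}\mapsto\zeta$ recover the full norm $N(t)$, and Gronwall concludes \eqref{eq.lipschitz}.

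The main obstacle I expect is the careful bookkeeping of the \emph{differences} of shape functionals $G(\eta_1)-G(\eta_2)$, $a_1-a_2$, and the remainder $R_1-R_2$ in the uniformly local setting: one must show that each is bounded by $\mathcal{K}(M_1,M_2)N(t)$ in the appropriate weaker norm, which requires reproving a uniformly local analogue of the shape-derivative identity for $G$ and revisiting the variational problem \eqref{def:Q} with the differences as inhomogeneity. Everything else is a routine transcription of the argument of Section 5 to the linearized system, losing $\mez$ derivative, and using that $s-\mez>\mez+\frac{d}{2}$ still exceeds the scaling threshold so that all paraproduct and commutator estimates remain tame.
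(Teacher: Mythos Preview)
Your outline is broadly correct and follows the same architecture as the paper (Lipschitz estimate for $G$, paralinearize the difference system, symmetrize, energy estimate, recover the original unknowns), but there are two genuine departures from the paper's proof that you should be aware of.

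First, the symmetrization in the paper is not the one you write. The paper (Lemma~\ref{symetr}) does \emph{not} apply $\langle D_x\rangle^{s-1}$ first and then $T_{\sqrt{a_1/\lambda_1}}$; instead it works directly with
\[
\varphi=T_{\sqrt{\lambda_1}}\,(V+\zeta_1 B),\qquad \vartheta=T_{\sqrt{a_2}}\,\zeta,\qquad \ell=\sqrt{\lambda_1 a_2},
\]
i.e.\ a half-order operator on the velocity unknown, a zero-order one on $\zeta$, and a \emph{mixed} choice of coefficients ($\lambda_1$ from solution~$1$, $a_2$ from solution~$2$). This mixed choice is what makes the remainders in Lemma~\ref{lemm:symmind} fall out cleanly; with your choice $a_1$ you would have to absorb an extra $(a_1-a_2)\zeta$ term, which is harmless but explains why the paper's form looks different.

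Second, and more importantly, the recovery of $B$ is \emph{not} done via the analogue of Lemma~\ref{end:proof}. The identity $G(\eta)B=-\cnx V+\gamma$ of Lemma~\ref{GB=divV} is a relation between $B_j$ and $V_j$ for each fixed solution; for the differences you would need to control $\gamma_1-\gamma_2$ in $H^{s-\frac32}_{ul}$, which drags you back into a second round of elliptic difference estimates on $\widetilde\Theta_1-\widetilde\Theta_2$ that the paper deliberately avoids. Instead the paper introduces the Alinhac good unknown
\[
w=(\widetilde\phi_1-\widetilde\phi_2)-T_{b_2}\rho,\qquad w\arrowvert_{z=0}=\psi-T_{B_2}\eta,
\]
shows $\lA \psi-T_{B_2}\eta\rA_{H^{s}_{ul}}\le \mathcal{K}(M_1,M_2)\{N(0)+TN(T)\}$ (Lemma~\ref{est:trace}), and then recovers $B$ from $\partial_z w\arrowvert_{z=0}$ via an elliptic estimate on $w$ (Lemmas~\ref{esti:b2012}--\ref{reg:w}). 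This is the one place where the contraction argument is \emph{not} a straight transcription of Section~\ref{sec.5}; your sketch misses it. Finally, the ``shape-derivative formula'' you invoke for $G(\eta_1)-G(\eta_2)$ is replaced in the paper by the direct Lipschitz bound of Lemma~\ref{lip:G}, proved by subtracting the two straightened elliptic problems and applying Theorem~\ref{regell} with $\sigma=s-\frac32$.
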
 
Let us recall that 
\begin{equation}\label{BVzeta0}
\left\{
\begin{aligned}
&(\partial_{t}+V_j\cdot\partialx)\B_j=\ma_j-g,\\
&(\partial_t+V_j\cdot\partialx)V_j+\ma_j\zeta_j=0,\\
&(\partial_{t}+V_j\cdot\partialx)\zeta_j=G(\eta_j)V_j+ \zeta_j G(\eta_j)\B_j+\gamma_j,\quad \zeta_j=\nabla \eta_j,
\end{aligned}
\right.
\end{equation}
where~$\gamma_j$ is the remainder term given by \eqref{eq:zeta}. Let
$$
N(T)\defn \sup_{t\in [0,T]} \lA (\eta,\psi,V,B)(t)\rA_{H_{ul}^{s-\mez}\times H_{ul}^{s-\mez}\times H_{ul}^{s-1}\times H_{ul}^{s-1}}.
$$
Our goal is to prove an 
estimate of the form 
\begin{equation}\label{NM1M2}
N(T)\le \mathcal{K}(M_1, M_2) N(0) +T \, \mathcal{K}(M_1,M_2) N (T),
\end{equation}
for some non-decreasing function~$\mathcal{K}$ depending only on~$s$ and~$d$. Then, by choosing~$T$ small enough, this implies~$N(T)\leq 2\mathcal{K}(M_1, M_2) N(0)~$ for~$T_1$ smaller than the minimum of $T_0$ and $1/2 \mathcal{K}(M_1,M_2)$, and iterating the estimate between~$[T_1, 2T_1]$,\dots,~$[T- T_1, T_1]$ implies Theorem~\ref{th.lipschitz}. 

\begin{rema}
Notice that we prove a Lipschitz property in weak norms. 
This is a general fact related to the fact that the flow map 
of a quasi-linear equation is not expected to be Lipschitz in the highest norms 
(this means that one does not expect to control the difference~$(\eta,\psi,V,B)$ in 
$L^\infty([0,T_0],H^{s+\mez}\times H^{s+\mez}\times H^{s}\times H^{s})_{ul}$). 
\end{rema}
The proof of Theorem~\ref{th.lipschitz} follows the same lines as the proof of the similar result~\cite[Theorem 5.1]{ABZ3}. Il follows 4 steps: first we prove a Lipschitz estimate for the Dirichlet-Neumann operator. Then we paralinearize the system satisfied by $(\eta, \psi, V, B)$, symmetrize this system, estimate the good unknowns of the symmetrized system and finally estimate $(\eta, \psi, V, B)$. The Lipschitz estimate of the Dirichlet-Neumann operator is the crucial one and we shall give some details.  Having established  the paradifferential calculus in uniformly local spaces, the other steps are identical {\em mutatis mutandi} as in~\cite{ABZ3} and we shall skip the proofs.
 
 \subsection{Contraction for the Dirichlet-Neumann operator.}
In this section the time being fixed  we will skip it.
\begin{lemm}\label{lip:G}
Assume $s>1+\frac{d}{2}$. Then there exists   $\mathcal{F}:\xR^+ \to \xR^+ $ non decreasing  such that for all  $\eta_1,\eta_2 \in H^{s+\mez}_{ul} $ and all $f\in H^s_{ul} $ we have
\begin{equation}\label{Glip} 
\Vert (G(\eta_1)- G(\eta_2))f \Vert_{H^{s-\frac{3}{2}}_{ul} } \leq \mathcal{F}( \Vert (\eta_1,\eta_2,f)\Vert_{H^{s+\mez}_{ul} \times H^{s+\mez}_{ul} \times H^{s }_{ul}})  \Vert \eta_1-\eta_2 \Vert_{H^{s-\mez}_{ul}}
\end{equation}
where $H_{ul}^\sigma = H_{ul}^\sigma (\xR^d)$.
 \end{lemm}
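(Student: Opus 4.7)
The plan is to follow the same scheme that was used for the direct estimates on $G(\eta)$ in Section \ref{sec.4}: I express the difference $(G(\eta_1)-G(\eta_2))f$ at the boundary $\{z=0\}$ as a trace of an object controlled by an elliptic equation on the flattened strip, and then invoke the elliptic regularity of Theorem \ref{regell}. For $j=1,2$, let $\rho_j$, $\alpha_j$, $\beta_j$, $\gamma_j$, $\Lambda_1^{(j)}$, $\Lambda_2^{(j)}$ denote the diffeomorphism and the associated coefficients/operators defined by \eqref{rho}, \eqref{alpha}, \eqref{lambda} with $\eta$ replaced by $\eta_j$, and let $\widetilde{\Phi}_j$ be the corresponding straightened harmonic extension of $f$ (the solution of \eqref{equ:modifie} with trace $f$ at $z=0$ furnished by Proposition \ref{existe:phi}). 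By \eqref{G=},
\begin{equation*}
G(\eta_j)f=\bigl(\Lambda_1^{(j)}\widetilde{\Phi}_j-\nabla_x\rho_j\cdot\Lambda_2^{(j)}\widetilde{\Phi}_j\bigr)\bigr|_{z=0}.
\end{equation*}

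Setting $\delta\defn\widetilde{\Phi}_1-\widetilde{\Phi}_2$, one has $\delta|_{z=0}=0$ and a direct computation based on \eqref{equ:modifie} gives
\begin{equation*}
(\partial_z^2+\alpha_1\Delta_x+\beta_1\cdot\nabla_x\partial_z-\gamma_1\partial_z)\delta=F,
\end{equation*}
with
\begin{equation*}
F=-(\alpha_1-\alpha_2)\Delta_x\widetilde{\Phi}_2-(\beta_1-\beta_2)\cdot\nabla_x\partial_z\widetilde{\Phi}_2+(\gamma_1-\gamma_2)\partial_z\widetilde{\Phi}_2.
\end{equation*}
Next I would prove Lipschitz estimates for the geometric objects. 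The explicit formula \eqref{rho} yields, by the proof of Lemma \ref{diffeo}, that for any $k\in\xN$ and any $a\in S^m_{1,0}$ one has $\Vert\partial_z^k a(D)(\rho_1-\rho_2)\Vert_{X^{t}_{ul}(J)}\le C\Vert\eta_1-\eta_2\Vert_{H^{t+m+k}_{ul}}$. Combined with the product/paraproduct laws \eqref{est:prod1}-\eqref{est:prod3}, the proof of Lemma \ref{est-alpha} then yields
\begin{equation*}
\Vert\alpha_1-\alpha_2\Vert_{X^{s-\mez}_{ul}(J)}+\Vert\beta_1-\beta_2\Vert_{X^{s-\mez}_{ul}(J)}+\Vert\gamma_1-\gamma_2\Vert_{X^{s-\tdm}_{ul}(J)}\le\mathcal{F}\Vert\eta_1-\eta_2\Vert_{H^{s-\mez}_{ul}}.
\end{equation*}

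Applying Corollary \ref{coroetape1} to $\widetilde{\Phi}_2$ with $\sigma=s-\mez$ (admissible since $s>1+d/2$) and using the equation to recover $\partial_z^2\widetilde{\Phi}_2$, together with \eqref{est:prod1}-\eqref{est:prod3}, I obtain
\begin{equation*}
\Vert F\Vert_{Y^{s-\tdm}_{ul}(J)}\le\mathcal{F}\Vert\eta_1-\eta_2\Vert_{H^{s-\mez}_{ul}}.
\end{equation*}
The low-regularity a priori bound $\Vert\nabla_{x,z}\delta\Vert_{X^{-\mez}_{ul}(J)}<+\infty$ needed to apply Theorem \ref{regell} follows from Corollary \ref{rec00} applied to each $\widetilde{\Phi}_j$. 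Theorem \ref{regell} (Case 1, with $\sigma=s-\tdm\le s-1=s_0-1$ upon setting $s_0=s$) then gives
\begin{equation*}
\Vert\nabla_{x,z}\delta\Vert_{X^{s-\tdm}_{ul}(z_0,0)}\le\mathcal{F}\Vert\eta_1-\eta_2\Vert_{H^{s-\mez}_{ul}}.
\end{equation*}
Finally, writing
\begin{equation*}
G(\eta_1)f-G(\eta_2)f=\bigl[\Lambda_1^{(1)}\delta-\nabla_x\rho_1\cdot\Lambda_2^{(1)}\delta+(\Lambda_1^{(1)}-\Lambda_1^{(2)})\widetilde{\Phi}_2-\nabla_x(\rho_1-\rho_2)\cdot\Lambda_2^{(1)}\widetilde{\Phi}_2-\nabla_x\rho_2\cdot(\Lambda_2^{(1)}-\Lambda_2^{(2)})\widetilde{\Phi}_2\bigr]\bigr|_{z=0},
\end{equation*}
and applying the trace lemma (Lemma \ref{lions}) together with the product laws and the bounds just derived, the estimate \eqref{Glip} follows.

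The main obstacle is the bookkeeping of derivative counts in the product estimates. The $\mez$-derivative loss produced when passing from $\eta_1-\eta_2\in H^{s-\mez}_{ul}$ to $\rho_1-\rho_2$ and its derivatives must combine cleanly with the full derivative lost in the boundary trace after the elliptic estimate so as to yield exactly $s-\tdm$ at the end; this requires choosing the correct "low frequency" factor in each application of \eqref{est:prod1}-\eqref{est:prod3}, and also checking that all the nonlinear factors involving $\eta_2$, $\rho_2$, $\widetilde{\Phi}_2$ can be placed in the $H^{s_0}_{ul}$-scale (for some $s_0$ with $s_0-\mez>d/2$) so that they are harmlessly absorbed in the nondecreasing function $\mathcal{F}$ of the statement.
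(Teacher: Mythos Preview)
Your approach is exactly the paper's: take the difference $\delta=\widetilde\Phi_1-\widetilde\Phi_2$, write the elliptic equation with source $F$, estimate $F$ via Lipschitz bounds on the coefficients, apply Theorem~\ref{regell} with $\sigma=s-\tdm$, and take the trace. However, two of your intermediate derivative counts are off by one and, as stated, are false.

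First, the claim $\Vert\alpha_1-\alpha_2\Vert_{X^{s-\mez}_{ul}(J)}\le\mathcal F\,\Vert\eta_1-\eta_2\Vert_{H^{s-\mez}_{ul}}$ cannot hold: by your own formula $\Vert\partial_z^k a(D)(\rho_1-\rho_2)\Vert_{X^{t}_{ul}}\le C\Vert\eta_1-\eta_2\Vert_{H^{t+m+k}_{ul}}$, controlling $\nabla_{x,z}(\rho_1-\rho_2)$ in $X^{s-\mez}_{ul}$ requires $\eta_1-\eta_2\in H^{s+\mez}_{ul}$ (just evaluate at $z=0$). What is actually available is
\[
\Vert\alpha_1-\alpha_2\Vert_{L^2(J,H^{s-1})_{ul}}+\Vert\beta_1-\beta_2\Vert_{L^2(J,H^{s-1})_{ul}}+\Vert\gamma_1-\gamma_2\Vert_{L^2(J,H^{s-2})_{ul}}\le\mathcal F\,\Vert\eta_1-\eta_2\Vert_{H^{s-\mez}_{ul}},
\]
which is precisely what the paper proves.

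Second, Corollary~\ref{coroetape1} with $\sigma=s-\mez$ would require $f\in H^{s+\mez}_{ul}$, which you do not have. You must take $\sigma=s-1$ (Case~1 with $s_0=s$), which yields $\nabla_{x,z}\widetilde\Phi_2\in L^\infty(J,H^{s-1})_{ul}$ and hence $\Delta_x\widetilde\Phi_2,\nabla_x\partial_z\widetilde\Phi_2\in L^\infty(J,H^{s-2})_{ul}$ and $\partial_z\widetilde\Phi_2\in L^\infty(J,H^{s-1})_{ul}$.

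With these corrected levels, Lemma~\ref{Houl} (parameters $\sigma_0=s-2$, $\sigma_1=s-1$, $\sigma_2=s-2$ for the $\alpha,\beta$ terms; $\sigma_0=s-2$, $\sigma_1=s-2$, $\sigma_2=s-1$ for the $\gamma$ term) gives $\Vert F\Vert_{L^2(J,H^{s-2})_{ul}}\le\mathcal F\,\Vert\eta_1-\eta_2\Vert_{H^{s-\mez}_{ul}}$, hence $F\in Y^{s-\tdm}_{ul}(J)$, and the rest of your argument goes through unchanged.
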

\begin{proof}
We follow closely~\cite{ABZ3}. 
As in~\eqref{rho}, ~\eqref{alpha} we introduce   $\rho_j$, $\widetilde{u}_j$, $\alpha_j$, $\beta_j$, $\gamma_j$ 
for $j=1,2$. Then if $\widetilde{u}_j \arrowvert_{z=0} = f$ we have
\begin{equation}\label{encoreG}
 G(\eta_j)f = \big(\frac{1+ \vert \nabla_x \rho_j \vert^2}{\partial_z \rho_j}\partial_z \widetilde{u}_j - \nabla_x \rho_j \cdot \nabla_x \widetilde{u}_j \big) \arrowvert_{z=0}.
 \end{equation}
We set  $ \widetilde{u} = \widetilde{u}_1  - \widetilde{u}_2$. Then 
$$ (\partial_z^2 + \alpha_1 \Delta_x + \beta_1\cdot \nabla_x \partial_z - \gamma_1 \partial_z) \widetilde{u} := F$$ 
where 
$$F= \big\{(\alpha_2 - \alpha_1)\Delta_x + (\beta_2- \beta_1)\cdot \nabla_x \partial_z -(\gamma_2 - \gamma_1) \partial_z \big\}\widetilde{u}_2.$$
Since $s>1+\frac{d}{2}$, Lemma~\ref{Houl} with $s_0 = s-2, s_1 = s-2, s_2 = s-1, p=2$ gives (with $J = (z_0,0)$ and $H^\mu_{ul} = H^\mu_{ul}(\xR^d)$)
\begin{equation*}
\begin{aligned}
  \Vert F \Vert_{L^2(J, H^{s-2} )_{ul}} \leq \, 
  &K  \{ \Vert \alpha_2-\alpha_1 \Vert_{L^2(J, H^{s-1})_{ul}} 
  \Vert \Delta \widetilde{u}_2 \Vert_{L^\infty(J, H^{s-2})_{ul}}\\
 & +\Vert \beta_2-\beta_1 \Vert_{L^2(J, H^{s-1})_{ul}} 
 \Vert \nabla \partial_z \widetilde{u}_2 \Vert_{L^\infty(J, H^{s-2} )_{ul}}\\
 &+\Vert \gamma_2-\gamma_1 \Vert_{L^2(J, H^{s-2} )_{ul}}
  \Vert   \partial_z \widetilde{u}_2 \Vert_{L^\infty(J, H^{s-1} )_{ul}}\}.
 \end{aligned}
 \end{equation*}
Using ~\eqref{alpha} and Lemma~\ref{Houl} we can find a non decreasing function $\mathcal{F}: \xR^+ \to \xR^+$ such that 
\begin{align*}
  \Vert \alpha_2-\alpha_1 \Vert_{L^2(J, H^{s-1} )_{ul}} + &\Vert \beta_2-\beta_1 \Vert_{L^2(J, H^{s-1} )_{ul}}+ \Vert \gamma_2-\gamma_1 \Vert_{L^2(J, H^{s-2} )_{ul}}  \\
 & \leq \mathcal{F} ( \Vert (\eta_1, \eta_2) \Vert_{H^{s+\mez}_{ul}  \times H^{s+\mez}_{ul} }) \Vert \eta_1 - \eta_2 \Vert_{H^{s-\mez}_{ul} }.
 \end{align*}
On the other hand by Theorem~\ref{regell} with $\sigma = s-1$ we have
$$
\Vert \nabla_{x,z} \widetilde{u}_2 \Vert_{L^\infty((z_0,0) , H^{s-1})_{ul} } \leq 
\mathcal{F}\big(\Vert \eta_2 \Vert_{H^{s+\mez}_{ul}}\big) \Vert f \Vert_{H^s_{ul}}.
$$ 
Combining these estimates we obtain eventually
$$  \Vert F \Vert_{L^2(J, H^{s-2})_{ul}} \leq 
\mathcal{F} \Big( \Vert (\eta_1, \eta_2,f) \Vert_{H^{s+\mez}_{ul} \times H^{s+\mez}_{ul} \times H^s_{ul}} \Big) \Vert \eta_1 - \eta_2 \Vert_{H^{s-\mez}_{ul} }.$$
Since $\tilde{u}$ vanishes at $z=0$ Theorem~\ref{regell} with $\sigma = s-\frac{3}{2}$ gives
$$
\Vert \nabla_{x,z} \widetilde{u}  \Vert_{C^0((z_0,0) , H^{s-\frac{3}{2}})_{ul}} \leq 
\mathcal{F}\big( \Vert (\eta_1, \eta_2,f) \Vert_{H^{s+\mez}_{ul}  \times H^{s+\mez}_{ul}\times H^s_{ul}}\big) \Vert \eta_1 - \eta_2 \Vert_{H^{s-\mez}_{ul} }.
$$
Using~\eqref{encoreG} and Proposition~\ref{Ho} $(i)$ we obtain~\eqref{Glip}.
\end{proof}

\subsection{Paralinearization of the equations}
Notice that it is enough to estimate 
$\deta,\dB,\dV$. Indeed, since~$V_j =\nabla \psi_j-B_j\nabla \eta_j$, one can estimate the~$L^\infty([0,T],H  ^{s-\frac{3}{2}})_{ul}$-norm of 
$\nabla\psi$
from the identity
\begin{align*}
\nabla \psi = \dV +\dB \nabla \eta_1 +\B_2 \nabla \deta.
\end{align*}
\begin{lemm}[\protect{\cite[Lemma~5.6]{ABZ3}}]\label{lemm:symmind}
The differences~$\zeta,B,V$ satisfy a system of the form
\begin{equation}\label{syst:delta}
\left\{
\begin{aligned}
&(\partial_t +V_1\cdot\partialx)(\dV+\zeta_1 B)+\ma_2 \dzeta =f_1 ,\\
&(\partial_{t}+V_2\cdot\partialx)\dzeta-G(\eta_1)\dV- \zeta_1 G(\eta_1)\dB=f_2,
\end{aligned}
\right.
\end{equation}
for some remainders such that
$$
\lA (f_1,f_2)\rA_{L^{\infty}([0,T],H ^{s-1}\times H ^{s-\tdm})_{ul}} \le \mathcal{K}(M_1,M_2)N(T).
$$
\end{lemm}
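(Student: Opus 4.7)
The strategy is to take the differences of the two systems \eqref{eq.7.1} for $j=1,2$ and regroup the terms so that the \emph{principal} quasilinear part has exactly the form stated in \eqref{syst:delta}, with the rest dumped into remainders $f_1,f_2$ that are tame (one half-derivative smoother than the unknowns). The basic observation, as in \cite[\S5]{ABZ3}, is that on the first equation of \eqref{BVzeta0} one may transport $B$ along $V_1$ (modulo a term $V\cdot\partialx B_2$ which is a genuine remainder), while on the second one transports $V$ along $V_1$, picking up $V\cdot\partialx V_2$ as a remainder; on the third, one transports $\zeta$ along $V_2$ instead (one must compensate by $V\cdot\partialx\zeta_1$ on the remainder side), and one linearizes the Dirichlet--Neumann operator in $\eta$ via Lemma~\ref{lip:G}.

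More concretely, first I would compute
\begin{align*}
(\partial_{t}+V_1\cdot\partialx)V&=-\,a_2\zeta-(a_1-a_2)\zeta_1-V\cdot\partialx V_2,\\
(\partial_{t}+V_1\cdot\partialx)B&=(a_1-a_2)-V\cdot\partialx B_2,\\
(\partial_{t}+V_2\cdot\partialx)\zeta&=G(\eta_1)V+\zeta_1 G(\eta_1)B-V\cdot\partialx \zeta_1\\
&\quad+\bigl(G(\eta_1)-G(\eta_2)\bigr)V_2+\zeta_1\bigl(G(\eta_1)-G(\eta_2)\bigr)B_2+\zeta\,G(\eta_2)B_2+R_1-R_2,
\end{align*}
and then apply $(\partial_t+V_1\cdot\partialx)$ to $\zeta_1 B$ using the equation for $\zeta_1$ from \eqref{BVzeta0} and the second identity above. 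Adding the result to the first identity, the terms $(a_1-a_2)\zeta_1$ and $\zeta_1(a_1-a_2)$ cancel, and one obtains the system \eqref{syst:delta} with
\begin{equation*}
\begin{aligned}
f_1&=-V\cdot\partialx V_2+\bigl(G(\eta_1)V_1+\zeta_1 G(\eta_1)B_1+R_1\bigr)B-\zeta_1\,V\cdot\partialx B_2,\\
f_2&=-V\cdot\partialx\zeta_1+\bigl(G(\eta_1)-G(\eta_2)\bigr)V_2+\zeta_1\bigl(G(\eta_1)-G(\eta_2)\bigr)B_2+\zeta\,G(\eta_2)B_2+R_1-R_2.
\end{aligned}
\end{equation*}

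Next I would estimate each term in $f_1$ in $H^{s-1}_{ul}$ and each term in $f_2$ in $H^{s-\tdm}_{ul}$, using the paraproduct machinery of Section~\ref{sec.3}. For $f_1$, the products $V\cdot\nabla V_2$ and $\zeta_1 V\cdot\nabla B_2$ are controlled by Proposition~\ref{Ho} since $s-1>d/2$; the middle term uses that $G(\eta_1)V_1,\zeta_1 G(\eta_1)B_1$ and $R_1$ all lie in $H^{s-1}_{ul}\subset H^{s-1}_{ul}$ thanks to Theorem~\ref{princ} and Proposition~\ref{new:eq} for solution~1, multiplied by $B\in H^{s-1}_{ul}$. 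For $f_2$, the Lipschitz estimate Lemma~\ref{lip:G} directly handles the two $\bigl(G(\eta_1)-G(\eta_2)\bigr)$ terms with the gain $\|\eta\|_{H^{s-\mez}_{ul}}\le N(T)$; the terms $V\cdot\partialx\zeta_1$ and $\zeta\,G(\eta_2)B_2$ are paraproduct estimates at regularity $s-\tdm$; and $R_1-R_2$ is the last piece.

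\textbf{Main obstacle.} The delicate point is the estimate of $R_1-R_2$ in $H^{s-\tdm}_{ul}$: since $R_j$ is defined through the auxiliary elliptic problems \eqref{eq:theta}--\eqref{Rd+1} for the harmonic extensions $\theta_i^{(j)},\Phi^{(j)}$ of $(V_j,B_j,\psi_j)$, proving a Lipschitz bound amounts to redoing the proof of Proposition~\ref{new:eq} at the level of differences. I would write $U_i^{(1)}-U_i^{(2)}$ as a solution of the straightened elliptic equation \eqref{elliptique} with a source term given by the difference of the coefficients $\alpha_j,\beta_j,\gamma_j$ (each of which is Lipschitz in $\eta$ by the explicit formula \eqref{alpha}) acting on $\widetilde U_i^{(2)}$, and a Dirichlet datum coming from the differences $V,B,\psi$. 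Applying Theorem~\ref{regell} at regularity $\sigma=s-\tdm$ and tracing the constants via Lemma~\ref{est-alpha} then yields the desired bound $\|R_1-R_2\|_{H^{s-\tdm}_{ul}}\le \mathcal{K}(M_1,M_2)N(T)$. Once this (and the analogous Lipschitz bound $\|a_1-a_2\|_{H^{s-1}_{ul}}\le \mathcal{K}(M_1,M_2)N(T)$ for the Taylor coefficient, obtained from a parallel variational argument on \eqref{eq:P}) is in hand, the assembly of the estimate on $(f_1,f_2)$ is routine.
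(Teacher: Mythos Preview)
Your approach is correct and follows the expected route. The paper itself does not give a proof of this lemma; it simply cites \cite[Lemma~5.6]{ABZ3} and states that ``the other steps are identical \emph{mutatis mutandi} as in~\cite{ABZ3}'', so your derivation is precisely what that reference contains: take differences of the two copies of \eqref{BVzeta0}, regroup so that the principal part is $(\partial_t+V_1\cdot\nabla)(V+\zeta_1 B)+a_2\zeta$ and $(\partial_t+V_2\cdot\nabla)\zeta-G(\eta_1)V-\zeta_1 G(\eta_1)B$, and verify that every leftover term is half a derivative smoother. Your explicit expressions for $f_1,f_2$ and the term-by-term estimates via Proposition~\ref{Ho}, Theorem~\ref{princ}, Lemma~\ref{lip:G}, and the elliptic regularity for $R_1-R_2$ are all correct.

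One remark: you do \emph{not} need the Lipschitz bound $\|a_1-a_2\|_{H^{s-1}_{ul}}\le\mathcal{K}(M_1,M_2)N(T)$ for this particular lemma. The whole point of combining the $V$-equation with $\zeta_1$ times the $B$-equation is that the contributions $-(a_1-a_2)\zeta_1$ and $+\zeta_1(a_1-a_2)$ cancel exactly, so the Taylor coefficient difference drops out of $f_1$ (and it never appears in $f_2$). That Lipschitz estimate on $a$ is indeed needed later in the contraction argument, but not here.
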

\subsection{Estimates for the good unknown}
In this section we introduce the good-unknown of Alinhac in~\cite{AM,ABZ1,Alipara,Ali} and symmetrize the system.
Let~$I = [0,T]$.
\begin{lemm}[\protect{\cite[Lemma~5.7]{ABZ3}}] \label{symetr}
Set
$$
\ell \defn \sqrt{ \lambda_1 \ma_2},\quad \varphi\defn T_{\sqrt{\lambda_1}}(\dV+\zeta_1 B),\quad \vartheta\defn T_{\sqrt{\ma_2}} \dzeta.
$$
Then 
\begin{align}
&(\partial_t +T_{V_1}\cdot\partialx)\varphi + T_{\ell}\vartheta =g_1 ,\label{syst:delta5}\\
&(\partial_{t}+T_{V_2}\cdot\partialx)\vartheta -T_{\ell }\varphi =g_2,\label{syst:delta5bis}
\end{align}
where
$$
\lA (g_1,g_2)\rA_{L^{\infty}(I,H ^{s-\tdm}\times H ^{s-\tdm})_{ul}} \le \mathcal{K}(M_1,M_2)N(T).
$$
\end{lemm}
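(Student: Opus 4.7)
The plan is to derive the symmetrized system by applying the paradifferential operators $T_{\sqrt{\lambda_1}}$ and $T_{\sqrt{a_2}}$ to the two equations of Lemma~\ref{lemm:symmind} respectively, and then to reorganize all terms using the paradifferential symbolic calculus (Theorem~\ref{calc:symb}) together with the paralinearization of the Dirichlet--Neumann operator (Theorem~\ref{paralinDN}). Throughout, a term will be declared \emph{admissible} if its $L^\infty(I,H^{s-\tdm})_{ul}$ norm is bounded by $\mathcal{K}(M_1,M_2)N(T)$; since the paradifferential calculus has been developed in the uniformly local framework in Section~\ref{sec.3}, all the building blocks are available.

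First I would apply $T_{\sqrt{\lambda_1}}$ to the first equation of Lemma~\ref{lemm:symmind}. The transport part yields $(\partial_t + T_{V_1}\cdot\nabla_x)\varphi$ up to two errors: the commutator $[T_{\sqrt{\lambda_1}},T_{V_1}\cdot\nabla_x](\dV+\zeta_1\dB)$, which is of order $\mez$ and admissible by the symbolic calculus (since $\lambda_1,V_1$ are controlled by $M_1$); and the paraproduct remainder $T_{\sqrt{\lambda_1}}(V_1-T_{V_1})\cdot\nabla_x(\dV+\zeta_1 \dB)$, admissible thanks to Proposition~\ref{a-Ta}. For the potential term, I write $T_{\sqrt{\lambda_1}}\ma_2\dzeta=T_{\sqrt{\lambda_1}}T_{\ma_2}\dzeta+T_{\sqrt{\lambda_1}}(\ma_2-T_{\ma_2})\dzeta$; the second piece is admissible by Proposition~\ref{a-Ta} applied with $\ma_2$ of positive Sobolev regularity inherited from Proposition~\ref{prop:taylor}. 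Then composition (Theorem~\ref{calc:symb}) gives $T_{\sqrt{\lambda_1}}T_{\ma_2}=T_{\sqrt{\lambda_1}\ma_2}+R_0$ and $T_\ell T_{\sqrt{\ma_2}}=T_{\ell\sqrt{\ma_2}}+R_0'=T_{\sqrt{\lambda_1}\ma_2}+R_0'$, with $R_0,R_0'$ of order $0$, so $T_{\sqrt{\lambda_1}}T_{\ma_2}\dzeta=T_\ell\vartheta+$ admissible. Finally $T_{\sqrt{\lambda_1}}f_1\in L^\infty(I,H^{s-\tdm})_{ul}$ since $T_{\sqrt{\lambda_1}}$ has order $\mez$. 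This gives \eqref{syst:delta5}.

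For the second equation I apply $T_{\sqrt{\ma_2}}$. The transport term is handled exactly as above, producing $(\partial_t+T_{V_2}\cdot\nabla_x)\vartheta$ plus admissible remainders (the commutator is now of order $0$ with $V_2$ which is controlled by $M_2$). For the elliptic part I use Theorem~\ref{paralinDN} with $t=s-1$ to write $G(\eta_1)\dV=T_{\lambda_1}\dV+R(\eta_1)\dV$ and $G(\eta_1)\dB=T_{\lambda_1}\dB+R(\eta_1)\dB$ with $R(\eta_1)$ of order $\mez$; then $\zeta_1 T_{\lambda_1}\dB=T_{\zeta_1}T_{\lambda_1}\dB+(\zeta_1-T_{\zeta_1})T_{\lambda_1}\dB$, the last term admissible by Proposition~\ref{a-Ta}, while $T_{\zeta_1}T_{\lambda_1}=T_{\lambda_1}T_{\zeta_1}+[T_{\zeta_1},T_{\lambda_1}]$ and the commutator, of order $\mez$, is admissible by Theorem~\ref{calc:symb}$(ii)$. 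Thus
\[
G(\eta_1)\dV+\zeta_1 G(\eta_1)\dB=T_{\lambda_1}\bigl(\dV+\zeta_1\dB\bigr)+\text{admissible}.
\]
Applying $T_{\sqrt{\ma_2}}$ and using composition $T_{\sqrt{\ma_2}}T_{\lambda_1}=T_{\sqrt{\ma_2}\lambda_1}+R_1=T_\ell T_{\sqrt{\lambda_1}}+R_1'$ (with $R_1,R_1'$ of order $\mez$), and noting that $T_{\sqrt{\lambda_1}}(\dV+\zeta_1\dB)=\varphi$, I obtain \eqref{syst:delta5bis}.

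The main obstacle is bookkeeping: one must check at every step that the composition/commutator estimates in Theorem~\ref{calc:symb} and the paralinearization estimates of Theorem~\ref{paralinDN} produce remainders that (i) lose at most $\mez$ derivative with respect to the highest-order input, and (ii) depend on the coefficients $\lambda_1,\ma_2,\zeta_1$ only through norms controlled by $\mathcal{K}(M_1,M_2)$ (in particular using the lower bound $\ma_2\ge c>0$ to take square roots, and Proposition~\ref{prop:taylor} for the Sobolev control of $\ma_j$). Since the structure of the computation is identical to \cite[Lemma~5.7]{ABZ3}, the only genuinely new input here is that every paradifferential estimate holds in the uniformly local scale, which has already been established in Section~\ref{sec.3}.
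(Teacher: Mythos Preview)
Your proposal is correct and follows the same route as the paper (which in fact omits the argument and refers to \cite[Lemma~5.7]{ABZ3}): apply $T_{\sqrt{\lambda_1}}$ and $T_{\sqrt{a_2}}$ to the two equations of Lemma~\ref{lemm:symmind}, paralinearize the products and the Dirichlet--Neumann contributions via Theorem~\ref{paralinDN}, and use the symbolic calculus of Theorem~\ref{calc:symb} to reduce all compositions to $T_\ell$, the only new point being that these tools now act on $H^\sigma_{ul}$. One small sharpening: for the commutator of $T_{\sqrt{\lambda_1}}$ (resp.\ $T_{\sqrt{a_2}}$) with the full transport operator $\partial_t+T_{V_j}\cdot\nabla$ you should invoke Lemma~\ref{commutation} rather than bare symbolic calculus, since the time derivative of the symbol has to be absorbed as well.
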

Once this symmetrization has been performed, simple energy estimates allow to prove 
\begin{lemm}[\protect{\cite[Lemma~5.8]{ABZ3}}]\label{L63.a}
Let
$$
N'(T)\defn  \sup_{t\in I} \big\{\lA \vartheta(t)\rA_{H^{s-\tdm}}
+ \lA \varphi(t)\rA_{H^{s-\tdm}}\big \}.
$$
We have
\begin{equation}\label{esti:N'}
N'(T)\le \mathcal{K}(M_1,M_2)\bigl( N(0)+TN(T)\bigr).
\end{equation}
\end{lemm}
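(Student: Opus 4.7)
The plan is to perform a standard $L^2$-type energy estimate on the symmetrized system \eqref{syst:delta5}--\eqref{syst:delta5bis}. The key structural fact is that since $\ell$ is a real, positive symbol of order $1/2$, the paradifferential operator $T_\ell$ is self-adjoint modulo a bounded remainder (by Theorem \ref{calc:symb}, the symbol of $T_\ell-T_\ell^*$ is of order $-1/2$), so the off-diagonal coupling $T_\ell \vartheta$ / $-T_\ell\varphi$ cancels up to a controlled error when we pair the two equations with $\varphi$ and $\vartheta$ respectively and add. Likewise, $T_{V_j}\cdot\nabla_x+(T_{V_j}\cdot\nabla_x)^{\!*}$ is a paradifferential operator of order $0$ whose norm on $L^2$ is bounded by $\mathcal{K}(M_1,M_2)$. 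To adapt this to the uniformly local framework, I work with a fixed cutoff $\chi_q$ and take the supremum over $q\in\xZ^d$ at the end.

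Concretely, first apply $\lDx{s-\tdm}$ and then multiply by $\chi_q$, setting $\Phi_q\defn \chi_q \lDx{s-\tdm}\varphi$ and $\Theta_q\defn \chi_q \lDx{s-\tdm}\vartheta$. Commuting $\lDx{s-\tdm}$ and $\chi_q$ through \eqref{syst:delta5}--\eqref{syst:delta5bis} gives
\begin{equation*}
(\partial_t+T_{V_1}\cdot\partialx)\Phi_q + T_\ell \Theta_q = \chi_q\lDx{s-\tdm} g_1 + r_q^1,
\end{equation*}
\begin{equation*}
(\partial_t+T_{V_2}\cdot\partialx)\Theta_q - T_\ell \Phi_q = \chi_q\lDx{s-\tdm} g_2 + r_q^2,
\end{equation*}
where $r_q^1,r_q^2$ collect the commutators $[\lDx{s-\tdm},T_{V_j}\cdot\partialx]$, $[\lDx{s-\tdm},T_\ell]$, $[\chi_q,T_{V_j}\cdot\partialx]$ and $[\chi_q,T_\ell]$ applied to $\varphi,\vartheta$. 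By the symbolic calculus of Section \ref{sec.3} (and the bounds on $V_j,\ell$ in terms of $M_1,M_2$ from Lemma \ref{est-alpha} and Proposition \ref{prop:taylor}) each such remainder is bounded in $L^2$ uniformly in $q$ by $\mathcal{K}(M_1,M_2)N'(T)$, since it involves either losing one derivative then reducing by $1/2$ (for $[\lDx{s-\tdm},T_\ell]$) or commuting with $\chi_q$ (which produces a lower-order operator through the smooth cutoff).

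Next, pair the first equation with $\Phi_q$ and the second with $\Theta_q$, integrate over $\xR^d$, and add. The cross terms combine into $\la (T_\ell-T_\ell^*)\Theta_q,\Phi_q\ra$, which by Theorem \ref{calc:symb}(ii) is $O(1)\cdot \|\Theta_q\|_{L^2}\|\Phi_q\|_{L^2}$ with constant $\mathcal{K}(M_1,M_2)$. The transport terms produce $\la (T_{V_j}\cdot\partialx+(T_{V_j}\cdot\partialx)^{\!*})\cdot,\cdot\ra$, similarly bounded. Therefore
\begin{equation*}
\frac{1}{2}\frac{d}{dt}\bigl(\|\Phi_q\|_{L^2}^2+\|\Theta_q\|_{L^2}^2\bigr) \le \mathcal{K}(M_1,M_2)\Big\{\bigl(\|\Phi_q\|_{L^2}^2+\|\Theta_q\|_{L^2}^2\bigr) + \bigl(\|\chi_q\lDx{s-\tdm} g_1\|_{L^2}+\|\chi_q\lDx{s-\tdm} g_2\|_{L^2}\bigr)\,N'(T)\Big\}.
\end{equation*}
Taking $\sup_q$, using Lemma \ref{symetr} for the $g_j$'s (whose $L^\infty(I,H^{s-\tdm})_{ul}$-norm is dominated by $\mathcal{K}(M_1,M_2)N(T)$), integrating over $[0,t]$, and applying Gronwall yields
\begin{equation*}
N'(T)^2 \le \mathcal{K}(M_1,M_2)\bigl(N'(0)^2 + T\,N(T)\,N'(T) + T\,N'(T)^2\bigr).
\end{equation*}
For $T$ small (which is admissible since we may iterate on short intervals as in the proof of Theorem \ref{th.lipschitz}) and using $N'(0)\le \mathcal{K}(M_1,M_2)N(0)$ (which follows directly from the definition $\varphi=T_{\sqrt{\lambda_1}}(\dV+\zeta_1\dB)$ and $\vartheta=T_{\sqrt{\ma_2}}\dzeta$ together with the product/paraproduct estimates of Section \ref{sec.3}), we extract \eqref{esti:N'}.

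The main obstacle is the uniformly local bookkeeping: one must verify that all commutator bounds from the symbolic calculus are truly uniform in the translation parameter $q$, so that after taking $\sup_q$ the remainders stay controlled by $\mathcal{K}(M_1,M_2)N'(T)$ rather than by a full $H^{s-\tdm}_{ul}$-norm of the coefficients on a whole strip growing with $q$. This is precisely why the para-differential machinery was developed in Section \ref{sec.3} with constants that depend only on the $H^\sigma_{ul}$-norms of the symbols; once this is in hand, the symmetric structure of the system does all the remaining work.
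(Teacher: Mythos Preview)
Your proposal is correct and is essentially the approach the paper has in mind: the paper does not spell out a proof here but refers to \cite[Lemma~5.8]{ABZ3} and to the analogous computation carried out for Proposition~\ref{est:L2}, which is precisely the localized $L^2$ energy estimate on the symmetrized system that you describe (cutoff by $\chi_q$, commute, use $(T_\ell)^*-T_\ell$ and $(T_{V_j}\cdot\nabla)^*+T_{V_j}\cdot\nabla$ bounded on $L^2$, Gronwall, sup over $q$). One small correction: with the available regularity ($\rho=\tfrac12$ in Theorem~\ref{calc:symb}(iii)), $T_\ell-(T_\ell)^*$ is of order $0$, not $-\tfrac12$; this is still bounded on $L^2$, so your argument goes through unchanged.
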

\subsection{Back to the original unknowns}
From the estimates in Lemma~\ref{L63.a}, it is fairly easy to recover estimates for $\eta$.
\begin{lemm}[\protect{\cite[Lemma~5.9]{ABZ3}}]\label{lem.N}
\begin{equation}\label{p64.10}
\| \eta \|_{L^\infty(I; H^{s-\mez})_{ul}}
\le  \mathcal{K}(M_1, M_2) \{N(0)+  T N(T)\}.
\end{equation}
\end{lemm}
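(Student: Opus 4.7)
The plan is to combine two pieces of information. First, the symmetrized energy estimate from Lemma \ref{L63.a} gives control on $\vartheta = T_{\sqrt{a_2}}\zeta$ (with $\zeta = \nabla\eta$) in $L^\infty(I,H^{s-\tdm}_{ul})$, which, after inverting the paramultiplication by $\sqrt{a_2}$, recovers control on $\nabla\eta$ in $L^\infty(I,H^{s-\tdm}_{ul})$. Second, a transport estimate applied to the scalar equation satisfied by $\eta$ itself recovers control on the low-frequency part of $\eta$ in $L^\infty(I,L^2_{ul})$. Summing these two pieces yields the $H^{s-\mez}_{ul}$-estimate via the equivalence
$$\|\eta\|_{H^{s-\mez}_{ul}}\leq C\bigl(\|\eta\|_{L^2_{ul}}+\|\nabla\eta\|_{H^{s-\tdm}_{ul}}\bigr).$$

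For the first piece, note that by Proposition \ref{prop:taylor} one has $a_2\in L^\infty(I,H^{s-\mez}_{ul})$ with $a_2\geq c>0$, so $1/\sqrt{a_2}\in L^\infty(I,H^{s-\mez}_{ul})$ by composition. Theorem \ref{calc:symb}(ii) (symbolic calculus) then gives $T_{1/\sqrt{a_2}}T_{\sqrt{a_2}} = I + R$, where the remainder $R$ is of order $-1$ with norm bounded by $\mathcal{K}(M_2)$. Applying $T_{1/\sqrt{a_2}}$ to the identity $\vartheta = T_{\sqrt{a_2}}\nabla\eta$ yields
$$\nabla\eta = T_{1/\sqrt{a_2}}\vartheta - R(\nabla\eta),$$
so that, using Lemma \ref{L63.a},
$$\|\nabla\eta\|_{L^\infty(I,H^{s-\tdm}_{ul})}\leq \mathcal{K}(M_1,M_2)\bigl(N(0)+TN(T)\bigr)+ \mathcal{K}(M_2)\|\nabla\eta\|_{L^\infty(I,H^{s-\frac{5}{2}}_{ul})}.$$
The last term is lower-order; it is absorbed by bounding $\|\nabla\eta\|_{H^{s-\frac{5}{2}}_{ul}}\leq \|\eta\|_{H^{s-\tdm}_{ul}}\leq N(T)$ times a factor $T$ arising from the transport bootstrap below (or alternatively by a direct Grönwall-type argument on the difference of the two orders).

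For the second piece, subtracting the two kinematic equations $(\partial_t + V_j\cdot\partialx)\eta_j = B_j$ one obtains the transport equation for the difference,
$$(\partial_t + V_1\cdot\partialx)\eta = B - V\cdot\partialx\eta_2.$$
The right-hand side lies in $L^\infty(I,L^2_{ul})$: indeed $B\in L^\infty(I,H^{s-1}_{ul})\subset L^\infty(I,L^2_{ul})$, and since $s-1>d/2$ the product law (Proposition \ref{Ho}) gives
$$\|V\cdot\partialx\eta_2\|_{L^2_{ul}}\leq C\|V\|_{H^{s-1}_{ul}}\|\partialx\eta_2\|_{H^{s-\mez}_{ul}}\leq \mathcal{K}(M_2)N(T).$$
Applying the $L^2_{ul}$ transport estimate (Lemma \ref{transport2} with $\mu=0$, using that $V_1\in L^\infty(I,H^{s_0}_{ul})\subset L^\infty(I,W^{1,\infty})$) yields
$$\|\eta\|_{L^\infty(I,L^2_{ul})}\leq \mathcal{K}(M_1)\bigl(\|\eta(0)\|_{L^2_{ul}}+TN(T)\cdot\mathcal{K}(M_2)\bigr)\leq \mathcal{K}(M_1,M_2)\bigl(N(0)+TN(T)\bigr).$$
Combining the two estimates gives the claim.

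The main obstacle is the inversion step: one needs the paradifferential inverse of $T_{\sqrt{a_2}}$ with uniform control in the uniformly local setting, and the lower-order remainder produced by the symbolic calculus must be absorbed without incurring a dangerous dependence on the top-order norm of $\eta$. Both points are handled by the uniformly-local paradifferential calculus developed in Section \ref{sec.3} together with the quantitative Taylor coefficient estimate of Proposition \ref{prop:taylor}, which ensures that $a_2$ is bounded below and bounded above in $H^{s-\mez}_{ul}$ in terms of $M_2$.
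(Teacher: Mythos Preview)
Your overall strategy is the right one and matches what the paper does (by analogy with the a~priori estimates, Lemma~\ref{weaker} followed by Lemma~\ref{end:proof}\,(i)): invert the zero-order elliptic paramultiplication $T_{\sqrt{a_2}}$ to pass from $\vartheta$ back to $\zeta=\nabla\eta$, and control the low frequencies of $\eta$ by a transport argument. However, the step where you absorb the remainder from the inversion is not complete as written.

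First, a minor point: by Theorem~\ref{calc:symb}\,(ii) the remainder $R=I-T_{1/\sqrt{a_2}}T_{\sqrt{a_2}}$ is of order $-\rho$ with $\rho=\min\bigl(1,\,s-\tfrac12-\tfrac d2\bigr)\in(\tfrac12,1]$, not necessarily of order $-1$. More importantly, the bound you write,
\[
\|\nabla\eta\|_{H^{s-\frac{5}{2}}_{ul}}\le \|\eta\|_{H^{s-\tdm}_{ul}}\le N(T),
\]
carries \emph{no} factor of $T$: it plugs directly into your inequality to give $\|\nabla\eta\|_{H^{s-\tdm}_{ul}}\le \mathcal{K}(M_1,M_2)(N(0)+TN(T))+\mathcal{K}(M_2)N(T)$, and the last term cannot be absorbed in the scheme~\eqref{NM1M2}. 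Your $L^2$ transport estimate in Step~2 does not help here, since the remainder lives at the level $H^{s-2}_{ul}$ (or $H^{s-5/2}_{ul}$), not $L^2_{ul}$.

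The fix is to run the transport argument at an intermediate regularity \emph{before} inverting, exactly as the paper does in Lemma~\ref{weaker}. From $(\partial_t+V_1\cdot\nabla)\eta=B-V\cdot\nabla\eta_2$ with right-hand side in $L^\infty(I,H^{s-1}_{ul})$ of size $\le \mathcal{K}(M_2)N(T)$, Lemma~\ref{transport2} (or the direct paralinearized energy estimate at level $\mu=s-\tdm$) yields
\[
\|\eta\|_{L^\infty(I,H^{s-\tdm}_{ul})}\le \mathcal{K}(M_1,M_2)\bigl(N(0)+TN(T)\bigr).
\]
Now the remainder term in the inversion, being bounded by $\mathcal{K}(M_2)\|\eta\|_{H^{s-\tdm}_{ul}}$ (since $R$ gains at least $\tfrac12$ derivative), is already of the desired form, and your argument closes. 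Equivalently, you may iterate the identity $\zeta=T_{1/\sqrt{a_2}}\vartheta-R\zeta$ finitely many times until the remainder sits at a level controlled by the $L^2$ transport estimate; but the single intermediate-level transport estimate is cleaner and is precisely what the paper does.
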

We now estimate $(V,B)$.
\begin{prop}[\protect{\cite[Proposition~5.10]{ABZ3}}]\label{lem.NN}
\begin{equation}\label{p64.10bis}
\| (V,B) \|_{L^\infty(I, H^{s-1}\times H^{s-1})_{ul}}
\le  \mathcal{K}(M_1, M_2) \big\{N(0)+  T N(T)\big\}.
\end{equation}
\end{prop}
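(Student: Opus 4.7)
The strategy, following closely the corresponding argument in~\cite{ABZ3}, is to combine the control of the symmetrized unknown $\varphi = T_{\sqrt{\lambda_1}}(\dV+\zeta_1 \dB)$ given by Lemma~\ref{L63.a} with a second, independent relation between $\dV$ and $\dB$ obtained by differencing the identity $G(\eta)B = -\cnx V + \gamma$ of Lemma~\ref{GB=divV}. These two relations together form an elliptic $2\times 2$ paradifferential system in $(\dV,\dB)$ which can be inverted in $H^{s-1}_{ul}$.

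First, I would invert the elliptic operator $T_{\sqrt{\lambda_1}}$. Since $\sqrt{\lambda_1}$ is elliptic of order $\mez$ with seminorms controlled by $M_1$, Theorem~\ref{calc:symb} furnishes a parametrix $T_{1/\sqrt{\lambda_1}}$ modulo an operator of order $-1$, and Lemma~\ref{L63.a} then yields
\begin{equation}\label{eq.VzetaB}
\lA \dV+T_{\zeta_1}\dB\rA_{L^\infty(I,H^{s-1})_{ul}} \le \mathcal{K}(M_1,M_2)\bigl(N(0)+TN(T)\bigr),
\end{equation}
where I have also absorbed the paramultiplication error $(\zeta_1-T_{\zeta_1})\dB$ using Proposition~\ref{a-Ta} together with the $H^{s-\mez}_{ul}$ control of $\zeta_1$.

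Second, applying Lemma~\ref{GB=divV} to both $(\eta_1,\dB_1)$ and $(\eta_2,\dB_2)$ and subtracting gives
$$
G(\eta_1)\dB + \bigl(G(\eta_1)-G(\eta_2)\bigr)\dB_2 = -\cnx \dV + (\gamma_1-\gamma_2).
$$
The Lipschitz estimate of Lemma~\ref{lip:G} controls $(G(\eta_1)-G(\eta_2))\dB_2$ in $H^{s-\frac{3}{2}}_{ul}$ by $\mathcal{K}(M_1,M_2)N(T)$, and an identical argument applied to the defining elliptic problem for $\gamma_j$ (which is of the same variational form as $G(\eta)\psi$, see the proof of Lemma~\ref{GB=divV}) bounds $\gamma_1-\gamma_2$ similarly. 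Thus
\begin{equation}\label{eq.Gell1}
G(\eta_1)\dB+\cnx \dV = r,\qquad \lA r\rA_{L^\infty(I,H^{s-\frac{3}{2}})_{ul}}\le \mathcal{K}(M_1,M_2)N(T).
\end{equation}

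Third, paralinearize: by Theorem~\ref{paralinDN} applied to $\eta_1$, $G(\eta_1)\dB = T_{\lambda_1}\dB + R(\eta_1)\dB$ with the remainder harmless in $H^{s-\frac{3}{2}}_{ul}$, while $\cnx \dV = \cnx(\dV+T_{\zeta_1}\dB) - \cnx(T_{\zeta_1}\dB)$, and $\cnx T_{\zeta_1}\dB = T_{i\xi\cdot\zeta_1}\dB$ modulo a lower order term. Substituting into~\eqref{eq.Gell1} yields
$$
T_{e}\dB = -\cnx(\dV+T_{\zeta_1}\dB) + r',\qquad e\defn \lambda_1-i\xi\cdot\zeta_1,
$$
with $r'$ still controlled by $\mathcal{K}(M_1,M_2)\bigl(N(0)+TN(T)\bigr)$ thanks to~\eqref{eq.VzetaB} and~\eqref{eq.Gell1}. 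The symbol $e$ is elliptic of order $1$ (its real part $\lambda_1$ is bounded below by $c\la\xi\ra$), so Theorem~\ref{calc:symb} gives a parametrix and
$$
\lA \dB\rA_{L^\infty(I,H^{s-1})_{ul}}\le C\lA \cnx(\dV+T_{\zeta_1}\dB)\rA_{L^\infty(I,H^{s-2})_{ul}}+\mathcal{K}(M_1,M_2)\bigl(N(0)+TN(T)\bigr),
$$
and the first right-hand-side term is bounded using~\eqref{eq.VzetaB}. Finally $\dV = (\dV+T_{\zeta_1}\dB)-T_{\zeta_1}\dB$ is estimated in $H^{s-1}_{ul}$ by the same token, completing the proof.

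The main obstacle is the careful bookkeeping of the remainder $\gamma_1-\gamma_2$ and the replacement of $\zeta_1$ by $T_{\zeta_1}$ in the uniformly local framework: one must verify that each paradifferential substitution and each elliptic inversion is valid in $H^{\sigma}_{ul}$ using Section~\ref{sec.3} and produces remainders controlled by $N(T)$ times a function of $M_1,M_2$ only, with no loss of derivatives beyond what is already present in~\eqref{eq.VzetaB}.
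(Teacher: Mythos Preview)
Your approach is genuinely different from the paper's. The paper does \emph{not} reuse the boundary identity $G(\eta)B=-\cnx V+\gamma$ of Lemma~\ref{GB=divV} here (that route is taken only for the \emph{a priori} estimates in Lemma~\ref{end:proof}~(ii)). Instead it works in the bulk with an Alinhac good unknown: with $v=\widetilde{\phi}_1-\widetilde{\phi}_2$, $b_2=\partial_z\widetilde{\phi}_2/\partial_z\rho_2$ and $w=v-T_{b_2}\rho$, it expresses
\[
B=\Bigl[\frac{1}{\partial_z\rho_1}\bigl(\partial_z w-(b_2-T_{b_2})\partial_z\rho+T_{\partial_z b_2}\rho\bigr)\Bigr]\Big|_{z=0},
\]
then shows (Lemma~\ref{est:trace}) that $w|_{z=0}=\psi-T_{B_2}\eta$ is controlled in $H^s_{ul}$ via a transport argument, and (Lemma~\ref{reg:w}) that $\nabla_{x,z}w$ is controlled in $C^0([-1,0],H^{s-1})_{ul}$ via the elliptic machinery of Theorem~\ref{regell}. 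This yields $B$ directly, with no circularity.

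Your route, by contrast, has a gap at the inversion of $T_e$. Writing $B=T_{1/e}T_eB+(I-T_{1/e}T_e)B$, and recalling that your $r'$ hides the paralinearization remainder $R(\eta_1)B$, both leftover contributions gain only half a derivative:
\[
\bigl\|(I-T_{1/e}T_e)B\bigr\|_{H^{s-1}_{ul}}+\bigl\|T_{1/e}R(\eta_1)B\bigr\|_{H^{s-1}_{ul}}\le \mathcal{K}(M_1)\,\|B\|_{H^{s-3/2}_{ul}}.
\]
You have not established $\|B\|_{H^{s-3/2}_{ul}}\le \mathcal{K}(M_1,M_2)\{N(0)+TN(T)\}$; a priori one only has $\|B\|_{H^{s-3/2}_{ul}}\le N(T)$, which is useless for closing the contraction inequality~\eqref{NM1M2}. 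In the a priori section the analogous lower-order input is Lemma~\ref{weaker}, proved from the transport equation $(\partial_t+V\cdot\nabla)B=a-g$. To mimic that here for the \emph{difference} $B=B_1-B_2$ you would first need a Lipschitz estimate on the Taylor coefficient $a_1-a_2$ in $H^{s-3/2}_{ul}$, which you have not supplied (and which is itself a nontrivial elliptic Lipschitz estimate of the same flavour as Lemma~\ref{lip:G}). The Lipschitz control of $\gamma_1-\gamma_2$ that you flag as ``the main obstacle'' is in fact a secondary issue compared with this one. The paper's bulk good-unknown construction sidesteps both difficulties entirely.
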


The proof will require several preliminary lemmas. 
We begin by noticing that it is enough to estimate $B$. Indeed, if
$$
\| B \|_{L^\infty(I, H ^{s-1})_{ul}}
\le  \mathcal{K}(M_1, M_2) \big\{N(0)+  T N(T)\big\}.
$$
then, the estimate of $\varphi$ in~\eqref{esti:N'} above allows to recover an estimate for $V+ \zeta_1 B$ (by applying  $T_{\sqrt{ \lambda_1} ^{-1}}$), which in turn implies the estimate for  $V$.

 Let~$v=\widetilde{\phi}_1-\widetilde{\phi}_2$, where $\widetilde{\phi}_j$ 
 is the harmonic extension in $\tilde\Omega$ of the function  $\psi_j$ and set 
$$ b_2\defn\frac{\partial_z \widetilde{\phi}_2}{\partial_z \rho_2},\quad 
w= v - T_{b_2}\rho.$$ We have 
 \begin{equation}\label{trace:w}
 w\arrowvert_{z=0} = \psi - T_{B_2} \eta.
\end{equation}
We first state the following result.
  \begin{lemm}[\protect{\cite[Lemma~5.11]{ABZ3}}]\label{est:trace}
We have 
\begin{equation}
\| \psi - T_{B_2} \eta \|_{L^\infty(I, H ^{s})_{ul}}
\le  \mathcal{K}(M_1, M_2)\big\{N(0)+  T N(T)\big\}
\end{equation}
\end{lemm}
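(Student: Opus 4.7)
The plan is to work in the straightened strip $\widetilde\Omega$ with the Alinhac good-unknown
$$w \defn v - T_{b_2}\rho,\quad v\defn\widetilde\phi_1-\widetilde\phi_2,\quad \rho\defn\rho_1-\rho_2.$$
Since $b_2\arrowvert_{z=0}=B_2$ and $\rho_j\arrowvert_{z=0}=\eta_j$, the trace is $w\arrowvert_{z=0}=\psi-T_{B_2}\eta$, exactly the quantity we need.

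First I would derive the elliptic equation satisfied by $w$. Each $\widetilde\phi_j$ obeys $(\partial_z^2+\alpha_j\Delta_x+\beta_j\cdot\nabla_x\partial_z-\gamma_j\partial_z)\widetilde\phi_j=0$, so the difference $v$ solves
$$(\partial_z^2+\alpha_1\Delta_x+\beta_1\cdot\nabla_x\partial_z-\gamma_1\partial_z)\,v=F_v,$$
where $F_v$ is the linear combination $-(\alpha_1-\alpha_2)\Delta_x\widetilde\phi_2 -(\beta_1-\beta_2)\cdot\nabla_x\partial_z\widetilde\phi_2 +(\gamma_1-\gamma_2)\partial_z\widetilde\phi_2$. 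Bounded naively, $F_v$ loses derivatives since the coefficient differences only control $\rho$ in $H^{s-\mez}_{ul}$ while $\widetilde\phi_2$ carries two derivatives. I would apply the same operator to $T_{b_2}\rho$ and expand using the paradifferential symbolic calculus of Section~\ref{sec.3}: the leading contribution (the one in which all derivatives fall on $\rho$) reproduces the worst part of $F_v$, exactly as in the good-unknown computation of~\cite{ABZ3}. After this cancellation, the remaining equation reads
$$(\partial_z^2+\alpha_1\Delta_x+\beta_1\cdot\nabla_x\partial_z-\gamma_1\partial_z)\,w=F_w,$$
and using Lemma~\ref{est-alpha}, Proposition~\ref{a-Ta}, the product rules of Proposition~\ref{Ho}, and the elliptic estimates on $\nabla_{x,z}\widetilde\phi_j$ in $X^{s_0-1}_{ul}$ provided by Corollary~\ref{coroetape1}, one obtains
$$\lA F_w\rA_{Y^{s-\mez}_{ul}(J)}\le \mathcal{K}(M_1,M_2)\,N(T).$$

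Second, I would apply the elliptic regularity Theorem~\ref{regell} with $\sigma=s-\mez$, together with the Poincar\'e inequality and the trace Lemma~\ref{lions}, to bound $\nabla_{x,z}w$ in $X^{s-\mez}_{ul}$, hence $w\arrowvert_{z=0}$ in $H^s_{ul}$, in terms of $\lA F_w\rA_{Y^{s-\mez}_{ul}(J)}$ and of a low-regularity quantity for $\nabla_{x,z}w$ on $J$. The latter is controlled by Lemma~\ref{lip:G} and by the contraction estimate for $\varphi,\vartheta$ in Lemma~\ref{L63.a}. This gives an instantaneous-in-time bound $\lA (\psi-T_{B_2}\eta)(t)\rA_{H^s_{ul}}\le \mathcal{K}(M_1,M_2)N(T)$.

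To upgrade this to the $N(0)+TN(T)$ form, I would couple the elliptic estimate with the transport equation satisfied by the good unknown. Using the Bernoulli identity $(\partial_t+V_j\cdot\nabla_x)\psi_j=\mez B_j^2+\mez\vert V_j\vert^2-g\eta_j$ and $(\partial_t+V_j\cdot\nabla_x)\eta_j=B_j$, one checks by paralinearization (the same cancellations that underlie the symmetrization in Lemma~\ref{lemm:symmind}) that
$$(\partial_t+T_{V_1}\cdot\nabla_x)(\psi-T_{B_2}\eta)=h,\qquad \lA h\rA_{L^\infty(I,H^s_{ul})}\le \mathcal{K}(M_1,M_2)\,N(T),$$
where the $H^s_{ul}$-estimate on $h$ uses precisely the instantaneous control obtained in the previous step. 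Applying Lemma~\ref{transport2} yields the conclusion.

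The main obstacle will be the first step: the good-unknown cancellation must be carried out to sufficient precision in the paradifferential symbolic calculus to realize the full $\mez$-derivative gain, and the computations must be uniform in the translation parameter $q\in\xZ^d$ defining the uniformly-local norms. Once this algebraic-paradifferential identity is in hand, the remaining elliptic and transport estimates follow by direct application of results already available in the paper.
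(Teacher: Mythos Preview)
There is a genuine circularity in your elliptic step. Theorem~\ref{regell} estimates $\lA\nabla_{x,z}w\rA_{X^{\sigma}_{ul}}$ in terms of, among other things, the Dirichlet datum $\lA w\arrowvert_{z=0}\rA_{H^{\sigma+1}_{ul}}$; with $\sigma=s-\mez$ this is $\lA\psi-T_{B_2}\eta\rA_{H^{s+\mez}_{ul}}$, which is stronger than the quantity you are trying to bound. So you cannot use Theorem~\ref{regell} to produce the trace estimate. In the paper's logical order the dependence runs the other way: Lemma~\ref{est:trace} supplies the boundary data that is then fed into the elliptic machinery to obtain Lemma~\ref{reg:w}. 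Your steps 1--2 are essentially the proof of Lemma~\ref{reg:w}, placed one lemma too early.

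The route taken in \cite{ABZ3} (and implicitly here) stays on the boundary. From $\nabla\psi_j=V_j+B_j\nabla\eta_j$ one expands
\[
\nabla(\psi-T_{B_2}\eta)=(V+\zeta_1 B)+(B_2-T_{B_2})\nabla\eta-T_{\nabla B_2}\eta.
\]
The first term is controlled in $H^{s-1}_{ul}$ by applying $T_{1/\sqrt{\lambda_1}}$ to $\varphi$ and invoking Lemma~\ref{L63.a}; the remaining two terms are handled by Proposition~\ref{a-Ta} together with the $H^{s-\mez}_{ul}$ bound on $\eta$ from Lemma~\ref{lem.N}. Since Lemmas~\ref{L63.a} and~\ref{lem.N} already carry the $N(0)+TN(T)$ structure, it is inherited here; the low-frequency part comes from a crude $L^2$ transport estimate on $\psi$. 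No interior elliptic estimate on $w$ enters at this stage.

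Your transport idea (step~3) is not wrong in spirit, but the control of $h$ in $H^s_{ul}$ cannot rest on the circular elliptic step; one would instead have to exhibit the good-unknown cancellations directly in the boundary equations, which is more laborious than the algebraic identity above.
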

We next relate $w$, $\rho$ and $B$.
\begin{lemm}[\protect{\cite[Lemma~5.12]{ABZ3}}]We have
$$
B=\Bigl[ \frac{1}{\partial_z\rho_1}\Bigl( \partial_z w -(b_2-T_{b_2})\partial_z\rho
+T_{\partial_z b_2}\rho\Bigr)\Bigr]\Big\arrowvert_{z=0}.
$$
\end{lemm}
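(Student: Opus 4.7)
The plan is to unwind the definitions and perform a direct algebraic identification. Using the change of variables $\widetilde{\phi}_j(x,z)=\Phi_j(x,\rho_j(x,z))$, the chain rule gives $\partial_z\widetilde{\phi}_j=(\partial_y\Phi_j)\,\partial_z\rho_j$, so by the very definition of $b_j=(\partial_z\widetilde{\phi}_j)/(\partial_z\rho_j)$ we have $b_j\arrowvert_{z=0}=B_j$. Thus
$$
B=B_1-B_2=\Bigl(\frac{\partial_z\widetilde{\phi}_1}{\partial_z\rho_1}-\frac{\partial_z\widetilde{\phi}_2}{\partial_z\rho_2}\Bigr)\Big\arrowvert_{z=0}.
$$

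First I would bring the two fractions over the common denominator $\partial_z\rho_1$ by adding and subtracting $\partial_z\widetilde{\phi}_2/\partial_z\rho_1$. Writing $v=\widetilde{\phi}_1-\widetilde{\phi}_2$ and $\rho=\rho_1-\rho_2$, and using once more $\partial_z\widetilde{\phi}_2=b_2\,\partial_z\rho_2$, the second difference becomes
$$
\frac{\partial_z\widetilde{\phi}_2}{\partial_z\rho_1}-\frac{\partial_z\widetilde{\phi}_2}{\partial_z\rho_2}
=\partial_z\widetilde{\phi}_2\cdot\frac{\partial_z\rho_2-\partial_z\rho_1}{\partial_z\rho_1\,\partial_z\rho_2}
=-\frac{b_2\,\partial_z\rho}{\partial_z\rho_1},
$$
so that
$$
B=\frac{1}{\partial_z\rho_1}\bigl(\partial_z v-b_2\,\partial_z\rho\bigr)\Big\arrowvert_{z=0}.
$$

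Next I would substitute $v=w+T_{b_2}\rho$. Since $\partial_z$ commutes with paradifferential operators acting in the $x$-variable, the Leibniz rule gives
$$
\partial_z(T_{b_2}\rho)=T_{\partial_z b_2}\rho+T_{b_2}\partial_z\rho,
$$
hence
$$
\partial_z v-b_2\,\partial_z\rho=\partial_z w+T_{\partial_z b_2}\rho-(b_2-T_{b_2})\partial_z\rho.
$$
Combining with the previous display and restricting to $z=0$ yields exactly
$$
B=\Bigl[\frac{1}{\partial_z\rho_1}\Bigl(\partial_z w-(b_2-T_{b_2})\partial_z\rho+T_{\partial_z b_2}\rho\Bigr)\Bigr]\Big\arrowvert_{z=0},
$$
as claimed.

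There is no real analytic obstacle here: the identity is purely algebraic once the good-unknown decomposition $v=w+T_{b_2}\rho$ is inserted. The only point that requires a moment of care is justifying the commutation of $\partial_z$ with the paramultiplication operator $T_{b_2}$ (which holds because paradifferential operators are defined pointwise in the $z$ variable, so $z$ plays the role of a parameter); everything else is bookkeeping.
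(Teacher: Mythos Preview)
Your proof is correct and is precisely the standard argument: the paper does not give its own proof here but cites \cite[Lemma~5.12]{ABZ3}, and the computation there is exactly the algebraic unwinding you carry out (common denominator to isolate $\partial_z v - b_2\partial_z\rho$, then the Leibniz identity $\partial_z(T_{b_2}\rho)=T_{\partial_z b_2}\rho+T_{b_2}\partial_z\rho$).
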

\begin{lemm}[\protect{\cite[Lemma~5.13]{ABZ3}}]\label{esti:b2012}
Recall that $ b_2\defn\frac{\partial_z \widetilde{\phi}_2}{\partial_z \rho_2}$. For $k=0,1,2$, we have
$$
\lA \partial_z^k b_2\rA_{C^0([-1,0], L^\infty(I, H^{s-\mez-k}) _{ul})}\le C \lA \psi_2\rA_{H_{ul}^{s+\mez}}.
$$
for some constant $C$ depending only on $\lA \eta_2\rA_{H_{ul}^{s+\mez}}$. 
\end{lemm}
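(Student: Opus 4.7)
\textbf{Proof plan for Lemma~\ref{esti:b2012}.} The strategy is to rewrite $\partial_z^k b_2$ via the Leibniz rule in terms of $\partial_z^j\tilde\phi_2$ and $\partial_z^j\rho_2$ for $j\le k+1$, bound each factor by elliptic regularity for $\tilde\phi_2$ and explicit formulas for $\rho_2$, and finally glue everything together with the uniformly local product laws.

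First, I would apply Corollary~\ref{coroetape1} to the harmonic extension $\widetilde{\phi}_2$ of $\psi_2$ at regularity $\sigma = s-\mez$. This gives
\[
\lA \nabla_{x,z}\widetilde{\phi}_2\rA_{X^{s-\mez}_{ul}(z_0,0)} \le \mathcal{F}\bigl(\lA \eta_2\rA_{H^{s+\mez}_{ul}}\bigr)\bigl(1+\lA \psi_2\rA_{H^{s+\mez}_{ul}}\bigr),
\]
which by definition of $X^\sigma_{ul}$ embeds in $C^0([-1,0], H^{s-\mez}_{ul})$. Next, from the explicit formula~\eqref{rho} and the estimate~\eqref{gradrh} applied with $a(\xi)=\langle \xi\rangle^j e^{\pm \delta z\langle \xi\rangle}$, each $z$-derivative of $\rho_2$ loses one order of tangential regularity: for $j=0,1,2$,
\[
\lA \partial_z^j \rho_2\rA_{C^0([-1,0], H^{s+\mez-j}_{ul})} \le \mathcal{F}\bigl(\lA \eta_2\rA_{H^{s+\mez}_{ul}}\bigr),
\]
and the non-degeneracy $\partial_z\rho_2\ge h/2$ from~\eqref{rho>} guarantees that $1/\partial_z\rho_2$ is a smooth function of $\partial_z\rho_2$ with the same Sobolev bounds (after composing with a bounded smooth function, by Proposition~\ref{Ho}).

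To handle $k=1,2$ I would then use the elliptic equation~\eqref{equ:modifie} (with $\widetilde{v}=\widetilde{\phi}_2$, $F=0$) to trade $z$-derivatives for $x$-derivatives. Namely,
\[
\partial_z^2 \widetilde{\phi}_2 = -\alpha_2 \Delta_x \widetilde{\phi}_2 - \beta_2\cdot \nabla_x\partial_z\widetilde{\phi}_2 + \gamma_2 \partial_z\widetilde{\phi}_2,
\]
and Lemma~\ref{est-alpha} combined with the tame products~\eqref{est:prod1}--\eqref{est:prod3} yields $\partial_z^2 \widetilde{\phi}_2 \in C^0([-1,0], H^{s-\tdm}_{ul})$. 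Differentiating once more in $z$ (using that the coefficients $\alpha_2,\beta_2,\gamma_2$ are themselves smooth functions of $\nabla_{x,z}\rho_2$) produces an analogous expression for $\partial_z^3 \widetilde{\phi}_2$ one derivative below, hence $\partial_z^3 \widetilde{\phi}_2 \in C^0([-1,0], H^{s-\mez-2}_{ul})$.

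Finally, for each $k\in\{0,1,2\}$ I would expand
\[
\partial_z^k b_2 = \partial_z^k\!\left(\frac{\partial_z \widetilde{\phi}_2}{\partial_z \rho_2}\right)
\]
by Leibniz, regrouping factors so that at most one factor sits at the critical level $s-\mez-k$ while all the others sit above the embedding threshold $s_0-\mez>d/2$. Applying Proposition~\ref{Ho} (for terms in the uniformly-local algebra) and the tame estimates~\eqref{est:prod1}--\eqref{est:prod3} to pair one low-regularity factor with high-regularity factors, the bound $\le C(\lA \eta_2\rA_{H^{s+\mez}_{ul}})\lA \psi_2\rA_{H^{s+\mez}_{ul}}$ follows. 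The one delicate point is the bookkeeping for $k=2$: the factor $\partial_z^3\widetilde{\phi}_2$ lives at level $s-\tq\mez-\mez$, i.e.\ exactly one derivative below what product estimates require in general, so one must combine it only with the most regular pieces (powers of $1/\partial_z\rho_2$, which are essentially as smooth as $\eta_2$), relying on the tame form of Proposition~\ref{Ho} rather than the algebra property.
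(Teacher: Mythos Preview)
The paper does not give its own proof of this lemma; it simply cites \cite[Lemma~5.13]{ABZ3} and remarks that the argument carries over verbatim to the uniformly local setting once the paradifferential tools of Section~\ref{sec.3} are in place. Your proposal is exactly that argument: elliptic regularity (Corollary~\ref{coroetape1}, Theorem~\ref{regell}) for $\nabla_{x,z}\widetilde\phi_2$, the equation~\eqref{equ:modifie} to convert $\partial_z^2\widetilde\phi_2$ and $\partial_z^3\widetilde\phi_2$ into tangential derivatives, explicit bounds on $\partial_z^j\rho_2$ from~\eqref{gradrh}, and the product rules of Proposition~\ref{Ho}. This is correct and is the intended proof.

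Two small points worth tightening. First, $X^\sigma_{ul}(J)$ is by definition an $L^\infty\cap L^2$ space in $z$, not a $C^0$ space; the continuity in $z$ comes afterwards from Lemma~\ref{lions} applied to $\partial_z^k\widetilde\phi_2$ (you have both $L^2(J,H^{t+\mez})$ control from $X^{t}_{ul}$ and $L^2(J,H^{t-\mez})$ control of the next $z$-derivative via the equation). Second, your ``delicate bookkeeping'' remark for $k=2$ is harmless but overstated: the worst product is $(\partial_z^3\widetilde\phi_2)\cdot(1/\partial_z\rho_2)$, i.e.\ $H^{s-\frac52}_{ul}\times H^{s-\mez}_{ul}$, and Proposition~\ref{Ho}(i) applies directly since $(s-\tfrac52)+(s-\tfrac12)=2s-3>0$ and $s-\tfrac52<(2s-3)-\tfrac d2$ under $s>1+\tfrac d2$; no special pairing is required. (Your expression ``$s-\tq\mez-\mez$'' appears to be a typo for $s-\tfrac52$.)
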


Notice that $\eta$ and hence $\rho$ are estimated in $L^\infty(I; H^{s-\mez})$ (see~\eqref{p64.10}). 
To complete the proof of the Proposition \ref{lem.NN}, it remains only to estimate $\partial_z w\arrowvert_{z=0}$ 
in $L^\infty(I,H_{ul}^{s-1})$.  
 \begin{lemm}[\protect{\cite[Lemma~5.14]{ABZ3}}]\label{reg:w}
 For $t\in [0,T]$ we have   
 \begin{equation}
\| \nabla_{x,z} w  \|_{C^0([-1,0],   H^{s-1})_{ul} }
\le  \mathcal{K}(M_1, M_2) \big\{N(0)+  T N(T)\big\}.
\end{equation}
\end{lemm}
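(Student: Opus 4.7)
The plan is to derive an elliptic equation for~$w$ on the flattened strip~$\widetilde{\Omega}$ and then apply the uniformly local elliptic regularity, Theorem~\ref{regell}, with~$\sigma=s-1$.

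Setting $v=\widetilde{\phi}_1-\widetilde{\phi}_2$ and using that each~$\widetilde{\phi}_j$ satisfies~\eqref{equ:modifie} with coefficients~$(\alpha_j,\beta_j,\gamma_j)$, one obtains
\begin{equation*}
(\partial_z^2+\alpha_1\Delta_x+\beta_1\cdot\nabla_x\partial_z-\gamma_1\partial_z)v=G_0,
\end{equation*}
where~$G_0$ collects the contributions of the differences~$\alpha_2-\alpha_1$,~$\beta_2-\beta_1$,~$\gamma_2-\gamma_1$ acting on~$\widetilde{\phi}_2$. The worst term in~$G_0$ only lies in~$L^2(J,H^{s-2})_{ul}$ because it carries second derivatives of~$\widetilde{\phi}_2$; using~$G_0$ alone would therefore yield~$\nabla v$ only in~$H^{s-3/2}_{ul}$, half a derivative short of what is needed.

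The role of the Alinhac good unknown~$w=v-T_{b_2}\rho$, with $b_2=\partial_z\widetilde{\phi}_2/\partial_z\rho_2$, is precisely to gain this half-derivative. Expanding $(\partial_z^2+\alpha_1\Delta_x+\beta_1\cdot\nabla_x\partial_z-\gamma_1\partial_z)(T_{b_2}\rho)$ and using~$\partial_z\widetilde{\phi}_2=b_2\,\partial_z\rho_2$, the top-order contributions match those of~$G_0$ modulo paradifferential commutators. One ends up with
\begin{equation*}
(\partial_z^2+\alpha_1\Delta_x+\beta_1\cdot\nabla_x\partial_z-\gamma_1\partial_z)w=G_1,
\end{equation*}
where~$G_1$ is controlled in~$Y^{s-1}_{ul}(J)$ by $\mathcal{K}(M_1,M_2)\{N(0)+TN(T)\}$: this uses Lemma~\ref{est-alpha} for the coefficient differences, Lemma~\ref{esti:b2012} for~$b_2$, the composition and action results of Theorem~\ref{calc:symb}, the product laws of Section~\ref{sec.3}, and Corollary~\ref{coroetape1} for the regularity of each~$\widetilde{\phi}_j$.

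With this source~$G_1$, the boundary datum $w|_{z=0}=\psi-T_{B_2}\eta$ controlled in~$H^{s}_{ul}$ by Lemma~\ref{est:trace}, and the low-regularity norm $\|\nabla_{x,z}w\|_{X^{s_0-1}_{ul}(J)}$ bounded via the a priori regularity of~$\nabla_{x,z}v$ (Corollary~\ref{coroetape1} applied to each~$\widetilde{\phi}_j$) together with Lemma~\ref{esti:b2012} for~$T_{b_2}\rho$, Theorem~\ref{regell} in Case~2 with~$\sigma=s-1$ delivers
\begin{equation*}
\|\nabla_{x,z}w\|_{X^{s-1}_{ul}(z_0,0)}\le \mathcal{K}(M_1,M_2)\{N(0)+TN(T)\},
\end{equation*}
from which the announced $C^0([-1,0],H^{s-1})_{ul}$ bound follows by the uniformly local trace inequality of Lemma~\ref{lions}. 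The main obstacle is the algebraic bookkeeping showing that the top-order paralinearization terms in $(\partial_z^2+\alpha_1\Delta_x+\beta_1\cdot\nabla_x\partial_z-\gamma_1\partial_z)(T_{b_2}\rho)$ exactly compensate the worst part of~$G_0$; this is the Alinhac-type cancellation exploited in \cite{ABZ3}, the novelty here being that every product and commutator estimate has to be carried out in the uniformly local setting, using the calculus developed in Sections~\ref{sec.3} and~\ref{sec.4}.
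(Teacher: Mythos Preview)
Your overall strategy---derive an elliptic equation for the good unknown $w=v-T_{b_2}\rho$, exploit the Alinhac cancellation to place the source in $Y^{s-1}_{ul}(J)$, and invoke Theorem~\ref{regell}---is the right one and is the route taken in \cite{ABZ3}. But there is a real gap in how you feed the low-regularity input into Theorem~\ref{regell}.

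First, in this contraction argument there is a single regularity index $s>1+\tfrac{d}{2}$, so $s_0=s$. Invoking Case~2 with $\sigma=s-1$ makes the ``low-regularity'' input $\lA\nabla_{x,z}w\rA_{X^{s_0-1}_{ul}(J)}=\lA\nabla_{x,z}w\rA_{X^{s-1}_{ul}(J)}$, which is precisely the quantity you are trying to estimate; the inequality becomes tautological. You must use Case~1, whose input is $\lA\nabla_{x,z}w\rA_{X^{-\mez}_{ul}(J)}$.

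Second, and more seriously, that $X^{-\mez}_{ul}$ norm enters \emph{linearly} on the right-hand side of Theorem~\ref{regell}, so it must itself be bounded by $\mathcal{K}(M_1,M_2)\{N(0)+TN(T)\}$. Your justification---Corollary~\ref{coroetape1} applied separately to each $\widetilde{\phi}_j$---only yields $\lA\nabla_{x,z}v\rA_{X^{-\mez}_{ul}}\le \mathcal{K}(M_1,M_2)$, a bound independent of the differences $(\eta,\psi,V,B)$. Plugging this into Case~1 gives $\lA\nabla_{x,z}w\rA_{X^{s-1}_{ul}}\le \mathcal{K}(M_1,M_2)$, which is useless for the contraction estimate~\eqref{NM1M2}.

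What is missing is a \emph{contraction} estimate at the base regularity: one has to show that $\lA\nabla_{x,z}v\rA_{X^{-\mez}_{ul}(J)}$ (hence $\lA\nabla_{x,z}w\rA_{X^{-\mez}_{ul}(J)}$) is itself controlled by $\mathcal{K}(M_1,M_2)\{N(0)+TN(T)\}$. This does not follow from applying Corollary~\ref{coroetape1} to each solution; it requires a Lipschitz-type variational argument for the difference, in the spirit of Proposition~\ref{existe:phi} and Lemma~\ref{coro1} but allowing a source term, using that the boundary datum $\psi$ and the forcing $G_0$ are already controlled in weak norms by $\lA\psi\rA_{H^{\mez}_{ul}}$ and $\lA\eta\rA_{H^{s-\mez}_{ul}}$ (both $\le\mathcal{K}\{N(0)+TN(T)\}$ thanks to Lemmas~\ref{lem.N} and~\ref{est:trace}). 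This is the step that has to be carried out explicitly in the uniformly local framework and is not covered by your proposal.
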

 \subsection{Well posedness}
The proof goes as follows. In a first step we prove the main theorem for very smooth data, using a parabolic  regularization. Then, when the data are rough, we regularize them, thus obtaining a sequence of solutions living on an interval depending on a small parameter $\eps$. In a second step, using the tame estimates proved in Proposition \ref{original}, we show that this sequence  exists on a fixed interval. In the last step, using the results stated 
in section 7, 
we prove that it is a Cauchy sequence and we conclude. Let us notice that most of this work has been already done in \cite{ABZ3} in the case of the classical Sobolev spaces. Therefore we will only sketch here the main points.

\subsection{Parabolic regularization} We assume first that $(\eta_0, \psi_0) \in H^{s }_{ul} \times H^{s}_{ul}$ for $s\geq n_0 + \frac{d}{2}, n_0$ large enough. and we consider for $\eps>0$ the problem 
\begin{equation}\label{wweps}
\left\{
\begin{aligned}
&\partial_t \eta  = G(\eta) \psi + \eps \Delta_x\eta,\\
&\partial_t \psi  = - \mez \vert \nabla_x \psi \vert^2 + \mez \frac{(\nabla_x \eta \cdot \nabla_x \psi + G(\eta)\psi)^2}{1+ \vert \nabla_x \eta \vert^2} - g \eta  + \eps \Delta_x \psi \\
&(\eta, \psi)\arrowvert_{t=0}  = (\eta_0, \psi_0).
\end{aligned}
\right.
\end{equation}
Setting $U = (\eta, \psi)$ we can rewrite this problem as
\begin{equation}\label{parabolique}
 U(t) = e^{\eps t \Delta_x}U_0 + \int_0^t e^{\eps(t-\tau) \Delta_x} \big[ \mathcal{A}(U(\tau))\big] \, d\tau. 
 \end{equation}
We set $I =[0,T]$ and we introduce the space 
$$E_s =L^\infty(I, H^s)_{ul} \cap L^2(I, H^{s+1})_{ul}. $$
According to Lemma \ref{lemmsmooth} we have $\Vert e^{\eps t \Delta_x}U_0 \Vert_{E_s} \leq C_\eps \Vert U_0\Vert_{H^s_{ul}}: = R$. Then using the estimates 
\begin{equation}
  \begin{aligned}
\Vert \mathcal{A}(U) \Vert_{L^2(I, H^s)_{ul}} &\leq \mathcal{F}(\Vert U \Vert_{L^\infty(I,H^s)_{ul}}) \Vert U \Vert_{L^2(I,H^{s+1})_{ul}} \\
\Vert \mathcal{A}(U_1) -  \mathcal{A}(U_2)\Vert_{L^2(I, H^s)_{ul}} &\leq \mathcal{F}(\Vert (U_1,U_2) \Vert_{L^\infty(I,H^s \times H^s)_{ul}}) \Vert U_1 -U_2 \Vert_{L^2(I,H^{s+1})_{ul}} 
 \end{aligned}
  \end{equation}
we can show that, if $T= T_\eps$ is small enough,  the right hand side of \eqref{parabolique} maps the ball of radius $2R$ in $E_s$ into itself and is contracting. By the Banach principle  the equation \eqref{parabolique} has a maximal solution on $[0, T_\eps^*)$.  Moreover if $T_\eps<+\infty $ then
\begin{equation}\label{explosion}
\lim_{t\to T_\eps^*} \Vert(\eta,\psi)(t)\Vert_{H^s_{ul}\times H^s_{ul}} = + \infty.
\end{equation}
Now with this large $s$ we set 
\begin{equation*}
M^\eps_s(T) = \sup_{t\in [0,T]}\Vert (\eta^\eps, \psi^\eps, V^\eps,B^\eps)(t) \Vert_{H^s_{ul}\times H^{s }_{ul} \times H^{s-1}_{ul} \times H^{s-1}_{ul}}.
\end{equation*}
Using the same computations as in \cite{ABZ3} and the method of proof of Proposition \ref{original} (but in an easier way since here $s$ is large) we deduce that one can find $\mathcal{F}: \xR^+ \to \xR^+$ strictly increasing such that 
$$ M^\eps_s(T) \leq \mathcal{F}(M^\eps_s(0) + \sqrt{T} M^\eps_s(T)).$$
Since $M^\eps_s(0) = M_s(0)$ does not depend on $\eps$, this will imply that there exists $T_0>0$ independent of $\eps$ such that $M^\eps_s(T) \leq \mathcal{F}(2M_s(0))$ for $T\in [0,T_0]$. Using this uniform bound on this fixed interval and the arguments of \cite{ABZ3} we can pass to the limit in the equations~\ref{wweps} to obtain a solution $(\eta, \psi)$ of the water wave system.

\subsection{Regularizing the data, {\em{a priori}} estimates}\label{sec.8.2}. Assume 
$ (\eta_0, \psi_0,V_0,B_0)$ belongs to $ H^{s_0 +\mez }_{ul}\times H^{s_0 +\mez }_{ul} \times H^{s_0}_{ul} \times H^{s_0}_{ul}$ where $s_0>1+\frac{d}{2}$. Let $j \in C_0^\infty(\xR^d), j(\xi) =1$ when $\vert\xi \vert \leq 1$. We regularize the data in setting  $f_0^\eps = j(\eps D)f_0 $ if $f_0$ is one of them. Then the regularized data belong to $H^{s}_{ul} $ for $s$ large.  Therefore we can apply Step 1. to get a maximal solution $(\eta_\eps, \psi_\eps,V_\eps,B_\eps)$  of the water wave system,  on an interval $[0,T_\eps^*),$ wich is very regular.  Moreover we know that if $T_\eps^* <+ \infty$ then
\begin{equation}\label{explosion2}
\lim_{T\to T_\eps^*} M^\eps_s(T) = +\infty.
\end{equation}
  We first apply Proposition \ref{original} with $s=s_0$ and we obtain
  $$ M^\eps_{s_0}(T) \leq \mathcal{F}(M^\eps_{s_0}(0) + \sqrt{T} M^\eps_{s_0}(T)).$$
Since there exists $A_0>0,$ independent of $\eps,$ such that $M^\eps_{s_0}(0) \leq A_0,$  for all $\eps>0$ small, we deduce that one can find $T_0 >0$ independent of $\eps$ such that $ M^\eps_{s_0}(T) \leq \mathcal{F}(2A_0)$ for all $T\leq  \text{min} (T_0, T_\eps^*)$.  We apply again Proposition \ref{original} with $s$ large and we get
 $$ M^\eps_s(T) \leq \mathcal{F}(A_0 + \sqrt{T_0}\mathcal{F}(2A_0))(  M^\eps_s(0)  +  \sqrt{T}M^\eps_s(T)).$$ 
 Let $T_1>0$ be such that  $ \sqrt{T_1} \mathcal{F}(A_0 + \sqrt{T_0}\mathcal{F}(2A_0)) \leq \mez$. Then 
 $$M^\eps_s(T) \leq 2\mathcal{F}(A_0 + \sqrt{T_0}\mathcal{F}(2A_0))M^\eps_s(0),  \forall 0<T \leq \min(T_1,T_\eps^*).$$
Using \eqref{explosion2} we deduce  that $T_\eps ^* \geq T_1$ for all $\eps>0$ small. This shows that our solution  $(\eta_\eps, \psi_\eps,V_\eps,B_\eps)$ exists on a fixed interval $[0,T_1]$. Moreover, as seen above,  $M^\eps_{s_0}(T)$ is uniformly bounded on this interval.
 
\subsection{Passing to the limit} 
According to Theorem~\ref{th.lipschitz}, $(\eta_\eps, \psi_ \eps, V_\eps, B_\eps)$ which is, according to Section~\ref{sec.8.2},  bounded in 
$$L^\infty((0,T); H^{s_0+\mez}_{ul}\times H^{s_0+\mez}_{ul}\times H^{s_0}_{ul} \times H^{s_0}_{ul}),$$
is convergent in 
 $$L^\infty((0,T); H^{s_0-\mez}_{ul}\times H^{s_0-\mez}_{ul}\times H^{s_0-1}_{ul} \times H^{s_0-1}_{ul}),
 $$
 and hence also for any $\delta >0$ in 
 $$
 L^\infty((0,T); H^{s_0+\mez -\delta}_{ul}\times H^{s_0+\mez-\delta}_{ul}
 \times H^{s_0-\delta}_{ul} \times H^{s_0-\delta}_{ul}).
 $$
To get the existence of solutions, it remains to pass to the limit in the equations (the uniqueness follows once again from Theorem~\ref{th.lipschitz}). For this step, we rewrite the system~\eqref{ww},~\eqref{BV} as 

\begin{equation}\label{wwbis}
\left\{
\begin{aligned}
\partial_t \eta_\eps &= G(\eta_\eps) \psi_\eps,\\
\partial_t \psi_\eps + V_\eps \cdot \nabla \psi _\eps&= \mez( V_\eps^2 + B_\eps^2) - g \eta_\eps.\\
B_\eps &=  \frac{ \nabla_x \eta_\eps \cdot \nabla_x \psi_\eps + G(\eta_\eps)\psi_\eps} {1+ \vert \nabla_x \eta_\eps \vert^2},\\
V_\eps&=\nabla_x \psi_\eps - B_\eps \nabla_x \eta_\eps.
\end{aligned}
\right.
 \end{equation}
 Choosing $\delta >0$ such that $s- \delta- \mez  > \frac d 2 $ ( so that $H^{s- \delta- \mez}$  is an algebra), we deduce that  
\begin{equation}
\begin{aligned} 
\partial_t \eta_\eps &\rightharpoonup \partial_t \eta \text{ in } \mathcal{D} ' ( (0,T) \times \xR^d)\\
\partial_t \psi_\eps &\rightharpoonup \partial_t \psi \text{ in } \mathcal{D} ' ( (0,T) \times \xR^d)\\
V_ \eps \cdot \nabla \psi_ \eps &\rightarrow V \cdot \nabla \psi \text{ in } L^\infty((0,T); H^{s- \delta- \mez }_{ul} )\\
V^2_\eps + B^2_\eps &\rightarrow V^2 + B^2 \text { in } L^\infty((0,T); H^{s- \delta }_{ul} )\\
\nabla_x \eta_ \eps \cdot \nabla_x \psi_\eps &\rightarrow \nabla_x \eta \cdot \nabla_x \psi \text{ in } L^\infty((0,T); H^{s- \delta- \mez }_{ul} ) \subset L^\infty((0,T); L^2_{ul})\\
|\nabla_x \eta_\eps | ^2&\rightarrow |\nabla_x \eta | ^2 \text{ in } L^\infty((0,T); H^{s- \delta- \mez }_{ul} )\subset L^\infty((0,T); C^0\cap L^\infty ( \xR^d))\\
\end{aligned}
\end{equation}
On the other hand, according to Lemma~\ref{lip:G}, we get 
$$
G( \eta_\eps) \psi_\eps- G( \eta) \psi= G( \eta_\eps) ( \psi_\eps - \psi) + (G( \eta_\eps) - G(\eta)) \psi \rightarrow 0 ,
$$
in
$$
L^\infty((0,T); H^{s- \tdm- \delta } _{ul} )\subset L^\infty((0,T); L^2 _{ul} ),
$$
which allows to pass to he limit in~\eqref{wwbis} and show that the same system of equations is satisfied by $(\eta, \psi, V, B)$ in $\mathcal{D} ' ( (0,T) \times \xR^d)$.
\subsection{Continuity in time}We now prove that $(\eta, \psi, V,B)$ is continuous in time with values in $H^{s_0+\mez- \delta}_{ul}\times H^{s_0+\mez- \delta}_{ul}\times H^{s_0- \delta}_{ul} \times H^{s_0- \delta}_{ul}$. From the equation, and product rules, its time derivative is clearly in 
$$L^\infty((0,T);  H^{s_0-\mez}_{ul}\times H^{s_0-\mez}_{ul}\times H^{s_0- \tdm}_{ul} \times H^{s_0- \tdm}_{ul})
$$ and consequently (interpolating with the a priori estimate), for any $\delta >0$,
\begin{equation}\label{eq.contfaible}( \eta, \psi, V, B) \in C^0((0,T);  H^{s_0+\mez- \delta}_{ul}\times H^{s_0+\mez- \delta }_{ul}\times H^{s_0-\delta}_{ul} \times H^{s_0- \delta}_{ul}).
\end{equation} 

 
\section{The canal}\label{sec.7}
We consider now the case of a canal having  vertical walls near the free surface or the case of  a rectangular basin. 

The propagation of waves whose crests are orthogonal to the walls is one 
of the main motivation for the analysis of 2D waves. It was historically at the heart 
of the analysis of water waves. The study of the propagation of three-dimensional 
water waves for the linearized equations goes back to Boussinesq (see~\cite{Boussinesq}).  
However, there are no existence results for the nonlinear equations in the 
general case where the waves can be reflected on the walls of the canals 
(except the analysis of 3D-periodic travelling waves 
which correspond to the reflexion of a 2D-wave off a vertical wall, see Reeder-Shinbrot~\cite{ReSh}, 
Craig and Nicholls~\cite{ABZ3CN} and Iooss-Plotnikov~\cite{IP}). 

We hence consider a fluid domain which at time~$t$ is of the form
$$
\Omega(t)=\left\{ (x_1,x_2,y)\in M \times \xR \, :\, b(x)< y < \eta(t,x), ~x= (x_1, x_2)\right\},
$$
where $M= (0,1) \times \xR$ in the case of the canal and $M= (0,1) \times (0,L)$ 
in the case of a rectangular basin, 
and $b$ is a fixed continuous function on $M$ describing the bottom. 

Denote by~$\Sigma$ the free surface 
and by~$\Gamma$ the fixed boundary of the canal:
$$
\Sigma(t)=\{ (x_1,x_2,y)\in M \times \xR\,:\, y=\eta(t,x)\},
$$
and we set~$\Gamma=\partial \Omega(t)\setminus \Sigma(t)~$ (which does not depend on time). We have 
$$
\Gamma= \Gamma_1 \cup \Gamma_2,
$$
\begin{equation}\label{eq.bord}
\begin{aligned}
\Gamma_1&=  \{\, (x_1, x_2,y) \in M\times \xR\, ;\,   b(x)=y\,\} \\
\Gamma_2&= \{\, (x_1, x_2,y) \in \partial M \times \xR\,;\,  b(x) <y< \eta (x_1, x')\,\}.
\end{aligned}
\end{equation}

Denote by~$n$ the normal to the boundary~$\Gamma$  and denote by~$\nu$ the normal  to 
the free surface~$\Sigma$.  
The irrotational water-waves system is then the following:  the Eulerian velocity field~$v\colon \Omega \rightarrow \xR^{3}$ 
solves the incompressible Euler equation
\begin{equation}\label{E1}
\partial_{t} v +v\cdot \partialyx v + \partialyx P = - g e_y ,\quad \cnxy v =0 \quad \text{curl}_{x,y} v =0 \quad \text{in }\Omega,
\end{equation}
where~$-ge_y$ is the acceleration of gravity ($g>0$) and 
where the pressure term~$P$ can be recovered from 
the velocity by solving an elliptic equation. 
The problem is then given by three boundary conditions. They are
\begin{equation}\label{E2}
\left\{\begin{aligned}
&v\cdot n=0 &&\text{on }\Gamma, \\
&\partial_{t} \eta = \sqrt{1+|\partialx \eta|^2}\, v \cdot \nu \quad 
&&\text{on }\Sigma, \\
& P=0
&&\text{on }\Sigma.
\end{aligned}\right. 
\end{equation}
We notice that the first condition in~\eqref{E2} expresses the fact 
that the particles in contact with the rigid bottom remain 
in contact with it. Notice that to fully make sense, 
this condition requires some smoothness on~$\Gamma$, 
but in general it has a weak variational meaning (see Section~\ref{sec.4}).
 
Finally we impose the initial condition
\begin{equation}\label{E3}
 (\eta, v)\arrowvert_{t=0} = (\eta_0, v_0),
 \end{equation}
where $v_0$ satisfies
$$\quad \cnxy v_0 =0 \quad \text{curl}_{x,y} v_0 =0 \quad \text{in }\Omega_0, \quad v_0\cdot n =0, \text {on } \Gamma.$$ 
It follows 
that  there exists a function $\phi_0: \Omega_0 \to \xR$ such that
$$
v_0 = \nabla_{x,y} \phi_0 \quad \text{in } \Omega_0,  \quad \text{with } \Delta_{x,y} \phi_0 =0.
$$
We set 
$$\psi_0=\phi_0\arrowvert_{y=\eta_0(x)}$$ and introduce the trace of the velocity field 
 $v_0 =(v_{0,x_1}, v_{0,x_2}, v_{0,y})$  on $\Sigma_0 = \{(x, \eta_0(x))\}$ in setting
$$
v_{0,x_1}\arrowvert_{y=\eta_0} = V_{0,x_1}, \quad v_{0,x_2}\arrowvert_{y=\eta_0} 
= V_{0,x_2},  \quad v_{0,y}\arrowvert_{y=\eta_0}= B_0,  \,\,V_0 =(V_{0,x_1}, V_{0,x_2}).
$$

 Similarly, to a solution $v$ of \eqref{E1}-\eqref{E2} we associate $\phi, \psi $ and $(V,B)=v\arrowvert_{y=\eta}$ 
as above.

The stability of the waves is dictated by 
the Taylor sign condition, which is the assumption that there exists a positive 
constant~$c$ such that
\begin{equation}\label{taylor}
\ma(t,x) := -  (\partial_y P)(t,x,\eta(t,x)) \ge c >0.
\end{equation}
\subsection{A simple observation}\label{se.ob}
  We begin with a elementary calculation showing that, at least for regular enough solutions, as soon as the Taylor sign condition~\eqref{taylor}  is satisfied, in the case of vertical walls, it is {\em necessary} that at the points where  the free surface and the boundary of the canal meet ($\Sigma(t) \cap \Gamma$), the scalar product between the two normals (to the free surface and to the boundary of the canal) vanishes :~$\nu\cdot n=0$ on~$\Sigma\cap \Gamma$, which means that the free surface~$\Sigma$ necessarily makes a right-angle with the rigid walls (see~Figure~\ref{fig22}).

\begin{figure}[!h]
\centerline{
\begin{tikzpicture}[samples=100]
\filldraw[fill=blue!20!white, draw=gray] (-3,-1) -- (-3,-2) arc (180:270:1) -- (-2,-3) -- (0,-3) arc (-90:0:1) -- (1,-1)  -- 
(1,-0.7) to [out=180,in=50] node [above] {$\Sigma$} (-2,-0.2) to [out=230,in=0] (-3,-1.2) -- (-3,-1) ;
\draw [gray] (-3,-1) -- (-3,-0.5) ;
\draw [gray] (1,-1) -- (1,-0.5) ;
\node at (-3,-2) [left] {$\Gamma$};
\draw [->] (-3,-1) -- (-3.5,-1)  node[left] {$n$} ;
\draw (-3,-1.4) -- (-2.8,-1.4) -- (-2.8,-1.17) ;
\draw (1,-0.9) -- (0.8,-0.9) -- (0.8,-0.685) ;
\end{tikzpicture}
}
\caption{Two-dimensional section of the fluid domain, exhibiting the right-angles at the interface~$\Sigma\cap \Gamma$}\label{fig22}
\end{figure}
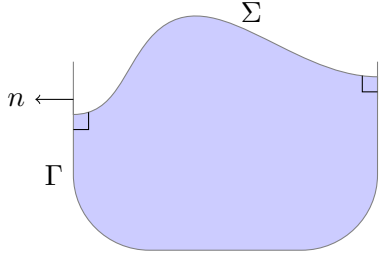

\begin{prop}\label{prop:ad}
Let~$(\eta, v)$ 
be a classical solution of System~\eqref{E1}, \eqref{E2} such  that the Taylor coefficient~$a$ is continuous and non-vanishing and $\eta(t,x)\ge b(x)+h$  \text{for some positive constant} $h$. Then the angle between the free surface,~$\Sigma(t)$ and the boundary of the canal~$\Gamma$  is a right angle: 
$$
\forall t\in [0,T], \forall x \in \Sigma(t) \cap \Gamma,\quad n \cdot \nu (t,x) =0,
$$
which is equivalent to 
\begin{equation}\label{eq.right}
 \partial_{x_1} \eta (t, x_1, x_2) \mid_{x_1= 0,1} =0.
 \end{equation} 
\end{prop}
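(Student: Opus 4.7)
The plan is to combine equation~\eqref{eq:V}---the horizontal Euler equation traced at the free surface---with the solid-wall condition along the vertical portions of the boundary, and then read off the conclusion from the Taylor sign condition.

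First, I would reduce the statement to the identity \eqref{eq.right}. This is immediate from computing normals: on $\Gamma_2$ the outward normal is horizontal, $n=\mp e_{x_1}$ at $x_1=0$ or $x_1=1$, while the surface normal reads $\nu=(1+|\nabla_x\eta|^2)^{-1/2}(-\nabla_x\eta,1)$, so $\nu\cdot n = \pm (1+|\nabla_x\eta|^2)^{-1/2}\,\partial_{x_1}\eta$, and the right-angle condition is indeed equivalent to $\partial_{x_1}\eta|_{x_1\in\{0,1\}}=0$.

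Next, I would exploit the boundary condition $v\cdot n=0$ on $\Gamma_2$. Since the walls are vertical, this reduces to $v_{x_1}(t,x_1,x_2,y)=0$ for every admissible $y\in(b(x),\eta(t,x))$ and $x_1\in\{0,1\}$. Taking the trace at $y=\eta(t,x)$ we obtain
$$
V_{x_1}(t,x_1,x_2)=0 \qquad \text{for all } t\in[0,T],\ x_1\in\{0,1\},\ x_2\in\mathbf{R}.
$$
In particular, $V_{x_1}$ vanishes identically along each slice $\{x_1=0\}$ and $\{x_1=1\}$, so its $t$- and $x_2$-derivatives also vanish on these slices.

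Finally, I would use the first component of equation~\eqref{eq:V}, namely
$$
\partial_t V_{x_1} + V_{x_1}\,\partial_{x_1}V_{x_1} + V_{x_2}\,\partial_{x_2}V_{x_1} + a\,\partial_{x_1}\eta = 0.
$$
Evaluated on the slice $x_1=0$ (or $x_1=1$), the first term vanishes because $V_{x_1}$ is identically zero in $t$ there, the second vanishes because of the factor $V_{x_1}$, and the third vanishes because $\partial_{x_2}V_{x_1}$ is zero along the slice; what remains is $a\,\partial_{x_1}\eta=0$. The Taylor sign condition $a\ge c>0$ then forces $\partial_{x_1}\eta=0$ on both walls, which is \eqref{eq.right}. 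There is no serious obstacle: the only delicate point is that equation~\eqref{eq:V} must hold up to the vertical boundary, but this is automatic for a classical solution by continuity of each factor up to $\Sigma\cap\Gamma$.
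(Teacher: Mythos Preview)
Your proof is correct. It is essentially the same computation as the paper's, but packaged differently: the paper works in the bulk, showing from the Euler equation that $(\nabla_{x,y}P)\cdot n=0$ along the vertical wall (since $v_{x_1}$ and hence $(\partial_t v+(v\cdot\nabla_{x,y})v)\cdot n$ vanish there), and then uses that $\nabla_{x,y}P$ is proportional to $\nu$ on $\Sigma$ (because $P|_\Sigma=0$ and $a\neq 0$) to conclude $\nu\cdot n=0$. You instead use the surface-traced equation \eqref{eq:V}, which already encodes $(\nabla_x P)|_{y=\eta}=a\,\nabla_x\eta$, and read off $\partial_{x_1}\eta=0$ directly. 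Your route is slightly more streamlined; the paper's is more self-contained in that it does not invoke \eqref{eq:V}, which in the text was derived for the problem on $\xR^d$ rather than on the canal---but as you note, that identity is local and follows from the Euler equations for any classical solution, so there is no real gap.
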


 \begin{proof}  We give the proof in the case of a canal, the proof for the rectangular basin 
 is similar. Since $\eta_0(x)\ge b(x)+h$   at a point   $m_0$ where $\Sigma(t)$ and $\Gamma$ meet we have $m_0=(\eps, x^0_2,y^0)$  where $\eps = 0$ or $1.$   Let $m = (\eps, x_2,y)$ be a point on $\Gamma$ near $m_0.$ At $m$  the normal $n$ to $\Gamma$ is  $n = (\pm 1,0,0)$.   Taking the scalar product of the equation \eqref{E1} with $n $   we obtain, since $e_y \cdot n =0,$
 \begin{equation}\label{nablaP}
 \big( \nabla_{x,y} P\big) \cdot n    = - (\partial_t v)\cdot n - ((v \cdot \nabla_{x,y})v)\cdot n \quad \text{ at }  m. 
  \end{equation}
   Denote by $(v_{x_1}, v_{x_2}, v_y)$ the three components of the velocity field $v.$
 The first condition in  \eqref{E2} implies that $(v \cdot n)(m)  = \pm v_{x_1}(t,\eps,x_2,y) = 0.$ It follows that $ (\partial_t v )\cdot n   =  \partial_t (v \cdot n)  = 0$ at $m$. Moreover on $\Gamma $ near $m_0$ we have
 \begin{align*}
 \big[((v \cdot \nabla_{x,y})v)\cdot n \big](t,\eps,x_2,y)&= \pm \big[((v \cdot \nabla_{x,y})v_{x_1}\big](t,\eps,x_2,y)\\
 & = \pm \big[(v_{x_2} \partial_{x_2} + v_y \partial_y)v_{x_1}\big](t,\eps,x_2,y)\\
 &= \pm \big[(v_{x_2} \partial_{x_2} + v_y \partial_y)\big] (v_{x_1}(t, \eps,x_2,y))=0.
 \end{align*}
It follows from \eqref{nablaP} that 
\begin{equation}\label{nablaP:n}
 \big(\nabla_{x,y} P\big) \cdot n = 0  \quad \text {at } m.
\end{equation}
Now by the third condition in \eqref{E2} we have $P=0$ on $\Sigma$  and by \eqref{taylor} and our hypothesis on the Taylor coefficient we have $\nabla_{x,y}P \neq 0$ on $\Sigma$.
 It follows that $\nabla_{x,y}P$ is proportional to the normal $\nu$ at $\Sigma$ and by continuity at $\Sigma \cap \Gamma.$ We deduce from \eqref{nablaP:n} that $\nu\cdot n = 0$ at $m_0$. 
   \end{proof}
Once this {\em right angle} property is ensured, it is easy to show that some additional compatibility conditions have also to be fulfilled. Namely, for $ f = B_0, V_{0, x_2}, \partial_{x_1}V_{0, x_1}$, using \eqref{E2},  as soon as the function $\phi$ is smooth enough so that all terms below are defined, we have with $m=(\eps,x_2)$ ($\eps =0$ or $1$).
\begin{equation}\label{eq.comp}\left\{
\begin{aligned}\partial_{x_1}  {\psi} _0 (m)&= \partial_{x_1}   \phi_0 (m, \eta_0(m)) + \partial_{y}   \phi_0 (m, \eta_0(m)) \partial_{x_1}  \eta_0 (m) =0,\\ 
\partial_{x_1}  {B} _0 (m)  &= \partial_{x_1} 
\partial_y \phi_0 (m, \eta_0(m)) + \partial^2_y \phi_0 (m, \eta_0(m))\partial_{x_1}  \eta_0 (m) =0,\\
   \partial_{x_1} {V} _{0,x_2} (m)  &= \partial_{x_1}  \partial_{x_2} \phi_0 (m, \eta_0( m))  
 + \partial_{y} \partial_{x_2} \phi_0(m, \eta_0(m))\partial_{x_1}  \eta_0 (m)  =0,\\ 
  \partial^2_{x_1}  {V} _{0,x_1}(m) 
 &=  \partial_{x_1} ^3 \phi_0(m, \eta_0(m)) + 2 \partial_y \partial_{x_1} ^2 \phi_0 (m, \eta_0(m)) \partial_{x_1} \eta_0(m)\\ 
 &+ \partial_y^2 \partial_{x_1} \phi_0 (m, \eta_0(m) (\partial_{x_1} \eta_0(m))^2 + \partial_y \partial_{x_1} \phi_0 (m, \eta_0(m))   \partial^2_{x_1} \eta_0(m)\\ 
  &= 0,
 \end{aligned}\right.
 \end{equation}
where in the last equality, we used that 
$\partial_{x_1}^3 \phi = - ( \partial_{x_2}^2 + \partial_y ^2)\partial_{x_1}\phi,$  since $\phi$ is harmonic.
\subsection{The result}
As before, we denote by $x_1$ (resp. $x_2$) the variable in $(0,1)$ (resp. in $\xR$). To state our results we need to introduce the uniformly local Sobolev spaces in the $x_2$ direction (these spaces are introduced by Kato in \cite{Kato}). 
Let  $ 1 = \sum_{k\in \xZ} \chi(x_2- k)$ be a partition of unity and define for any $s\in \xR$, 
$$
H^s_{ul} ((0,1) \times \xR) 
= \Big\{ u \in H^s_{loc} ((0,1) \times \xR): \sup_k \| \chi(x_2 - k ) u \|_{H^s( (0,1) \times \xR)} < + \infty\Big\}.
$$
These are Banach spaces when endowed with the norm 
$$
\Vert u \Vert_{H^s_{ul}}=  \sup_k \| \chi(x_2 - k ) u \|_{H^s( (0,1) \times \xR)}.
$$
In Section~\ref{se.ob} we showed that in order to get smooth solutions, a set of compatibility conditions~\eqref{eq.right}, \eqref{eq.comp} {\em have} to be assumed. Here we prove that these conditions are not only {\em necessary}, but they are {\em sufficient}.
\begin{theo} \label{th.Canal}
Set $M= (0,1) \times \xR.$ Let~$s\in (2,3)$, $s\neq \frac 5 2$,  and
$$ \mathcal{H}^s(M )=  H_{ul}^{s+ \mez}( M )\times H_{ul}^{s+\mez}( M ) \times H_{ul}^s (M )\times H_{ul}^s(M ).
$$
Consider   $(\eta_0,\psi_0, V_0, B_0) \in\mathcal{H}^s(M )$  and  assume that, with $\eps = 0,1$
 
$(H_ 1)$  \quad $V_{0, x_1}( \eps, x_2) =0 \text{ and } \partial_{x_1}f(\eps, x_2)=0 \text{ when  } f = \eta_0,\psi_0, B_0, V_{0,x_2}$. Furthermore, $ \partial_{x_1}^2 V_{0,x_1}(\eps, x_2)=0$ if $s>5/2$.
  
$(H_2)$  \quad The Taylor sign condition, $a_0 (x) \geq c>0$ \text{is satisfied at time} $t=0$.
 
$(H_3)$ \quad  $\eta_0(x)\ge b(x)+h$  \text{for some positive constant} $h$.
 
Then there exists  a time $T>0$ and a unique solution $(\eta, v= \nabla_{x,y}\phi)$ of the system \eqref{E1}, \eqref{E2}, \eqref{E3} such that 
\begin{enumerate}[i)]
\item $(\eta,\phi \arrowvert_\Sigma, V, B) \in C([0,T); \mathcal{H}^s(M ))$,
\item \label{item:4} the Taylor sign condition is satisfied at time~$t$ and~$\eta(t) \ge b+h/2$.
\end{enumerate}
\end{theo}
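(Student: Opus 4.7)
The strategy is to implement Boussinesq's reflection/periodization idea rigorously using Theorem~\ref{theo:princ}. I would proceed in four steps. First, extend the initial data from $M=(0,1)\times\xR$ to $\xR^2$ as follows: reflect $\eta_0,\psi_0,B_0,V_{0,x_2}$ and the bottom $b$ across the walls $x_1=0$ and $x_1=1$ by \emph{even} reflection, reflect $V_{0,x_1}$ by \emph{odd} reflection, and $2$-periodize in $x_1$. Denote the extended objects $\tilde\eta_0,\tilde\psi_0,\tilde V_0,\tilde B_0,\tilde b$.

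The second step, and the technical heart of the matter, is to prove the following trace characterization: for $\sigma\in(3/2,7/2)\setminus\{5/2\}$, the even-reflected $2$-periodization of $f\in H^\sigma(M)$ lies in $H^\sigma_{ul}(\xR^2)$ if and only if $\partial_{x_1}f|_{x_1\in\{0,1\}}=0$; similarly for the odd reflection the condition is $f|_{x_1\in\{0,1\}}=0$, augmented by $\partial_{x_1}^2 f|_{x_1\in\{0,1\}}=0$ when $\sigma>5/2$. With $s+\tfrac12\in(5/2,7/2)\setminus\{3\}$ and $s\in(2,3)\setminus\{5/2\}$, hypothesis $(H_1)$ provides exactly these vanishing traces for all four unknowns, so $(\tilde\eta_0,\tilde\psi_0,\tilde V_0,\tilde B_0)\in\mathcal{H}^s(\xR^2)$. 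Conditions $(H_2)$, $(H_3)$ propagate trivially to the extended data by $G$-invariance (where $G$ is the group generated by the reflections $x_1\mapsto -x_1,\,x_1\mapsto 2-x_1$ and the translation $x_1\mapsto x_1+2$), so the hypotheses of Theorem~\ref{theo:princ} are met with $d=2$.

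Third, applying Theorem~\ref{theo:princ} yields, on some time interval $[0,T]$, a unique solution $(\tilde\eta,\tilde\psi)$ with $(\tilde\eta,\tilde\psi,\tilde V,\tilde B)\in C^0([0,T);\mathcal{H}^s(\xR^2))$, together with $\tilde a\ge c/2$ and $\tilde\eta-\tilde b\ge h$. The fourth step uses the $G$-invariance of the water-waves system itself: the equations~\eqref{ww}--\eqref{BV} are invariant under the action of $G$ provided one prescribes that $V_{x_1}$ transforms oddly under the reflections while $\eta,\psi,B,V_{x_2}$ transform evenly (this is a direct verification, relying on the $x_1$-parity of the Dirichlet--Neumann operator associated to an even $\eta$ and even bottom). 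Consequently, the $G$-image of the solution is another solution of Theorem~\ref{theo:princ} with the same initial data; by uniqueness, $(\tilde\eta,\tilde\psi,\tilde B,\tilde V)$ is itself $G$-symmetric for every $t\in[0,T]$. The oddness of $\tilde V_{x_1}$ across each wall forces $\tilde V_{x_1}(t,\eps,x_2)=0$ for $\eps\in\{0,1\}$, which is precisely the wall condition $v\cdot n=0$ on $\Gamma_2$. The restriction of $(\tilde\eta,\tilde\psi)$ to $M$ provides the required solution, and uniqueness on the canal follows from the uniqueness in Theorem~\ref{theo:princ} applied to the extended problem.

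The main obstacle is the second step: the trace characterization of even/odd extensions in the uniformly local scale $H^\sigma_{ul}$, which requires adapting the classical Sobolev trace theory to the periodic/uniformly local setting. Since periodicity is compatible with $H^\sigma_{ul}(\xR^2)$ (periodic functions on $\xR^2$ embed in $H^\sigma_{ul}(\xR^2)$) and the traces are local objects controllable through the cutoffs $\chi_q$ of~\eqref{kiq}, this reduces to the classical periodic trace theorem, but the bookkeeping of the precise threshold values of $\sigma$ (and the genuine exclusion $s\neq 5/2$) must be done carefully. A secondary technical point is checking the $G$-invariance of the water-waves system; this reduces to verifying that the variational solution $\Phi$ of~\eqref{eqPhi} inherits the $G$-symmetries of $\eta$ and $\psi$, which follows from the uniqueness statement in Proposition~\ref{existe:phi} applied in the $G$-invariant class.
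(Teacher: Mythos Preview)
Your proposal is correct and follows essentially the same route as the paper: even/odd reflection plus $2$-periodization of the data, continuity of these extensions into $H^\sigma_{ul}(\xR^2)$ under exactly the trace conditions supplied by $(H_1)$ (the paper isolates this as Proposition~\ref{prop.7.4} and Corollary~\ref{cor.sym}), application of Theorem~\ref{theo:princ}, and then the symmetry--uniqueness argument to restrict back to the canal. The only minor difference is that the paper proves just the sufficiency direction of your ``trace characterization'' (which is all that is needed), and for the uniqueness part on the canal it makes the jump formula $[v_{x_1}\partial_{x_1}v]\otimes\delta_{\Gamma_2}=0$ explicit when checking that the extension of a canal solution solves the equations on~$\xR^2$.
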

In the case of a rectangular basin we have the following result.
\begin{theo} \label{th.Swimming}
Set $M = (0,1)\times (0,L).$ Let~$s\in (2,3)$, $s\neq \frac 5 2$,  and
 $$
 \mathcal{H}^s(M) 
 =  H^{s+ \mez}( M)\times H^{s+\mez}( M) \times H^s (M )\times H^s(M). 
$$
Consider initial data $(\eta_0,\psi_0, V_0, B_0) \in\mathcal{H}^s( M),$   such that 

$(C_1)$  \quad  $V_{0, x_1}( \eps,x_2) =0 \text{ and } \partial_{x_1}f(\eps, x_2)=0 \text{ when  } f = \eta_0,\psi_0, B_0, V_{0,x_2}$. 
Furthermore, $ \partial_{x_1}^2 V_{0,x_1}(\eps,x_2)=0$ if $s>5/2$. Here 
$\eps = 0$ or $1.$

 $(C_2)$   \quad $V_{0, x_1}( x_1, \delta) =0 \text{ and } \partial_{x_2}f(x_1, \delta)=0 \text{ when  } f = \eta_0,\psi_0, B_0, V_{0,x_2}$. Furthermore, $ \partial_{x_2}^2 V_{0,x_1}(x_1, \delta)=0$ if $s>5/2$. Here $\delta = 0$ or $L.$  
 
 $(C_3)$ \quad  The Taylor sign condition, $a_0 (x) \geq c>0$ is satisfied at time~$t=0,$  
  
 $(C_4)$ \quad$\eta_0(x)\ge b(x)+h$ for some positive constant~$h$.

Then there exists $T>0$ and a unique solution $(\eta, v = \nabla_{x,y} \phi)$ of \eqref{E1}--\eqref{E3}  such that 
 \begin{enumerate}
\item $(\eta,\phi \arrowvert_{\Sigma}, V, B) \in C([0,T); \mathcal{H}^s( M ))$,
\item the Taylor sign condition is satisfied at time~$t$ and~$\eta(t) \ge b+h/2$.
\end{enumerate}
\end{theo}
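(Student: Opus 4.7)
\medskip

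\noindent\textbf{Proof proposal.}
The plan is to mimic the proof of Theorem~\ref{th.Canal}, i.e.\ to apply the main Cauchy theorem (Theorem~\ref{theo:princ}) to a reflected/periodized extension of the data, the only difference being that here we must reflect across \emph{both} pairs of walls $\{x_1=0,1\}$ and $\{x_2=0,L\}$. Accordingly, I would define the extended initial data on all of~$\xR^2$ by
\begin{itemize}
\item even extension of $\eta_0,\psi_0,B_0$ across $x_1\in\{0,1\}$ and across $x_2\in\{0,L\}$,
\item odd extension of $V_{0,x_1}$ across $x_1\in\{0,1\}$ and even across $x_2\in\{0,L\}$,
\item even extension of $V_{0,x_2}$ across $x_1\in\{0,1\}$ and odd across $x_2\in\{0,L\}$,
\end{itemize}
followed by periodization with periods $2$ in $x_1$ and $2L$ in $x_2$. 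The bottom $b$ is extended by even reflection in both variables; since $b$ is only assumed bounded and continuous, this reflection trivially stays bounded and continuous.

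The key verification is that these extended data belong to $H^{s+\mez}_{ul}(\xR^2)\times H^{s+\mez}_{ul}(\xR^2)\times H^s_{ul}(\xR^2)\times H^s_{ul}(\xR^2)$. Since the reflected data are doubly-periodic it suffices to check that they lie in $H^{s+\mez}\times H^{s+\mez}\times H^s\times H^s$ on the corresponding torus, and the uniformly local norm is then controlled (the periodic Sobolev spaces embed continuously into the uniformly local ones). For $s\in(2,\tdm)$ even extension of a half-space $H^{s+\mez}$ function survives in $H^{s+\mez}$ on the doubled domain iff its first normal derivative vanishes on the wall, while odd extension of a $H^s$ function survives iff its trace vanishes; for $s\in(\tdm,3)$ one additionally needs the second normal derivative of the odd-extended function to vanish. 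These are exactly hypotheses $(C_1)$ and $(C_2)$. The non-cavitation assumption $(C_4)$ is preserved by even extension of $\eta_0$ and $b$, and the Taylor condition $(C_3)$ is preserved because the Taylor coefficient is a functional of $(\eta_0,\psi_0,V_0,B_0)$ via a variational problem whose evenness structure (the Dirichlet boundary values are even) propagates through the reflection.

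Once Theorem~\ref{theo:princ} supplies a unique solution $(\tilde\eta,\tilde\psi,\tilde V,\tilde B)$ on some interval $[0,T]$ in the uniformly local spaces, I would use the uniqueness statement combined with the invariance of the water-waves system~\eqref{ww},~\eqref{BV} under the involutions $(x_1,x_2,V_{x_1},V_{x_2})\mapsto(-x_1,x_2,-V_{x_1},V_{x_2})$ and $(x_1,x_2,V_{x_1},V_{x_2})\mapsto(x_1,-x_2,V_{x_1},-V_{x_2})$, together with the translation invariance by $(2,0)$ and $(0,2L)$. Applying any such symmetry to $(\tilde\eta,\tilde\psi,\tilde V,\tilde B)$ yields another solution with the same initial data, hence by uniqueness equal to $(\tilde\eta,\tilde\psi,\tilde V,\tilde B)$. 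Restricting to $M\times[0,T]$, the evenness of $\tilde\eta,\tilde\psi$ in each wall variable yields $\partial_{x_i}\eta=0$ on the vertical walls (the right-angle condition of Proposition~\ref{prop:ad}), and the oddness of $\tilde V_{x_i}$ gives the no-penetration condition $V\cdot n=0$ on $\Gamma_2$; combined with the original condition on $\Gamma_1$ this yields~\eqref{E2}. The Taylor sign condition~\eqref{taylor} and $\eta\ge b+h/2$ for the restricted solution follow from item~\ref{item:4} of Theorem~\ref{theo:princ} for the extended problem.

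The main obstacle is the trace/reflection analysis sketched above: one must match precisely which vanishing traces are forced by even vs.\ odd extension in $H^r$ at the critical thresholds $r=\mez,\tdm,\frac{5}{2}$, which is why the regularity $s\in(2,3)\setminus\{\tq\cdot 10\}$ is excluded at $s=\tdm\cdot\frac{5}{3}$. A secondary delicate point is propagating the Taylor positivity through the reflection, which requires unpacking the variational definition of $a$ in the doubled geometry, but once the extended data are seen to define a bona fide water-waves configuration the uniform local estimate of Proposition~\ref{prop:taylor} gives continuity in time of $a$ and hence preservation of $(C_3)$ on a (possibly smaller) time interval.
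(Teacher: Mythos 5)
Your proposal is correct and takes the same approach the paper uses for the canal case, which the paper carries over to the rectangle in a single line ("the case of a rectangular basin is similar"): reflect and periodize the data across both pairs of walls with the parities dictated by $(C_1)$--$(C_2)$, apply Theorem~\ref{theo:princ}, propagate the even/odd/periodic symmetries of the solution by uniqueness, restrict to $M$, and check the boundary conditions on $\Gamma_1$ and $\Gamma_2$. The only slip is notational — in two places you invoke the macro $\tdm$ (which the paper defines as $\tfrac{3}{2}$) where the relevant threshold from Proposition~\ref{prop.7.4} for the odd extension is $\tfrac{5}{2}$ — but this does not affect the substance of the argument.
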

\begin{rema}
$(i)$ Our results exclude  the case $s= \frac 5 2$ for technical reasons. It would be possible (but unnecessarily complicated) to include this case.
  
$(ii)$ In the case of a flat bottom (say $b(x)=-1$) we do not need assumption $(H_2)$ (and $(C_3))$  which is in this case always satisfied 
as proved by Wu~(\cite{Wu09,WuJAMS}), see also \cite{LannesJAMS}. 
Also, this condition is satisfied under a smallness assumption. 

$(iii)$ Condition  $(H_1),$ when $f = \eta_0,$ says that at $t=0$ the fluid has to be  orthogonal to the fixed vertical walls.   

\end{rema}

 \subsection{Proof of the result}
Following Boussinesq (see~\cite[page 37]{Boussinesq}) the strategy of proof is to perform a symmetrization process (following the process which is illustrated on Figure~\ref{fig3i} below). 
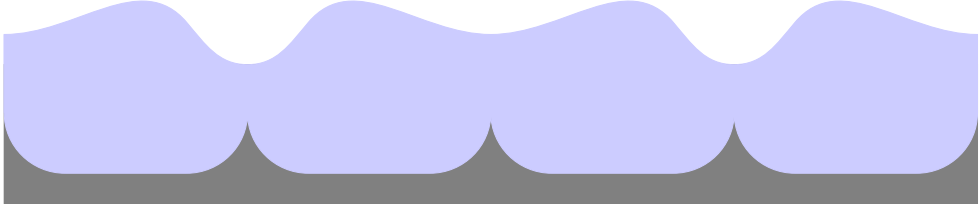
\begin{figure}[ht]
\centerline{
\begin{tikzpicture}[scale=0.8,samples=200]
\filldraw[gray] (-7,-1.2) -- (-7,-3.5) -- (9,-3.5) -- (9,-1.2)  -- cycle;
\filldraw[fill=blue!20!white, draw=blue!20!white] 
(1,-0.7) to [out=180,in=50] (-2,-0.5) to [out=230,in=0] (-3,-1.2) 
to [in=310,out=180] (-4,-0.5) to [out=130,in=0] (-7,-0.7) -- (-7,-2) 
arc (180:270:1) (-6,-3) -- (-4,-3) arc (-90:0:1) -- (-3,-2) arc (180:270:1) -- (-2,-3) -- (0,-3) arc (-90:0:1) -- (1,-2) 
arc (180:270:1) -- (2,-3) -- (4,-3) arc (-90:0:1) -- (5,-2)  arc (180:270:1) -- (6,-3) -- (8,-3) arc (-90:0:1) 
-- (9,-2) -- (9,-0.7) to [out=180,in=50] (6,-0.5) to [out=230,in=0] (5,-1.2)to [in=310,out=180] (4,-0.5) to [out=130,in=0] (1,-0.7) ;
\draw [blue!20!white] (9,-2) -- (9,-0.7) ;
\draw [blue!20!white] (-7,-2) -- (-7,-0.7) ;
\end{tikzpicture}
}
\caption{Two-dimensional section of the extended fluid domain.}  \label{fig3i}
\end{figure}

Once this symmetrization process is performed, we will apply our result~\cite[Theorem 2.3]{ABZ4}   to conclude.
 \subsubsection{The periodization process}\label{sec.4.1}
 Without additional assumptions, the reflection procedure should  yield in general a Lipschitz singularity. However, here the possible singularities are weaker according to the physical hypothesis $(H_1).$
   
 For a function $v$ defined on $(0, +\infty)$, define 
$\even{v}$ and $\odd{ v}$ to be the even and odd extensions of $v$ to $(-\infty, + \infty)$ defined by 
\begin{equation}\label{eq.und}
\begin{aligned}
\even{v} ( y) &= \begin{cases} & v( -y), \text{ if } y<0\\
& v(y) \text{ if }y \geq 0.
\end{cases}\\
\odd{v} ( y) &= \begin{cases} & -v( -y), \text{ if } y<0\\
& v(y) \text{ if }y \geq 0.
\end{cases}
\end{aligned}
\end{equation} We have the following result
\begin{prop}\label{prop.7.4}We have 
\begin{enumerate}
\item Assume that $0 \leq s<\frac{3}{2}$. Then the map 
$ v\mapsto \even{v} $ is continuous from $H^s( 0, + \infty)$ to $H^s( \mathbf{R})$.
 \item Assume that $  \frac{3}{2} <s < \frac 7 2. $  Then the map 
$v\mapsto \even{v}$ is continuous from the space $\{v\in H^s(0,+\infty): v'(0) =0\}$  to $H^s(\xR)$.
\item Assume that $0 \leq s < \mez $. Then the map 
$ v\mapsto \odd{v} $ is continuous from $H^s( 0, + \infty)$ to $H^s( \mathbf{R})$.
  \item Assume that $\mez \leq s < \frac{5}{2} $. Then the map 
$ v\mapsto \odd{v} $ is continuous from    the space $\{v \in H^s(0,+\infty): v(0)=0\}$  to $H^s(\xR)$.
    \item Assume that $ \frac 5 2< s \leq 4$.  Then the map 
$v\mapsto \odd{v}$ is continuous from the space $\{ v \in H^s(0,+\infty): v(0)= v''(0) =0\}$  to $H^s(\xR)$.
\end{enumerate}
\end{prop}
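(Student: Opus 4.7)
The plan is to reduce all five statements to two basic ingredients: (a) Slobodeckii seminorm computations for $s\in[0,1)$, and (b) the distributional identities $(\even{v})'=\odd{v'}$ and, when $v(0)=0$, $(\odd{v})'=\even{v'}$, which allow one to pass from higher to lower regularity by differentiation. The first ingredient handles the intrinsic estimates; the second provides the inductive mechanism.

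For (a), I would use the equivalent Gagliardo norm
\[
\lA u\rA_{H^s(\xR)}^2\sim\lA u\rA_{L^2(\xR)}^2+\iint_{\xR^2}\frac{|u(x)-u(y)|^2}{|x-y|^{1+2s}}\,dx\,dy,
\]
and split the double integral over the four quadrants. The same-sign quadrants reproduce twice the intrinsic seminorm on $(0,+\infty)$. After the change of variables $y\mapsto -y$, each mixed quadrant becomes
\[
\iint_{(0,\infty)^2}\frac{|v(x)\mp v(y)|^2}{(x+y)^{1+2s}}\,dx\,dy.
\]
Since $x+y\ge|x-y|$, the minus-sign case is dominated by the intrinsic seminorm, which gives (1) on $[0,1)$ for the even extension without any boundary condition. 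For the plus-sign case (odd extension), I would expand $|v(x)+v(y)|^2\le 2|v(x)-v(y)|^2+4|v(x)|^2+4|v(y)|^2$ and use the explicit identity
\[
\iint_{(0,\infty)^2}\frac{|v(x)|^2}{(x+y)^{1+2s}}\,dx\,dy=\frac{1}{2s}\int_0^\infty\frac{|v(x)|^2}{x^{2s}}\,dx.
\]
By the Hardy inequality this is bounded by $\lA v\rA_{H^s(0,\infty)}^2$ when $s<\mez$ without any condition, giving (3), and when $\mez\le s<1$ provided $v(0)=0$ (interpreted in the $H^{\mez}_{00}$ sense at the critical exponent), which establishes the low-regularity part of (4).

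For the higher regularity statements, I would first verify by direct integration against test functions that one always has $(\even{v})'=\odd{v'}$ distributionally on $\xR$, whereas $(\odd{v})'=\even{v'}+2v(0)\delta_0$, so that $(\odd{v})'=\even{v'}$ precisely when $v(0)=0$. Applying these identities once or twice then reduces each remaining range to the building blocks: for (1) with $s\in[1,3/2)$, one has $\even{v}\in H^s\iff\odd{v'}\in H^{s-1}$ with $s-1\in[0,\mez)$, covered by (3) applied to $v'$; for (4) with $s\in[1,5/2)$ and $v(0)=0$, the identity yields $\odd{v}\in H^s\iff\even{v'}\in H^{s-1}$, handled by (1) for $v'$; for (2), the reduction $\even{v}\in H^s\iff\odd{v'}\in H^{s-1}$ puts us at $s-1\in(\mez,5/2)$ with $(v')(0)=0$, covered by (4) applied to $v'$; and (5) follows by differentiating twice to get $(\odd{v})''=\odd{v''}$, reducing to (4) for $v''$ since $v''(0)=0$ and $s-2\in(\mez,2]$.

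The main obstacle is the treatment of the critical exponents, particularly $s=\mez$ for the odd extension: the condition $v(0)=0$ must be interpreted via the space $H^{\mez}_{00}$, equivalently via finiteness of $\int_0^\infty|v(x)|^2/x\,dx$, and the corresponding Hardy inequality is borderline. A secondary technical point is the rigorous verification of the distributional identities at the lowest admissible regularity, but this is a direct integration by parts, the key observation being that the boundary contribution for the even extension cancels identically while that for the odd extension yields exactly the $2v(0)\delta_0$ term.
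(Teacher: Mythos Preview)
Your approach is correct and essentially the same as the paper's: both use the Gagliardo seminorm for $s\in[0,1)$, bound the mixed-quadrant term via the weighted integral $\int_0^\infty |f(x)|^2/x^{2\sigma}\,dx$ and a Hardy-type inequality (the paper cites Theorems~11.2 and~11.3 in Lions--Magenes for this), and reduce higher-regularity cases by differentiating. Your organization through the distributional identities $(\even{v})'=\odd{v'}$ and $(\odd{v})'=\even{v'}+2v(0)\delta_0$ is somewhat more systematic than the paper's case-by-case treatment, but the content is the same; the paper simply writes ``The cases $(3)$ to $(5)$ are proved by exactly the same arguments'' after doing $(1)$ and $(2)$ in detail, whereas you spell out the reductions explicitly.
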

\begin{proof}
Let  $I =(0,+\infty).$  Then $C_0^\infty(\overline{I})$ is dense in $H^s(I)$ for all $s \in \xR$.

$(1)$ The case $ s=0$ is trivial since 
$\Vert  \even{v} \Vert^2_{L^2(\xR)} = 2 \Vert   v  \Vert^2_{L^2(I)}.$ 
Consider now the case $0<s<1.$ Then the square of the $H^s(\xR)$-norm of $\even{v}$ is equivalent to
$$
\Vert  \even{v} \Vert^2_{L^2(\xR)} 
+ A ,\quad A:= \iint_{\xR \times \xR}
\frac{ \vert \even{v}(x) -\even{v}(y)\vert^2}{\vert x-y\vert^{1+ 2s}}\, dxdy.
$$
Then we can write
$$
A= 2\iint_{I \times I }\frac{ \vert {v}(x) - {v}(y)\vert^2}{\vert x-y\vert^{1+ 2s}}\, dxdy +2 \iint_{I  \times I }\frac{ \vert  {v}(x) - {v}(y)\vert^2}{\vert x+y\vert^{1+ 2s}}\, dxdy:= A_1 +A_2.
$$
We have 
$$
A_1 \leq 2\Vert v \Vert^2_{H^s(I)}, \quad A_2 \leq 2 \Vert v \Vert^2_{H^s(I)}
$$
since $\frac{1}{(x+y)^{1+2s}} \leq \frac{1}{\vert x-y \vert^{1+2s}}.$ The case $s=1$ being  straightforward consider the case $1< s < \frac{3}{2}.$ Set $\sigma = s-1\in (0, \mez).$ 
Then
$$
\Vert \even{v} \Vert^2_{H^s(\xR)} = \Vert \even{v} \Vert^2_{L^2(\xR)}  + \Vert \even{ \partial_xv} \Vert^2_{H^\sigma(\xR)}.
$$
Since $0< \sigma< 1$ we have 
\begin{align*}
&\Vert \even{ \partial_xv} \Vert^2_{H^\sigma(\xR)} \leq C(A_0 + A_1 +A_2)\\
&A_0 = \Vert \even{ \partial_xv} \Vert^2_{L^2(\xR)}
\leq C_1  \Vert {v'} \Vert^2_{L^2(I)}\leq C_1  \Vert {v } \Vert^2_{H^s(I)}\\
& A_1 =   \iint_{I  \times I } \frac{ \vert {v'}(x) - {v'}(y)\vert^2}{\vert x-y\vert^{1+ 2\sigma}}\, dxdy 
\leq C_2\Vert v' \Vert_{H^\sigma(I)} \leq C_2\Vert v  \Vert_{H^s(I)} \\
&A_2 = \iint_{I  \times I } \frac{ \vert {v'}(x) +{v'}(y)\vert^2}{\vert x+y\vert^{1+ 2\sigma}}\, dxdy.
\end{align*} 
Eventually we have
$$
A_2 \leq C_3 \int_I \frac{ \vert v'(x)\vert^2}{\vert x \vert^{2 \sigma}}\, dx 
\leq C_4\Vert v' \Vert_{H^\sigma(I)} \leq C_4\Vert v  \Vert_{H^s(I)}
$$
by Theorem 11.2 in \cite{Lions}, since $0< \sigma <\mez$. 
This completes the proof of $(1)$.

$(2)$ 
If $\frac{3}{2} < s < 2$ let $\sigma = s-1 \in (\mez , 1);$ arguing as above we see that
 $$\Vert \even{v} \Vert^2_{H^s(\xR)} \leq C \Big( \Vert {v} \Vert^2_{H^s(I)} + \int_I \frac{\vert v'(x)\vert^2}{\vert x \vert^{2 \sigma}}\, dx dy \Big).$$
Now since $v' \in H^\sigma(I)$ and $v'(0) =0$ 
we can apply Theorem 11.3 in \cite{Lions} which ensures 
that the integral in the right hand side can be estimated by $C \Vert {v} \Vert^2_{H^s(I)} $.
The case $s=2$ being straightforward let  $2<s \leq \frac{7}{2}.$ Then  
$$
\Vert \even{v} \Vert^2_{H^s(\xR)}  \leq C \big(\Vert \even{v} \Vert^2_{L^2(\xR)}  + \Vert \partial_x^2\even{v} \Vert^2_{H^{s-2}(\xR)}\big).
$$
Since $0< s-2 < \frac{3}{2}$ and  
$v'(0) = 0$ we may apply the same argument as in the case $(1)$ to ensure that $\Vert \partial_x^2\even{v} \Vert^2_{H^{s-2}(\xR)}  \leq C \Vert v \Vert^2_{H^s(I)}. $ 

The cases $(3)$ to $(5)$  are proved by exactly the same arguments.
\end{proof}

To state the reflection procedure in higher dimension 
we need to introduce the uniformly local Sobolev spaces in $\xR^n, n \geq 2$.
 
Let  $ 1 = \sum_{k\in \xZ} \chi(x - k)$ be a partition of unity in $\xR^n$ and define for any $s\in \xR$, 
$$
H^s_{ul} (\xR^n) = \{ u \in H^s_{loc} (\xR^n): \sup_k \| \chi(\cdot  - k ) u \|_{H^s(\xR^n)} < + \infty\}.
$$
These are Banach spaces when endowed with the norm 
$$
\Vert u \Vert_{H^s_{ul}}=  \sup_k \| \chi(\cdot - k ) u \|_{H^s(\xR^n)}.
$$
Now, if $v$ is  a function  on $M=(0,1) \times \xR^d$ 
we define the even (resp. odd) periodic extensions on $\xT\times \xR^d$, $\even{v}$ (resp. $\odd{v}$), by 
\begin{equation}\label{eq.per}
\begin{aligned}
\even{v} ( x_1, x') &= \begin{cases}  v( -x_1, x'), &\text{ if } -1<x_1<0,\\
 v(x_1, x'), & \text{ if }0 \leq x_1 < 1,\\
 v(x_1-2k, x').&\text{ if } x_1 - 2k \in (-1,1), \, k\in \xZ.
\end{cases}\\
\odd{v} ( x_1, x') &= \begin{cases}  -v( -x_1, x'), &\text{ if } -1<x_1<0,\\
 v(x_1, x'), &\text{ if }0 \leq x_1 < 1,\\
v(x_1- 2k, x').&\text{ if } x_1 - 2k \in (-1,1), \, k\in \xZ.
\end{cases}
\end{aligned}
\end{equation}
\begin{coro}\label{cor.sym}
Let $M = (0,1)\times \xR.$

$(1)$ Assume $0 \leq s < \frac{3}{2}$  then the map 
$ v\mapsto \even{v} $ is continuous from $H_{ul}^s(M)$ to $H_{ul}^s( \xR^2)$.

$(2)$ Assume  $ \frac{3}{2} < s <\frac{7}{2}. $ Let 
$$
E_s= \{ u \in  H_{ul}^s(M) : \partial_{x_1}u (\eps, x_2) = 0, \eps = 0,1, \forall x_2 \in \xR\}.
$$ 
Then the map $v\mapsto \even{v}$ is continuous from $E_s$ to $H_{ul}^s( \xR^2)$.
 
$(3)$ Assume  $ 0 \leq s < \mez$  then the map 
$ v\mapsto \odd{v} $ is continuous from $H_{ul}^s(M)$ to $H_{ul}^s( \xR^2)$.

$(4)$ Assume $ \mez< s < \frac{5}{2}.$ Let 
$$
F_s= \{u \in H_{ul}^s(M): u(\eps,x_2) = 0,  \eps = 0,1, \forall x_2\in \xR\}.
$$
Then the map 
$v\mapsto \odd{v}$ is continuous from $F_s$ to $H_{ul}^s( \xR^2)$.

$(5)$ Assume  $\frac 5 2 < s\leq 4.$  Let 
$$
G_s= \{ u \in H_{ul}^s(M): u(\eps,x_2) =   \partial_{x_1}^2 u(\eps,x_2)  =0 , \eps = 0,1, \forall x_2 \in \xR\}.
$$   
Then the map 
$v\mapsto \odd{v}$ is continuous from $G_s$ to $H_{ul}^s( \xR^2)$.
\end{coro}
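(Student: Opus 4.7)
The approach is to reduce the two-dimensional uniformly local estimate to slicewise applications of the one-dimensional Proposition~\ref{prop.7.4}. Two observations drive the argument: first, since the extension operators $v\mapsto\even{v},\odd{v}$ defined in~\eqref{eq.per} act only in the $x_1$ variable, they commute with multiplication by any function depending solely on $x_2$, in particular with the cutoff $\chi(x_2-k_2)$ defining the uniformly local norm in the $x_2$-direction; second, for $s\ge 0$ the elementary Fourier equivalence $(1+|\xi_1|^2+|\xi_2|^2)^s\sim(1+|\xi_1|^2)^s+(1+|\xi_2|^2)^s$ yields
\[
\lA u\rA_{H^s(\xR^2)}^2\sim \lA u\rA_{L^2_{x_2}(H^s_{x_1})}^2+\lA u\rA_{L^2_{x_1}(H^s_{x_2})}^2.
\]

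The one-dimensional building block I would establish first is: for $v\in H^s(0,1)$ satisfying the relevant vanishing conditions of the corollary at \emph{both} endpoints $x_1=0$ and $x_1=1$, and for any fixed $\chi\in C^\infty_c(\xR)$,
\[
\lA \chi(x_1)\even{v}\rA_{H^s(\xR)}\le C\lA v\rA_{H^s(0,1)},
\]
and likewise for $\odd{v}$. This reduces to Proposition~\ref{prop.7.4} after first extending $v$ from $(0,1)$ to $(0,+\infty)$ by a Stein-type extension operator preserving the prescribed one-sided vanishing of $v$ (or of its derivatives) at $x_1=0$; such operators are classical, built by reflection multiplied by a smooth cutoff supported in $[0,1+\delta]$. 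The vanishing condition at the \emph{other} endpoint $x_1=1$ enters precisely because the $2$-periodic extension involves an additional reflection around $x_1=1$; a direct matching of one-sided limits of the derivatives of $\even{v}$ or $\odd{v}$ at $x_1=1$ shows that the hypotheses listed in cases~(2), (4), (5) of the corollary are exactly what is needed for that second reflection to preserve $H^s$-regularity in the relevant ranges of~$s$.

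Next, to pass to the two-dimensional uniformly local estimate, set $w_{k_2}:=\chi(x_2-k_2)v$, so that commutation gives $\chi(x_2-k_2)\even{v}=\even{w_{k_2}}$. For $k=(k_1,k_2)\in\xZ^2$, the $2$-periodicity of $\even{v}$ in $x_1$ allows a translation of $x_1$ by an element of $2\xZ$ at no cost in $\lA \chi_k\even{v}\rA_{H^s(\xR^2)}$, so we may assume $k_1\in\{0,1\}$. Applying the slicewise characterization to $\chi(x_1-k_1)\even{w_{k_2}}$, the $L^2_{x_2}(H^s_{x_1})$-contribution is controlled slicewise by the one-dimensional building block with a uniform constant, giving a bound by $\lA w_{k_2}\rA_{L^2_{x_2}(H^s_{x_1}(0,1))}^2$; the $L^2_{x_1}(H^s_{x_2})$-contribution is immediate since the extension is an $L^2_{x_1}$-isometry up to a factor $2$ and commutes with $H^s_{x_2}$-multipliers. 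Adding, both parts are $\lesssim\lA w_{k_2}\rA_{H^s(M)}^2\le\lA v\rA_{H^s_{ul}(M)}^2$ uniformly in $k$, which is the desired bound. The odd cases (3)--(5) follow by the same argument using the odd parts of Proposition~\ref{prop.7.4}.

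The main technical point is the construction of the one-dimensional extension operator from $(0,1)$ to $(0,+\infty)$ preserving the specific one-sided trace conditions at $x_1=0$: this is routine in each case considered but requires a separate verification for the different regimes of $s$, since the number of preserved derivative traces increases with $s$.
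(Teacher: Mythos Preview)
Your approach is correct and follows the same line as the paper's own proof, which is extremely terse: the paper simply notes that the extension operators commute with $D_{x_2}$ (so the problem is ``clearly a one-dimensional result''), invokes Proposition~\ref{prop.7.4}, and refers to ``a localization argument'' without further detail. Your use of the anisotropic equivalence $\lA u\rA_{H^s(\xR^2)}^2\sim\lA u\rA_{L^2_{x_2}(H^s_{x_1})}^2+\lA u\rA_{L^2_{x_1}(H^s_{x_2})}^2$ together with the commutation with $\chi(x_2-k_2)$ is a clean way to make the reduction to one dimension precise, and the periodicity argument reducing $k_1$ to $\{0,1\}$ is exactly the ``localization'' the paper alludes to.

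One small point to tighten: your one-dimensional building block, as written, extends $v$ from $(0,1)$ to $(0,+\infty)$ and applies Proposition~\ref{prop.7.4} at $x_1=0$, but this handles only cutoffs $\chi(x_1-k_1)$ with $k_1=0$; for $k_1=1$ the support of $\chi(x_1-1)$ straddles the reflection point $x_1=1$, and there the periodic extension is an even (resp.\ odd) reflection about $x_1=1$, not about $0$. You acknowledge this, but the cleanest fix is simply to apply Proposition~\ref{prop.7.4} a second time after the change of variable $x_1\mapsto 1-x_1$ (using the boundary condition at $x_1=1$), rather than trying to cover both endpoints with a single extension to $(0,+\infty)$. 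With that clarification the argument is complete and matches the paper's intent.
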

\begin{proof}
Since $\even{(D_{x_2}^\alpha v)} = D_{x_2}^\alpha (\even{v})$, $\odd{(D_{x_2}^\alpha v)} = D_{x_2}^\alpha (\odd{v})$ the result is clearly a one dimensional result and it is enough to prove it for the one dimensional case, in which case it is a direct consequence of Proposition~\ref{prop.7.4} and a localization argument.
\end{proof}
 Consider now an initial data ($ \eta_0, \psi_0= \phi_0 \arrowvert_{ \Sigma_0}, V_0,B_0$) satisfying the assumptions in Theorem~\ref{th.Canal} 
 and define
 $$ \widetilde{\eta} _0  = \even{\eta}_0,  \quad \widetilde{\psi}_0 = \even{\psi}_0,  \quad \widetilde{V}_{0,x_1}= \odd{V}_{0, x_1}, \quad \widetilde{V}_{0, x_2}= \even{V}_{0, x_2},  \quad   \widetilde{B}_0 = \even{B}_0 $$
 on $ \xT\times  \xR$.
 
 Recall (see Theorem \ref{th.Canal}) that, with $M = (0,1) \times \xR,$ we have set
$$ \mathcal{H}^s(M)=  H_{ul}^{s+ \mez}(M)\times L_{ul}^2(M) \times H_{ul}^s (M)\times H_{ul}^s(M), $$
 and introduce
$$  \mathcal{H}^s(\xR^2 )=  H_{ul}^{s+ \mez}(\xR^2 )\times L_{ul}^2(\xR^2 ) \times H_{ul}^s ( \xR^2)\times H_{ul}^s(\xR^2).$$ 
Then we have the following lemma. 
\begin{lemm}
Let $2<s<3, s \neq \frac{5}{2} $ and  $ (\eta_0,\psi_0, V_0, B_0) \in\mathcal{H}^s(M)$  satisfying the hypothesis $(H_1)$ in Theorem \ref{th.Canal}. Then   $ (\widetilde{\eta}_0, \widetilde{\psi} _0, \widetilde{V}_0, \widetilde{B}_0) \in   \mathcal{H}^s(\xR^2 ) $  and are $2$- periodic with respect to the $x_1$ variable.
  \end{lemm}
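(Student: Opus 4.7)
The plan is to apply Corollary~\ref{cor.sym} componentwise, matching the regularity of each unknown and the type of extension (even or odd) to the corresponding statement in the corollary. The hypothesis $(H_1)$ is designed precisely to supply the vanishing trace conditions required. Periodicity of each extension on $\xT\times\xR$ is immediate from definition~\eqref{eq.per}, so the only real content is the Sobolev regularity.

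First I would handle the even extensions. Since $2<s<3$ we have $s+\mez\in(\tq\cdot\tfrac{10}{3},\tdm+\mez)\subset(\tdm,\tdm+1)\subset(\tdm,\tq\cdot\tfrac{14}{3})$; more precisely $s+\mez\in(\tfrac{5}{2},\tfrac{7}{2})$. Thus for $\widetilde\eta_0=\even{\eta}_0$ and $\widetilde\psi_0=\even{\psi}_0$, part~(2) of Corollary~\ref{cor.sym} applies, requiring $\partial_{x_1}\eta_0\eval_{x_1=\eps}=\partial_{x_1}\psi_0\eval_{x_1=\eps}=0$, both of which are in $(H_1)$. For $\widetilde B_0=\even{B}_0$ and $\widetilde V_{0,x_2}=\even{V}_{0,x_2}$, the regularity is $s\in(2,3)\subset(\tdm,\tq\cdot\tfrac{14}{3})=(\tdm,\tfrac{7}{2})$, so again part~(2) of Corollary~\ref{cor.sym} applies and the required conditions $\partial_{x_1}B_0\eval_{x_1=\eps}=\partial_{x_1}V_{0,x_2}\eval_{x_1=\eps}=0$ are also in $(H_1)$.

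Next I would handle the odd extension $\widetilde V_{0,x_1}=\odd{V}_{0,x_1}$, which is the delicate part and the reason the statement separates the case $s>\tfrac{5}{2}$. Here I split into two sub-cases according to the value of $s$. If $2<s<\tfrac{5}{2}$ then $s\in(\mez,\tfrac{5}{2})$ and part~(4) of Corollary~\ref{cor.sym} applies, requiring only $V_{0,x_1}\eval_{x_1=\eps}=0$, which is provided by $(H_1)$. If $\tfrac{5}{2}<s<3$ then $s\in(\tfrac{5}{2},4]$ and part~(5) of Corollary~\ref{cor.sym} applies, requiring both $V_{0,x_1}\eval_{x_1=\eps}=0$ and $\partial_{x_1}^2V_{0,x_1}\eval_{x_1=\eps}=0$; both conditions are precisely what is assumed in $(H_1)$ when $s>\tfrac{5}{2}$. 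This explains why the case $s=\tfrac{5}{2}$ is excluded: it is the critical exponent at which the trace $\partial_{x_1}^2 V_{0,x_1}\eval_{x_1=\eps}$ sits exactly at the borderline of Sobolev trace theory, so neither part~(4) nor part~(5) of Corollary~\ref{cor.sym} applies directly.

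Collecting these four estimates, each a continuous map from the appropriate subspace of $H^\sigma_{ul}(M)$ into $H^\sigma_{ul}(\xR^2)$ by Corollary~\ref{cor.sym}, yields $(\widetilde\eta_0,\widetilde\psi_0,\widetilde V_0,\widetilde B_0)\in\mathcal{H}^s(\xR^2)$ with norm controlled by the $\mathcal{H}^s(M)$-norm of the original data. The $2$-periodicity in $x_1$ is built into the definition~\eqref{eq.per} and needs no further argument. No step here is genuinely hard; the only point requiring care is the bookkeeping of which trace condition is needed at each threshold, which is exactly why $(H_1)$ is stated with a distinction at $s=\tfrac{5}{2}$.
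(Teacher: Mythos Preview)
Your proposal is correct and takes essentially the same approach as the paper: the paper's proof is a one-liner stating that the result follows immediately from hypothesis $(H_1)$ together with Corollary~\ref{cor.sym}, and you have simply unpacked that one line by matching each component to the appropriate case of the corollary. The case split at $s=\tfrac{5}{2}$ for the odd extension of $V_{0,x_1}$ is exactly the right bookkeeping, and your explanation of why $s=\tfrac{5}{2}$ is excluded is accurate. (A minor cosmetic remark: the intermediate interval computations using $\tq$ and $\tdm$ are garbled and do not parse as written, though your final conclusions $s+\mez\in(\tfrac{5}{2},\tfrac{7}{2})$ and $s\in(2,3)\subset(\tfrac{3}{2},\tfrac{7}{2})$ are correct; you should clean those up.)
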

\begin{proof}
This follows immediately from the hypothesis $(H_1)$ and Corollary \ref{cor.sym}.
\end{proof}

In the case of a rectangular basin, 
performing both reflection and periodizations 
with respect to the $x_1$ and the $x_2$ variables leads similarly to extensions  
$$
(\widetilde{\eta}_0, \widetilde{\psi} _0, \widetilde{V}_0, \widetilde{B}_0) \in H_{ul}^{s+ \mez}( \xR^2) \times L^2_{ul} (\xR^2)\times H_{ul}^s (\xR^2)\times H_{ul}^s (\xR^2)
$$
which are $2$- periodic with respect to the $x_1$ 
variable and $2L$ periodic with respect to the $x_2$ variable ).

\subsubsection{Conclusion}
We are now in position to apply Theorem~\ref{theo:princ}.
We consider first the case of the canal. Starting from $(\eta_0, \psi_0, V_0, B_0)$, 
we define $ (\widetilde{\eta}_0, \widetilde{\psi} _0, \widetilde{V}_0, \widetilde{B}_0) $ 
their periodized extensions following the process in Section~\ref{sec.4.1}.
Let $(\widetilde{\eta}, \widetilde{v} )$ be the solution of the free surface 
water waves system given by Theorem~\ref{theo:princ}. 
Since the initial data $ (\widetilde{\eta}_0, \widetilde{\psi} _0, \widetilde{V}'_0, \widetilde{B}_0)$ 
are even while $\widetilde{V}_{0,x_1}$ is odd, 
our uniqueness result guaranties that the solution 
satisfies the same symmetry property 
(because if we consider our solution, the function obtained 
by symmetrization is also a solution with same initial data). 
The same argument shows that as 
the initial data are $2$-periodic with 
respect to the variable $x_1$, so is the solution. 
As a consequence if we define $v,\eta, P$ 
as the trace of $\widetilde{v}, \widetilde{\eta}, \widetilde{P} $ on $(0,1) \times \xR$, 
we get that they satisfy trivially the system free boundary Euler equation
\begin{equation}\label{syst.canal}
\begin{aligned}
\partial_{t} v +v\cdot \partialyx v + \partialyx P &= - g e_y ,\quad \cnxy v =0 \quad\text{in }\Omega,\\
\partial_{t} \eta &= \sqrt{1+|\partialx \eta|^2}\, v \cdot \nu \quad \text{on }\Sigma, \\
P&=0 \quad \text{on }\Sigma,
\end{aligned}
\end{equation}
and to conclude on the existence point in Theorem~\ref{th.Canal}, 
it only remains to check that the "solid wall condition" 
\begin{equation}\label{eq.solid}
v\cdot n =0, \text{ on } \Gamma= \Gamma_1 \cup \Gamma_2 \end{equation}
is satisfied.
On $\Gamma_1$ it is a straightforward consequence 
of the condition $\widetilde{v} \cdot \widetilde{\Gamma}=0$, 
while on $\Gamma_2$ it is simply consequence of the fact 
that the component  of the velocity field along $x_1$, $\widetilde{v}_{x_1}$ 
is odd and $2$-periodic. To prove the uniqueness part in Theorem~\ref{th.Canal}, 
starting from a a solution of ~\eqref{syst.canal},~\eqref{eq.solid}, 
on the time interval $[-T, T]$, if we define the 
function $\widetilde{v}, \widetilde{\eta}$ at each 
time $t$ following the same procedure, we end up 
with a solution of ~\eqref{E1},~\eqref{E2} in the 
domain $\{ (t,x,y); t \in (-T, T), (x,y) \in \widetilde{\Omega} (t)\} $, 
at the same level of regularity. Indeed, the jump formula gives 
$$ \partial_{t} \widetilde{v} +\widetilde{v}\cdot \partialyx \widetilde{v} + \partialyx \widetilde{P} = - g e_y + [ v_{x_1}\cdot \partial_{x_1} v]\otimes \delta_{\Gamma_2}= -ge_y, $$
where in the last equality we used that 
the component of the velocity field along $x_1$ 
vanishes on $\Gamma_2$. The uniqueness part 
in Theorem~\ref{th.Canal} consequently follows from 
the uniqueness part in Theorem~\ref{theo:princ}. The case of a rectangular basin is similar.
\section{Technical results}\label{sec.3}
 \subsection{Invariance}  
  The following result shows that the definition of the uniformly local 
  Sobolev spaces does not depend on the choice of the function $\chi$ satisfying \eqref{kiq}.
 \begin{lemm}\label{invariance}
Let $E$ be a normed space of functions from $\xR^d$ to $\xC$ such that  
$$ \forall \, \theta \in W^{\infty,\infty}(\xR^d) \quad  \exists C>0:  \Vert \theta u\Vert_E \leq C \Vert u\Vert_E  \quad   \forall u\in E $$
where $C$ depends only on a finite number of semi-norms of $\theta$ in $W^{\infty,\infty}(\xR^d)$.
 Then  for any   $\widetilde{\chi} \in C_0^\infty(\xR^d)$ there exists $C'>0$ such that
\begin{equation}\label{chitilde}
 \sup_{k\in \xZ^d} \Vert \widetilde{\chi}_k u \Vert_{E} \leq C' \sup_{q\in \xZ^d} \Vert \chi_q u \Vert_{E} 
 \end{equation}
 where $\widetilde{\chi}_k(x) =\widetilde{\chi}(x-k)$.
\end{lemm}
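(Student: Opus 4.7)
My plan is to use the partition of unity~\eqref{kiq} to rewrite $\widetilde{\chi}_k u$ as a finite sum of the pieces $\chi_q u$ (multiplied by smooth cutoffs), and then invoke the multiplier hypothesis on $E$ uniformly in $k$.

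More precisely, I will first use $\sum_{q\in \xZ^d}\chi_q = 1$ to write, for every $k\in \xZ^d$,
\begin{equation*}
\widetilde{\chi}_k u \;=\; \sum_{q\in \xZ^d} \widetilde{\chi}_k\, \chi_q\, u \;=\; \sum_{q\in \xZ^d} \widetilde{\chi}_k \,(\chi_q u).
\end{equation*}
Since $\widetilde{\chi}$ has compact support, there exists a compact set $K\subset \xR^d$ (depending only on $\widetilde{\chi}$) such that $\supp \widetilde{\chi}_k \subset k+K$, and since $\supp \chi_q \subset q+[-1,1]^d$, the product $\widetilde{\chi}_k \chi_q$ vanishes unless $|k-q|\le R$ for some $R$ depending only on the diameter of $K$. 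Hence for each $k$ the above sum has at most $N=N(\widetilde{\chi})$ nonzero terms, and by the triangle inequality
\begin{equation*}
\Vert \widetilde{\chi}_k u \Vert_{E} \;\le\; \sum_{\substack{q\in\xZ^d\\ |k-q|\le R}} \Vert \widetilde{\chi}_k\,(\chi_q u)\Vert_{E}.
\end{equation*}

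The next step is the crucial one: I apply the multiplier hypothesis on $E$ with $\theta=\widetilde{\chi}_k$ acting on the vector $\chi_q u\in E$. The key observation is that the $W^{\infty,\infty}(\xR^d)$ seminorms are translation invariant, so the seminorms of $\widetilde{\chi}_k=\widetilde{\chi}(\cdot-k)$ coincide with those of $\widetilde{\chi}$ and are therefore bounded by a constant $C_0=C_0(\widetilde{\chi})$ independent of $k$. Thus
\begin{equation*}
\Vert \widetilde{\chi}_k\,(\chi_q u)\Vert_{E} \;\le\; C_0\, \Vert \chi_q u\Vert_{E} \;\le\; C_0\, \sup_{q'\in \xZ^d}\Vert \chi_{q'} u\Vert_{E}.
\end{equation*}
Combining with the previous step and taking the supremum over $k$ yields~\eqref{chitilde} with $C'=N\,C_0$.

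The argument involves no analytic obstacle; the only thing to be careful about is that one cannot simply absorb the cutoffs $\chi_q$ into $\widetilde{\chi}_k$ directly (the $\chi_q u$ need not be controlled individually in a finer way than $E$), so one has to apply the multiplier bound \emph{after} rewriting $\widetilde{\chi}_k u$ as a sum of terms of the form $\theta\cdot(\chi_q u)$ with $\theta$ having translation-uniform seminorms. This uniformity in $k$ is really the whole content of the lemma.
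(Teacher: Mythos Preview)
Your proof is correct and is actually more elementary than the paper's. Both arguments begin with the partition of unity decomposition $\widetilde{\chi}_k u=\sum_q \widetilde{\chi}_k(\chi_q u)$, but then diverge. You exploit directly that $\widetilde{\chi}$ and $\chi$ have compact support, so the sum over $q$ has at most $N(\widetilde{\chi})$ nonzero terms, and one application of the multiplier hypothesis with $\theta=\widetilde{\chi}_k$ (seminorms translation-invariant, hence uniform in $k$) finishes. The paper instead inserts a weight: writing $\widetilde{\chi}_k\chi_q u=\langle k-q\rangle^{-N}\bigl[\langle k-q\rangle^{N}\langle x-q\rangle^{-N}\widetilde{\chi}_k\bigr]\bigl[\langle x-q\rangle^{N}\underline{\chi}_q\bigr]\chi_q u$ with $\underline{\chi}=1$ on $\supp\chi$, it checks that both bracketed factors have $W^{\infty,\infty}$ seminorms bounded independently of $k,q$, applies the multiplier hypothesis twice, and then sums the resulting bound $C\langle k-q\rangle^{-N}$ over all of $\xZ^d$ with $N=d+1$. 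For the lemma as stated your route is shorter and loses nothing. The paper's weighted decomposition is a recurring device in the article (it reappears, for instance, in Lemma~\ref{ulN} and in the pseudo-local estimates of Proposition~\ref{pseudo}), which may explain why the authors chose to introduce it here even though the finiteness of the sum makes it unnecessary in this particular case.
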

\begin{proof}
Let   $\underline{\chi}\in C_0^\infty(\xR^d)$ be equal to one on the support of $\chi $. We write with $N = d+1$
$$
\widetilde{\chi}_k \chi_q u
= \langle k-q\rangle^{-N} \left[\frac{\langle k-q\rangle^{ N} }{\langle x-q\rangle^{ N}}\widetilde{\chi}_k\right] \left[ \langle x-q\rangle^{ N}\underline{\chi}_q\right] \chi_q u.
$$
Since the two functions inside the brackets belong to $W^{\infty,\infty}(\xR^d)$ with semi-norms independent of $k,q,$ using the assumption in the lemma we deduce that
$$
\Vert \widetilde{\chi}_k u\Vert_E \leq   \sum_{q\in \xZ^d} \Vert \widetilde{\chi}_k \chi_q u\Vert_E  \leq C\sum_{ q\in \xZ^d}\langle k-q\rangle^{-N} \sup_{q\in \xZ^d} \Vert  {\chi}_qu\Vert_E,
$$
which completes the proof.
\end{proof}
 
\begin{lemm}\label{ulN}
Let $\mu \in \xR$ and $N \geq d+1$. Then there exists $C>0$ such that
\begin{equation}\label{N>d+1}
\sup_{x\in \xR^d} \Vert \langle x-\cdot \rangle^{-N} u \Vert_{H^\mu(\xR^d)} \leq  C \Vert  u \Vert_{H_{ul}^\mu(\xR^d)}
\end{equation}
for all $u\in H^{\mu}_{ul}(\xR^d)$.
\end{lemm}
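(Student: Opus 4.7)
The plan is to decompose $u$ via the partition of unity $\{\chi_q\}_{q \in \xZ^d}$ from \eqref{kiq}, bound each piece after extracting the decay factor $\langle x-q\rangle^{-N}$, and then sum in $q$.

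First, for fixed $x \in \xR^d$, write
\[
\langle x-\cdot\rangle^{-N} u = \sum_{q\in \xZ^d} \langle x-\cdot\rangle^{-N}\chi_q u.
\]
On the support of $\chi_q$ we have $|y-q|\le 1$, hence $\langle x-y\rangle \sim \langle x-q\rangle$ with constants independent of $x$ and $q$. This motivates factoring
\[
\langle x-y\rangle^{-N}\chi_q(y) = \langle x-q\rangle^{-N}\, \theta_{x,q}(y), \qquad
\theta_{x,q}(y) \defn \frac{\langle x-q\rangle^{N}}{\langle x-y\rangle^{N}}\chi_q(y).
\]

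Next I would show that $\theta_{x,q}$ has seminorms in $W^{\infty,\infty}(\xR^d)$ bounded uniformly in $(x,q)$. Indeed $\chi_q$ has translation-invariant seminorms, and on the support of $\chi_q$ the ratio $\langle x-q\rangle/\langle x-y\rangle$ lies in a fixed compact subinterval of $(0,\infty)$ with derivatives (in $y$) uniformly bounded in $(x,q)$. Since multiplication by a function with bounded $W^{\infty,\infty}$ seminorms is a bounded operator on $H^\mu(\xR^d)$ for every real $\mu$ (with operator norm controlled by a finite number of those seminorms), this gives
\[
\|\langle x-\cdot\rangle^{-N}\chi_q u\|_{H^\mu}
\le \langle x-q\rangle^{-N}\, \|\theta_{x,q}(\chi_q u)\|_{H^\mu}
\le C\,\langle x-q\rangle^{-N}\,\|\chi_q u\|_{H^\mu},
\]
with $C$ independent of $(x,q)$, and hence $\le C \langle x-q\rangle^{-N}\|u\|_{H^\mu_{ul}}$.

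Finally, summing over $q$ and using that $N\ge d+1 > d$ ensures $\sum_{q\in\xZ^d}\langle x-q\rangle^{-N} \le C_d$ uniformly in $x\in\xR^d$, we conclude
\[
\|\langle x-\cdot\rangle^{-N} u\|_{H^\mu}
\le \sum_{q\in\xZ^d}\|\langle x-\cdot\rangle^{-N}\chi_q u\|_{H^\mu}
\le C\,\|u\|_{H^\mu_{ul}}\sum_{q\in\xZ^d}\langle x-q\rangle^{-N}
\le C'\,\|u\|_{H^\mu_{ul}},
\]
which yields \eqref{N>d+1}. The only technical point is the uniform $W^{\infty,\infty}$ bound on $\theta_{x,q}$; negative $\mu$ is handled identically since multiplication by a $W^{\infty,\infty}$ function acts boundedly on $H^\mu$ for all real $\mu$ by duality (or by direct symbolic calculus).
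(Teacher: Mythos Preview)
Your proof is correct and follows essentially the same route as the paper's: decompose via the partition of unity, factor out $\langle x-q\rangle^{-N}$, observe that the remaining multiplier has $W^{\infty,\infty}$ seminorms uniformly bounded in $(x,q)$, and sum using $N\ge d+1$. One small notational slip: since $\theta_{x,q}$ already contains the factor $\chi_q$, the identity gives $\langle x-\cdot\rangle^{-N}\chi_q u=\langle x-q\rangle^{-N}\theta_{x,q}u$ (not $\theta_{x,q}(\chi_q u)$); to bound $\lA\theta_{x,q}u\rA_{H^\mu}$ you should insert a fattened cutoff $\widetilde{\chi}_q\equiv 1$ on $\supp\chi_q$ so that $\theta_{x,q}u=\theta_{x,q}(\widetilde{\chi}_q u)$ and then use $\lA\widetilde{\chi}_q u\rA_{H^\mu}\le C\lA u\rA_{H^\mu_{ul}}$ --- this is exactly what the paper does.
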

\begin{proof}
Indeed we have
$$\Vert \langle x-\cdot \rangle^{-N} u \Vert_{H^\mu } \leq \sum_{q \in \xZ^d}\Vert \langle x-\cdot \rangle^{-N} \chi_qu \Vert_{H^\mu}
$$
and we write
  $$
  \langle x-y\rangle^{-N} \chi_q (y)u (y)  =  \frac{1}{\langle x-q \rangle^N} \frac{\langle x-q \rangle^N}{ \langle x-y\rangle^{ N}}\widetilde{\chi}_q(y)  \chi_q (y)u (y)$$
   where $\widetilde{\chi}\in C_0^\infty(\xR^d), \widetilde{\chi} = 1$ on the support of $\chi$. 
  This implies that
  $$
  \sum_{q \in \xZ^d}\Vert \langle x-\cdot \rangle^{-N} \chi_qu \Vert_{H^\mu}
  \leq C_N \sum_{q\in \xZ^d} \frac{1}{\langle x-q \rangle^N} 
   \Vert  u \Vert_{H_{ul}^\mu} \leq C'_N \Vert  u \Vert_{H_{ul}^\mu},
  $$
  since the function $y \mapsto \frac{\langle x-q \rangle^N}{ \langle x-y\rangle^{ N}}\widetilde{\chi}_q(y)$ belongs to $W^{\infty,\infty}(\xR^d)$ with semi-norms uniformly bounded 
  (independently of $x$ and $q$).
  \end{proof}
 
\subsection {Product laws}
\begin{prop}\label{Ho}
$(i)$  Let $\sigma_j \in \xR, j=1,2$ be such that $\sigma_1+\sigma_2>0$ and  $u_j \in H^{\sigma_j}_{ul}(\xR^d), j=1,2$. Then $u_1 u_2 \in H^{\sigma_0}_{ul}(\xR^d)$ for $\sigma_0 \leq \sigma_j$ and $\sigma_0< \sigma_1 +\sigma_2 -\frac{d}{2}$. Moreover we have
$$\Vert u_1 u_2 \Vert_{H^{\sigma_0}_{ul}(\xR^d)} \leq C\Vert u_1 \Vert_{H^{\sigma_1}_{ul}(\xR^d)} \Vert  u_2 \Vert_{H^{\sigma_2}_{ul}(\xR^d)}.$$
 $(ii)$  Let $s\geq 0$ and $u_j \in H^s_{ul}(\xR^d)\cap L^\infty(\xR^d), j=1,2$. Then $u_1 u_2 \in H^s_{ul}(\xR^d)$ and 
$$\Vert u_1 u_2 \Vert_{H^{s }_{ul}(\xR^d)} \leq C\big( \Vert u_1\Vert_{L^\infty(\xR^d)}\Vert u_2\Vert_{H^s_{ul}(\xR^d)} + \Vert u_2\Vert_{L^\infty(\xR^d)}\Vert u_1\Vert_{H^s_{ul}(\xR^d)} \big).$$

 $(iii)$ Let $F\in C^\infty(\xR^N, \xC)$ be such that $F(0) = 0$. Let $s> \frac{d}{2}$. If $U\in \big(H^s_{ul}(\xR^d)\big)^N$ then $F(U) \in H^s_{ul}(\xR^d)$ and 
$$\Vert F(U) \Vert_{H^s_{ul}(\xR^d)} \leq  G(\Vert U\Vert_{ (L^\infty(\xR^d))^N})\Vert U \Vert_{(H^s_{ul}(\xR^d))^N} $$
for an increasing function $G: \xR^+\to\xR^+$.
\end{prop}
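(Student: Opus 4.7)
The plan is to reduce each of the three assertions to its classical counterpart on $H^\sigma(\xR^d)$ by means of the localization cutoffs $\chi_q$. The unifying observation is the following: fix once and for all $\widetilde\chi\in C_0^\infty(\xR^d)$ with $\widetilde\chi\equiv 1$ on $\supp\chi$ and set $\widetilde{\chi}_q(\cdot)=\widetilde\chi(\cdot-q)$, so that $\chi_q\widetilde{\chi}_q=\chi_q$ and hence $\chi_q u=\chi_q(\widetilde{\chi}_q u)$ for every $u$.

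For (i) I would write
$$\chi_q(u_1 u_2)=(\widetilde{\chi}_q u_1)(\chi_q u_2)$$
and invoke the classical product rule in $H^\sigma(\xR^d)$, valid under exactly the hypotheses $\sigma_1+\sigma_2>0$, $\sigma_0\le\sigma_j$, $\sigma_0<\sigma_1+\sigma_2-\frac{d}{2}$, to obtain
$$\|\chi_q(u_1 u_2)\|_{H^{\sigma_0}}\le C\|\widetilde{\chi}_q u_1\|_{H^{\sigma_1}}\|\chi_q u_2\|_{H^{\sigma_2}}.$$
Lemma~\ref{invariance}, applied with $E=H^{\sigma_1}$ (noting that multiplication by a translate of $\widetilde\chi$ is bounded on $H^{\sigma_1}$ for every $\sigma_1\in\xR$, with seminorms independent of $q$), yields $\|\widetilde{\chi}_q u_1\|_{H^{\sigma_1}}\le C\|u_1\|_{H^{\sigma_1}_{ul}}$. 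Taking the supremum over $q$ finishes (i). Part (ii) proceeds identically, with the classical product rule replaced by the classical tame Moser estimate $\|fg\|_{H^s}\le C(\|f\|_{L^\infty}\|g\|_{H^s}+\|g\|_{L^\infty}\|f\|_{H^s})$ valid for $s\ge 0$; the $L^\infty$-norms of $\widetilde{\chi}_q u_j$ are controlled by $\|u_j\|_{L^\infty}$ trivially.

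For (iii), since $F(0)=0$ the key identity is
$$\chi_q F(U)=\chi_q\, F(\widetilde{\chi}_q U),$$
which holds pointwise: on $\supp\chi_q$ we have $\widetilde{\chi}_q\equiv 1$ so both sides agree, while outside $\supp\chi_q$ the left-hand side is zero by definition and so the factor $\chi_q$ kills whatever the right-hand side is. Since $\widetilde{\chi}_q U$ belongs to $(H^s(\xR^d))^N$ with $\|\widetilde{\chi}_q U\|_{H^s}\le C\|U\|_{H^s_{ul}}$ uniformly in $q$ (again by Lemma~\ref{invariance}), the classical composition estimate on $H^s(\xR^d)$ for $s>\frac{d}{2}$ gives
$$\|F(\widetilde{\chi}_q U)\|_{H^s}\le G\bigl(\|\widetilde{\chi}_q U\|_{L^\infty}\bigr)\|\widetilde{\chi}_q U\|_{H^s}\le G(\|U\|_{L^\infty})\cdot C\|U\|_{H^s_{ul}},$$
after which a further multiplication by the fixed compactly supported $\chi_q$ is harmless in $H^s$.

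The only genuinely delicate point is verifying that multiplication by the translated bump $\widetilde{\chi}_q$ acts \emph{uniformly in $q$} on each relevant Sobolev space $H^{\sigma}(\xR^d)$; this is precisely the content of Lemma~\ref{invariance}, whose abstract hypothesis on $E$ is clearly satisfied by $H^\sigma$ for every $\sigma\in\xR$. Once this is in place, all three statements follow by assembling the classical $\xR^d$-estimates, so no further ingredient is needed.
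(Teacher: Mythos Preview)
Your proof is correct and follows essentially the same localization-and-reduce-to-classical approach as the paper: write $\chi_q(u_1u_2)=(\chi_q u_1)(\widetilde{\chi}_q u_2)$, apply the standard $H^\sigma(\xR^d)$ product or Moser estimate, and control the $\widetilde{\chi}_q$-factor uniformly in $q$ via Lemma~\ref{invariance}. Your treatment of (iii) via the pointwise identity $\chi_q F(U)=\chi_q F(\widetilde{\chi}_q U)$ is slightly more explicit than the paper's, which simply declares the argument ``similar'', but the underlying idea is the same.
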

 \begin{proof}
 The proofs are straightforward extensions of the proofs in the classical Sobolev spaces case. Indeed let us show $(i)$ for instance. Let $\chi_q$ be defined in~\eqref{kiq} and $\widetilde{\chi} \in C^\infty_0(\xR^d)$ be equal to one on the support of $\chi$. Then from the classical case we can write
\begin{align*}
\Vert \chi_q u_1u_2\Vert_{H^{s_0}} &= \Vert \chi_q u_1\widetilde{\chi}_q u_2\Vert_{H^{s_0}}
\leq C\Vert \chi_q u_1\Vert_{H^{s_1}}\Vert \widetilde{\chi}_q u_2\Vert_{H^{s_2}}\\ 
&\leq C\Vert  u_1\Vert_{H^{s_1}_{ul}}\Vert u_2\Vert_{H^{s_2}_{ul}}.
\end{align*}
The proofs of $(ii)$ and $(iii)$ are similar.
\end{proof}
The following spaces will be used in the sequel
\begin{defi}
Let $ p\in [1,+\infty],$ $J =(z_0,0), z_0<0$ and $\sigma \in \xR$. 
\begin{enumerate}
\item The space $L^p(J, H^\sigma(\xR^d))_{ul}$ is defined as the space of measurable functions 
$u$ from $ \xR^d_x \times J_z$ to $ \xC$ such that
$$
\Vert u \Vert_{L^p(J, H^\sigma(\xR^d))_{ul}}:= \sup_{q\in \xZ^d}
\Vert \chi_q u \Vert_{L^p(J, H^\sigma(\xR^d))} <+\infty.
$$  
\item We set
  \begin{equation*}\label{X,Y}
\begin{aligned}
  X^\sigma_{ul}(J) &= L^\infty(J, H^\sigma(\xR^d))_{ul}\cap L^2(J, H^{\sigma+\mez}(\xR^d))_{ul}\\
  Y^\sigma_{ul} (J) & = L^1(J, H^\sigma(\xR^d))_{ul} +  L^2(J, H^{\sigma - \mez}(\xR^d))_{ul}
  \end{aligned}
\end{equation*}
endowed with their natural norms.
\item 
We define the spaces $X^\sigma(J), Y^\sigma(J) $  by the same formulas without the subscript $ul$.
\end{enumerate}
\end{defi}
Notice that $L^\infty(J, H^\sigma(\xR^d))_{ul} = L^\infty(J, H^\sigma_{ul}(\xR^d))$. 
  
\begin{lemm}\label{Houl}
Let $\sigma_0,\sigma_1,\sigma_2$ be real numbers such that $\sigma_1+\sigma_2>0,  \sigma_0\leq \sigma_j, j=1,2, \sigma_0 < \sigma_1+ \sigma_2-\frac{d}{2} $ and $2\leq p\leq +\infty$.  Then
$$
\Vert uv \Vert_{L^p(J, H^{\sigma_0}(\xR^d))_{ul}} \leq C \Vert u  \Vert_{L^\infty(J, H^{\sigma_1}(\xR^d))_{ul}}\Vert  v \Vert_{L^p(J, H^{\sigma_2}(\xR^d))_{ul}}
$$ 
whenever the right hand side is finite.

The same inequality holds for the spaces without the subscript $ul$.
\end{lemm}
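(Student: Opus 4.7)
The plan is to reduce Lemma~\ref{Houl} to the classical (time-independent) Sobolev product law by localizing with the cutoffs $\chi_q$ and then applying Hölder's inequality in the $z$ variable.

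First, I would fix $q\in\xZ^d$ and pick $\widetilde{\chi}\in C_0^\infty(\xR^d)$ equal to one on the support of $\chi$, setting $\widetilde{\chi}_q(x)=\widetilde{\chi}(x-q)$, so that $\chi_q\widetilde{\chi}_q=\chi_q$. Then write
$$
\chi_q(uv)(z,\cdot)=\bigl(\chi_q u(z,\cdot)\bigr)\,\bigl(\widetilde{\chi}_q v(z,\cdot)\bigr).
$$
For fixed $z\in J$, the functions $\chi_q u(z,\cdot)$ and $\widetilde{\chi}_q v(z,\cdot)$ belong respectively to $H^{\sigma_1}(\xR^d)$ and $H^{\sigma_2}(\xR^d)$, so the classical Sobolev product law (which holds under exactly the hypotheses $\sigma_1+\sigma_2>0$, $\sigma_0\le\sigma_j$, $\sigma_0<\sigma_1+\sigma_2-d/2$) gives
$$
\bigl\Vert \chi_q u(z,\cdot)\,\widetilde{\chi}_q v(z,\cdot)\bigr\Vert_{H^{\sigma_0}(\xR^d)}
\le C\,\bigl\Vert \chi_q u(z,\cdot)\bigr\Vert_{H^{\sigma_1}(\xR^d)}
\bigl\Vert \widetilde{\chi}_q v(z,\cdot)\bigr\Vert_{H^{\sigma_2}(\xR^d)}.
$$

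Next I would take the $L^p(J)$ norm in $z$ on both sides. Since $1/p=1/\infty+1/p$, Hölder's inequality in $z$ yields
$$
\bigl\Vert \chi_q(uv)\bigr\Vert_{L^p(J,H^{\sigma_0}(\xR^d))}
\le C\,\bigl\Vert \chi_q u\bigr\Vert_{L^\infty(J,H^{\sigma_1}(\xR^d))}
\bigl\Vert \widetilde{\chi}_q v\bigr\Vert_{L^p(J,H^{\sigma_2}(\xR^d))}.
$$
Taking the supremum over $q\in\xZ^d$, the first factor is bounded by $\Vert u\Vert_{L^\infty(J,H^{\sigma_1}(\xR^d))_{ul}}$, while the second factor is controlled by $\Vert v\Vert_{L^p(J,H^{\sigma_2}(\xR^d))_{ul}}$ thanks to Lemma~\ref{invariance}, applied with the normed space $E=L^p(J,H^{\sigma_2}(\xR^d))$ (one checks at once that multiplication by $\theta\in W^{\infty,\infty}(\xR^d)$ is continuous on this $E$ uniformly in $z$, so the hypothesis of Lemma~\ref{invariance} is satisfied).

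The version without the subscript $ul$ is obtained by the same computation, omitting the cutoffs $\chi_q,\widetilde{\chi}_q$ entirely and directly combining the classical Sobolev product law in $x$ with Hölder's inequality in $z$. The only mildly delicate point is the comparison between the cutoffs $\chi$ and $\widetilde\chi$, but this is precisely the content of Lemma~\ref{invariance}, so no essential difficulty remains.
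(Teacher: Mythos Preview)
Your proof is correct and follows essentially the same approach as the paper, which simply says the lemma follows from Proposition~\ref{Ho}$(i)$ and~\eqref{chitilde}: localize with $\chi_q$ and $\widetilde\chi_q$, apply the classical Sobolev product law pointwise in $z$, integrate in $z$, and use Lemma~\ref{invariance} to pass from $\widetilde\chi_q$ back to the $ul$-norm. The only additional detail you make explicit is the H\"older step in $z$, which the paper leaves implicit.
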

\begin{proof}
This follows immediately from Proposition~\ref{Ho} $(i) $ and~\eqref{chitilde}.
\end{proof}

\begin{lemm}\label{algebre}
If $\sigma> \frac{d}{2}$ 
the spaces $X^\sigma_{ul}(J)$  and $ X^\sigma (J)$ are algebras.
\end{lemm}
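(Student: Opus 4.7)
The plan is to prove the algebra property in two pieces, one for each factor of the space $X^\sigma_{ul}(J) = L^\infty(J,H^\sigma)_{ul} \cap L^2(J,H^{\sigma+\mez})_{ul}$, by applying the Moser-type tame product estimate of Proposition \ref{Ho}~$(ii)$ pointwise in $z$ and then integrating. The key input is the Sobolev embedding $H^\sigma_{ul}(\xR^d) \hookrightarrow L^\infty(\xR^d)$ for $\sigma>\frac{d}{2}$, which holds as soon as $\sigma-\frac{d}{2}\notin \xN$ by the embedding proposition stated in the introduction; when $\sigma-\frac{d}{2}\in\xN$ we lose an arbitrarily small $\eps$ and still have $H^\sigma_{ul}\hookrightarrow L^\infty$. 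Write $K(\sigma) := \Vert \cdot \Vert_{H^\sigma_{ul}\to L^\infty}$.

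For the $L^\infty$-in-time part, apply Proposition \ref{Ho}~$(ii)$ at each fixed $z\in J$ with $s=\sigma\ge 0$ to obtain
$$
\Vert u(z)v(z)\Vert_{H^\sigma_{ul}} \le C\bigl(\Vert u(z)\Vert_{L^\infty}\Vert v(z)\Vert_{H^\sigma_{ul}} + \Vert v(z)\Vert_{L^\infty}\Vert u(z)\Vert_{H^\sigma_{ul}}\bigr),
$$
and then use Sobolev embedding and take the supremum in $z$ to get
$$
\Vert uv\Vert_{L^\infty(J,H^\sigma)_{ul}} \le C\, \Vert u\Vert_{L^\infty(J,H^\sigma)_{ul}}\,\Vert v\Vert_{L^\infty(J,H^\sigma)_{ul}}.
$$
For the $L^2$-in-time part, apply Proposition \ref{Ho}~$(ii)$ with $s=\sigma+\mez$ at each fixed $z$:
$$
\Vert u(z)v(z)\Vert_{H^{\sigma+\mez}_{ul}} \le C\bigl(\Vert u(z)\Vert_{L^\infty}\Vert v(z)\Vert_{H^{\sigma+\mez}_{ul}} + \Vert v(z)\Vert_{L^\infty}\Vert u(z)\Vert_{H^{\sigma+\mez}_{ul}}\bigr).
$$
Bound the $L^\infty$ norms by the $H^\sigma_{ul}$ norms via Sobolev embedding and take the $L^2(J)$ norm; by H\"older's inequality this yields
$$
\Vert uv\Vert_{L^2(J,H^{\sigma+\mez})_{ul}} \le C\bigl(\Vert u\Vert_{L^\infty(J,H^\sigma)_{ul}}\Vert v\Vert_{L^2(J,H^{\sigma+\mez})_{ul}} + \Vert v\Vert_{L^\infty(J,H^\sigma)_{ul}}\Vert u\Vert_{L^2(J,H^{\sigma+\mez})_{ul}}\bigr).
$$
Summing the two contributions gives the desired algebra estimate
$$
\Vert uv\Vert_{X^\sigma_{ul}(J)} \le C\,\Vert u\Vert_{X^\sigma_{ul}(J)}\,\Vert v\Vert_{X^\sigma_{ul}(J)}.
$$

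The non-uniformly-local case $X^\sigma(J)$ follows by the same argument using the classical tame estimate in $H^\sigma(\xR^d)$ (which is also the classical case of Proposition \ref{Ho}~$(ii)$ without the $ul$ subscripts) and the standard Sobolev embedding $H^\sigma(\xR^d)\hookrightarrow L^\infty(\xR^d)$ for $\sigma>d/2$. There is no real obstacle here; the mild point is simply that Lemma \ref{Houl}, which requires $\sigma_0\le \sigma_j$, is not sharp enough to gain the extra half-derivative needed in the $L^2$ piece, which is why one must instead invoke the tame estimate of Proposition \ref{Ho}~$(ii)$ and pay with an $L^\infty$ (hence $H^\sigma_{ul}$) norm on one factor.
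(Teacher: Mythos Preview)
Your proof is correct and is essentially the natural argument. The paper states Lemma~\ref{algebre} without proof, but your approach---applying Proposition~\ref{Ho}~$(ii)$ pointwise in $z$ together with the Sobolev embedding $H^\sigma_{ul}\hookrightarrow L^\infty$ for $\sigma>d/2$---is exactly the method the paper uses to prove the closely related estimate \eqref{est:prod1} in Lemma~\ref{est:produit} (see the first line of that proof: ``The first and the third estimates follow easily from $(ii), (iii)$ in Proposition~\ref{Ho}''). Your closing remark explaining why Lemma~\ref{Houl} alone is insufficient for the $L^2(J,H^{\sigma+\mez})$ piece is also on point.
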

 \begin{lemm}\label{est:produit}
 Let $s_0> 1+ \frac{d}{2}, \mu>0$ and $J= (-1,0)$. Then we have 
 \begin{align}
 \label{est:prod1}  \quad &\Vert fg\Vert_{X^\mu_{ul}} \leq C\bigl( \Vert f\Vert_{L^\infty(J, H^{s_0-1})_{ul}}\Vert g \Vert_{ X^\mu_{ul}} + \Vert g\Vert_{L^\infty(J, H^{s_0-1} )_{ul}}\Vert f \Vert_{X^\mu_{ul}} \bigr), \\
 \label{est:prod2}  \quad &\Vert fg\Vert_{X^\mu_{ul}} \leq C\bigl( \Vert f\Vert_{L^\infty(J, H^{s_0-1} )_{ul}}\Vert g \Vert_{ X^\mu_{ul}} + \Vert g\Vert_{L^\infty(J, H^{s_0-\frac{3}{2}} )_{ul}}\Vert f \Vert_{X^{\mu+\mez}_{ul}} \bigr). 
 \end{align}
 Let $F\in C_b^\infty(\xC^N, \xC)$ be such that $F(0)=0$. Then there exists a non decreasing function $\mathcal{F}:\xR^+ \to \xR^+$ such that for $\mu>\frac{d}{2}$  we have 
 \begin{equation}\label{est:prod3}
   \Vert F(U)\Vert_{X^\mu_{ul}} \leq \mathcal{F}\bigl( \Vert U\Vert_{L^\infty(J, H^{s_0-1})_{ul}} \bigr)\Vert  U\Vert_{X^\mu_{ul}}.
\end{equation}
 \end{lemm}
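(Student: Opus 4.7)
\medskip

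\noindent\textbf{Proof plan for Lemma~\ref{est:produit}.} The plan is to reduce each of the three estimates to pointwise-in-$z$ statements in the uniformly local Sobolev spaces $H^\sigma_{ul}(\xR^d)$, for which the relevant product/composition laws are already in hand (Proposition~\ref{Ho} and Lemma~\ref{Houl}), and then to recombine via the definition $X^\mu_{ul}(J)=L^\infty(J,H^\mu_{ul})\cap L^2(J,H^{\mu+\mez}_{ul})$. The main input is the embedding $H^{s_0-1}_{ul}(\xR^d)\hookrightarrow L^\infty(\xR^d)$, which holds because $s_0-1>\frac d2$.

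For~\eqref{est:prod1}, I would apply Proposition~\ref{Ho}$(ii)$ pointwise in $z\in J$: with $s=\mu$, it gives $\|fg\|_{H^\mu_{ul}}\lesssim \|f\|_{L^\infty}\|g\|_{H^\mu_{ul}}+\|g\|_{L^\infty}\|f\|_{H^\mu_{ul}}$ since $\mu>0$, and likewise with $s=\mu+\mez$. Using the embedding $L^\infty(H^{s_0-1})_{ul}\hookrightarrow L^\infty(L^\infty)$ to replace the $L^\infty$ factors, taking supremum (resp.\ $L^2$) in $z$ and summing, one obtains~\eqref{est:prod1} directly.

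The main difficulty is~\eqref{est:prod2}, where one factor is measured only in $H^{s_0-\frac 32}_{ul}$, which does \emph{not} embed into $L^\infty$: one has to gain a $\mez$ derivative on the other factor. Here I would use Bony's paraproduct decomposition $fg=T_fg+T_gf+R(f,g)$ (applied pointwise in $z$). The paraproduct $T_fg$ is tame and handled by $\|T_fg\|_{H^\sigma_{ul}}\lesssim \|f\|_{L^\infty}\|g\|_{H^\sigma_{ul}}$ for $\sigma=\mu,\mu+\mez$, giving the first term on the right of~\eqref{est:prod2}. For $T_gf$ one uses the paraproduct estimate $\|T_gf\|_{H^\sigma_{ul}}\lesssim \|g\|_{C_*^{-\mez}}\|f\|_{H^{\sigma+\mez}_{ul}}$ together with $H^{s_0-\frac 32}_{ul}\hookrightarrow C^{-\mez}_*$, which holds because $s_0-\frac 32-\frac d2>-\mez$. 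Applied with $\sigma=\mu$ and $\sigma=\mu+\mez$ this precisely produces $\|g\|_{L^\infty(J,H^{s_0-\frac 32})_{ul}}\|f\|_{X^{\mu+\mez}_{ul}}$. Finally the remainder $R(f,g)$ is even better behaved: choosing regularities $s_1=\mu+1,\ s_2=s_0-\frac 32$, the Bony remainder estimate yields $\|R(f,g)\|_{H^{\mu+s_0-\mez-\frac d2}_{ul}}\lesssim \|f\|_{H^{\mu+1}_{ul}}\|g\|_{H^{s_0-\frac 32}_{ul}}$, and $\mu+s_0-\mez-\frac d2>\mu+\mez$ since $s_0>1+\frac d2$, so this contributes to the same term.

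For the nonlinear estimate~\eqref{est:prod3}, the plan is the Moser/composition estimate: Proposition~\ref{Ho}$(iii)$ yields $\|F(U)\|_{H^\sigma_{ul}}\le G(\|U\|_{L^\infty})\|U\|_{H^\sigma_{ul}}$ whenever $\sigma>\frac d2$. Since $\mu>\frac d2$, this applies with $\sigma=\mu$ and, a fortiori, with $\sigma=\mu+\mez$; using once more $\|U\|_{L^\infty}\le C\|U\|_{H^{s_0-1}_{ul}}$, one integrates in $z$ to control the two components of the $X^\mu_{ul}$ norm. The only point requiring care is that the function $G$ in the Moser estimate is monotone, so the uniform bound $\|U\|_{L^\infty(J,H^{s_0-1})_{ul}}$ can be pulled out of the time integral, yielding the claimed form with a non-decreasing $\mathcal F$. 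The chief obstacle throughout is~\eqref{est:prod2}, since it is the only place where a paraproduct decomposition (rather than a bare tame product law) is genuinely needed.
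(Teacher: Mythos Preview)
Your plan is correct and matches the paper's proof. For \eqref{est:prod1} and \eqref{est:prod3} the paper also simply invokes Proposition~\ref{Ho}(ii) and (iii) pointwise in $z$ together with the embedding $H^{s_0-1}_{ul}\hookrightarrow L^\infty$. For \eqref{est:prod2} the paper cites directly the tame product inequality
\[
\Vert \chi_k fg\Vert_{H^t}\le C\bigl(\Vert \chi_k f\Vert_{L^\infty}\Vert\widetilde\chi_k g\Vert_{H^t}+\Vert\widetilde\chi_k g\Vert_{C_*^{-1/2}}\Vert\chi_k f\Vert_{H^{t+1/2}}\bigr),\quad t>0,
\]
from \cite[Corollary~2.12]{ABZ3} and then uses $H^{s_0-1}\subset L^\infty$, $H^{s_0-3/2}\subset C_*^{-1/2}$; your Bony decomposition $fg=T_fg+T_gf+R(f,g)$ is precisely the proof of that cited inequality, so the two arguments coincide. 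One small practical point: the paper works with the localized functions $\chi_k f,\widetilde\chi_k g$ (in ordinary $H^t$, $C_*^{-1/2}$) and takes the supremum in $k$ at the end, which spares you from having to justify the negative-regularity paraproduct bound $\|T_gf\|_{H^\sigma_{ul}}\lesssim\|g\|_{C_*^{-1/2}}\|f\|_{H^{\sigma+1/2}_{ul}}$ directly in the $ul$ setting.
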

\begin{proof}
The first and the third estimates follow easily from $(ii), (iii)$ in Proposition \ref{Ho}. To prove  the second one  we start from the inequality (see \cite{ABZ3}, Corollary 2.12)
 $$ \Vert \chi_kfg\Vert_{H^t } \leq C\bigl( \Vert \chi_k f\Vert_{ L^\infty}\Vert \widetilde{\chi}_kg \Vert_{ H^t} + \Vert \widetilde{\chi}_kg\Vert_{C_*^{-\mez}}\Vert \chi_k f \Vert_{H^{t+\mez} } \bigr), \quad t>0$$
  where $\widetilde{\chi} \in C_0^\infty(\xR^d)$ is equal to one on the support of $\chi $. Then we use the  continuous embeddings: $H^{s_0-1} \subset L^\infty,  H^{s_0- \frac{3}{2}} \subset C_*^{-\mez} $ and the above inequality for $t = \mu, t = \mu+\mez$.
 \end{proof}
 \subsection{Continuity of the pseudo-differential operators}
We have the following result  which reflects the pseudo-local character of the pseudo-differential operators.
Recall that $S^m_{1,0}$ is the set of symbols $p\in C^\infty(\xR^d \times \xR^d)$ such that
$$  \vert D_\xi^\alpha D_x^\beta p(x,\xi)\vert \leq C_{\alpha,\beta}(1+\vert \xi \vert)^{m-\vert \alpha \vert}\quad \forall \alpha, \beta \in \xN^d, \forall (x,\xi) \in \xR^d \times \xR^d .$$
\begin{prop}\label{pseudo}
Let $P$ be a pseudo-differential operator whose symbol belongs to the class~$S^m_{1,0}$. 
Then for every $s \in \xR$ there exists a constant $C>0$ such that 
$$
\Vert Pu \Vert_{H^s_{ul}(\xR^d)} \leq C \Vert u \Vert_{H^{s+m}_{ul}(\xR^d)},
$$
for every $u\in H^{s+m}_{ul}(\xR^d), $ where $C$ depends only on   semi-norms of the symbol in $S^m_{1,0}$.
\end{prop}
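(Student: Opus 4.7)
The plan is to reduce the uniformly local estimate to the classical continuity of pseudo-differential operators on $H^{s+m}(\xR^d)$, exploiting the pseudo-local character of $P$ to control the off-diagonal contributions. Recall that $\Vert Pu \Vert_{H^s_{ul}} = \sup_q \Vert \chi_q Pu \Vert_{H^s}$, so it suffices to bound $\Vert \chi_q Pu \Vert_{H^s}$ by a constant, independent of $q$, times $\Vert u \Vert_{H^{s+m}_{ul}}$.

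First, I would decompose $u$ using~\eqref{kiq} and write
\[
\chi_q P u = \sum_{k\in\xZ^d} \chi_q P \chi_k u.
\]
For the near-diagonal indices $\vert k-q\vert \le N_0$ (with $N_0$ a fixed integer, say $N_0 = 2$), there are $O(1)$ terms and each satisfies, by the classical $H^{s+m} \to H^s$ boundedness of $P$ and the fact that multiplication by $\chi_q$ is continuous on $H^s$,
\[
\Vert \chi_q P \chi_k u \Vert_{H^s} \le C \Vert \chi_k u \Vert_{H^{s+m}} \le C \Vert u \Vert_{H^{s+m}_{ul}}.
\]

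The main step is to show that the off-diagonal operators $\chi_q P \chi_k$ (with $\vert k-q\vert \ge N_0+1$, hence $\supp \chi_q \cap \supp \chi_k = \emptyset$) are smoothing and have operator norm which decays faster than any polynomial in $\vert k-q\vert$. The key input is that, for a symbol $p\in S^m_{1,0}$, the Schwartz kernel
\[
K(x,y) = \frac{1}{(2\pi)^d}\int e^{i(x-y)\cdot \xi}\, p(x,\xi)\, d\xi
\]
is smooth off the diagonal and rapidly decreasing: for every multi-indices $\alpha,\beta$ and every $N\in\xN$,
\[
\vert \partial_x^\alpha \partial_y^\beta K(x,y) \vert \le C_{\alpha,\beta,N}\, \vert x-y\vert^{-N}, \qquad \vert x-y\vert \ge 1,
\]
which is obtained by integrating by parts $L$ times in $\xi$ using $\vert x-y\vert^{-2L}(-\Delta_\xi)^L e^{i(x-y)\cdot\xi} = e^{i(x-y)\cdot \xi}$, choosing $L$ large enough so that $\Delta_\xi^L p \in L^1_\xi$. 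When $\vert k-q\vert$ is large, $\vert x-y\vert \ge c\vert k-q\vert$ on $\supp \chi_q(x)\chi_k(y)$, so the kernel $\chi_q(x)K(x,y)\chi_k(y)$ has $C^\infty$ entries rapidly decreasing in $\vert k-q\vert$. Interpreting $\chi_q P \chi_k$ via its kernel as a finite-rank-type mollifier on a fixed compact region, one gets
\[
\Vert \chi_q P \chi_k u \Vert_{H^s} \le C_N \langle k-q\rangle^{-N} \Vert \chi_k u \Vert_{H^{s+m}} \le C_N \langle k-q\rangle^{-N}\Vert u \Vert_{H^{s+m}_{ul}}
\]
for any $N$.

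Summing the two regimes, choosing $N > d$, I would conclude
\[
\Vert \chi_q Pu \Vert_{H^s} \le C \sum_{k\in\xZ^d} \langle k-q\rangle^{-N} \Vert u \Vert_{H^{s+m}_{ul}} \le C'\, \Vert u \Vert_{H^{s+m}_{ul}},
\]
with $C'$ independent of $q$. Taking the supremum over $q$ yields the claim. The constant only involves a finite number of $S^m_{1,0}$-seminorms of $p$, since both the classical $H^{s+m}\to H^s$ bound on $P$ and the kernel estimates used above depend only on such seminorms. The one point requiring care is the off-diagonal kernel bound; this is the main (and only real) obstacle, and is a standard integration-by-parts computation once the partition~\eqref{kiq} is used to localize.
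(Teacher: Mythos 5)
Your decomposition into near-diagonal blocks (handled by the classical $H^{s+m}\to H^s$ continuity of $P$) and far off-diagonal blocks (handled by integration by parts on the Schwartz kernel, using that $\vert x-y\vert \gtrsim \langle k-q\rangle$ on $\supp\chi_q(x)\chi_k(y)$), followed by summation of the $\langle k-q\rangle^{-N}$ tail, is exactly the argument in the paper. The one step you leave slightly implicit — converting the rapid decay of the localized kernel into an $H^{s+m}\to H^s$ operator bound of size $\langle k-q\rangle^{-N}$ — is carried out in the paper by estimating $D_x^\alpha(\chi_q P\chi_k u)(x)$ pointwise via the $H^{-(s+m)}$–$H^{s+m}$ duality pairing of $D_x^\alpha K(x,\cdot)$ against $\chi_k u$, and then using the compact support in $x$ to pass from an $L^\infty_x$ bound to an $L^2_x$ bound, summed over $\vert\alpha\vert\le n_0$ with $n_0\ge s$.
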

\begin{proof}
Write
\begin{equation}\label{pseudo:ul}
\chi_k Pu = \sum_{\vert k-q\vert \leq2} \chi_k P\chi_q u 
+  \sum_{\vert k-q\vert \geq 3} \chi_k P\chi_q u =: A+ \sum_{\vert k-q\vert \geq 3}B_{k,q}.
\end{equation}
The first sum is finite depending only on the dimension. To bound it in $H^s(\xR^d)$ we use the usual continuity of pseudo-differential operators. For the second one let $n_0\in \xN, n_0 \geq s$. We shall prove that
\begin{equation}\label{Bkq}
\Vert D_x^\alpha B_{k,q} \Vert_{L^2(\xR^d)} \leq \frac{C_d}{\langle k-q \rangle^{d+1}} \Vert u \Vert_{H^{s+m}_{ul} }, \quad \vert \alpha \vert \leq n_0 
\end{equation}
which will complete the proof of Proposition~\ref{pseudo}. 

Notice that, due to the presence of $\chi_k,$ we have $\Vert D_x^\alpha B_{k,q} \Vert_{L^2 } \leq C \Vert D_x^\alpha B_{k,q} \Vert_{L^\infty }$. We have
$$ D_x^\alpha B_{k,q}(x)= \big \langle D_x^\alpha K(x,\cdot), \chi_qu \big\rangle $$
with
$$K(x,y) = (2\pi)^{-d}\int_{\xR^d} e^{i(x-y)\cdot \xi} p(x,\xi) d\xi \, \chi_k(x) \widetilde{\chi}_q(y)$$
where $\widetilde{\chi} \in C_0^\infty(\xR^d), \widetilde{\chi}   =1$ on the support of $\chi$. 

Now on the support of $\chi_k(x) \widetilde{\chi}_q(y)$ we have $\vert x-y\vert \geq \delta \vert k-q\vert, \delta>0$. 
Integrating by parts $N$ times (with large $N$ depending on $d,n_0$) 
with the vector field $L = \sum_{j=1}^d \frac{x_j-y_j}{\vert x-y \vert^2}\partial_{\xi_j}$ 
we see that for all $\beta \in \xN^n$ we have
$$\vert D_x^\beta K(x,y)\vert \leq \frac{C_{d, \beta}}{\langle k-q \rangle^{d+1}} \vert \widetilde{\chi}_q(y) \vert, \quad \forall (x,y) \in \xR^d \times \xR^d.$$
It follows that 
\begin{align*}
\vert D_x^\alpha B_{k,q} (x) \vert &\leq   \Vert D_x^\alpha K(x,\cdot)\Vert_{H^{-(s+m)} }\Vert \chi_q u\Vert_{ H^{ s+m } } \\
& \leq  \frac{C_{d, \beta}}{\langle k-q \rangle^{d+1}}\Vert \chi_q u\Vert_{ H^{ s+m } } 
\end{align*}
which proves~\eqref{Bkq} and hence concludes the proof.
\end{proof}
In a particular case the proof above gives the  following more precise result.
\begin{prop}\label{pseudoh}
Let $m \in \xR$, $h(\xi)= \widetilde{h}\big (\frac{\xi}{\vert \xi \vert}\big)\vert \xi \vert^m \psi(\xi)$ 
where $\widetilde{h} \in C^\infty(\xS^{d-1})$ and $\psi \in C ^\infty(\xR^d)$ is such that $\psi(\xi) = 1 $ 
if $\vert \xi \vert \geq 1$, $\psi(\xi) = 0$  if $\vert \xi \vert \leq \mez$. 
Then for every $\mu \in \xR$ there exists a constant $C$ such that
$$\Vert h(D_x) u \Vert_{H^\mu_{ul}(\xR^d)} \leq C \Vert \widetilde{h} \Vert_{H^{d+1}(\xS^{d-1})} \Vert  u \Vert_{H^{\mu+m}_{ul}(\xR^d)} $$
for all $u \in  H^{\mu+m}_{ul}(\xR^d)  $. 
\end{prop}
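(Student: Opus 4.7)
The proof will follow the same decomposition as in Proposition~\ref{pseudo}, writing
$\chi_k h(D_x) u = \sum_{|k-q|\le 2}\chi_k h(D_x)\chi_q u + \sum_{|k-q|\ge 3}\chi_k h(D_x)\chi_q u$,
but with the constants tracked carefully to obtain the sharp dependence on $\|\tilde h\|_{H^{d+1}(\xS^{d-1})}$. For the near-diagonal (finite) sum, I would use the global Plancherel estimate: since $|h(\xi)|\le \|\tilde h\|_{L^\infty(\xS^{d-1})}\langle\xi\rangle^m$, one has $\|h(D_x)v\|_{H^\mu(\xR^d)}\le C\|\tilde h\|_{L^\infty(\xS^{d-1})}\|v\|_{H^{\mu+m}(\xR^d)}$. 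The Sobolev embedding on the sphere, valid because $d+1>(d-1)/2$, then gives $\|\tilde h\|_{L^\infty(\xS^{d-1})}\le C\|\tilde h\|_{H^{d+1}(\xS^{d-1})}$, and applying this to the $O_d(1)$ terms with $|k-q|\le 2$ yields the desired bound from the definition of $H^{\mu+m}_{ul}$.

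For the far-diagonal part, the essential step is a kernel estimate. The kernel $K(z)=(2\pi)^{-d}\int e^{iz\cdot\xi}h(\xi)\,d\xi$ is smooth away from $z=0$, and integration by parts gives $|z|^N|K(z)|\le C\|D_\xi^N h\|_{L^1(\xR^d)}$ for any $N\in\xN$. Passing to polar coordinates $\xi=r\omega$, each $\xi$-derivative either produces an $r^{-1}$ factor together with an angular derivative of $\tilde h$, or differentiates $r^m\psi(r\omega)$; one then obtains
\[
\|D_\xi^N h\|_{L^1(\xR^d)}\le C\|\tilde h\|_{H^N(\xS^{d-1})}\int_{1/2}^\infty r^{m-N+d-1}\,dr,
\]
the radial integral converging as soon as $N>m+d$. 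This yields $|K(z)|\le C_N|z|^{-N}\|\tilde h\|_{H^N(\xS^{d-1})}$ for $|z|\ge 1$. When $m<1$ we may take $N=d+1$ directly.

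To cover all $m\in\xR$ with the fixed angular norm $H^{d+1}$, I would factor
\[
h(\xi)=\bigl(|\xi|^{m-m_0}\widetilde\psi(\xi)\bigr)\cdot\bigl(\tilde h(\xi/|\xi|)|\xi|^{m_0}\psi(\xi)\bigr)
\]
with $m_0<1$ and $\widetilde\psi$ a radial cutoff equal to $1$ on $\supp\psi$. The first factor defines a Fourier multiplier $H^{\mu+m}\to H^{\mu+m_0}$ whose norm depends only on $m,m_0,d$ (and not on $\tilde h$), so $h(D_x)=h_1(D_x)h_2(D_x)$ reduces the problem to $h_2$, which has order $m_0<1$ and the same angular factor $\tilde h$. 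Once the kernel bound $|K(z)|\le C\|\tilde h\|_{H^{d+1}(\xS^{d-1})}|z|^{-(d+1)}$ for $|z|\ge 1$ is in hand, the argument proceeds exactly as in the proof of Proposition~\ref{pseudo}: for $|k-q|\ge 3$ the product $\chi_k(x)\chi_q(y)$ forces $|x-y|\gtrsim\langle k-q\rangle$, so $\|\chi_k h(D_x)\chi_q u\|_{H^\mu}\le C\langle k-q\rangle^{-(d+1)}\|\tilde h\|_{H^{d+1}(\xS^{d-1})}\|u\|_{H^{\mu+m}_{ul}}$, and summation over $q$ gives a convergent geometric-type series bounded by $C\|\tilde h\|_{H^{d+1}(\xS^{d-1})}\|u\|_{H^{\mu+m}_{ul}}$.

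The main obstacle is precisely this constraint: a naive application of $N$-fold integration by parts requires $N>m+d$, which exceeds $d+1$ as soon as $m\ge 1$, so one cannot expect the full statement to follow from a single integration by parts with the $H^{d+1}$ norm alone. The factorization $h=h_1h_2$ above is the decisive trick, peeling off the large-order radial part into an $\tilde h$-independent multiplier and reducing everything to the case $m_0<1$. A secondary, more routine issue is handling arbitrary $\mu\in\xR$: since $\chi_k h(D_x)\chi_q u$ is supported in a unit cube, its $H^\mu$ norm is controlled for $\mu>0$ by $L^2$ norms of finitely many derivatives (which satisfy analogous kernel estimates as $K$ itself, since differentiating in $x$ only multiplies the symbol by $i\xi$ and the radial factor can be absorbed by choosing $m_0$ slightly larger), and for $\mu\le 0$ is dominated by the $L^2$ bound already obtained.
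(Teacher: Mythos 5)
Your proposal is correct, and in fact supplies details the paper does not. The paper offers no standalone proof of Proposition \ref{pseudoh}: it only remarks that ``the proof above [of Proposition \ref{pseudo}] gives the following more precise result,'' leaving the reader to track constants. You have correctly observed that this is not automatic. A literal rerun of the Proposition \ref{pseudo} argument integrates by parts $N$ times and needs $N>m+d$ for the $\xi$-integral to converge; since the $N$-fold integration by parts produces up to $N$ angular derivatives of $\widetilde{h}$, the resulting constant is controlled by $\|\widetilde{h}\|_{W^{N,1}(\xS^{d-1})}$, which is strictly stronger than $\|\widetilde{h}\|_{H^{d+1}(\xS^{d-1})}$ as soon as $m\ge 1$ (a case the paper actually uses, e.g.\ for symbols of order $1$ in the Dirichlet--Neumann analysis). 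Moreover $d+1$ is exactly the regularity that makes the Weyl-type sum $\sum_\nu \lambda_\nu^{-(2d+2)}\lambda_\nu^{d+1}$ converge in the proof of Theorem \ref{calc:symb}, so the exponent cannot be relaxed. Your factorization $h=h_1h_2$ with $h_1$ a radial, $\widetilde{h}$-independent multiplier of class $S^{m-m_0}_{1,0}$ (handled by Proposition \ref{pseudo}) and $h_2$ of order $m_0<1$ is precisely the missing ingredient; together with the near-diagonal Plancherel/Sobolev estimate and the Schur-type off-diagonal summation, this closes the gap in the order $m$. Implicit in your bound $\|D_\xi^N h\|_{L^1(\xR^d)}\lesssim \|\widetilde{h}\|_{H^N(\xS^{d-1})}\int r^{m-N+d-1}dr$ is the elementary but important step $\|\partial_\omega^\beta\widetilde{h}\|_{L^1(\xS^{d-1})}\le C\|\partial_\omega^\beta\widetilde{h}\|_{L^2(\xS^{d-1})}$, which is what lets you land on the $H^{d+1}$ rather than the $W^{d+1,\infty}$ norm; that is exactly right.

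The one point that deserves tightening is the reduction in $\mu$. The phrase ``the radial factor can be absorbed by choosing $m_0$ slightly larger'' is not a correct instruction as written. What actually happens is that each $x$-derivative $D_x^\alpha$ multiplies the symbol by $\xi^\alpha=\omega^\alpha|\xi|^{|\alpha|}$; the angular factor $\omega^\alpha\widetilde{h}$ still satisfies $\|\omega^\alpha\widetilde{h}\|_{H^{d+1}(\xS^{d-1})}\le C_\alpha\|\widetilde{h}\|_{H^{d+1}(\xS^{d-1})}$ since $\omega\mapsto\omega^\alpha$ is smooth on the sphere, but the radial factor $|\xi|^{|\alpha|}$ must be pushed into $h_1$, which means choosing $m_0$ smaller (e.g.\ $m_0<1-n_0$ where $n_0\ge\mu$ is the number of $x$-derivatives you need). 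A cleaner route, which avoids any awkwardness, is to conjugate once and for all: write
$$h(D_x)=\langle D_x\rangle^{-\mu}\,\Bigl(\langle D_x\rangle^{\mu}\,h(D_x)\,\langle D_x\rangle^{-(\mu+m)}\Bigr)\,\langle D_x\rangle^{\mu+m}.$$
The two outer Bessel multipliers are $\widetilde{h}$-independent and bounded between the relevant $H^\sigma_{ul}$ spaces by Proposition \ref{pseudo}, while the middle symbol is $\widetilde{h}(\xi/|\xi|)\,(|\xi|/\langle\xi\rangle)^m\psi(\xi)$, a symbol of order $0$ with the same angular part. One is thus reduced to the $L^2_{ul}\to L^2_{ul}$ bound for a zero-order multiplier, where $d+1$ integrations by parts and the $L^2\subset L^1$ embedding on $\xS^{d-1}$ give precisely the kernel decay $|z|^{-(d+1)}\|\widetilde{h}\|_{H^{d+1}(\xS^{d-1})}$ needed for the off-diagonal sum. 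With this clarification the argument is complete and, as noted, goes beyond what the paper itself records.
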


 We shall use the following result when $p(\xi) = \langle \xi \rangle$ and $p(\xi)  = \vert \xi \vert^2$.
 \begin{lemm}\label{lemmsmooth}
Let $d\geq 1, r>0, m\in\xR$. Let $p\in S^r_{1,0}(\xR^d), a\in S^m_{1,0}(\xR^d)$ two symbols with constant coefficients. We assume that one can find $c_0>0$ such that for all $\xi \in \xR^d$ we have $p(\xi)  \geq c_0 \vert \xi \vert^r$. Then for all $\sigma\in \xR$ and every interval $I = [0,T],$ one can find a positive constant $C$ such that, with $H^s   = H^s (\xR^d)$
\begin{equation}\label{smoothing}
 \Vert e^{-tp(D)}a(D)u\Vert_{L^\infty(I, H^\s)_{ul}} + \Vert e^{-tp(D)}a(D)u\Vert_{L^2(I, H^{\s+\frac{r}{2}})_{ul}} \leq C \Vert u \Vert_{H^{\s+m}_{ul}} 
 \end{equation}
 for all $u\in H^{\s+m}_{ul}$.
\end{lemm}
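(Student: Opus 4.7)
The plan is to combine the classical parabolic smoothing estimates (via Plancherel in $x$) with the pseudo-local strategy of the proof of Proposition~\ref{pseudo}. First I would establish the classical $(\xR^d$-based) versions of both estimates: since $|e^{-tp(\xi)}a(\xi)|\le C\langle\xi\rangle^m$ uniformly in $t\ge 0$, for any $g\in H^{\sigma+m}(\xR^d)$
$$\lA e^{-tp(D)}a(D)g\rA_{H^\sigma}\le C\lA g\rA_{H^{\sigma+m}}$$
uniformly in $t$, while the $L^2_t$--smoothing estimate follows from
$\int_0^T e^{-2tp(\xi)}\,dt\le \min(T,(2c_0|\xi|^r)^{-1})$
together with $|a(\xi)|\le C\langle\xi\rangle^m$ and Plancherel, yielding
$\int_0^T \lA e^{-tp(D)}a(D)g\rA_{H^{\sigma+r/2}}^2\,dt\le C(T)\lA g\rA_{H^{\sigma+m}}^2.$

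Then, following Proposition~\ref{pseudo}, I would decompose $u=\sum_q \chi_q u$ and split, for each $k\in\xZ^d$,
$$\chi_k e^{-tp(D)}a(D)u = \sum_{|k-q|\le 2}\chi_k e^{-tp(D)}a(D)(\chi_q u) + \sum_{|k-q|\ge 3}\chi_k e^{-tp(D)}a(D)(\chi_q u).$$
The near sum contains finitely many terms; each is controlled in both $L^\infty_t H^\sigma$ and $L^2_t H^{\sigma+r/2}$ by the classical estimates applied to $\chi_q u\in H^{\sigma+m}(\xR^d)$, combined with the fact that multiplication by $\chi_k$ is bounded on $H^\sigma$.

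For the far sum I would use the convolution kernel $K_t(z)=(2\pi)^{-d}\int e^{iz\cdot\xi}e^{-tp(\xi)}a(\xi)\,d\xi$. The key claim is that for every multi-index $\alpha$ and every $N\in\xN$,
$$|\partial_z^\alpha K_t(z)|\le C_{N,\alpha,T}\,|z|^{-N},\qquad |z|\ge 1,\ t\in[0,T].$$
Granted this, on the support of $\chi_k(x)\chi_q(y)$ we have $|x-y|\ge c|k-q|$, and the integral representation together with $\lA \chi_q u\rA_{L^1}\le C\lA u\rA_{H^{\sigma+m}_{ul}}$ yields
$\lA \chi_k e^{-tp(D)}a(D)(\chi_q u)\rA_{H^{n_0}}\le C_N |k-q|^{-N}\lA u\rA_{H^{\sigma+m}_{ul}}$
for any $n_0$, uniformly in $t\in[0,T]$. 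Choosing $N>d$ and summing over $q$ produces the $L^\infty_t$ bound directly and the $L^2_t$ bound after multiplying by $\sqrt{T}$.

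The main obstacle, and the one nontrivial computation, is the uniform-in-$t$ kernel decay. Integrating by parts $N$ times with $L=-i|z|^{-2}(z\cdot\nabla_\xi)$ and expanding by Leibniz, each derivative in $\xi$ hitting $e^{-tp(\xi)}$ produces a factor $t\,\partial^\gamma p$, with $\partial^\gamma p\in S^{r-|\gamma|}_{1,0}$; the elementary bound $u^j e^{-c_0 u}\le C_j$ applied to $u=tc_0|\xi|^r$ converts these factors into the harmless estimate $e^{-tp(\xi)}t^j|\xi|^{jr}\le C_j$. For $|\xi|\ge 1$ the resulting integrand is bounded by $C\langle\xi\rangle^{m+|\alpha|-N}$ (integrable once $N>d+m+|\alpha|$), while on $|\xi|\le 1$ the bound $t^j\le T^j$ suffices since the region is compact. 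This is the only point where the constant genuinely depends on $T$, and it feeds into $C$ in the statement.
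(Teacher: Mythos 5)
Your strategy — classical Plancherel-based smoothing for the near terms plus kernel decay for the far terms — is essentially identical to the paper's. Both proofs split $\chi_k e^{-tp(D)}a(D)u$ into a near sum $|k-q|\le 2$ and a far sum $|k-q|\ge 3$, and both obtain $O(\langle k-q\rangle^{-N})$ decay for the far sum by integrating by parts with the vector field $L$ and controlling derivatives of $e^{-tp(\xi)}$ via $(t|\xi|^r)^j e^{-c_0 t|\xi|^r}\le C_j$, exactly as you describe. (For the first term of \eqref{smoothing} the paper simply invokes Proposition~\ref{pseudo}, noting that $e^{-tp(\xi)}a(\xi)$ has $S^m_{1,0}$ seminorms uniform in $t\in[0,T]$; your re-derivation amounts to the same computation.)

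There is, however, a genuine gap in how you close the far-sum estimate. You bound
$\int K_t(x-y)\chi_q u(y)\,dy$
using $\lA \chi_q u\rA_{L^1}\le C\lA u\rA_{H^{\sigma+m}_{ul}}$. This inequality fails when $\sigma+m<0$ — a case the lemma must cover, since $\sigma,m\in\xR$ are arbitrary — because then $\chi_q u\in H^{\sigma+m}$ need not be an $L^1$ function at all and its $L^1$ norm is not controlled by $\lA u\rA_{H^{\sigma+m}_{ul}}$. The paper instead treats the convolution as a duality pairing: it fixes $N_1\in\xN$ with $N_1\ge -(\sigma+m)$ and proves $\lA K(t,x,\cdot)\rA_{H^{N_1}}\le C_N\langle k-q\rangle^{-N}$ (this is \eqref{est:K}), after which
$|F(t,x)|\le \lA K(t,x,\cdot)\rA_{H^{-(\sigma+m)}}\lA\chi_q u\rA_{H^{\sigma+m}}$
gives the needed bound for all $\sigma,m$. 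Your pointwise kernel estimate $|\partial_z^\alpha K_t(z)|\le C_{N,\alpha,T}|z|^{-N}$, applied to the kernel multiplied by a cutoff supported near $\operatorname{supp}\chi_q$, in fact already controls the relevant $H^{N_1}$ norm, so the ingredient you need is present — you just need to run the final step as a Sobolev duality pairing rather than an $L^1$ bound.
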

\begin{proof}
The estimate of the first term in \eqref{smoothing} follows from Proposition \ref{pseudo} since $e^{-tp(D)}a(D)$ is a pseudo-differential operator of order $m$ whose symbol has semi-norms in $S^m_{1,0}$ bounded by  constants depending only on $T$. Let us look at the second term. Set 
$$I_q = \Vert \chi_q e^{-tp(D)}a(D)u\Vert_{L^2(I, H^{\s+\frac{r}{2}})}.$$
One can write 
\begin{equation}\label{Aq-Bq}
\left\{
\begin{aligned}
 I_q &= A_q + B_q\\
A_q&= \sum_{\vert k-q\vert \leq 2} \Vert \chi_q e^{-tp(D)}a(D)\chi_ku\Vert_{L^2(I, H^{\s+\frac{r}{2}})},\\
B_q &= \sum_{\vert k-q\vert \geq 3} \Vert \chi_q e^{-tp(D)}a(D)\chi_ku\Vert_{L^2(I, H^{\s+\frac{r}{2}})}.
\end{aligned}
\right.
 \end{equation}
Since the number of terms in the sum defining $A_q$ is bounded by a fixed constant (depending only on $d$) using a classical computation we can write
\begin{equation}
\begin{aligned}\label{est:Aq}
 A_q  & \leq C_1 \sup_{k \in \xZ^d} \Vert   e^{-tp(D)}a(D)\chi_ku\Vert_{L^2(I, H^{\s+\frac{r}{2}})}\\
  A_q &\leq C_2 \sup_{k \in \xZ^d}\Vert a(D)\chi_ku\Vert_{ H^{\s} } \leq C_3 \sup_{k \in \xZ^d}\Vert \chi_ku\Vert_{ H^{\s+m}} \leq C_3\Vert u\Vert_{ H_{ul}^{\s+m}}. 
 \end{aligned}
  \end{equation}
  Let us look at the term $B_q$. Let $N_0 $ be an integer such that $N_0 \geq \sigma + \frac{r}{2}$. Then 
  $B_q$ is bounded by a finite sum of terms of the form 
  $$  \sum_{\vert k-q\vert \geq 3} \Vert (D^\alpha\chi_q) (D^\beta e^{-tp(D)}a(D))\chi_ku\Vert_{L^2(I,L^2)} $$
  with $\vert \alpha\vert + \vert \beta \vert \leq N_0$. Due to the presence of the function $ D^\alpha\chi_q$, $B_q$ is therefore bounded by a finite number of terms of the form 
   $$  \sum_{\vert k-q\vert \geq 3} \Vert (D^\alpha\chi_q) (D^\beta e^{-tp(D)}a(D))\chi_ku\Vert_{L^2(I,L^\infty).} $$
Now we can write
\begin{equation}\label{def:F}
 F(t,x):= (D^\alpha\chi_q) (D^\beta e^{-tp(D)}a(D))\chi_ku(x) = \langle K(t,x,\cdot),  (\chi_k u)(\cdot) \rangle
 \end{equation}
with
$$K(t,x,y) = (2\pi)^{-d} (D^\alpha\chi_q)(x)  \widetilde{\chi}_k(y)\int e^{i(x-y)\cdot \xi} q(t,\xi)\,  d\xi$$
where  $\widetilde{\chi} \in C_0^\infty(\xR^d)$ is equal to one on the support of $\chi $ and $q(t,\xi) = \xi^\beta e^{-tp(\xi)}a(\xi)$.  It follows that for fixed $(t,x) $ we have 
\begin{equation}\label{est:F-i}
 \vert F(t, x) \vert  \leq \Vert K(t, x,\cdot) \Vert_{H^{-(\sigma +m)}} \Vert \chi_k u \Vert_{ H^{ \sigma +m}}.
 \end{equation}
Let $N_1 \in \xN$ be fixed such that $N_1 \geq -(\sigma +m)$. We shall show that for every $N \in \xN$ one can find $C_N = C_N(T)>0$ such that for every $(t,x)\in I\times \xR^d$ we have 
\begin{equation}\label{est:K}
 \Vert K(t, x,\cdot) \Vert_{H^{N_1}} \leq \frac{C_N}{\langle k-q\rangle^N}\vert  (D^\alpha\chi_q)(x) \vert.
 \end{equation}
 
Indeed for $\vert \mu \vert \leq N_1, D_y^\mu K(x,y)$ is a finite linear combination of terms of the form
$$
J(t,x,y):= (D_x^\alpha\chi_q)(x)  (D_y^\nu\widetilde{\chi}_k)(y)\int e^{i(x-y)\cdot \xi} \xi^\lambda q(t,\xi) d\xi
$$
where $\vert \nu \vert + \vert \lambda \vert = \vert \mu \vert$.
 
We notice  that for all $\gamma \in \xN^d$ we have
  \begin{equation}\label{est:symbq}
 \vert D_\xi ^\gamma (\xi^\lambda q(t,\xi))\vert \le C_\gamma(T) \langle \xi \rangle^{N_0+ N_1 + m - \vert \gamma \vert}.
 \end{equation}

Now let $N\in \xN $ be  such that $N \geq \max(d+1, N_0 + N_1+m + d+1) $ and $ \gamma \in \xN^d$ with $ \vert \gamma \vert = N$. Then 
$$(x-y)^\gamma J(t,x,y) =  (D^\alpha\chi_q)(x)  (D_y^\nu\widetilde{\chi}_k)(y)\int e^{i(x-y)\cdot \xi} (-D_\xi)^\gamma(\xi^\lambda  q(t,\xi)) \, d\xi.$$
It follows from \eqref{est:symbq} that
 $$\vert (x-y)^\gamma J(t,x,y)\vert  \leq C_1(T) \vert (D^\alpha\chi_q)(x)\vert \vert  (D_y^\nu\widetilde{\chi}_k)(y)\vert. $$
Now since $\vert k-q \vert \geq 3,$ on the support of $(D^\alpha\chi_q)(x)  (D_y^\nu\widetilde{\chi}_k)(y)$ we have $\vert x-y \vert \geq \frac{1}{3} \vert k-q \vert$. It follows that 
$$\vert J(t,x,y) \vert \leq \frac{C_2(T)}{ \langle k-q \rangle^N} \vert (D^\alpha\chi_q)(x)\vert \vert  (D_y^\nu\widetilde{\chi}_k)(y)\vert $$
 which proves \eqref{est:K}. According to \eqref{est:F-i} and \eqref{est:K} we obtain
 $$\Vert F\Vert_{L^2(I,L^\infty)} \leq  \frac{C_3(T)}{ \langle k-q \rangle^N} \Vert \chi_k u \Vert_{ H^{ \sigma +m}} \leq \frac{C_3(T)}{ \langle k-q \rangle^N} \Vert  u \Vert_{ H_{ul}^{ \sigma +m}}$$
 which implies that   $B_q \leq C_4(T ) \Vert   u \Vert_{ H_{ul}^{ \sigma +m}}$.  Combined with \eqref{Aq-Bq} and \eqref{est:Aq} this proves  the  estimate of the second term in \eqref{smoothing}.
\end{proof}
  
\begin{coro}\label{e^z} 
Let $m\in \xR$ and $a\in S^m_{1,0}(\xR^d)$. 
Then for every $\sigma \in \xR$ there exists $C>0$ such that 
\begin{equation*} 
\Vert e^{\delta z \langle D_x\rangle}  a(D_x) u \Vert_{X_{ul}^{\sigma- \mez}(-1,0)} \leq C \Vert u \Vert_{H_{ul}^{\sigma+m-\mez}(\xR^d)}
\end{equation*}
for every $\delta>0$ and every $u\in H^{\sigma + m- \mez}_{ul}(\xR^d)$.
\end{coro}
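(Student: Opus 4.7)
The plan is to recognize $e^{\delta z \langle D_x\rangle}$ as the backward-in-$z$ semigroup generated by $-\delta \langle D_x \rangle$ and apply Lemma~\ref{lemmsmooth} directly. Concretely, set $p(\xi) = \delta \langle \xi \rangle$. Then $p \in S^1_{1,0}(\xR^d)$ (since $\delta$ is a fixed positive constant) and
$$
p(\xi) \;=\; \delta \langle \xi \rangle \;\geq\; \delta |\xi|,
$$
so the coercivity hypothesis $p(\xi)\ge c_0|\xi|^r$ of Lemma~\ref{lemmsmooth} is satisfied with $r=1$ and $c_0 = \delta$.

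Next, perform the change of variable $t = -z$, which sends $z\in (-1,0)$ bijectively to $t\in (0,1)$ and transforms $e^{\delta z \langle D_x\rangle}$ into $e^{-t p(D_x)}$. Applying Lemma~\ref{lemmsmooth} with $I=[0,1]$, with the given $a\in S^m_{1,0}$, and with the parameter called $\sigma$ there chosen to equal $\sigma - \mez$ here, we obtain
\begin{equation*}
\Vert e^{-tp(D_x)} a(D_x) u \Vert_{L^\infty(I, H^{\sigma-\mez})_{ul}}
+ \Vert e^{-tp(D_x)} a(D_x) u \Vert_{L^2(I, H^{\sigma})_{ul}}
\;\leq\; C \Vert u \Vert_{H^{\sigma+m-\mez}_{ul}(\xR^d)}.
\end{equation*}
Reverting the substitution $t\mapsto -z$, the left-hand side is precisely $\Vert e^{\delta z \langle D_x\rangle} a(D_x) u \Vert_{X^{\sigma-\mez}_{ul}(-1,0)}$ by the very definition of $X^{\sigma-\mez}_{ul}(-1,0) = L^\infty((-1,0),H^{\sigma-\mez})_{ul}\cap L^2((-1,0),H^{\sigma})_{ul}$, and the corollary follows.

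There is no real obstacle: the corollary is a direct specialization of Lemma~\ref{lemmsmooth} to the symbol $p(\xi)=\delta\langle\xi\rangle$ after a trivial reflection in the $z$ variable. The only mild subtlety worth noting is that the constant $C$ produced this way depends on $\delta$ (through both the $S^1_{1,0}$ semi-norms of $p$ and through $c_0=\delta$), which is acceptable since $\delta$ is a fixed positive parameter; in applications in the paper, $\delta$ is chosen once for all in Lemma~\ref{diffeo}.
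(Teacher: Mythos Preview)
Your argument is correct and is exactly the approach intended by the paper: Corollary~\ref{e^z} is stated immediately after Lemma~\ref{lemmsmooth} without proof, and the paper explicitly announces before that lemma that it will be applied with $p(\xi)=\langle\xi\rangle$. Your observation about the possible $\delta$-dependence of the constant is accurate and harmless, since in every use of the corollary in the paper $\delta$ is a fixed positive number.
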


\subsection{An interpolation Lemma.}
We shall use the following   interpolation lemma for which we refer to \cite{Lions} Th\' eor\`eme 3.1.

\begin{lemm}\label{lions}
Let $J = (-1,0)$ and $t\in \xR$.  Let  $f \in L_z^2(J, H^{t+ \mez}(\xR^d))$ be such that $\partial_z f \in L_z^2(J, H^{t-\mez}(\xR^d))$. Then $f \in C_z^0([-1,0], H^{t}(\xR^d))$ and there exists an absolute constant $C>0$ such that
$$
\sup_{z\in [-1,0]} \Vert f(z,\cdot)\Vert_{H^{t}(\xR^d)} \leq C
\Vert f \Vert _{L_z^2(J, H^{t+ \mez}(\xR^d))} +C\Vert \partial_zf \Vert_{L_z^2(J, H^{t- \mez}(\xR^d))}.
$$
 \end{lemm}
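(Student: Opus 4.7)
The plan is to reduce to a one-dimensional pointwise bound via the Fourier transform in $x$ and then use the identity $|g(z)|^2 = |g(z_0)|^2 + 2\RE\int_{z_0}^z g(s)\overline{g'(s)}\,ds$ combined with a weighted Cauchy--Schwarz inequality. Taking Fourier transform in $x$, set $g_\xi(z) \defn \hat f(z,\xi)$, so that the two hypotheses become
\begin{equation*}
\int_\xR \langle \xi\rangle^{2t+1}\int_J |g_\xi(z)|^2\,dz\,d\xi <\infty,\qquad
\int_\xR \langle \xi\rangle^{2t-1}\int_J |g'_\xi(z)|^2\,dz\,d\xi <\infty,
\end{equation*}
and the desired conclusion becomes $\sup_{z\in[-1,0]}\int_\xR \langle\xi\rangle^{2t}|g_\xi(z)|^2\,d\xi<\infty$, with continuity in $z$. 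So it suffices to prove, for each fixed $\xi$, the pointwise bound
\begin{equation*}
\sup_{z\in[-1,0]} \langle\xi\rangle^{2t}|g_\xi(z)|^2 \le C\Big(\langle\xi\rangle^{2t+1}\int_J |g_\xi|^2\,dz + \langle\xi\rangle^{2t-1}\int_J |g'_\xi|^2\,dz\Big)
\end{equation*}
with an absolute constant $C$, and then integrate in $\xi$.

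For the scalar bound, fix $z\in[-1,0]$ and write for any $z_0\in J$,
\begin{equation*}
|g_\xi(z)|^2 = |g_\xi(z_0)|^2 + 2\RE \int_{z_0}^z g_\xi(s)\overline{g'_\xi(s)}\,ds.
\end{equation*}
Averaging in $z_0$ over $J$ (whose length is $1$) and applying Cauchy--Schwarz to the double integral, I would split the product $g_\xi\cdot g'_\xi$ as $(\langle\xi\rangle^{1/2}g_\xi)\cdot(\langle\xi\rangle^{-1/2}g'_\xi)$ to balance the weights, obtaining
\begin{equation*}
|g_\xi(z)|^2 \le \int_J |g_\xi|^2\,ds + \langle\xi\rangle\int_J |g_\xi|^2\,ds + \langle\xi\rangle^{-1}\int_J |g'_\xi|^2\,ds.
\end{equation*}
Multiplying by $\langle\xi\rangle^{2t}$ gives exactly the pointwise bound above. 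Integrating in $\xi$ and taking the supremum in $z$ yields the desired estimate.

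For the $C^0$-regularity in $z$, the standard argument is to approximate $f$ by smooth functions on $\overline J$ (e.g.\ by convolution in $z$ after extending across the endpoints), for which continuity is clear, and then pass to the limit using that the estimate just proved controls the uniform norm in $z$ by the two hypothesized norms; the approximations then form a Cauchy sequence in $C^0([-1,0];H^t)$. The only mild subtlety is the extension/convolution step at the endpoints, but this is routine since translation is continuous on $L^2(\xR;H^{t\pm 1/2})$. The estimate itself is the essential point, and it reduces entirely to the elementary scalar inequality above after Plancherel.
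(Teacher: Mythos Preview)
Your proof is correct. The paper does not actually prove this lemma; it simply cites Th\'eor\`eme~3.1 in Lions--Magenes~\cite{Lions}. Your direct argument---Fourier transform in $x$ to reduce to the scalar inequality $\sup_z |g|^2 \le \int_J |g|^2 + 2\int_J |g||g'|$, followed by the weighted Cauchy--Schwarz splitting $|g||g'| = (\langle\xi\rangle^{1/2}|g|)(\langle\xi\rangle^{-1/2}|g'|)$---is the standard elementary proof of this trace estimate and works without issue. The only point one might add for completeness is that for a.e.\ $\xi$ the function $g_\xi$ belongs to $H^1(J)$ (by Fubini, since both $g_\xi$ and $g_\xi'$ are in $L^2(J)$), which justifies the fundamental-theorem-of-calculus identity; but this is routine.
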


\subsection{Para-differential operators}

\subsubsection{Symbolic calculus}
In this section we quote some results 
which concern the symbolic calculus for para-differential operators in the framework of the uniformly local Sobolev spaces. Of course, here, the theory for the classical Sobolev spaces will be  assumed to be known (see \cite{MePise}).

The following technical lemma will be used in the sequel.
 \begin{lemm}\label{techpara}
Let $\chi \in C_0^\infty(\xR^d)$ and $\widetilde{\chi} \in C_0^\infty(\xR^d)$ be equal to one on the support of $\chi$. Let $\psi, \theta \in \mathcal{S}(\xR^d)$. For every $m, \sigma  \in \xR$ one can find a constant $C>0$ such that
\begin{equation}\label{est:tech1}
 \sum_{j\geq -1} \Vert \chi_k \psi(2^{-j}D)((1-\widetilde{\chi}_k)u) \theta(2^{-j}D)v \Vert_{H^m(\xR^d)} \leq C \Vert u \Vert_{H^\sigma_{ul}(\xR^d)}  \Vert v\Vert_{L^\infty(\xR^d)}.
 \end{equation}
For every $m, \sigma, t \in \xR$ one can find a constant $C>0$ such that
  \begin{equation}\label{est:tech2}
 \sum_{j\geq -1} \Vert \chi_k \psi(2^{-j}D)((1-\widetilde{\chi}_k)u) \theta(2^{-j}D)v \Vert_{H^m(\xR^d)} \leq C \Vert u \Vert_{H^\sigma_{ul}(\xR^d)}  \Vert v\Vert_{H^t_{ul}(\xR^d)}.
 \end{equation}
 \end{lemm}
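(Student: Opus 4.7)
The plan is to exploit the support separation between $\chi_k$ and $1-\widetilde{\chi}_k$ together with the Schwartz decay of $\check{\psi}$ to show that the factor $F_j := \chi_k\psi(2^{-j}D)\bigl((1-\widetilde{\chi}_k)u\bigr)$ is supported in a fixed ball around $k$ and satisfies, for every $L,M\in\xN$,
\begin{equation*}
\|F_j\|_{C^L(\xR^d)} \le C_{L,M}\, 2^{-jM}\|u\|_{H^\sigma_{ul}(\xR^d)},
\end{equation*}
with $C_{L,M}$ independent of $k$ and $j\ge -1$. Once this is granted, since $F_jG_j$ (with $G_j := \theta(2^{-j}D)v$) is compactly supported in $\mathrm{supp}(\chi_k)$, a standard product estimate majorizes $\|F_jG_j\|_{H^m}$ by suitable norms of $F_j$ and $G_j$, and summation in $j$ yields the two inequalities.

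To prove the key estimate on $F_j$, I would write $\phi_j := \mathcal{F}^{-1}[\psi(2^{-j}\cdot)] = 2^{jd}\check{\psi}(2^j\cdot)$, decompose $u = \sum_q \chi_q u$ using the partition \eqref{kiq}, and introduce $\underline{\chi}\in C_0^\infty(\xR^d)$ with $\underline{\chi}=1$ on $\mathrm{supp}(\chi)$, so that $\chi_q u = \chi_q\underline{\chi}_q u$. Then
\begin{equation*}
F_j(x) = \chi_k(x)\sum_q \bigl\langle \phi_j(x-\cdot)(1-\widetilde{\chi}_k)\chi_q,\underline{\chi}_q u\bigr\rangle,
\end{equation*}
where the sum is effectively restricted to indices $q$ for which $\mathrm{supp}(\chi_q)\cap \mathrm{supp}(1-\widetilde{\chi}_k)\neq\emptyset$, which forces $|k-q|\gtrsim 1$. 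On the relevant supports $|x-y|\gtrsim \langle k-q\rangle$, and Schwartz decay gives
\begin{equation*}
|D^\alpha \phi_j(z)| \le C_{\alpha,N}\,2^{j(d+|\alpha|-N)}|z|^{-N}\qquad(|z|\gtrsim 1,\ j\ge -1),
\end{equation*}
so choosing $N$ arbitrarily large, the $H^{-\sigma}$ norm of the compactly supported test function $\phi_j(x-\cdot)(1-\widetilde{\chi}_k)\chi_q$ (which is dominated by its $C^{\lceil\max(0,-\sigma)\rceil}$ norm) is bounded by $C\,2^{-jM}\langle k-q\rangle^{-N'}$ for any $M,N'$. Combined with $\|\underline{\chi}_q u\|_{H^\sigma}\le C\|u\|_{H^\sigma_{ul}}$, summation over $q$ gives the claim.

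Next, for $G_j = \theta(2^{-j}D)v$: in the first case the convolution kernel $2^{jd}\check{\theta}(2^j\cdot)$ is uniformly bounded in $L^1$, so $\|G_j\|_{L^\infty}\le C\|v\|_{L^\infty}$, and each derivative costs at most $2^j$: $\|G_j\|_{C^L}\le C\,2^{jL}\|v\|_{L^\infty}$. In the second case, Proposition~\ref{pseudo} applied to the symbol $\theta(2^{-j}\xi)\langle\xi\rangle^{m-t}$, whose $S^0_{1,0}$-seminorms are controlled by $2^{j\max(0,m-t)}$, gives $\|G_j\|_{H^m_{ul}}\le C\,2^{j\max(0,m-t)}\|v\|_{H^t_{ul}}$. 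Since $F_jG_j$ is supported in $\mathrm{supp}(\chi_k)$, a classical localization argument yields
\begin{equation*}
\|F_jG_j\|_{H^m(\xR^d)} \le C_m\|F_j\|_{C^{L_m}}\cdot
\begin{cases}
\|G_j\|_{C^{L_m}} & \text{(first case)},\\
\|G_j\|_{H^m_{ul}} & \text{(second case)},
\end{cases}
\end{equation*}
for some $L_m$ depending only on $m$ and $\sigma$. Inserting the estimates above and choosing $M$ so large that $M-L_m-\max(0,m-t)>0$ produces a geometric series, and summation in $j\ge -1$ concludes the proof.

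The main obstacle is that when $\sigma<0$ the distribution $u$ is not a function and the integral defining $F_j$ must be understood as a duality pairing. The reduction to compactly supported pieces $\underline{\chi}_q u\in H^\sigma(\xR^d)$ via the partition of unity is precisely designed to handle this issue, and the polynomial decay in $\langle k-q\rangle$ makes the resulting series over $q$ absolutely convergent uniformly in $k$. Everything else is routine bookkeeping of powers of $2^j$ against the arbitrary Schwartz decay of $\check{\psi}$.
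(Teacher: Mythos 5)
Your proposal is correct and follows essentially the same route as the paper: exploit the support separation between $\chi_k$ and $1-\widetilde{\chi}_k$ together with the rapid decay of the kernel of $\psi(2^{-j}D)$, pair against localized pieces of $u$ in $H^\sigma$ to extract arbitrary decay in $2^j$, and bound the $\theta(2^{-j}D)v$ factor either in $L^\infty$ or via a pseudo-differential estimate. The only differences are organizational: the paper reduces the $H^m$ norm to $L^\infty$ and expands derivatives via Leibniz at the outset whereas you defer this and carry $C^L$ norms of $F_j$, and where the paper cites its Lemma~\ref{ulN} you rederive the same $\langle k-q\rangle^{-N}$ decay by hand with the partition of unity.
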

\begin{proof}
We may assume $m\in \xN$. Let us call $A_{k,j}$ the term inside the sum in the left hand side of~\eqref{est:tech1}. Due to   $\chi_k,$ the term  $A_{k,j}$ is a bounded by finite sum of terms of the form 
$$A_{k,j,\alpha} := 2^{jm}\Vert (D^{\alpha_1}\chi_k) \psi_{\alpha_2}(2^{-j}D)((1-\widetilde{\chi}_k)u) \widetilde{\chi}_k \theta_{\alpha_3}(2^{-j}D)v \Vert_{L^\infty }$$
where $\vert \alpha_1 \vert+\vert \alpha_2\vert+\vert \alpha_3 \vert \leq m $ and $\psi_{\alpha_2} = x^{\alpha_2}\psi, \theta_{\alpha_3} = x^{\alpha_3}\theta$.
We are going to show that for large $N\in \xN$ we have
\begin{align*}
&(i) \quad \Vert (D^{\alpha_1}\chi_k) \psi_{\alpha_2}(2^{-j}D)((1-\widetilde{\chi}_k)u) \Vert_{L^\infty } \leq C_N 2^{jM_1(d,\sigma)} 2^{-jN}  \Vert u \Vert_{H^\sigma_{ul} }\\
&(ii) \quad \Vert \widetilde{\chi}_k \theta_{\alpha_3}(2^{-j}D)v \Vert_{L^\infty } \leq C\Vert v\Vert_{L^\infty }\\
&(iii) \quad \Vert \widetilde{\chi}_k \theta_{\alpha_3}(2^{-j}D)v \Vert_{L^\infty } \leq  C2^{jM_2(d,t)}\Vert v\Vert_{H^t_{ul} }
\end{align*}
where, as indicated, $M_j$ are fixed constants depending only on $d,\sigma,t$.
 Then the lemma will follow from these estimates.
 
 To prove $(i)$  we write
 \begin{align*}
 &(D^{\alpha_1} \chi_k(x))  \psi_{\alpha_2}(2^{-j}D)((1-\widetilde{\chi}_k)u)(x) =2^{jd}2^{-jN}\\
 &\times  \big \langle (2^j \vert x-\cdot \vert)^N \widehat{\psi_{\alpha_2}}(2^j(x-\cdot )), \frac{\langle x-\cdot  \rangle^N}{\vert x-\cdot  \vert^N} (D^{\alpha_1}\chi_k(x)) ((1-\widetilde{\chi}_k(\cdot )) \langle x-\cdot  \rangle^{-N} u \big \rangle. 
 \end{align*}
 The function $y \mapsto \frac{\langle x-y \rangle^N}{\vert x-y \vert^N} (D^{\alpha_1}\chi_k(x)) ((1-\widetilde{\chi}_k(y))$ belongs to $W^{\infty, \infty} $ with semi-norms uniformly bounded in $x$. Using the duality $H^{-\sigma}-H^{\sigma}$ we deduce that
\begin{equation*}
\Vert (D^{\alpha_1}\chi_k) \psi_{\alpha_2}(2^{-j}D)((1-\widetilde{\chi}_k)u) \Vert_{L^\infty } 
\leq C_N 2^{jM_1(d,\sigma)} 2^{-jN}  \Vert\langle x-\cdot  \rangle^{-N} u \Vert_{H^\sigma},
\end{equation*}
and we conclude using Lemma \ref{ulN}.
 
 The estimate $(ii)$ is easy. To prove $(iii)$ we take $\check{\chi} \in C_0^\infty(\xR^d)$ equal to one on the support of $\widetilde{\chi}$ and we write
 $$\Vert \widetilde{\chi}_k \theta_{\alpha_3}(2^{-j}D)v \Vert_{L^\infty } \leq \Vert \widetilde{\chi}_k \theta_{\alpha_3}(2^{-j}D)\check{\chi}_kv \Vert_{L^\infty } + \Vert \widetilde{\chi}_k \theta_{\alpha_3}(2^{-j}D)(1-\check{\chi})v \Vert_{L^\infty }.$$
 The second term is bounded exactly by the same method as $(i)$. For the first one we write
 $$\widetilde{\chi}_k \theta_{\alpha_3}(2^{-j}D)\check{\chi}_kv(x) = 2^{jd} \widetilde{\chi}_k(x) \big \langle \widehat{ \theta_{\alpha_3}}(2^j(x-\cdot)), \check{\chi}_k(\cdot)v(\cdot) \big\rangle $$
 and we use the $H^{-t}-H^t$ duality.
\end{proof}
\begin{rema}
$(i)$ Notice that the sames estimates in \eqref{est:tech1}, \eqref{est:tech2} hold if in the left hand side one $2^{-j}$ is replaced by $2^{-j-j_0}$ where $j_0\in \xZ$ is fixed.

$(ii)$ Notice also that in the above proof we have proved that for all real numbers $m,\sigma $, all $N \in \xN$ and all $\psi \in C_0^\infty(\xR^d)$ one can find a positive constant $C_{N,m,\sigma}$ such that 
\begin{equation}\label{est:deltaj}
\Vert \chi_k \psi(2^{-j}D)(1-\widetilde{\chi}_k) u \Vert_{H^m(\xR^d)} \leq C_{N,m,\sigma} 2^{-jN} \Vert u \Vert_{H^\sigma_{ul}(\xR^d)} 
\end{equation}
for every $j\in \xN$ and every $k\in \xZ^d$.

\end{rema}

We introduce now the para-differential calculus.

\begin{defi}\label{def:symb}
Given $m\in \xR, \rho \in [0,1], \Gamma_\rho^m(\xR^d)$ denotes the space of locally bounded functions on $\xR^d \times \xR^d\setminus\{0\}$ which are $C^\infty$ with respect to $\xi,$   such that for all $\alpha \in \xN^d$ the function $x\mapsto \partial_\xi^\alpha a(x,\xi)$ belongs to $W^{\rho,\infty}(\xR^d)$ and there exists a constant $C_\alpha >0$ such that
$$\Vert \partial_\xi^\alpha a (\cdot,\xi) \Vert_{W^{\rho,\infty}(\xR^d)} \leq C_\alpha (1+ \vert \xi \vert)^{m-\vert \alpha \vert}, \quad \forall \vert \xi \vert \geq \mez.$$
For such $a$ we set 
\begin{equation}\label{seminorm}
M^m_\rho(a)= \sup_{\vert \alpha \vert \leq 2d+2}\sup_{\vert \xi \vert \geq \mez} \Vert (1+ \vert \xi \vert)^{ \vert \alpha \vert -m}\partial_\xi^\alpha a (\cdot,\xi)\Vert_{W^{\rho,\infty}(\xR^d)}.
\end{equation}
 
Then $\dot{\Gamma}^m_\rho(\xR^d)$ denotes the subspace of $\Gamma^m_\rho(\xR^d)$ which consists of symbols $a(x,\xi)$ which are homogeneous of degree $m$ with respect to $\xi$.
\end{defi}
Given a symbol $a $ we denote by $T_a$ the associated para-differential operator which is given by the formula
$$\widehat{T_a u}(\xi) = (2 \pi)^{-d}\int_{\xR^d} \theta(\xi-\eta, \eta) \hat{a}(\xi-\eta, \eta) \psi(\eta) \hat{u}(\eta) d \eta $$
where $\hat{a}(\zeta,\eta) = \int_{\xR^d}e^{-ix\cdot \zeta} a(x,\eta) dx$ is the Fourier transform of $a$ with respect to the first variable, $\psi, \theta$ are two fixed $C^\infty$ functions on $\xR^d$ such that for $ 0< \eps_1 < \eps_2$ small enough 
\begin{align}
\psi(\eta) &= 1 \text{ if }\,  \vert \eta \vert \geq 1, \quad \psi(\eta)  = 0 \text{ if } \, \vert \eta \vert \leq \mez \label{cond:psi}\\
\theta(\zeta, \eta) &=1 \text{ if }\,  \vert \zeta \vert \leq \eps_1 \vert \eta \vert, \quad \theta(\zeta, \eta)   =0 \text{ if } \, \vert \zeta \vert \geq \eps_2 \vert \eta \vert \label{cond:theta}.
\end{align}
Notice that if the symbol $a$ is independent of $\xi$   the associated operator $T_a$ is called a paraproduct.
 \begin{theo}\label{calc:symb}
Let $m,m' \in \xR, \rho \in [0, 1]$.

$(i)$  If $a \in \Gamma^m_0(\xR^d),$ then for all $\mu \in \xR$  $T_a$ is continuous from $H_{ul}^\mu(\xR^d)$ to  $H_{ul}^{\mu-m}(\xR^d)$ with norm bounded by $C M_0^m(a)$.

$(ii)$ If $a \in \Gamma^m_\rho(\xR^d)$, $b\in \Gamma^{m'}_\rho(\xR^d)$ then, for all $\mu \in \xR$, $T_a T_b - T_{ab}$ is continuous from $H_{ul}^{\mu }(\xR^d)$ to $H_{ul}^{\mu-m-m' +\rho}(\xR^d)$ with norm bounded by 
$$
C(M^m_\rho(a)M^{m'}_0(b) +M^m_0(a)M^{m'}_\rho(b)).
$$ 

$(iii)$ Let $a \in \Gamma^m_\rho(\xR^d)$. Denote by $(T_a)^*$ the adjoint operator of $T_a$ and by $\overline{a}$ the complex conjugate of $a$. Then for all $\mu \in \xR$ $(T_a)^* -T_{\overline{a}}$ is continuous from  $H_{ul}^{\mu }(\xR^d)$ to  $H_{ul}^{\mu-m+ \rho }(\xR^d)$ with norm bounded by $CM_\rho^m(a)$.
\end{theo}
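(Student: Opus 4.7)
The strategy is to reduce each assertion to its classical Sobolev-space counterpart (due to Bony--Meyer--M\'etivier) by localization, exploiting the almost-diagonal nature of the paradifferential operators. The main tool for handling the error produced by localization is Lemma~\ref{techpara} (together with its consequence~\eqref{est:deltaj}), which captures the rapid decay of the kernels of the dyadic blocks $\psi(2^{-j}D)$ off the diagonal. Throughout, we fix $\chi_q$ as in~\eqref{kiq} and pick $\widetilde{\chi} \in C_0^\infty(\xR^d)$ equal to one on the support of $\chi$.

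For (i), the plan is to estimate $\lA \chi_k T_a u \rA_{H^{\mu-m}}$ uniformly in $k\in \xZ^d$. I would split $u = \widetilde{\chi}_k u + (1-\widetilde{\chi}_k) u$. The contribution $\chi_k T_a(\widetilde{\chi}_k u)$ is controlled by the classical $H^\mu \to H^{\mu-m}$ continuity of $T_a$ applied to $\widetilde{\chi}_k u$, yielding a bound by $C M_0^m(a)\lA \widetilde{\chi}_k u\rA_{H^\mu} \le C' M_0^m(a) \lA u\rA_{H^\mu_{ul}}$. For the remainder $\chi_k T_a((1-\widetilde{\chi}_k)u)$, I expand $T_a$ as a dyadic sum $\sum_j S_{j-N}a(x,\cdot) \psi_j(D) \Delta_j u$ (with suitable spectral cutoffs $\psi_j,\theta_j$ coming from~\eqref{cond:psi}--\eqref{cond:theta}) and apply~\eqref{est:tech1} from Lemma~\ref{techpara} term by term, gaining arbitrary polynomial decay in $j$ that absorbs all growth factors of the symbol.

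For (ii) and (iii), the same localization scheme works. For the composition, I would write
\begin{equation*}
\chi_k(T_a T_b - T_{ab}) u = \chi_k (T_a T_b - T_{ab})(\widetilde{\chi}_k u) + \chi_k (T_a T_b)((1-\widetilde{\chi}_k) u) - \chi_k T_{ab}((1-\widetilde{\chi}_k)u).
\end{equation*}
The first summand is estimated by the classical composition theorem applied to $\widetilde{\chi}_k u \in H^\mu$ and controlled by $\lA u\rA_{H^\mu_{ul}}$ with the expected constant $M^m_\rho(a) M^{m'}_0(b) + M^m_0(a) M^{m'}_\rho(b)$. The last term is handled as in (i), already proved. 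For the ``nested'' term $\chi_k T_a T_b((1-\widetilde{\chi}_k)u)$, I would insert another cutoff $\widetilde{\widetilde{\chi}}_k$ adapted to $\chi_k$ before the outer $T_a$: write $T_a T_b((1-\widetilde{\chi}_k)u) = T_a[\widetilde{\widetilde{\chi}}_k T_b((1-\widetilde{\chi}_k)u)] + T_a[(1-\widetilde{\widetilde{\chi}}_k) T_b((1-\widetilde{\chi}_k)u)]$; the first piece uses~\eqref{est:deltaj} on the inside before applying (i) to the outside, and the second piece uses~\eqref{est:deltaj} on the outside directly. The adjoint statement (iii) reduces to checking $\langle (T_a)^* u, v\rangle = \langle u, T_a v\rangle$ on well-localized test functions and then pairing with the classical theorem after the same splitting.

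The main obstacle is the bookkeeping of the off-diagonal error terms in the composition: because $T_a$ and $T_b$ both act by almost-diagonal kernels, the cross-cutoff terms produce two ``levels'' of off-diagonal contributions that must be estimated simultaneously. The crucial point is that each dyadic block $\psi(2^{-j}D)((1-\widetilde{\chi}_k)u)$ on the support of $\chi_k$ is smoothing of arbitrary order by~\eqref{est:deltaj}, with constants independent of $k$, which together with the rapidly decaying gains of Lemma~\ref{techpara} compensate any polynomial loss coming from the symbol and keep the sums over $j$ absolutely convergent. Once the off-diagonal estimates are arranged, the on-diagonal parts are literally the classical theorems, so the uniformly local statements follow with the stated operator-norm bounds.
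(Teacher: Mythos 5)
Your overall strategy — localize with $\chi_k$, split $u=\widetilde{\chi}_k u + (1-\widetilde{\chi}_k)u$, use the classical para-differential theorem on the diagonal part and the off-diagonal smallness from Lemma~\ref{techpara}/\eqref{est:deltaj} on the rest — is the right one and matches the paper's. However, there is a genuine gap in the step where you ``expand $T_a$ as a dyadic sum and apply \eqref{est:tech1} term by term.'' Estimate~\eqref{est:tech1} is formulated for \emph{products} of Littlewood--Paley pieces, i.e.\ for terms of the shape $\chi_k\,\psi(2^{-j}D)\big((1-\widetilde\chi_k)u\big)\cdot \theta(2^{-j}D)v$, which is precisely what arises when the symbol depends on $x$ only (paraproducts). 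For a genuine para-differential symbol $a(x,\xi)$, the $j$-th block of $T_a\big((1-\widetilde\chi_k)u\big)$ is $(S_{j-N}a)(x,D)\,\Delta_j\big((1-\widetilde\chi_k)u\big)$, where $(S_{j-N}a)(x,D)$ is an \emph{operator}, not a multiplication operator; Lemma~\ref{techpara} does not apply to it directly, and \eqref{est:deltaj} alone cannot be composed with this nonlocal operator without additional work.

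The paper closes exactly this gap by reducing to the paraproduct case: it first treats (for $m=0$, $a=a(x)$) the paraproduct $T_a$ with Lemma~\ref{techpara}; then treats tensor symbols $a(x,\xi)=b(x)h(\xi)$ by the factorization $T_a=T_b\,\psi(D_x)h(D_x)$ together with Proposition~\ref{pseudoh} (continuity of the Fourier multiplier $h(D)$ on $H^\mu_{ul}$ with operator norm controlled by $\lA\widetilde h\rA_{H^{d+1}(\xS^{d-1})}$); and finally expands a general homogeneous symbol in spherical harmonics, $a(x,\xi)=\sum_\nu b_\nu(x)h_\nu(\xi)$, using the Weyl asymptotics for the eigenvalues $\lambda_\nu$ and the decay $\lA b_\nu\rA_{L^\infty}\lesssim \lambda_\nu^{-(2d+2)}M^m_0(a)$ to make the series over $\nu$ absolutely summable. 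Without this separation of the $x$- and $\xi$-dependence, your term-by-term application of~\eqref{est:tech1} does not go through; you should insert this spherical-harmonic reduction (or an equivalent way of disentangling the symbol) before invoking Lemma~\ref{techpara}. The same remark applies to the off-diagonal pieces in (ii)--(iii), although there the extra cutoff $\underline{\chi}_k$ you propose to insert between $T_a$ and $T_b$ is indeed the right device (compare Lemma~\ref{commutation} in the paper).
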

\begin{proof}
All these points are proved along the same lines. We shall only prove the first one and for simplicity we shall consider symbols in $\dot{\Gamma}^m_\rho(\xR^d)$. We begin by the case where $a$ is a bounded function. Then we write
$$\chi_k T_a u = \chi_k T_a (\widetilde{\chi}_k u) + \chi_kT_a ((1- \widetilde{\chi}_k)u)$$
where $\widetilde{\chi}\in C_0^\infty(\xR^d), \widetilde{\chi} =1$  on the support of $\chi$.
By the classical theory we have
$$ \Vert \chi_k T_a (\widetilde{\chi}_k u) \Vert_{H^\mu } \leq C \Vert a \Vert_{L^\infty } \Vert  \widetilde{\chi}_k u  \Vert_{H^\mu } \leq C \Vert a \Vert_{L^\infty }\Vert u \Vert_{H_{ul}^\mu  }.$$
Now we write
$$ \chi_kT_a ((1- \widetilde{\chi}_k)u)= \sum_{j} \chi_k\{\psi(2^{-j}D)a\} \{ \varphi(2^{-j}D)((1- \widetilde{\chi}_k)u)\}.$$
and  the desired estimate follows immediately from the first inequality in Lemma~\ref{techpara}.

We now assume $a(x,\xi) = b(x) h(\xi)$ where $h(\xi) = \vert \xi \vert^m\widetilde{h}\big (\frac{\xi}{\vert \xi \vert}\big)$ with $\widetilde{h} \in C^\infty(\xS^{d-1})$. Then directly from the definition we have $T_a = T_b \psi(D_x) h(D_x) $ and our estimate in $(i)$ follows from the first step and from the estimate proved in Proposition \ref{pseudoh}
$$\Vert h(D) v \Vert_{H^\mu } \leq C\Vert \widetilde{h}\Vert_{H^{d+1}(\xS^{d-1})}\Vert u \Vert_{H^{ \mu+ m } }.$$

In the last step we introduce $(\widetilde{h}_\nu)_{\nu \in \xN^*}$ an orthonormal basis of $L^2(\xS^{d-1})$ consisting of eigenfunctions of the (self adjoint) Laplace Beltrami operator $\Delta_\omega = \Delta_{\xS^{d-1}} $ on  $L^2(\xS^{d-1})$ i.e. $\Delta_\omega \widetilde{h}_\nu = \lambda^2_\nu \widetilde{h}_\nu$. By the Weyl formula we know that $\lambda_\nu \sim c\nu^{\frac{1}{d}}$. Setting $h_\nu = \vert \xi \vert^m \widetilde{h} (\omega )$, 
$\omega = \frac{\xi}{\vert \xi \vert}$ when $ \xi \neq 0,$ we can write
$$ a(x,\xi) = \sum_{\nu \in \xN^*}b_\nu(x) h_\nu(\xi) \quad \text{where} \quad b_\nu(x) = \int_{\xS^{d-1}} a(x,\omega) \overline{\widetilde{h}_\nu(\omega)} d\omega.$$
Since
$$\lambda_\nu^{2d +2}b_\nu(x) = \int_{\xS^{d-1}} \Delta_\omega^{d+1}a(x,\omega) \overline{\widetilde{h}_\nu(\omega)} d\omega $$
 we deduce that 
\begin{equation} 
\Vert b_\nu \Vert_{L^\infty(\xR^d)} \leq C \lambda_\nu^{-(2d+2)} M^m_0(a). 
\end{equation}
Moreover there exists a positive constant $K$ such that for all $\nu \geq 1$
\begin{equation}\label{hnu}
\Vert \widetilde{h}_\nu \Vert_{H^{d+1}(\xS^{d-1})} \leq K \lambda_\nu^{ d+1}.
\end{equation}
 Now using the  steps above and Proposition~\ref{pseudoh} we obtain
 \begin{align*}
  \Vert T_a u \Vert_{H^\mu_{ul} } &\leq \sum_{\nu \geq 1} \Vert T_{b_\nu}\psi(D_x)h_\nu(D_x) u \Vert_{H^\mu_{ul} }\\
  &\leq C \sum_{\nu \geq 1}\Vert b_\nu \Vert_{L^\infty(\xR^d)}\Vert \widetilde{h}_\nu\Vert_{H^{d+1}(\xS^{d-1})} \Vert u \Vert_{H^{\mu + m}_{ul} }\\
  &\leq M^m_0(a) \Vert u \Vert_{H^{\mu + m}_{ul} } \sum_{ \nu \geq 1}\lambda_\nu^{-(d+1)} 
   \end{align*}
   and $\lambda_\nu^{-(d+1)} \sim c \nu^{-(1+ \frac{1}{d})}$.
\end{proof}
\subsubsection{Paraproducts}
We have the following result of paralinearization of a product.
 \begin{prop}\label{paralin}
 Given two functions $a\in H ^\alpha_{ul}(\xR^d), u\in H^\beta_{ul}(\xR^d)$ with $\alpha + \beta >0$ we can write
 $$au = T_au +T_ua +R(a,u)$$
 with 
 \begin{equation}\label{R(a,u)}
  \Vert R(a,u) \Vert_{H_{ul}^{\alpha + \beta - \frac{d}{2}}(\xR^d)} \leq C\Vert a\Vert_{ H^\alpha_{ul}(\xR^d)} \Vert u\Vert_{ H^\beta_{ul}(\xR^d)}.
  \end{equation}
\end{prop}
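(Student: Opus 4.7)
The identity $au = T_a u + T_u a + R(a,u)$ is nothing but Bony's decomposition, serving to \emph{define} $R(a,u)$; the content of the proposition is therefore the bilinear bound \eqref{R(a,u)}. I will take for granted the classical counterpart of this estimate on $\xR^d$: for $\alpha+\beta > 0$,
\[
\Vert R(f,g)\Vert_{H^{\alpha+\beta-d/2}(\xR^d)} \le C \Vert f \Vert_{H^\alpha(\xR^d)}\Vert g\Vert_{H^\beta(\xR^d)},
\]
which is a standard result from paradifferential calculus.

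The plan is to fix $k \in \xZ^d$, bound $\Vert \chi_k R(a,u)\Vert_{H^{\alpha+\beta-d/2}(\xR^d)}$ uniformly in $k$, and take the supremum. Choose $\widetilde{\chi} \in C_0^\infty(\xR^d)$ equal to one on $\supp\chi$; since $\chi_k(1-\widetilde{\chi}_k) \equiv 0$, one has the pointwise identity $\chi_k au = \chi_k (\widetilde{\chi}_k a)(\widetilde{\chi}_k u)$. Substituting this in $\chi_k R(a,u) = \chi_k au - \chi_k T_a u - \chi_k T_u a$ and comparing with $\chi_k R(\widetilde{\chi}_k a,\widetilde{\chi}_k u)$ yields
\[
\chi_k R(a,u) = \chi_k R(\widetilde{\chi}_k a, \widetilde{\chi}_k u) - \chi_k\bigl[T_{(1-\widetilde{\chi}_k)a}\, u + T_{\widetilde{\chi}_k a}((1-\widetilde{\chi}_k)u) + T_{(1-\widetilde{\chi}_k)u}\, a + T_{\widetilde{\chi}_k u}((1-\widetilde{\chi}_k)a)\bigr].
\]
The leading term is harmless: the classical estimate applied to the compactly supported functions $\widetilde{\chi}_k a \in H^\alpha(\xR^d)$ and $\widetilde{\chi}_k u \in H^\beta(\xR^d)$ gives directly $\Vert \chi_k R(\widetilde{\chi}_k a,\widetilde{\chi}_k u)\Vert_{H^{\alpha+\beta-d/2}} \le C \Vert a\Vert_{H^\alpha_{ul}}\Vert u\Vert_{H^\beta_{ul}}$.

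The four cross terms are where the real work lies, and all four are handled by the same mechanism; consider the representative $\chi_k T_{(1-\widetilde{\chi}_k)a} u = \sum_j \chi_k S_{j-N}((1-\widetilde{\chi}_k)a)\, \Delta_j u$. Since $\chi_k$ and $1-\widetilde{\chi}_k$ have disjoint supports, the estimate \eqref{est:deltaj} supplies, for every $N' \in \xN$ and every Sobolev index $m$, the rapid decay
\[
\Vert \chi_k\, S_{j-N}((1-\widetilde{\chi}_k)a)\Vert_{H^m(\xR^d)} \le C_{N',m,\alpha}\, 2^{-jN'}\, \Vert a\Vert_{H^\alpha_{ul}(\xR^d)}.
\]
Combined with the spectral localization $\operatorname{spec}(\Delta_j u) \subset \{|\xi|\sim 2^j\}$ and Bernstein-type estimates (which trade powers of $2^j$ against Sobolev regularity on $\Delta_j u$, at the cost of a prefactor bounded by $\Vert u\Vert_{H^\beta_{ul}}$), one obtains a series in $j$ that is absolutely summable in $H^{\alpha+\beta-d/2}$, with total mass $\le C\Vert a\Vert_{H^\alpha_{ul}}\Vert u\Vert_{H^\beta_{ul}}$. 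The three remaining cross terms are treated identically after inserting an auxiliary smooth bump $\widehat{\chi}$ with $\widetilde\chi=1$ on $\supp \widehat\chi$ and $\chi_k\widehat\chi_k=\chi_k$, which lets \eqref{est:deltaj} (or equivalently Lemma~\ref{techpara}) be applied to whichever factor carries the $(1-\widetilde{\chi}_k)$ cutoff. The principal obstacle is precisely this last step: paraproducts are non-local, so $(1-\widetilde{\chi}_k)$ does not simply annihilate against $\chi_k$, and one must exploit \eqref{est:deltaj} to convert the geometric separation of supports into summable dyadic decay.
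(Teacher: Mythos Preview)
Your proof is correct and follows essentially the same strategy as the paper: localize, quote the classical remainder estimate for the main term $\chi_k R(\widetilde\chi_k a,\widetilde\chi_k u)$, and dispose of the nonlocal cross-terms via the pseudo-locality of dyadic multipliers encoded in Lemma~\ref{techpara}. The only cosmetic difference is that the paper splits $a$ and $u$ directly inside the dyadic formula $R(a,u)=\sum_j\sum_{|l-j|\le 1}\Delta_j a\,\Delta_l u$ (yielding three cross-terms immediately of the form in \eqref{est:tech2}), whereas you pass through the identity $R=au-T_au-T_ua$ and land on four paraproduct cross-terms; for those, note that your argument via \eqref{est:deltaj} plus Bernstein is slightly awkward because $\chi_k$ multiplies the full product rather than a single factor, and the clean way to finish is simply to invoke \eqref{est:tech2} directly, which you do mention at the end.
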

\begin{proof}
We have 
$$R(a,u) = \sum_{j\geq -1} \sum_{\vert k-j\vert \leq 1} \varphi(2^{-j}D)a \cdot\varphi(2^{-k}D)u.$$
We take $\chi \in C_0^\infty(\xR^d)$ satisfying \eqref{kiq},  $\widetilde{\chi} \in C_0^\infty(\xR^d)$ equal to one on the support of $\chi$ and we write $a= \widetilde{\chi}_k a + (1-\widetilde{\chi}_k)a, u=  \widetilde{\chi}_k u + (1-\widetilde{\chi}_k)u$. It follows that
$$\chi_kR(a,u) = \chi_kR(\widetilde{\chi}_ka,\widetilde{\chi}_ku) + \chi_kS_k(a,u).$$
The term $ \chi_kR(\widetilde{\chi}_ka,\widetilde{\chi}_ku)$ is estimated by the right hand side of \eqref{R(a,u)} using Theorem 2.11 in \cite{ABZ3}. The remainder $\chi_kS_k(a,u)$ is estimated using \eqref{est:tech2}.
\end{proof}

\begin{prop}\label{a-Ta}
Let $\gamma, r, \mu$ be real numbers such that 
$$r+\mu>0, \quad \gamma \leq r, \quad \gamma<r+\mu - \frac{d}{2}.$$
 There exists a constant $C>0$ such that
 \begin{align}\label{est:a-Ta}
 \Vert (a-T_a)u \Vert_{H^\gamma_{ul}(\xR^d)} \leq C \Vert a \Vert_{H^r_{ul}(\xR^d)}\Vert  u \Vert_{H^\mu_{ul}(\xR^d)} 
 \end{align}
whenever the right hand side is finite.
\end{prop}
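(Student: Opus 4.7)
The plan is to reduce the uniformly local estimate to the analogous classical Sobolev estimate via Bony's decomposition, combined with a near/far localization argument of the type already used several times in Section~\ref{sec.3}. Since $r+\mu>0$, Proposition~\ref{paralin} applies and we may write
$$(a - T_a) u = T_u a + R(a,u),$$
so it suffices to bound each term on the right in $H^\gamma_{ul}(\xR^d)$ by $C\Vert a\Vert_{H^r_{ul}}\Vert u\Vert_{H^\mu_{ul}}$.

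The remainder is immediate: Proposition~\ref{paralin} gives $\Vert R(a,u)\Vert_{H^{r+\mu-\frac{d}{2}}_{ul}}\le C\Vert a\Vert_{H^r_{ul}}\Vert u\Vert_{H^\mu_{ul}}$, and the hypothesis $\gamma<r+\mu-\frac{d}{2}$ yields the trivial embedding $H^{r+\mu-\frac{d}{2}}_{ul}\hookrightarrow H^\gamma_{ul}$ (by definition of the $ul$-norm, since $\Vert \chi_q v\Vert_{H^\gamma}\le \Vert \chi_q v\Vert_{H^{r+\mu-d/2}}$), which handles this piece.

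For the paraproduct $T_u a$, fix $k\in\xZ^d$ and let $\widetilde{\chi}\in C_0^\infty(\xR^d)$ equal $1$ on the support of $\chi$. Using the bilinearity of the map $(u,a)\mapsto T_u a$ and decomposing $u=\widetilde{\chi}_k u+(1-\widetilde{\chi}_k)u$ and $a=\widetilde{\chi}_k a+(1-\widetilde{\chi}_k)a$, we split
$$\chi_k T_u a=\chi_k T_{\widetilde{\chi}_k u}(\widetilde{\chi}_k a)+\chi_k T_{(1-\widetilde{\chi}_k)u}(\widetilde{\chi}_k a)+\chi_k T_{\widetilde{\chi}_k u}((1-\widetilde{\chi}_k)a)+\chi_k T_{(1-\widetilde{\chi}_k)u}((1-\widetilde{\chi}_k)a).$$
For the diagonal term, the classical paraproduct estimate in $\xR^d$ (valid under the stated conditions $\gamma\le r$, $\gamma<r+\mu-\frac{d}{2}$; see~\cite{MePise}) gives
$$\Vert\chi_k T_{\widetilde{\chi}_k u}(\widetilde{\chi}_k a)\Vert_{H^\gamma}\le C\Vert\widetilde{\chi}_k u\Vert_{H^\mu}\Vert\widetilde{\chi}_k a\Vert_{H^r}\le C\Vert u\Vert_{H^\mu_{ul}}\Vert a\Vert_{H^r_{ul}}.$$
For each of the three cross terms, I would expand the paraproduct via the dyadic representation implicit in \eqref{cond:psi}--\eqref{cond:theta},
$$T_f g=\sum_{j\ge -1}\Psi_j(D)f\cdot\Phi_j(D)g,$$
for suitable admissible frequency cutoffs $\Psi_j,\Phi_j$, and then invoke estimate~\eqref{est:tech2} of Lemma~\ref{techpara} (which is symmetric in its two factors) with $m=\gamma$ and an appropriate choice of the free indices $\sigma,t$ (namely $\sigma=\mu$, $t=r$, or vice-versa, depending on which factor carries the $1-\widetilde{\chi}_k$). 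This produces the uniform bound $C\Vert u\Vert_{H^\mu_{ul}}\Vert a\Vert_{H^r_{ul}}$ for each cross term. Taking the supremum over $k\in\xZ^d$ then yields~\eqref{est:a-Ta}.

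The main obstacle is the cross-term bookkeeping: one must verify that the dyadic series defining the paraproduct, with one factor cut off far from the support of $\chi_k$, genuinely falls under the hypotheses of Lemma~\ref{techpara} (rather than a close variant), and that the resulting $j$-sum is absolutely convergent. Both points are straightforward consequences of the rapid-decay mechanism explained in the proof of Lemma~\ref{techpara} (essentially integration by parts in the convolution kernel against $(1-\widetilde{\chi}_k)$), analogous to what was done for Theorem~\ref{calc:symb}; once this verification is in place, no additional ideas are needed beyond those already developed in Section~\ref{sec.3}.
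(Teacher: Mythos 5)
Your proof is correct but follows a genuinely different decomposition path from the paper's. You apply Bony's decomposition at the outset, writing $(a-T_a)u = T_u a + R(a,u)$, dispose of the remainder in one stroke via Proposition~\ref{paralin} combined with the embedding $H^{r+\mu-d/2}_{ul}\hookrightarrow H^\gamma_{ul}$, and handle $T_u a$ by a near/far split on both factors, using the classical Euclidean paraproduct bound for the diagonal piece and estimate~\eqref{est:tech2} of Lemma~\ref{techpara} for the three cross pieces. The paper localizes first, writing $\chi_k(a-T_a)u = \chi_k(\widetilde{\chi}_k a - T_{\widetilde{\chi}_k a})\widetilde{\chi}_k u + R_{1,k}u + R_{2,k}u$, applies the classical (non-$ul$) bound for $(a-T_a)u$, namely Proposition~2.12 in~\cite{ABZ3}, to the fully localized term, and uses Bony's decomposition only inside $R_{1,k}$ before invoking Lemma~\ref{techpara}. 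Both arguments ultimately rest on the same two ingredients---Lemma~\ref{techpara} for pieces with a far cut-off, and a classical non-$ul$ estimate for the near-diagonal piece---but yours substitutes the already-established $ul$ remainder estimate of Proposition~\ref{paralin} for the classical $(a-T_a)$ bound, which yields a cleaner reduction at the mild cost of a four-way rather than three-way split of the remaining term. The cross-term bookkeeping you flag does check out: the dyadic shift $2^{-j}\to 2^{-j-j_0}$ inherent in pairing a low-frequency cutoff with a Littlewood--Paley block is covered by the Remark following Lemma~\ref{techpara}, and when the far cut-off sits on $a$ rather than on $u$ one simply applies~\eqref{est:tech2} with the roles of the two factors interchanged (the estimate makes no asymmetric assumption on $\psi$ versus $\theta$). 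With these two points made explicit, the proof is complete.
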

\begin{proof}
We write
\begin{align}
\chi_k(a-T_a)u &= \chi_k(\widetilde{\chi}_k a - T_{ \widetilde{\chi}_k a})  \widetilde{\chi}_ku +R_{1,k}u +R_{2,k}u\\
R_{1,k}u &=  \chi_k(\widetilde{\chi}_k a - T_{ \widetilde{\chi}_k a}) (1-\widetilde{\chi}_k)u\\
R_{2,k} &= - \chi_k T_{(1- \widetilde{\chi}_k)a} u = - \chi_k \sum_{j} S_j((1- \widetilde{\chi}_k)a)\Delta_j(u)
\end{align}
where $\widetilde{\chi} \in C_0^\infty(\xR^d)$ is equal to 
one on the support of $\chi$.
According to Proposition 2.12 in~\cite{ABZ3} we have
\begin{equation}\label{a-Ta1}
\Vert  \chi_k(\widetilde{\chi}_k a - T_{ \widetilde{\chi}_k a})  \widetilde{\chi}_ku 
\Vert_{H^{\gamma} } \leq \Vert a \Vert_{H^r_{ul} }\Vert  u \Vert_{H^\mu_{ul} }. 
\end{equation}
Now 
\begin{align*}
R_{1,k} &= \chi_k T_{(1-\widetilde{\chi}_ku)} \widetilde{\chi}_k a  
+ \chi_k \mathcal{R}((1-\widetilde{\chi}_ku,\widetilde{\chi}_k a)\\
&=  \chi_k \sum_{j} S_j((1- \widetilde{\chi}_k)u)\Delta_j(\widetilde{\chi}_k a ) 
+\chi_k  \sum_{\vert i-j \vert \leq 1}\Delta_i((1- \widetilde{\chi}_k)u)\Delta_j(\widetilde{\chi}_k a).
\end{align*}
Therefore we can apply~\eqref{est:tech2} 
in Lemma~\ref{techpara} to $R_{1,k}$ and $R_{2,k}$ 
to conclude that the estimate ~\eqref{est:a-Ta} holds for these terms.
\end{proof}
 
\subsection{On transport equations}
We will be using the following result about solutions of vector fields.
  \begin{lemm}\label{transport2}
Let $I =[0,T]$, $s_0> 1+ \frac{d}{2} $ and $  \mu>0$. Then there exists  $\mathcal{F}: \xR^+ \to \xR^+$ non decreasing  such that for    $V_j\in  L^\infty(I, H^{s_0} (\xR^d))_{ul}\cap L^\infty(I, H^\mu (\xR^d))_{ul}$  $ j =1, \ldots,d,$  $f\in L^1(I, H^\mu (\xR^d))_{ul},$  $ u_0 \in H^\mu_{ul}(\xR^d) $ and any solution $u \in L^\infty(I, H^{s_0} (\xR^d))_{ul}$   of the problem
$$(\partial_t + V\cdot \nabla)u = f, \quad u\arrowvert_{t=0} =u_0 $$
 we have, 
  \begin{multline*}
\Vert u \Vert_{L^\infty(I, H^\mu )_{ul}}  \leq \mathcal{F}\big( T\Vert V\Vert_{L^\infty(I, H^{s_0} )_{ul}} \big) \Big \{ \Vert u_0 \Vert_{ H^\mu_{ul} } +  \Vert f  \Vert_{L^1(I, H^\mu )_{ul}}   \\
+ \sup_{k\in \xZ^d} \Big(\int_0^T\Vert u(\sigma)\Vert_{H^{s_0}_{ul}} \Vert  \widetilde{\chi}_k V(\sigma) \Vert_{H^\mu }\,d\sigma \Big) \Big\} 
\end{multline*}
where $ \widetilde{\chi} \in C_0^\infty(\xR^d)$ is equal to one on the support of $\chi$.
\end{lemm}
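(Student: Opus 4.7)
The plan is to localize by the cutoff $\chi_k$ and reduce to a standard $H^\mu(\xR^d)$ transport estimate, exploiting the key observation that on the support of the derivative of $v_k\defn \chi_k u$ one may replace the coefficient $V$ by its localization $\widetilde{\chi}_k V$. Multiplying the equation by $\chi_k$, one gets
\[
\partial_t v_k + V\cdot\nabla v_k = \chi_k f + V\cdot(\nabla\chi_k)u.
\]
Since $\nabla v_k$ is supported in $\supp\chi_k\subset\{\widetilde{\chi}_k=1\}$, one has $V\cdot\nabla v_k = (\widetilde{\chi}_k V)\cdot\nabla v_k$, so $v_k$ also solves
\[
\partial_t v_k + W_k\cdot\nabla v_k = g_k,\qquad W_k\defn \widetilde{\chi}_k V,\quad g_k\defn \chi_k f + V\cdot(\nabla\chi_k)u,
\]
and the new coefficient $W_k$ belongs to $L^\infty(I, H^\mu(\xR^d))$, a space in which the classical tame transport theory is available.

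Set $\mathcal{V}\defn \Vert V\Vert_{L^\infty(I,H^{s_0})_{ul}}$; by the Sobolev embedding $H^{s_0}_{ul}\subset W^{1,\infty}$ (valid since $s_0>1+d/2$) one has $\Vert W_k\Vert_{W^{1,\infty}}\leq C\mathcal{V}$ uniformly in $k$. The standard tame $H^\mu$-transport estimate (whose proof combines the divergence identity with a Kato--Ponce bound on $[\langle D_x\rangle^\mu, W_k\cdot\nabla]v_k$) then gives
\[
\Vert v_k(t)\Vert_{H^\mu} \leq e^{CT\mathcal{V}}\Bigl\{\Vert \chi_k u_0\Vert_{H^\mu} + \int_0^t \bigl(\Vert g_k\Vert_{H^\mu} + \Vert \widetilde{\chi}_k V\Vert_{H^\mu}\Vert \nabla v_k\Vert_{L^\infty}\bigr)d\sigma\Bigr\}.
\]
Using $\Vert \nabla v_k\Vert_{L^\infty} \leq C\Vert u\Vert_{H^{s_0}_{ul}}$ (Sobolev embedding applied to $\chi_k u$), the Kato--Ponce term already matches the third quantity in the claim after taking the supremum in $k$. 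The boundary source $V\cdot(\nabla\chi_k)u$, being supported in $\supp\chi_k$, can be rewritten as $(\widetilde{\chi}_k V)(\nabla\chi_k)(\widetilde{\chi}_k u)$ and bounded by the Moser product rule: for $\mu>0$,
\[
\Vert V\cdot(\nabla\chi_k)u\Vert_{H^\mu}\leq C\bigl(\Vert V\Vert_{L^\infty}\Vert \widetilde{\chi}_k u\Vert_{H^\mu} + \Vert u\Vert_{L^\infty}\Vert \widetilde{\chi}_k V\Vert_{H^\mu}\bigr),
\]
the first piece being absorbed through Gronwall (using $\Vert \widetilde{\chi}_k u\Vert_{H^\mu}\leq C\Vert u\Vert_{H^\mu_{ul}}$, which is Lemma~\ref{invariance}) and the second again contributing the expected third term.

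Taking the supremum over $k$ on both sides yields an inequality of the form
\[
N_\mu(t)\leq A + B\int_0^t N_\mu(\sigma)\,d\sigma,\qquad N_\mu(t)\defn \Vert u(t)\Vert_{H^\mu_{ul}},
\]
with $A$ equal to $e^{CT\mathcal{V}}$ times the right-hand side of the lemma and $B=C\mathcal{V}$. A final application of Gronwall concludes the proof, with $\mathcal{F}(T\mathcal{V})$ of the form $\exp(CT\mathcal{V}+CT\mathcal{V}\,e^{CT\mathcal{V}})$. The main (and only nontrivial) point is precisely the replacement of $V$ by $\widetilde{\chi}_k V$ in the transport term of $v_k$: without this observation one could not invoke the classical $H^\mu$-tame estimate on $\xR^d$, since $V$ itself is only in a uniformly local space. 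Once this localization trick is in place, the remainder of the argument reduces to classical transport theory on $\xR^d$, the Moser product inequality, and Gronwall, and raises no essential issue.
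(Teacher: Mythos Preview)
Your proof is correct and follows essentially the same strategy as the paper: localize by $\chi_k$, replace $V$ by $\widetilde{\chi}_k V$ in the transport term (the paper does this too, writing $V_k=\widetilde{\chi}_k V$), estimate the boundary source $V\cdot(\nabla\chi_k)u$ by the Moser product rule, and conclude by Gronwall. The only cosmetic difference is that the paper handles the commutator $[\langle D\rangle^\mu, W_k\cdot\nabla]$ via the paradifferential decomposition $W_k=T_{W_k}+(W_k-T_{W_k})$ and the symbolic calculus, whereas you invoke Kato--Ponce directly; the two routes produce the same terms and the same final estimate.
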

\begin{proof}
Set $V_k =\widetilde{\chi}_k V$.  We have 
\begin{equation}\label{transport:0}
  (\partial_t + T_{V_k}\cdot \nabla)(\chi_ku) = \chi_kf + V_k \cdot(\nabla \chi_k)\widetilde{\chi}_k u + (T_{V_k} - V_k) \cdot \nabla(\chi_ku): = g_k. 
  \end{equation}
Now computing the quantity $\frac{d}{dt} \Vert \chi_k u(t) \Vert^2_{L^2},$ using the above equation, the fact that $\Vert (T_V \cdot \nabla + (T_V \cdot \nabla)^*\Vert_{L^2 \to L^2} \leq C \Vert V(t) \Vert_{W^{1,\infty}}$ and the Gronwall inequality we obtain 
\begin{equation}\label{transport:1}
\Vert  \chi_ku(t)\Vert_{L^2 } \leq \mathcal{F}\big(\Vert V \Vert_{L^1(I, W^{1,\infty})}\big)\Big\{ \Vert \chi_k u_0\Vert_{L^2} + \int_0^t \Vert g_k(\sigma)\Vert_{L^2}\, d\sigma\Big\}.
\end{equation}
Now we can write
$$
(\partial_t + T_{V_k}\cdot \nabla)\langle D \rangle^\mu(\chi_ku) = \langle D \rangle^\mu g_k + [T_{V_k}, \langle D \rangle^\mu] \cdot \nabla (\chi_k u).
$$
By the symbolic calculus (see Theorem \ref{calc:symb}, $(ii)$) we have 
$$ \Vert  [T_{V_k}, \langle D \rangle^\mu] \cdot \nabla (\chi_k u)(t)\Vert_{L^2} \leq C \Vert V(t) \Vert_{W^{1,\infty}}\Vert \chi_ku(t) \Vert_{H^ \mu}.$$
Therefore using \eqref{transport:1} and Gronwall inequality we obtain
 \begin{equation}\label{transport:2}
\Vert  \chi_ku(t)\Vert_{H^\mu} \leq \mathcal{F}\big(\Vert V \Vert_{L^1(I, W^{1,\infty})}\big)\Big\{ \Vert  \chi_k u_0\Vert_{H^\mu} + \int_0^t \Vert g_k(\sigma)\Vert_{H^\mu}\, d\sigma\Big\}.
\end{equation}
Coming back to the definition of $g$ given in \eqref{transport:0} we have
$$\Vert V_k\cdot (\nabla \chi_k) \widetilde{\chi}_k u(t) \Vert_{H^\mu} \leq C (\Vert V(t) \Vert_{L^\infty}\Vert \widetilde {\chi}_k u(t)\Vert_{H^\mu} + \Vert u(t) \Vert_{L^\infty}\Vert V_k(t)\Vert_{H^\mu}).$$
 
On the other hand we have 
$$(V_k-T_{V_k})\cdot \nabla (\chi_k u) = T_{\nabla (\chi_ku)} \cdot V_k + R(V_k, \nabla (\chi_k u)).$$
By Theorem \ref{calc:symb} $(i)$ and an easy computation we see that 
\begin{equation*}
  \Vert  T_{\nabla (\chi_ku)(t)} \cdot V_k(t) \Vert_{H^\mu} + \Vert  R(V_k, \nabla (\chi_k u)) \leq C \Vert u(t) \Vert_{W^{1,\infty}} \Vert V_k(t) \Vert_{H^\mu}. 
\end{equation*}
Using \eqref{transport:2}, the Gronwall inequality, the embedding of $H^{s_0}_{ul}$ in $W^{1,\infty}$   and the above estimates we obtain the desired conclusion.
 \end{proof}
\subsection{Commutation with a vector field}

\begin{lemm}\label{commutation}
Let $I =[0,T], V\in C^0(I, W^{1+\eps}(\xR^d))$ for some $\eps>0$ 
and consider a symbol $p = p(t,x,\xi)$ which is homogeneous of order $m$. 
Then there exists a positive constant K (independent of $p,V$) such 
that for any $t\in I$ and any $u\in C^0(I,H^m_{ul}(\xR^d))$ we have
\begin{equation}\label{est:commutation}
\Vert [T_p,\partial_t +T_V\cdot \nabla_x]u(t, \cdot) \Vert_{L^2_{ul}(\xR^d)} 
\leq K C(p,V)\Vert u(t,\cdot) \Vert_{H^m_{ul}(\xR^d)}
\end{equation}
where
$$
C(p,V)\defn M_0^m(p) \Vert V \Vert_{C^0(I,W^{1+ \eps}(\xR^d))} 
+ M^m_0(\partial_t p + V \cdot \nabla_x p) \Vert V \Vert_{L^\infty(I\times \xR^d)}.
$$
\end{lemm}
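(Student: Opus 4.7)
The plan is to reduce this uniformly local estimate to its classical Sobolev space analogue via the localization technique used repeatedly in the preceding sections. The classical statement we will rely on is
$$
\bigl\Vert [T_p, \partial_t + T_V \cdot \nabla_x] v \bigr\Vert_{L^2(\xR^d)} \leq K\, C(p,V) \Vert v \Vert_{H^m(\xR^d)}
$$
for all $v\in H^m(\xR^d)$, which is essentially proved in~\cite{ABZ3} and rests on the decomposition
$$
[T_p, \partial_t + T_V \cdot \nabla_x] = -T_{\partial_t p} + [T_p, T_V \cdot \nabla_x],
$$
then on extracting from $[T_p, T_V \cdot \nabla_x]$ its leading paraproduct $-T_{V \cdot \nabla p}$ via symbolic calculus. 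The two leading pieces combine into $-T_{(\partial_t + V \cdot \nabla)p}$, whose norm from $H^m$ to $L^2$ is bounded by $M^m_0(\partial_t p + V \cdot \nabla p)$ by Theorem~\ref{calc:symb}$(i)$. The symbolic-calculus remainders use the $1+\eps$ regularity of $V$ and contribute the term $M^m_0(p) \Vert V \Vert_{W^{1+\eps,\infty}}$, so both pieces of $C(p,V)$ appear naturally.

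To pass to the uniformly local setting, fix $q\in \xZ^d$, let $\widetilde{\chi}_q \in C_0^\infty(\xR^d)$ be equal to one on the support of $\chi_q$, and set $\mathcal{L} \defn \partial_t + T_V \cdot \nabla_x$. The starting point is the splitting
$$
\chi_q [T_p, \mathcal{L}] u = \chi_q [T_p, \mathcal{L}](\widetilde{\chi}_q u) + \chi_q [T_p, \mathcal{L}]\bigl((1-\widetilde{\chi}_q) u\bigr).
$$
The first term is directly controlled by the classical bound applied to $\widetilde{\chi}_q u$, yielding a contribution $\les C(p,V) \Vert u \Vert_{H^m_{ul}}$. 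For the second term one exploits the pseudo-local character of paradifferential operators: since $\chi_q$ and $1-\widetilde{\chi}_q$ have disjoint supports, expanding the commutator into its Littlewood--Paley dyadic blocks and integrating by parts in each block produces kernels with arbitrary polynomial decay in $\langle x-y\rangle$. After dispatching $(1-\widetilde{\chi}_q)u$ onto its $\chi_k$-pieces, this yields
$$
\bigl\Vert \chi_q [T_p, \mathcal{L}]\bigl((1-\widetilde{\chi}_q) u\bigr) \bigr\Vert_{L^2} \leq C_N\, C(p,V) \sum_{k \in \xZ^d} \langle q - k \rangle^{-N} \Vert \chi_k u \Vert_{H^m},
$$
exactly as in the proof of Lemma~\ref{techpara}. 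Taking the supremum over $q$ and using the summability of $\langle\cdot\rangle^{-N}$ gives~\eqref{est:commutation}.

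The main obstacle lies in the classical estimate rather than in the localization step. The delicate point is that $[T_p, \partial_t]$ and $[T_p, T_V \cdot \nabla_x]$ each produce operators of order $m$ whose norms are \emph{not} individually controlled by a multiple of $C(p,V)$ in $L^2$; only their sum benefits from the cancellation that collapses the two leading symbols into the material-derivative symbol $-(\partial_t + V \cdot \nabla)p$. The symbolic calculus remainders coming from $[T_p, T_{V_j}]$ genuinely require a gain of $\eps$ derivatives on $V$, which is precisely the hypothesis $V\in W^{1+\eps,\infty}$; this is where the strict inequality in the regularity threshold enters. Once the classical $L^2 \to L^2$ bound is in hand, the transfer to the uniformly local statement via Lemma~\ref{techpara} is a routine adaptation of the arguments already developed in the paper.
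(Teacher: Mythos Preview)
Your overall scheme --- localize with $\chi_q$, apply the classical commutator estimate on the near piece $\widetilde{\chi}_q u$, and treat the far piece $(1-\widetilde{\chi}_q)u$ by pseudo-locality --- is exactly the paper's. The near-diagonal step is fine: the paper invokes Lemma~2.17 of \cite{ABZ3} on $\chi_k[T_p,\mathcal{L}]\widetilde{\chi}_k$, just as you describe.

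The gap is in the off-diagonal step. You claim that disjoint supports and kernel decay alone produce the bound with constant $C_N\,C(p,V)$, ``exactly as in the proof of Lemma~\ref{techpara}''. But estimates of that type always carry the symbol seminorm of the operator being localized as a prefactor. When you expand the commutator, the piece $-\chi_q T_{\partial_t p}(1-\widetilde{\chi}_q)$ appears, and its pseudo-local estimate comes with constant $M^m_0(\partial_t p)$, which is \emph{not} controlled by $C(p,V)$: one cannot recover $M^m_0(\partial_t p)$ from $M^m_0(p)$ and $M^m_0(\partial_t p + V\cdot\nabla_x p)$ without passing through $M^m_0(V\cdot\nabla_x p)$, and the latter costs an $x$-derivative on $p$ that the hypotheses do not provide. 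The cancellation you correctly identify for the classical estimate must therefore be \emph{redone} in the off-diagonal term. This is precisely what the paper does: it writes
\[
-\chi_k T_{\partial_t p}(1-\widetilde{\chi}_k) \;=\; -\chi_k T_{\partial_t p + V\cdot\nabla_x p}(1-\widetilde{\chi}_k) \;+\; \chi_k T_{V\cdot\nabla_x p}(1-\widetilde{\chi}_k),
\]
bounds the first summand via Theorem~\ref{calc:symb}$(i)$ with constant $M^m_0(\partial_t p + V\cdot\nabla_x p)$, and treats the second (in the scalar case $m=0$) by rewriting $V\cdot\nabla_x p = \cnx(pV) - p\,\cnx V$ so that the resulting loss of one frequency factor $2^j$ is absorbed by the super-polynomial gain from disjoint supports (the remark following Lemma~\ref{techpara}). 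The two remaining pieces $\chi_k T_p T_V\cdot\nabla(1-\widetilde{\chi}_k)$ and $\chi_k T_V\cdot\nabla T_p(1-\widetilde{\chi}_k)$ are handled separately by inserting a further cutoff and reusing the disjoint-support machinery. So the localization strategy is right, but the off-diagonal piece is not the routine application of Lemma~\ref{techpara} you suggest; it requires replaying the $\partial_t p \mapsto (\partial_t + V\cdot\nabla)p$ substitution there as well.
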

\begin{proof}
We proceed as in the proof of Theorem~\ref{calc:symb} 
and we begin by the case where $m=0 $ and $p$ is a function. 
We denote by $\mathcal{R}$ the set of continuous 
operators $R(t)$ from $L_{ul}^2(\xR^d)$ to $L^2(\xR^d)$ 
such that $\sup_{t\in I} \Vert R(t)u(t) \Vert_{L^2(\xR^d)}$ 
is bounded by the right hand side of~\eqref {est:commutation}. 
We write
$$ \chi_k [T_p ,\partial_t +T_V\cdot \nabla_x] 
= \chi_k [T_p ,\partial_t +T_V\cdot \nabla_x]  \widetilde{\chi}_k 
+ [\chi_k T_p ,\partial_t +T_V\cdot \nabla_x] (1-\widetilde{\chi}_k) $$
where $\tilde \chi \in C_0^\infty(\xR^d)$ is equal to one on the support of $\chi$.
By Lemma 2.17 in~\cite{ABZ3} the first  operator in the 
right hand side of the above equality  belongs to $\mathcal{R}$. 
Let us look at  the second term. It is equal to 
$$
-\chi_k T_{\partial_t p}(1- \widetilde{\chi}_k) 
+ \chi_k T_p T_V \cdot \nabla_x  (1- \widetilde{\chi}_k) 
- \chi_k T_V \cdot \nabla_x T_p (1- \widetilde{\chi}_k) = : A  +B + C.
$$
We can write
$$
A  = -\chi_k T_{\partial_t p + V\cdot \nabla_x p}(1- \widetilde{\chi}_k)
+\chi_k T_{ V\cdot \nabla_x p }(1- \widetilde{\chi}_k) := A_1 +A_2.
$$
By Theorem~\ref{calc:symb} $(i)$ the term $A_1$ belongs to $\mathcal{R}$. 
Now 
\begin{equation*}
\begin{aligned}
A_2u &= \chi_k T_{\cnx (pV) - p \cnx V  }(1- \widetilde{\chi}_k)u \\
& = \sum_{j\geq-1} \psi(2^{-j}D)(\cnx(pV) - p \cnx V) \chi_k\varphi(2^{-j}D)((1- \widetilde{\chi}_k)u ). 
\end{aligned}
\end{equation*}
Since 
$$\Vert  \psi(2^{-j}D)(\cnx(pV) - p \cnx V)\Vert_{L^\infty }
\leq C 2^j \Vert p \Vert _{L^\infty }\Vert V \Vert_{W^{1,\infty} }$$ 
we deduce from Remark~\ref{est:deltaj} that $A_2 \in \mathcal{R}$. 

Let $\underline{\chi} \in C_0^\infty(\xR^d)$ such that $ \widetilde{\chi} =1$ 
on the support of $\underline{\chi}$ and $\underline{\chi} =1$ on the support of $\chi$. 
We write 
$$
B = \chi_k T_p \underline{\chi}_kT_V \cdot \nabla_x  (1- \widetilde{\chi}_k)  
+ \chi_k T_p (1-\underline{\chi}_k) T_V \cdot \nabla_x  (1- \widetilde{\chi}_k): = B_1 + B_2.
$$
By Theorem~\ref{calc:symb} $(i)$ we have
\begin{equation*}
\begin{aligned}
  \Vert B_1u \Vert_{L^2 }  
& \leq C \Vert p \Vert_{L^\infty } \big\Vert  \underline{\chi}_kT_V \cdot \nabla_x  (1- \widetilde{\chi}_k)u \big\Vert_{L^2 }\\
 &\leq C\Vert p \Vert_{L^\infty } \sum_{j\geq -1}  \big\Vert (\psi(2^{-j}D) V) 2^j \underline{\chi}_k\varphi_1(2^{-j}D)(1- \widetilde{\chi}_k)u \big\Vert_{L^2 }\\
 & \leq C\Vert p \Vert_{L^\infty } \Vert V \Vert_{L^\infty }\sum_{j\geq -1}   2^j\big\Vert  \underline{\chi}_k\varphi_1(2^{-j}D)(1- \widetilde{\chi}_k)u \big\Vert_{L^2 }
\end{aligned}
\end{equation*}
and Remark~\ref{est:deltaj} shows that $B_1 \in \mathcal{R}$. 
 
Now by~\eqref{est:tech1} and Theorem~\ref{calc:symb} we can write 
\begin{equation*}
\begin{aligned}
\Vert B_2u \Vert_{L^2 } &\leq  C \Vert p \Vert_{L^\infty }\big\Vert T_V \cdot \nabla_x(1-\widetilde{\chi}_k)u \big\Vert_{H^{-1}_{ul} } \\
&\leq  C \Vert p \Vert_{L^\infty  }\Vert  V \Vert_{L^\infty } \Vert u \Vert_{L^2_{ul} }
\end{aligned}
\end{equation*}
so $B_2 \in \mathcal{R}$. 
The term $C$ is estimated exactly by the same way, 
introducing a cut-off $ \underline{\chi}_k$ after the operator 
$T_V \cdot \nabla_x$.  Thus $C \in \mathcal{R}$.

The case where $p = a(x)h(\xi)$ and then were $p$ is a general homogeneous symbol of order $m$ is handled as in the proof of Theorem \ref{calc:symb}. 
\end{proof}

\section{Appendix}
 Let $\alpha \in ]0,+\infty[, , \alpha \neq 1$ and $ S(t) = e^{-it\vert D_x \vert^\alpha}$. Our aim is to prove the following result. 
\begin{prop}\label{perte}
Let $s,\sigma \in \xR. $ Assume that there exists $t_0\neq 0$ such that $S(t_0)$ is continuous from $C^\sigma_*(\xR^d)$ to $C^s_*(\xR^d).$ Then $s \leq \sigma -   \frac{d \alpha}{2}$.
 \end{prop}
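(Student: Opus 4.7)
The plan is to test the putative boundedness against a one-parameter family of ``time-reversed'' frequency-localized bumps and then let the frequency tend to infinity. Fix a non-negative $\psi\in C_0^\infty(\xR^d)$, $\psi\not\equiv 0$, supported in the annulus $\{1/2<|\eta|<2\}$, and for each integer $j\ge 1$ I would define $u_j$ by
\[
\widehat{u_j}(\xi) = e^{it_0|\xi|^\alpha}\,\psi(\xi/2^j).
\]
The key feature of this choice is that $\widehat{S(t_0)u_j}(\xi)=\psi(\xi/2^j)$, so $S(t_0)u_j(x)=c\,2^{jd}\check\psi(2^jx)$ is simply an $L^\infty$-normalized Littlewood--Paley block at scale $2^j$. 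Both $u_j$ and $S(t_0)u_j$ have Fourier support in the dyadic shell $\{|\xi|\sim 2^j\}$, so only one Littlewood--Paley projector contributes to their Zygmund norms and
\[
\|u_j\|_{C^\sigma_*}\sim 2^{j\sigma}\|u_j\|_{L^\infty}, \qquad \|S(t_0)u_j\|_{C^s_*}\sim 2^{js}\|S(t_0)u_j\|_{L^\infty}.
\]
The trivial lower bound $\|S(t_0)u_j\|_{L^\infty}\gtrsim 2^{jd}$ (take $x=0$ and use $\psi\ge 0$) then yields $\|S(t_0)u_j\|_{C^s_*}\gtrsim 2^{j(s+d)}$.

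The real work is an upper bound on $\|u_j\|_{L^\infty}$ via stationary phase. After the substitution $\xi=2^j\eta$,
\[
u_j(x) = (2\pi)^{-d}\,2^{jd}\int_{\xR^d}e^{i\Phi_{j,x}(\eta)}\psi(\eta)\,d\eta,\qquad \Phi_{j,x}(\eta)=2^jx\cdot\eta + t_0 2^{j\alpha}|\eta|^\alpha.
\]
On the compact support of $\psi$, the Hessian $D^2_\eta|\eta|^\alpha$ has eigenvalues proportional to $\alpha$ (in the $d-1$ tangential directions) and $\alpha(\alpha-1)$ (in the radial direction), both uniformly bounded away from $0$ because $\alpha>0$ and $\alpha\neq 1$ -- this is precisely where the hypothesis is used. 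Standard non-degenerate stationary phase with large parameter $|t_0|\,2^{j\alpha}$ (or repeated integration by parts when the critical point equation $x=-t_0\alpha|\eta|^{\alpha-2}\eta\cdot 2^{j(\alpha-1)}$ has no solution in $\supp\psi$) gives
\[
\|u_j\|_{L^\infty}\le C\,2^{jd}\,(|t_0|\,2^{j\alpha})^{-d/2} = C|t_0|^{-d/2}\,2^{jd(1-\alpha/2)},
\]
and therefore $\|u_j\|_{C^\sigma_*}\le C|t_0|^{-d/2}\,2^{j(\sigma+d-d\alpha/2)}$.

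Inserting the two estimates in the assumed inequality $\|S(t_0)u_j\|_{C^s_*}\le K\|u_j\|_{C^\sigma_*}$ would then give
\[
2^{j(s+d)}\lesssim K|t_0|^{-d/2}\,2^{j(\sigma+d-d\alpha/2)},
\]
i.e.\ $2^{j(s-\sigma+d\alpha/2)}$ stays bounded as $j\to +\infty$, forcing $s-\sigma+d\alpha/2\le 0$, which is the claim. The main obstacle is the stationary-phase analysis of $u_j$: one must check the non-degeneracy of $D^2|\eta|^\alpha$ uniformly on the annulus (where the exclusion $\alpha\neq 1$ enters) and handle non-stationary regions with enough decay in $2^{j\alpha}|t_0|$ to beat the $2^{jd}$ prefactor. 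Once this dispersive bound for a single dyadic block is in hand, the reduction to Littlewood--Paley pieces and the final algebraic comparison of exponents are routine.
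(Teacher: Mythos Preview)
Your argument is correct and rests on the same oscillatory integral as the paper's proof, but the packaging is genuinely different. The paper reduces the hypothesis to the inequality $2^{js}\|S(-1)\Delta_j^2 u\|_{L^\infty}\le C\,2^{j\sigma}\|u\|_{L^\infty}$ and then bounds the operator norm $\|S(-1)\Delta_j^2\|_{L^\infty\to L^\infty}$ from \emph{below} by computing the $L^1$-norm of its convolution kernel: after the same rescaling you perform, the kernel is an oscillatory integral $\int e^{ih^{-\alpha}(s\cdot\eta+|\eta|^\alpha)}\varphi^2(\eta)\,d\eta$, and the paper uses the full stationary-phase asymptotic (leading term plus remainder) to show this kernel is of size $\sim h^{-d}h^{d\alpha/2}$ on a region of $s$ of measure $\sim 1$, whence $\int|K_h|\gtrsim 2^{jd\alpha/2}$. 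You instead choose an explicit test function $u_j=S(-t_0)v_j$ with $v_j$ a dyadic bump, so that $S(t_0)u_j=v_j$ is trivially large in $L^\infty$, and you only need an \emph{upper} bound $\|u_j\|_{L^\infty}\lesssim 2^{jd}(|t_0|2^{j\alpha})^{-d/2}$, which is the easy half of stationary phase (no leading asymptotic required, only the decay rate). Your route is therefore slightly more economical analytically; the paper's route has the conceptual advantage of exhibiting the actual operator norm on a dyadic shell rather than its effect on a single function. The core computation---the nondegeneracy of $D^2|\eta|^\alpha$ on the annulus, using $\alpha\neq 0,1$, and the resulting $\lambda^{-d/2}$ decay with $\lambda=|t_0|2^{j\alpha}$---is identical in both arguments.
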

 \begin{proof}
 Without loss of generality we can assume that $t_0 =-1$.   Our hypothesis reads
 \begin{equation}\label{hypo}
   \exists  C>0 : \Vert S(-1) u \Vert_{C^s_*(\xR^d)} \leq C \Vert  u \Vert_{C^\sigma_*(\xR^d)}, \quad \forall u\in C^\sigma_*(\xR^d).
   \end{equation}
Now if $u\in L^\infty(\xR^d)$ we set $\widehat{\Delta_j u}(\xi) = \varphi(2^{-j}\xi)\widehat{u}(\xi),$  where $\varphi\in C_0^\infty(\xR^d),$ with    $\supp\varphi \subset \{\xi: \mez \leq \vert \xi \vert \leq 2\}$. Then for fixed $j\in \xN$ we have $\Delta_j u \in C^\sigma_*(\xR^d)$ and 
 $$\Vert  \Delta_j u \Vert_{C^\sigma_*(\xR^d)} \leq C 2^{j\sigma}\Vert \Delta_j u\Vert_{L^\infty(\xR^d)} \leq C' 2^{j\sigma}\Vert u\Vert_{L^\infty(\xR^d)} .$$
This follows from the fact that $\Vert  \Delta_j u \Vert_{C^\sigma_*(\xR^d)}= \sup_{k\in \xN}2^{k\sigma}\Vert \Delta_k \Delta_j u\Vert_{L^\infty(\xR^d)} $ and the fact that $\Delta_k \Delta_j = 0$ if $\vert j-k\vert \geq 2$.  
Since $\Delta_j$ commutes with $S(-1)$,  we see    that
$$ 2^{js}\Vert S(-1) \Delta_j \Delta_j u\Vert_{L^\infty(\xR^d)} \leq    \Vert S(-1)\Delta_j u \Vert_{C^s_*(\xR^d)}.$$
 It follows from \eqref{hypo} applied to $\Delta_ju$ with $u\in L^\infty(\xR^d)$ that   one can find a positive constant $C$ such that
 \begin{equation}\label{hypo2}
   2^{js}\Vert S(-1) \Delta_j \Delta_j u\Vert_{L^\infty(\xR^d)} \leq C2^{j\sigma}\Vert  u\Vert_{L^\infty(\xR^d)} \quad \forall u\in L^\infty(\xR^d), \quad \forall j\in \xN.
   \end{equation}

 Let us set $T =  S(-1) \Delta _j  \Delta _j$. Then 
 $$T  u(x) = (2\pi)^{-d}\iint e^{i  [(x-y)\cdot \xi  + \vert \xi \vert^\alpha ]} \varphi^2(2^{-j}\xi)   u (y)\,dy\,d\xi.$$
 We shall set $h =2^{-j}$ and take $j$ large enough. Then setting $\eta = h \xi$ we obtain
 $$T  u(x) = \int_{\xR^d} K_h(x-y)   u(y) \, dy$$
 where 
 $$ K_h(z) = (2\pi h)^{-d}\int_{\xR^d} e^{ \frac{i}{h}(z\cdot \eta + h^{1-\alpha} \vert \eta \vert^\mez)} \varphi^2(\eta)\, d\eta.$$
 We shall use the following well known lemma.
 \begin{lemm}
 Let $K\in C^0(\xR^d\times\xR^d)$ be such that $\sup_{x \in \xR^d}\int \vert K(x,y)\vert\, dy <+\infty$.
 Then the operator $T$ defined by $Tu(x) = \int K(x,y) u(y) \, dy$ is continuous from $L^\infty(\xR^d) $ to $L^\infty(\xR^d)$ and $\Vert T \Vert_{L^\infty \to L^\infty} = \sup_{x \in \xR^d}\int \vert K(x,y)\vert\, dy$.
\end{lemm}
 It follows from this lemma that in our case we have
 $$\Vert T \Vert_{L^\infty \to L^\infty} = \int_{\xR^d}\vert K_h(z)\vert \, dz.$$
 Setting $z =h^{1-\alpha} s$ and $\widetilde{K}_h(s) = K_h(h^{1-\alpha} s)$ we find that 
 \begin{equation}\label{normT}
 \Vert T \Vert_{L^\infty \to L^\infty} = h^{d(1-\alpha)}\int_{\xR^d}\vert\widetilde{K}_h(s)\vert \, ds
 \end{equation}
 with
 \begin{equation}\label{Ktilde}
 \widetilde{K}_h(s) = (2\pi h)^{-d}\int_{\xR^d}\ e^{ {i} h^{-\alpha} \phi_(s,\eta)} \varphi^2(\eta)\, d\eta, \quad \phi (s,\eta) =  s\cdot \eta +  \vert \eta \vert^\alpha. 
 \end{equation}
 Recall that $\supp \varphi \subset \{\eta: \mez \leq \vert \eta \vert \leq 2 \}$. We have $\frac{\partial \phi}{\partial \eta} = s + \alpha \frac{\eta}{\vert \eta \vert^{2-\alpha}}$.
 
Case 1:   $\vert s \vert \leq \mez \frac{\alpha}{2^{\vert 1-\alpha \vert }}$. 
Here, on the support of $\varphi,$ we have
$$
\la \frac{\partial \phi}{\partial \eta}(s,\eta)\ra \geq  \frac{\alpha}{\vert \eta \vert^{1-\alpha}} - \vert s \vert \geq  \mez \frac{\alpha}{2^{\vert 1-\alpha \vert }}.
$$
Therefore integrating by parts in the right hand side of \eqref{Ktilde} 
using the vector field $L = \frac{h^\alpha}{i} \frac{1}{\vert \partial_\eta \phi \vert^2}\sum_{k=1}^d \frac{\partial\phi}{\partial \eta_k} \frac{\partial}{\partial \eta_k}$ we obtain
\begin{equation}\label{est2}
\vert \widetilde{K}_h(s) \vert \leq C_N h^N, \quad \forall N\in \xN.
\end{equation}

Case 2:   $\vert s \vert \geq   2^{1+\vert \alpha-1 \vert}\alpha $. On the support of $\varphi$ we have
$$
\la \frac{\partial \phi}{\partial \eta}(s,\eta)\ra \geq \vert s \vert 
-  \frac{\alpha}{\vert \eta \vert^{1-\alpha}} \geq   2^{ \vert \alpha-1 \vert}\alpha.
   $$
Then using the same vector field as in the first case and noticing that $\partial_\eta^\alpha \phi$ 
is independent of $s$ when $\vert \alpha \vert \geq 2$ we obtain
\begin{equation}\label{est2-b}
\vert \widetilde{K}_h(s) \vert \leq C_N \vert s \vert ^{-N}h^N, \quad \forall N\in \xN.
\end{equation}

Case 3: $\mez \frac{\alpha}{2^{\vert 1-\alpha \vert }} \leq \vert s \vert \leq 2^{1+\vert \alpha-1 \vert}\alpha$. 
Here the function $\phi$ has a critical point given by $   \frac{\eta}{\vert \eta \vert^{2-\alpha}} = -\frac{s}{\alpha}$. 
It follows that $\frac{1}{\vert \eta \vert^{1-\alpha}} = \frac{\vert s \vert}{\alpha},$ 
which implies that $\eta_c = c_\alpha  s  \vert s \vert^{\frac{2-\alpha}{\alpha-1}} $. 
Moreover we have 
$$
\frac{\partial^2\phi}{\partial \eta_j \partial \eta_k} = \alpha  \vert \eta \vert^{\alpha-2}  m_{jk} 
\quad m_{jk}=   \delta_{jk} - (\alpha-2) \omega_{j }\omega_{k}, \quad \omega= \frac{\eta }{\vert \eta \vert }.
$$
 Since $\det(m_{jk}) = c_0 \neq 0$ we obtain $ \Big(\la\det\Big(\frac{\partial^2\phi}{\partial \eta_j \partial \eta_k}(s, \eta_c)  \Big)\ra\Big)^\mez = c_{\alpha,d} \vert s \vert ^{\frac{(\alpha-2)d}{2(\alpha-1)}}$. The stationnary phase formula implies that there exists $C_d>0$ such that
 \begin{equation}\label{est3}
  \widetilde{K}_h(s) =  C_{\alpha,d} h ^{-d} h^{ \frac{\alpha d}{2}}\Big\{\frac{e^{i\phi(s, \eta_c)}}{\vert s \vert ^{\frac{(\alpha-2)d}{2(\alpha-1)}}}\varphi ^2\big(\eta_c\big) + \mathcal{O}(h^\alpha)\Big\}.
  \end{equation}
  Using \eqref{normT}, \eqref{est2}, \eqref{est2-b}, \eqref{est3} we can conclude that 
  $$ \Vert T \Vert_{L^\infty \to L^\infty} \geq  Ch^{d(1-\alpha)} h^{-d} h^{\frac{d\alpha}{2}} -C_N h^N\geq C' h^{-\frac{d\alpha}{2}} .$$
  Recalling that $h = 2^{-j}$ we obtain
  $$\Vert T \Vert_{L^\infty \to L^\infty} \geq C  2^{j\frac{ d\alpha}{2}}.$$
  Thus for any $\eps>0$ one can find $u_0 \in L^\infty(\xR^d), $ non identically zero, such that 
  $$\Vert Tu_0\Vert_{L^\infty(\xR^d)} \geq  (C 2^{j\frac{ d\alpha}{2}} - \eps) \Vert u_0 \Vert_{L^\infty(\xR^d)}.$$
  Taking $\eps$ small and using \eqref{hypo2} with $u_0$ we obtain
  $$C''_d 2^{j(s+ \frac{d\alpha }{2})}\Vert u_0 \Vert_{L^\infty(\xR^d)} \leq C2^{j\sigma}\Vert u_0 \Vert_{L^\infty(\xR^d)} $$
  for all $j \geq j_0$ large enough which proves the Proposition.
\end{proof}

\addcontentsline{toc}{section}{Bibliography}

\end{document}